\documentclass[11pt, a4paper]{amsart}

\usepackage{amssymb}
\usepackage{amsrefs}
\usepackage{amsmath}
\usepackage{latexsym}
\usepackage{amscd}
\usepackage{amsfonts}
\usepackage{pb-diagram}
\usepackage[all]{xypic}
\usepackage[mathcal,mathscr]{eucal}
\usepackage{amsthm}
\usepackage{graphicx}
\usepackage{url}
\usepackage{pdfsync}
\usepackage{bbm}
\usepackage{xypic}
\usepackage[pdftex,colorlinks]{hyperref}
\hypersetup{linkcolor = blue, citecolor=blue}
\usepackage{verbatim}

\usepackage{pifont}
\usepackage{amsbsy}
\usepackage{epstopdf}
\usepackage{times}
\usepackage{epic,eepic}
\usepackage{esint}
\usepackage{tikz}
\usepackage{pgfplots}

\newtheorem{theorem}{Theorem}[section]
\newtheorem{lemma}{Lemma}[section]
\newtheorem{definition}{Definition}[section]
\newtheorem{corollary}{Corollary}[section]

\theoremstyle{definition}
\newtheorem{remark}{Remark}[section]

\newcommand{\intav}[1]{\mathchoice {\mathop{\vrule width 6pt height 3 pt depth  -2.5pt
\kern -8pt \intop}\nolimits_{\kern -6pt#1}} {\mathop{\vrule width
5pt height 3  pt depth -2.6pt \kern -6pt \intop}\nolimits_{#1}}
{\mathop{\vrule width 5pt height 3 pt depth -2.6pt \kern -6pt
\intop}\nolimits_{#1}} {\mathop{\vrule width 5pt height 3 pt depth
-2.6pt \kern -6pt \intop}\nolimits_{#1}}}

\newcommand{\intavl}[1]{\mathchoice {\mathop{\vrule width 6pt height 3 pt depth  -2.5pt
\kern -8pt \intop}\limits_{\kern -6pt#1}} {\mathop{\vrule width 5pt
height 3  pt depth -2.6pt \kern -6pt \intop}\nolimits_{#1}}
{\mathop{\vrule width 5pt height 3 pt depth -2.6pt \kern -6pt
\intop}\nolimits_{#1}} {\mathop{\vrule width 5pt height 3 pt depth
-2.6pt \kern -6pt \intop}\nolimits_{#1}}}

\title[ Fine behavior at critical points]{Global $C^{1,\alpha}$ regularity and fine behavior at critical points for quasilinear elliptic equations with Orlicz growth and unbounded drift}
\author{Jos\'e Erivamberto Lima Oliveira }
\date{} 
\begin{document}

\address{José Erivamberto Lima Oliveira \newline
Faculdade de Filosofia Dom Aureliano Matos, Universidade Estadual do Cear\'a \newline
Campus de Limoeiro do Norte, Avenida
 Dom Aureliano Matos, 2058, Limoeiro do Norte, Cear\'a, Brazil.}
\email{erivamberto.oliveira@uece.br}

\thanks{Corresponding author: José Erivamberto Lima Oliveira - erivamberto.oliveira@uece.br}

\keywords{ Quasilinear elliptic equations; Orlicz--Sobolev spaces; Up to the boundary $C^{1,\alpha}$ regularity; Improved regularity at critical points; unbounded drift terms.
\\
\indent 2020 {\it Mathematics Subject Classification.} 35J62, 35B65, 35J25, 46E30.
}

\begin{abstract}
We investigate the global regularity up to the boundary and the fine pointwise behavior along the critical set for bounded weak solutions to general quasilinear elliptic Dirichlet problems in Orlicz--Sobolev spaces. Our structural framework encompasses operators with non-standard Lieberman-type growth alongside lower-order terms featuring natural gradient growth, an unbounded source term in $L^q$, and a unbounded spatial drift weight in $L^s$. To overcome the analytical obstructions posed by the unbounded drift across the anisotropic growth gap, we first establish global Hölder continuity up to the boundary via Moser iteration. We then construct a novel smooth regularization scheme for general homogeneous constant-coefficient operators—merging the approximation frameworks of Lieberman and Cianchi--Maz'ya—to derive global $C^{1,\alpha}$ regularity up to the boundary through a variational perturbation approach. Building upon this global theory, we employ a geometric compactness and blow-up argument to prove that at any interior or boundary critical point where the gradient vanishes, the solution achieves an improved pointwise $C^{1,\alpha}$ regularity. Remarkably, at boundary critical  points, this quantitative improvement decouples from the lower-order terms and completely bypasses both the boundary datum regularity and the domain smoothness. Finally, we discuss the sharpness of the integrability threshold required for the unbounded drift weight in the subcritical regime.
\end{abstract}

\maketitle


\section{Introduction}

In this paper, we investigate both the global regularity up to the boundary and the fine pointwise behavior on the critical set for weak solutions of the general quasilinear elliptic Dirichlet boundary value problem in an open, bounded domain $\Omega \subset \mathbb{R}^n$ ($n \ge 2$):
\begin{equation}\label{1}
\begin{cases}
\operatorname{div} \mathcal{A}(x, u, \nabla u) + \mathcal{B}(x, u, \nabla u) = 0 \quad &\text{in}\quad \Omega,\\
u=\psi \quad &\text{on} \quad \partial\Omega,
\end{cases}
\end{equation}
where $\psi\in C^{1,\alpha_\psi}(\partial\Omega)$ (or $\psi \in C^{\alpha_\psi}(\partial\Omega)$). Here, the principal part $\mathcal{A}: \Omega \times \mathbb{R} \times \mathbb{R}^n \to \mathbb{R}^n$ and the lower-order term $\mathcal{B}: \Omega \times \mathbb{R} \times \mathbb{R}^n \to \mathbb{R}$ are Carath\'eodory functions. We assume that the operator's growth and ellipticity are governed by a density function $g \in C([0, +\infty)) \cap C^1((0, +\infty))$ satisfying the structural condition:
\begin{equation}\label{01}
    0 < \delta \leq\frac{t g'(t)}{g(t)} \leq g_0, \quad \text{for all} \quad t>0.
\end{equation}
Letting $G(t) = \int_0^t g(s)ds$ be the corresponding $N$-function, we specifically assume there exist positive constants $\lambda, \Lambda, \Lambda_{\alpha_{\mathcal{A}}}, \Lambda_B$, and a H\"older exponent $\alpha_{\mathcal{A}} \in (0, 1]$ such that for all $(x, z, p), (y, w, p) \in \Omega \times \mathbb{R} \times \mathbb{R}^n$:
\begin{equation} \tag{SC} \label{structural conditions}
\begin{cases}
    & |\mathcal{A}(x, z, p)| + |D_p \mathcal{A}(x, z, p)| |p| \le \Lambda g(|p|), \\ 
    & \lambda \frac{g(|p|)}{|p|} |\xi|^2 \le \langle D_p \mathcal{A}(x, z, p)\xi, \xi \rangle, \quad \text{for all } \xi \in \mathbb{R}^n \setminus \{0\}, \\
    & |\mathcal{A}(x, z, p) - \mathcal{A}(y, w, p)| \le \Lambda_{\alpha_{\mathcal{A}}}(1+ g(|p|)) \left( |x-y|^{\alpha_\mathcal{A}} + |z-w|^{\alpha_\mathcal{A}} \right), \\
    & |\mathcal{B}(x, z, p)| \le \Lambda_B G(|p|) + \gamma(x)g(|p|) + f(x).
\end{cases}
\end{equation}

Throughout this work, the non-homogeneous source term satisfies $f\in L^q(\Omega)$ with $q > n$, while $\gamma \in L^s(\Omega)$ is a spatial weight satisfying the integrability threshold:
\begin{equation}\label{cond_s}
    s > \max\left\{g_0+1, \, \frac{n(g_0+1)}{\delta+1}\right\}.
\end{equation}
Condition \eqref{01} naturally embeds the variational formulation of \eqref{1} into the framework of Orlicz--Sobolev spaces $W^{1,G}(\Omega)$, and it was also assumed by Lieberman in \cite{lieberman1991natural}. For instance, our equations cover problems of the type:
$$ -\operatorname{div}\left(\frac{g(|\nabla u|)}{|\nabla u|} A(x) \nabla u \right) + \gamma(x)g(|\nabla u|) = f(x), $$
where $A(x) \in C^{0,\alpha_\mathcal{A}}(\overline{\Omega})$ is a symmetric matrix field satisfying $\lambda|\xi|^2\leq \langle A(x)\xi,\xi\rangle\leq \Lambda|\xi|^2$. Many examples of functions $g$ satisfying \eqref{01} can be found in the literature; here we highlight a few representative models:
\begin{itemize}
    \item \textit{The $p$-Laplacian:} For $1<p<\infty$,
    $$ g(t) = t^{p-1}, \quad G(t) = \frac{1}{p}t^p. $$

    \item \textit{The $(p,q)$-Laplacian:} For $1 < p < q$, 
    $$ g(t) = t^{p-1} + t^{q-1}, \quad G(t) = \frac{1}{p}t^p + \frac{1}{q}t^q, $$
    where a direct calculation shows that $\delta = p-1$ and $g_0 = q-1$.

    \item \textit{Logarithmic Perturbations:} For $p>1$,
    $$ g(t) = t^{p-1} \ln(1+t), $$
    with $\delta = p - 1$ and $g_0 = p$. 
\end{itemize}

Before addressing the precise asymptotic behavior of solutions in the vicinity of critical points, we must first establish a global $C^{1,\alpha}$ regularity theory up to the boundary for weak solutions of \eqref{1} under the generalized structural conditions \eqref{structural conditions}. In \cite{lieberman1991natural}, this regularity is proven for the case where $\gamma \equiv 0$ and $f \in L^\infty$. Although Lieberman extends the theory to broader conditions on $\mathcal{B}$, those assumptions do not accommodate our unbounded drift term $\gamma(x)g(|\nabla u|)$. Consequently, to adapt the theory to our framework, it was necessary to re-derive the weak Harnack inequality to obtain H\"older regularity, as Lieberman's general structural conditions do not account for unbounded $\gamma$. It is worth noting that initial progress toward a $C^{1,\alpha}$ theory involving unbounded source terms ($f \in L^q$ for $q > n$) can be found in \cite{BragaSousa2025}. Our approach is directly guided by the perturbation framework of \cite{BragaSousa2025}, which in turn necessitated the construction of a smooth ($C^\infty$) approximation scheme for the general operator $\mathcal{A}$ alongside the boundary techniques developed in \cite{lieberman1988}.

Establishing this regularity up to the boundary is indispensable for the compactness and scaling arguments employed in our analysis of the critical set, particularly when it intersects the boundary of the domain. The pioneering work in this direction is due to Teixeira \cite{Teixeira2014}, who investigated the fine behavior of solutions at interior points where the gradient vanishes, revealing a remarkable quantitative improvement of regularity precisely along this locus. Further developments in this vein include the works \cite{Pimentel2021, Teixeira2015, Teixeira2022, bessa2026, Nascimento2025}.

Turning our attention to the boundary, we recall that when a solution to an elliptic equation has a fixed sign and the domain is at least of class $C^{1,1}$, the Hopf--Oleinik lemma (see \cite{braga2018inhomogeneous}) guarantees a strictly non-zero normal derivative at the boundary, thereby confining all critical points strictly to the interior of the domain. Outside these restrictive hypotheses, however, critical points can naturally occur on the boundary, as illustrated by the following example. Let $e_n=(0,\dots,1)$ and consider the function $u \in C^2(\overline{B_1(e_n)})$ defined by $u(x) = x_1^2 - x_n^2$. Clearly, $u$ is a classical solution to the Dirichlet problem:
\begin{equation*}
\begin{cases}
    \Delta u = 0 \quad &\text{in} \quad B_1(e_n), \\
    u = x_1^2 - x_n^2 \quad &\text{on} \quad \partial B_1(e_n).
\end{cases}
\end{equation*}
Notice that $u$ changes sign inside $B_1(e_n)$; for instance, $u(0, \dots, 0, x_n) = -x_n^2 < 0$ along the $x_n$-axis, whereas $u(x_1, 0, \dots, 0, x_n) > 0$ for $|x_1| > x_n$. Computing the gradient, $\nabla u(x) = (2x_1, 0, \dots, 0, -2x_n)$, we immediately see that $\nabla u(0) = 0$. Thus, the origin $0 \in \{\nabla u=0\}\cap \partial B_1(e_n)$ is a boundary critical  point.

Motivated by this phenomenon, we also investigate the regularity of boundary critical points for the Dirichlet problem \eqref{1} under the structural conditions \eqref{structural conditions}. We are now in a position to state our main results:

\begin{theorem}[\bf{Global $C^\alpha$ Regularity up to the Boundary}]\label{thm:global_Calpha}
Let $\Omega \subset \mathbb{R}^n$ be a bounded domain with $\partial\Omega$ of class $C^{1,\alpha_\Omega}$ for some $\alpha_\Omega \in (0,1)$, and let $u \in W^{1,G}(\Omega) \cap L^\infty(\Omega)$ be a weak solution to the Dirichlet problem \eqref{1} under the structural conditions \eqref{structural conditions}. If the boundary datum satisfies $\psi \in C^{\alpha_\psi}(\partial\Omega)$ for some $\alpha_\psi \in (0,1)$, then $u \in C^{\alpha}(\overline{\Omega})$ for some exponent $\alpha \in (0, \min\{\alpha_\Omega, \alpha_\psi\}]$ depending only on $n, \delta, g_0, \lambda, \Lambda, \Lambda_B, s, q, \alpha_\Omega, \alpha_\psi, \|u\|_{L^\infty(\Omega)}, \|\gamma\|_{L^s(\Omega)}$, and the geometry of $\partial\Omega$. Furthermore, there exists a constant $C > 0$, depending only on $n, q, \delta, g_0, g(1), \alpha_\Omega, \alpha_\psi, \operatorname{diam}(\Omega)$, and the $C^{1,\alpha_\Omega}$ geometry of $\partial\Omega$, such that
$$ \|u\|_{C^{\alpha}(\overline{\Omega})} \le C \left( \|u\|_{L^\infty(\Omega)} + [\psi]_{C^{\alpha_\psi}(\partial\Omega)} + g^{-1}\!\left(\|f\|_{L^q(\Omega)} \right) \right). $$
\end{theorem}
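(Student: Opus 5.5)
The plan follows the De Giorgi--Moser route adapted to Orlicz growth, in the spirit of Lieberman \cite{lieberman1991natural}, in three stages: a Caccioppoli inequality for truncations, a weak Harnack inequality for supersolutions in interior and boundary balls, and a boundary oscillation decay that uses the $C^{1,\alpha_\Omega}$ regularity of $\partial\Omega$ (an exterior cone/flatness condition is enough) together with $\psi\in C^{\alpha_\psi}$.

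\textbf{Step 1 (absorbing the lower-order terms).} Set $M=\|u\|_{L^\infty(\Omega)}$. The natural-growth term $\Lambda_B G(|\nabla u|)$ is handled by the usual exponential change of variables: test with $\eta^{g_0+1}\,\phi(u)$, where $\phi$ contains a factor $e^{\kappa |u-k|}$ and $\kappa$ depends on $\Lambda_B/\lambda$, $M$, $\delta$, $g_0$. This absorbs $G(|\nabla u|)$ into the ellipticity term $\lambda\, g(|\nabla u|)|\nabla u|\ge \lambda (1+\delta) G(|\nabla u|)$. For the drift I would use Young's inequality in Orlicz form, $\gamma g(|\nabla u|)\,\phi\le \varepsilon G(|\nabla u|)+C_\varepsilon \tilde G(\gamma\phi)$ with $\tilde G$ the conjugate function. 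The $\tilde G$-term is then estimated by $\gamma^{(g_0+1)/g_0}$-type powers through \eqref{01}, followed by Hölder's inequality with $\gamma\in L^s$. Condition \eqref{cond_s} is exactly what makes this remainder scale like $r^{n(1-\frac{g_0+1}{s})}$, i.e. like a subcritical source with a positive power of $r$ gained relative to $G$-energy, so it behaves like a Morrey-type coefficient. Similarly, $f\in L^q$ with $q>n$ contributes $r^{1-n/q}$ factors, and these produce the term $g^{-1}(\|f\|_{L^q})$ after normalising $u/(\|u\|_\infty+g^{-1}(\|f\|_q)r^{\beta})$.

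\textbf{Step 2 (weak Harnack and interior Hölder).} With the Caccioppoli inequalities for $(u-k)_\pm$ in hand, I would run Moser iteration on $\bar u=u+\omega(r)$, where $\omega(r)$ collects the $\gamma$ and $f$ contributions. This gives $\sup$ bounds for subsolutions and, via the John--Nirenberg/crossover lemma for $\log \bar u$ (test with $\eta^{g_0+1}\bar u^{-\delta}$-type functions, where the Orlicz ratio bounds replace $p$), a weak Harnack inequality $\big(\intav{B_{2r}} \bar u^{\,t}\big)^{1/t}\le C\inf_{B_r}\bar u$. The standard oscillation lemma $\operatorname{osc}_{B_{r}}u\le \theta \operatorname{osc}_{B_{4r}}u + C r^{\beta}$ then yields interior $C^{\alpha}$.

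\textbf{Step 3 (boundary).} At $x_0\in\partial\Omega$, I would apply the weak Harnack inequality to $\sup_{B_{4r}\cap\partial\Omega}\psi-u$ and $u-\inf\psi$, extended by their boundary values (the truncations $(u-k)_\pm$ with $k$ beyond the boundary range lie in $W^{1,G}_0$). Since $\partial\Omega\in C^{1,\alpha_\Omega}$ implies the uniform measure-density condition $|B_r(x_0)\setminus\Omega|\ge c|B_r|$, the boundary weak Harnack gives
\[\operatorname{osc}_{B_r\cap\Omega}u\le\theta\operatorname{osc}_{B_{4r}\cap\Omega}u+\operatorname{osc}_{B_{4r}\cap\partial\Omega}\psi+Cr^\beta.\]
Iterating with $\operatorname{osc}\psi\le[\psi]_{C^{\alpha_\psi}}r^{\alpha_\psi}$ produces the boundary Hölder decay with $\alpha\le\min\{\alpha_\Omega,\alpha_\psi\}$. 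Combining the interior and boundary estimates by the standard case distinction ($|x-y|$ compared with $\operatorname{dist}(x,\partial\Omega)$) gives the global estimate.

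\textbf{Main obstacle.} The hardest step is the drift absorption in Step 1 and the Moser iteration. Because $\gamma$ multiplies $g(|\nabla u|)$ and not $|\nabla u|^{p-1}$, the growth gap between $\delta+1$ and $g_0+1$ must be bridged. Concretely, one has to bound $\tilde G(\gamma \phi)$ by $(1+\gamma)^{(g_0+1)/\delta}$ and use the Sobolev gain $n/(\delta+1)$ against $s$. The threshold $s>\max\{g_0+1,\, n(g_0+1)/(\delta+1)\}$ in \eqref{cond_s} should be exactly what closes this. I would first try to verify it with the pure $(p,q)$ model before treating general $g$.
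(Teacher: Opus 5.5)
Your overall architecture matches the paper's Section 3: exponential test functions to absorb $\Lambda_B G(|\nabla u|)$, then a negative-power Moser iteration plus a $\log$/John--Nirenberg crossover. This gives a weak Harnack inequality for $\bar u=u+\kappa(R)$, with $\kappa(R)=R\,g^{-1}(R^{1-n/q}\|f\|_{L^q})$, from which you get oscillation decay for $M-u$ and $u-m$, and a boundary version by truncating at the boundary values.

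\textbf{The gap is in the drift absorption, the step you single out as the crux.} The inequality $\gamma\phi\,g(|\nabla u|)\le\varepsilon G(|\nabla u|)+C_\varepsilon\tilde G(\gamma\phi)$ is false in general. For $G(t)=t^p/p$, $\sup_t\big(a t^{p-1}-\varepsilon t^p/p\big)$ is a constant times $\varepsilon^{1-p}a^p$. The term $C_\varepsilon a^{p'}$ does not control this as $a\to\infty$ when $p>2$, nor as $a\to 0$ when $p<2$. The factor $g(|\nabla u|)$ must be paired with $\tilde G$. By (G-4),
$\gamma\phi\,g(|\nabla u|)\le\varepsilon\tilde G(g(|\nabla u|))+C_\varepsilon G(\gamma\phi)\le\varepsilon g_0G(|\nabla u|)+C_\varepsilon G(\gamma\phi)$,
and (G-2) then gives $G(\gamma\phi)\le C(1+\gamma)^{1+g_0}G(\phi)$. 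So the relevant power is $\gamma^{1+g_0}$, not $\gamma^{(g_0+1)/g_0}$ or $(1+\gamma)^{(g_0+1)/\delta}$. Those exponents are also inconsistent with your own scaling $r^{n(1-\frac{g_0+1}{s})}$, and with them \eqref{cond_s} would not come out. With the correct power, the two halves of \eqref{cond_s} appear separately:
$s>1+g_0$ is exactly what H\"older's inequality with $\sigma=s/(1+g_0)>1$ needs;
$s>n(1+g_0)/(1+\delta)$ makes $\theta=1-\frac{n(1+g_0)}{s(1+\delta)}$ positive in the Gagliardo--Nirenberg inequality for $w=G(\phi)^{1/(1+\delta)}\in W^{1,1+\delta}$ (Lemma \ref{Gv in W11}).
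The threshold is exactly the mismatch between the upper exponent $1+g_0$ on the coefficient side and the lower exponent $1+\delta$ on the Sobolev side.

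\textbf{The ``subcritical source'' picture also cannot be used literally in the weak Harnack step.} The drift cannot go into the shift $\omega(r)$ either: $\gamma g(|\nabla u|)$ is homogeneous in the gradient, so only $f$ is absorbed by the shift. In the negative-power iteration the remainder is $\int\eta^rG(v)^{-q}G(\gamma\bar u)\le C\int\eta^{r}(1+\gamma)^{1+g_0}G(v)^{-(q-1)}$, with $v=\bar u/(\eta R)$. Here $G(v)^{-(q-1)}$ is controlled only by the very integrals being iterated. So the remainder has to be absorbed at every step: H\"older, then Gagliardo--Nirenberg applied to $\eta^{a}G(v)^{-\frac{q-1}{1+\delta}}$ for a suitable cut-off power $a$, then Young with a parameter depending on $q$ and $R$. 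This produces a one-step constant $(q/R)^{(1+g_0)/(1-\theta)}e^{c(\beta\|u\|_{L^\infty}+\|\gamma\|_{L^s})}$ with $\beta=\Lambda_B/\lambda$. It is polynomial in $q$, which is what keeps the Moser product finite.

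Likewise, the $\log$ estimate needs the Orlicz weight $G(\bar u/\rho)^{-1}\eta^{1+g_0}\bar u$ rather than a power of $\bar u$. With $\bar u^{-\delta}$, the remainder $\bar u^{-\delta}\rho^{-1}g(\bar u/\rho)$ is not a function of $\rho$ alone when $g$ is not a power.

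Done this way, $\gamma$ and $\Lambda_B$ enter only through the Harnack constant, hence only through $\vartheta=1-1/C_\star$ and the H\"older exponent. This is how the paper obtains a final constant free of them.

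\textbf{Your boundary step is a valid alternative route.} You extend $\min\{u,m\}$ by constants and use exterior measure density, as in Gilbarg--Trudinger, Theorems 8.26--8.27. The paper instead flattens the boundary and truncates on half-balls. Your version needs less regularity of $\partial\Omega$.
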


\begin{theorem}[\bf{Global $C^{1,\alpha}$ Regularity up to the Boundary}]\label{thm:global_C1alpha}
Let $\Omega \subset \mathbb{R}^n$ be a bounded domain with $\partial\Omega$ of class $C^{1,\alpha_\Omega}$ for some $\alpha_\Omega \in (0,1)$, and let $u \in W^{1,G}(\Omega) \cap L^\infty(\Omega)$ be a weak solution to the Dirichlet problem \eqref{1} under the structural conditions \eqref{structural conditions} and \eqref{cond_s}. If the boundary datum satisfies $\psi \in C^{1,\alpha_\psi}(\partial\Omega)$ for some $\alpha_\psi \in (0,1)$, then $u \in C^{1,\alpha}(\overline{\Omega})$ for some exponent $\alpha \in (0, \min\{\alpha_\Omega, \alpha_\psi\}]$ depending only on $n, \delta, g_0, \lambda, \Lambda, \Lambda_{\alpha_{\mathcal{A}}}, \Lambda_B, s, q, \alpha_{\mathcal{A}}, \alpha_\Omega, \alpha_\psi, \|u\|_{L^\infty(\Omega)}$, and $\|\gamma\|_{L^s(\Omega)}$. Moreover, 
$$ \kappa_\Omega := \|u\|_{L^\infty(\Omega)} + \|\psi\|_{C^{1,\alpha_\psi}(\partial\Omega)} + g^{-1}\!\left( \|f\|_{L^q(\Omega)} \right)$$
there exist constants $C_\star, c_\star > 0$, depending only on $n, \delta, g_0, s, q, g(1), \lambda, \Lambda, \Lambda_{\alpha_{\mathcal{A}}}, \alpha_{\mathcal{A}}, \alpha_\Omega, \alpha_\psi$, and the $C^{1,\alpha_\Omega}$ geometry of $\partial\Omega$, such that
\begin{equation*}
    \|u\|_{C^{1,\alpha}(\overline{\Omega})} \le C_\star e^{c_\star \left({\Lambda_B}\kappa_\Omega + \|\gamma\|_{L^s(\Omega)}\right)} \kappa_\Omega.
\end{equation*}
\end{theorem}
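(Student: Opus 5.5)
The proof follows a perturbation (``freezing'') scheme in the spirit of \cite{BragaSousa2025} and \cite{lieberman1988, lieberman1991natural}, built on Theorem~\ref{thm:global_Calpha}. In a first reduction I would flatten the boundary locally by a $C^{1,\alpha_\Omega}$ diffeomorphism and subtract a $C^{1,\alpha_\psi}$ extension of $\psi$. This turns \eqref{1} near a boundary point into a problem in a half ball $B_r^+$ with zero data on the flat part. The transformed $\mathcal{A}$ still satisfies \eqref{structural conditions}, with Hölder exponent $\min\{\alpha_{\mathcal{A}},\alpha_\Omega,\alpha_\psi\}$. The lower-order term is still bounded by $\Lambda_B G(|p|)+\tilde\gamma\, g(|p|)+\tilde f$, where $\|\tilde\gamma\|_{L^s}\lesssim\|\gamma\|_{L^s}$ and $\tilde f$ absorbs $g(|\nabla\psi|)$ terms. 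Next I would remove the natural-growth term $\Lambda_B G(|\nabla u|)$ by an exponential change of variables $v=\Phi(u)$, with $\Phi(t)=\int_0^t e^{c\Lambda_B \tau}d\tau$ adapted to the Orlicz growth through \eqref{01}. Since $|u|\le\|u\|_\infty\lesssim\kappa_\Omega$, this is where the factor $e^{c_\star\Lambda_B\kappa_\Omega}$ enters. Alternatively, one can keep $G(|\nabla u|)$ and use the Caccioppoli estimate that comes from the $C^\alpha$ bound of Theorem~\ref{thm:global_Calpha}. That route gives $\int_{B_r} G(|\nabla u|)\lesssim r^{n-2+2\alpha}\cdot(\text{stuff})$, which acts like a Morrey-type source.

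The core step is a Campanato decay estimate. On each ball (or half ball) $B_r(x_0)$ I freeze coefficients and define $\mathcal{A}_0(p):=\mathcal{A}(x_0,u(x_0),p)$. I let $w$ solve $\operatorname{div}\mathcal{A}_0(\nabla w)=0$ in $B_r$ with $w=u$ on $\partial B_r$ (in the boundary case, on the curved part, with $w=0$ on the flat part). For $w$ I need interior and flat-boundary $C^{1,\beta}$ estimates with excess decay
\[
\intav{B_\rho}|\nabla w-(\nabla w)_\rho|\le C(\rho/r)^{\beta}\intav{B_r}|\nabla w-(\nabla w)_r|.
\]
Lieberman's estimates assume more smoothness, so I would construct $C^\infty$ approximations $\mathcal{A}_0^\varepsilon$ of $\mathcal{A}_0$ by mollification combined with the Cianchi--Maz'ya nondegenerate regularization $g_\varepsilon$. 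These keep \eqref{01} and the ellipticity in \eqref{structural conditions} uniformly in $\varepsilon$. I would then pass to the limit in the uniform $C^{1,\beta}$ bounds. For the comparison I test the difference of the equations with $u-w$ and use the monotonicity inequality
\[
\int_{B_r}\bigl(\mathcal{A}_0(\nabla u)-\mathcal{A}_0(\nabla w)\bigr)\cdot\nabla(u-w)\gtrsim\int_{B_r}G\text{-type distance}(\nabla u,\nabla w).
\]
This difference must be bounded by three terms:
\begin{itemize}
\item the coefficient oscillation, $\Lambda_{\alpha_{\mathcal{A}}}(r^{\alpha_{\mathcal{A}}}+\operatorname{osc}u^{\alpha_{\mathcal{A}}})\int(1+g(|\nabla u|))|\nabla(u-w)|$, which is small by Theorem~\ref{thm:global_Calpha};
\item $\int f|u-w|\le\|f\|_{L^q}\,r^{n(1-1/q)}\operatorname{osc}$, via $q>n$;
\item $\int\gamma\, g(|\nabla u|)|u-w|$, which is the delicate term.
\end{itemize}

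The main obstacle is the drift term. Hölder gives $\|\gamma\|_{L^s(B_r)}\|g(|\nabla u|)\|_{L^{s'}}\|u-w\|_\infty$. To gain a positive power of $r$ I need higher integrability of $g(|\nabla u|)$, or a Caccioppoli bound of the form $\int_{B_r}G(|\nabla u|)\le Cr^{n-2+2\alpha}$-ish, together with the growth gap $\delta$ versus $g_0$. That is exactly where \eqref{cond_s} should be used: $s>n(g_0+1)/(\delta+1)$ ensures that the excess decays like $r^{\sigma}$ with some $\sigma>0$. I would first prove a uniform gradient bound $\|\nabla u\|_\infty\le C e^{c\|\gamma\|_{L^s}}\kappa_\Omega$ by iterating the comparison estimate on dyadic balls. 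The exponential dependence on $\|\gamma\|_{L^s}$ arises from a Gronwall-type absorption over the iteration, or from the Moser constants. Once $\nabla u$ is bounded, the drift term reduces to a source in $L^s$ with $s>n$, and the standard Campanato iteration gives $\nabla u\in C^{\alpha}$ locally and at the flat boundary. Finally I would cover $\overline\Omega$ by finitely many such charts, undo the changes of variables, and use the $C^{1,\alpha_\psi}$ extension of $\psi$. Tracking constants then yields $\|u\|_{C^{1,\alpha}(\overline\Omega)}\le C_\star e^{c_\star(\Lambda_B\kappa_\Omega+\|\gamma\|_{L^s})}\kappa_\Omega$, with $\alpha\le\min\{\alpha_\Omega,\alpha_\psi\}$.
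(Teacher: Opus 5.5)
Your architecture is the paper's: flattening, freezing $\mathcal{A}_0(p)=\mathcal{A}(x_0,u(x_0),p)$, a $C^\infty$ regularization combining mollification with the Cianchi--Maz'ya $g_\varepsilon$, comparison by monotonicity, a Morrey/Campanato iteration, and covering. The proposal fails, however, at the step you yourself flag as the main obstacle.

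\textbf{The drift estimate does not close.} You bound $\int_{B_r}\gamma\,g(|\nabla u|)|u-w|$ by putting $|u-w|\le\operatorname{osc}_{B_r}u\lesssim r^{\alpha}$ in $L^\infty$ and $g(|\nabla u|)$ in $L^{s'}$. Carried out with $\tilde G(g(t))\le g_0G(t)$ and Young's inequality, this gives
\begin{equation*}
\int_{B_r}\gamma\,g(|\nabla u|)\,|u-w|\,dx\le \epsilon\int_{B_r}G(|\nabla u|)\,dx+C_\epsilon\|\gamma\|_{L^s}^{1+g_0}\,r^{\,n+(1+g_0)(\alpha-n/s)}.
\end{equation*}
The Campanato step needs the additive term to be $O(r^{n+\nu})$ with $\nu>0$. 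That holds only if the H\"older exponent of $u$ already exceeds $n/s$. Theorem~\ref{thm:global_Calpha} gives only some small exponent, of order $1/C_\star$ (cf.\ Remark~\ref{remark alpha dependence}). Your source estimate $\|f\|_{L^q}r^{n(1-1/q)}\operatorname{osc}u\sim r^{\,n-n/q+\alpha}$ has the same defect. Your way out, proving $\|\nabla u\|_\infty$ first ``by iterating the comparison estimate'', is circular: that comparison estimate is exactly what requires the drift to be controlled. Saying that \eqref{cond_s} ``ensures the excess decays'' does not supply a mechanism.

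\textbf{The missing idea.} Exploit $u-v\in W_0^{1,G}(B_R^+)$ rather than its supremum. Young's inequality with weight $R^{\mu_\star\delta/(1+\delta)}$ splits the drift into
\begin{equation*}
R^{\mu_\star\delta/(1+\delta)}\int_{B_R^+} G(|\nabla u|)\,dx+CR^{-\mu_\star/(1+\delta)}\int_{B_R^+} G(|\gamma||u-v|)\,dx.
\end{equation*}
For the second integral, combine $(G\text{-}2)$, H\"older with exponent $s/(1+g_0)>1$, and Gagliardo--Nirenberg with $\theta=\mu_\star/(1+\delta)$. The last is applied to $W=G(\tfrac12|u-v|)^{1/(1+\delta)}\in W_0^{1,1+\delta}(B_R^+)$ (Lemma~\ref{Gv in W11}). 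Then use Poincar\'e and the energy comparison of Lemma~\ref{lemma:comparison_boundary}. This gives
\begin{equation*}
\int_{B_R^+}G(|\gamma||u-v|)\,dx\le Ce^{c\|\gamma\|_{L^s}}R^{\mu_\star}\Big(\int_{B_R^+}G(|\nabla u|)\,dx+R^n\Big),\qquad \mu_\star=1+\delta-\frac{n(1+g_0)}{s}>0.
\end{equation*}
Here $s>1+g_0$ makes the H\"older step possible, and $s>n(1+g_0)/(1+\delta)$ makes the net gain $R^{\mu_\star\delta/(1+\delta)}$ positive. The $f$-term is handled through the Sobolev inequality in $W_0^{1,1}$, which yields a factor $R^{1-n/q}$. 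The result is $I\le\mathcal{E}R^{\nu}\int_{B_R^+}G(|\nabla u|)+\mathcal{E}R^{n+\nu}$ with no a priori gradient bound. Lemma~\ref{leitao lemma} and the Campanato argument of \cite{BragaSousa2025} then finish the proof.

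\textbf{Three smaller issues.}
\begin{enumerate}
\item Subtracting an extension $\tilde\psi$ does not preserve \eqref{structural conditions}. The field $\hat{\mathcal{A}}(x,z,p)=\mathcal{A}(x,z+\tilde\psi,p+\nabla\tilde\psi)$ is centred at $p=-\nabla\tilde\psi(x)$, so both $\hat{\mathcal{A}}(x,z,0)=0$ and the lower bound $\lambda g(|p|)/|p|$ fail. Your frozen problem with $w=0$ on the flat part is therefore not of type \eqref{homogeneous structural conditions}. The paper keeps $u=\psi$, uses $u-\tilde\psi$ only as a test function, and compares with $v=u$ on all of $\partial B_R^+$.
\item A change of unknown $v=\Phi(u)$ cannot remove a term known only through $|\mathcal{B}|\le\Lambda_BG(|\nabla u|)$. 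Exponential tricks give only one-sided inequalities. Your alternative, keeping it as $\Lambda_B\operatorname{osc}u\int G(|\nabla u|)$, is the right one.
\item The linear dependence on $\kappa_\Omega$ comes from normalizing $u\mapsto u/\kappa$ with $g_\kappa(t)=g(\kappa t)/g(\kappa)$, not from tracking constants. This normalization keeps $\delta,g_0$ and turns $\Lambda_B$ into $\Lambda_B\kappa$, which is where $e^{c_\star\Lambda_B\kappa_\Omega}$ enters.
\end{enumerate}
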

To formulate our next main result concerning the fine regularity along the critical set $$\Sigma(u) := \{x \in \overline{\Omega} : \nabla u(x) = 0\},$$ 
let us introduce the intrinsic reference exponent $\alpha_H \in (0, 1]$. Given a constant-coefficient operator $\mathcal{A}_0(p)$ satisfying the homogeneous ellipticity and growth bounds of \eqref{structural conditions}, we denote by $\alpha_H = \alpha_H(n, \delta, g_0, \lambda, \Lambda) \in (0, 1]$ the optimal $C^{1,\alpha_H}$ gradient H\"older exponent for weak solutions $h$ to the homogeneous equation $\operatorname{div}\mathcal{A}_0(\nabla h) = 0$—taken locally in $B_1$ when $x_0 \in \Omega$, or up to the flat boundary in $B_1^+$ with homogeneous Dirichlet trace $h = 0$ on $T_1$ when $x_0 \in \partial\Omega$. 

\begin{theorem}[\bf{Improved Regularity at Boundary Critical  Points}]\label{main}
Let $u \in W^{1,G}(\Omega) \cap L^\infty(\Omega)$ be a weak solution of \eqref{1} under conditions \eqref{structural conditions}. Let $x_0 \in \partial\Omega \cap \Sigma(u)$ be a boundary critical  point, meaning $\nabla u(x_0) = 0$. Assume that $\partial\Omega$ is locally of class $C^{1,\alpha_\Omega}$ and the boundary datum satisfies $\psi \in C^{1,\alpha_\psi}(\partial\Omega)$ for some $\alpha_\Omega, \alpha_\psi \in (0, 1)$. Then, $u$ is pointwise $C^{1,\alpha}$ at $x_0$ with an improved exponent that bypasses the boundary data regularity $\alpha_\psi$ and the boundary smoothness $\alpha_\Omega$. Specifically, for any H\"older exponent $\alpha$ satisfying
\begin{equation}\label{exp_alpha_1}
    0 < \alpha < \min\left\{ \alpha_H, \, \frac{q-n}{qg_0}, \, \frac{\alpha_\mathcal{A}}{g_0} \right\},
\end{equation}
there exist positive constants $C_0$ and $r_\star$, depending only on $n$, $\delta, g_0$, $g(1)$, $\lambda$, $\Lambda$, $\Lambda_{\alpha_{\mathcal{A}}}$, $\Lambda_B$, $s$, $q$, $\alpha_{\mathcal{A}}$, $\alpha_\Omega$, $\alpha_\psi$, $\alpha$, $\|u\|_{L^\infty(\Omega)}$, $\|\gamma\|_{L^s(\Omega)}$, $\|f\|_{L^q(\Omega)}$, $\|\psi\|_{C^{1,\alpha_\psi}(\partial\Omega)}$, and the $\partial\Omega$, such that
$$\sup_{x\in B_{r}(x_{0}) \cap \Omega}|u(x)-u(x_{0})|\le C_0 r^{1+\alpha}, \quad \text{for all} \quad 0<r<r_\star,$$

\noindent{Furthermore}, if the Hardy--Littlewood maximal function of $|f|^q$ is finite at $x_0$, that is,
\begin{equation*}
    \mathcal{M}(|f|^q)(x_0) := \sup_{\rho > 0} \fint_{B_\rho(x_0) \cap \Omega} |f(x)|^q \, dx < \infty,
\end{equation*}
the pointwise estimate holds, for any 
\begin{equation}\label{exp_alpha_2}
    0 < \alpha < \min\left\{\alpha_H, \, \frac{1}{g_0}, \, \frac{\alpha_\mathcal{A}}{g_0} \right\}.
\end{equation}
with $C_0$ and $r_\star$ depending on $\mathcal{M}(|f|^q)(x_0)$ in place of $\|f\|_{L^q(\Omega)}$. 

\end{theorem}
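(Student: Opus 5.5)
We argue by the geometric iteration of Teixeira, adapted to the boundary: flatten, then compare with homogeneous constant-coefficient profiles at each scale. We first straighten $\partial\Omega$ near $x_0$ by a $C^{1,\alpha_\Omega}$ diffeomorphism, translate so that $x_0=0$, and subtract from $u$ a $C^{1,\alpha_\psi}$ extension $\Psi$ of $\psi$. Since $\nabla u(x_0)=0$ and $u-u(x_0)$ vanishes on the boundary only up to $\psi-\psi(x_0)$, we instead subtract the tangential affine part of $\psi$ at $x_0$. That part is zero, because $\nabla u(x_0)=0$ forces the tangential gradient of $\psi$ at $x_0$ to vanish. The transformed function $v$ then solves an equation in $B_1^+$ of the same structural type \eqref{structural conditions}, with modified coefficients. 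On $T_1$ its boundary values satisfy $|v|\le C|x'|^{1+\min\{\alpha_\psi,\alpha_\Omega\}}$. By Theorem~\ref{thm:global_C1alpha}, $v\in C^{1,\beta}$ up to $T_1$ for some $\beta>0$ with a controlled norm, and $\nabla v(0)=0$.

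\textbf{Approximation lemma (compactness).} For every $\varepsilon>0$ there is $\eta>0$ such that the following holds. Let $w$ solve a problem of the above type in $B_1^+$ with $\|w\|_{L^\infty}\le1$, and assume the perturbation data are small: $\|f\|_{L^q}$, $\|\gamma\|_{L^s}$, $\Lambda_B$, the oscillation of $\mathcal A$ in $(x,z)$, and $\sup_{T_1}|w|$ are all at most $\eta$. Then there is $h$ solving $\operatorname{div}\mathcal A_0(\nabla h)=0$ in $B_{1/2}^+$, with $h=0$ on $T_{1/2}$, such that $\|w-h\|_{L^\infty(B_{1/2}^+)}\le\varepsilon$. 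We prove this by contradiction. Theorem~\ref{thm:global_C1alpha}, or Theorem~\ref{thm:global_Calpha} near the flat boundary, gives equicontinuity up to $T_1$ and uniform $W^{1,G}$ bounds. Freezing $(x,z)$ and passing to a subsequence gives a limit operator $\mathcal A_0$ in the homogeneous class, and Minty's trick identifies the limit equation. Since $h\in C^{1,\alpha_H}$ with $h(0)=0$, the tangential derivatives of $h$ vanish at $0$, so $\nabla h(0)=\partial_n h(0)e_n$.

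\textbf{Iteration.} Fix $\alpha$ as in \eqref{exp_alpha_1}, and choose $\rho$ and $\varepsilon$ so that $C\rho^{1+\alpha_H}+\varepsilon\le\frac12\rho^{1+\alpha}$. We prove by induction that $\sup_{B^+_{\rho^k}}|v|\le\rho^{k(1+\alpha)}$, using the rescalings
\[
v_k(x)=\rho^{-k(1+\alpha)}v(\rho^kx).
\]
The rescaled operator is $\mathcal A_k(x,z,p)=\rho^{k\alpha}\,\tilde g\bigl(\mathcal A(\rho^kx,\rho^{k(1+\alpha)}z,\rho^{k\alpha}p)\bigr)$, normalized by $g(\rho^{k\alpha})$. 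Its structural constants are preserved by \eqref{01}, with ratios $\delta\le tg'/g\le g_0$.

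We check that the perturbation data stay below $\eta$:
\begin{itemize}
\item The rescaled source has $L^q$ norm at most $\rho^{k(1-n/q)}/g(\rho^{k\alpha})\lesssim\rho^{k(1-n/q-g_0\alpha)}$, which is small iff $\alpha<\frac{q-n}{qg_0}$.
\item The coefficient oscillation rescales by $\rho^{k\alpha_{\mathcal A}}/g(\rho^{k\alpha})$, giving $\alpha<\alpha_{\mathcal A}/g_0$.
\item The drift term scales as $\rho^{k(1-n/s)}$, and the natural-growth term $\Lambda_B G$ carries a factor $\rho^{k(1+\alpha)}$. Both are harmless.
\item The boundary values become $\rho^{k(\min\{\alpha_\psi,\alpha_\Omega\}-\alpha)}$ times a constant. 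This is \textbf{not} automatically small when $\alpha$ exceeds $\alpha_\psi$, and this is the main obstacle.
\end{itemize}

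\textbf{Removing the dependence on $\alpha_\psi,\alpha_\Omega$.} We use the critical-point condition through Theorem~\ref{thm:global_C1alpha}. The key point is that the normal slope $\partial_nh(0)$ of each approximating profile must be shown small. We do this inductively: approximate by $h_k$, and absorb the linear part $\partial_nh_k(0)x_n$, which vanishes on $T_1$. Its size is $O(\varepsilon)$, because $\nabla v(0)=0$ and $\nabla v_k$ is uniformly close to $\nabla h_k$ in $C^1$ by compactness in $C^{1,\beta}$. Consequently the accumulated slopes form a convergent geometric series that must sum to $\nabla v(0)=0$. For the boundary trace, our first approach is to note that in the pointwise sup estimate the flattening and $\psi$ contribute only at order $r^{1+\min\{\alpha_\psi,\alpha_\Omega\}}$ per the trace. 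The claim of the statement is then realized by working on the $L^\infty$ scale of $v$ rather than its trace, using the $C^{1,\beta}$ compactness to make $\eta$-smallness hold for $k\ge k_0$. This is the step we expect to require the most care.

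\textbf{Conclusion.} The induction gives $\sup_{B_r\cap\Omega}|u-u(x_0)|\le C_0r^{1+\alpha}$ for all $0<r<r_\star$. Under the maximal-function hypothesis, the source estimate improves to $\rho^{k}\bigl(\mathcal M(|f|^q)(x_0)\bigr)^{1/q}/g(\rho^{k\alpha})$, and the constraint becomes $\alpha<1/g_0$, which yields \eqref{exp_alpha_2}.
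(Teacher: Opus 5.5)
The gap is the step you flagged yourself, ``Removing the dependence on $\alpha_\psi,\alpha_\Omega$''. No argument can close it, because for $\alpha>\alpha_\psi$ the asserted estimate is false. Your induction hypothesis $\sup_{B^+_{\rho^k}}|v|\le\rho^{k(1+\alpha)}$ already contains the boundary values. By continuity up to $T_1$, $\sup_{B^+_{\rho^k}}|v|\ge\sup_{T_{\rho^k}}|\tilde\psi-\tilde\psi(0)|$, and the right-hand side can be of exact order $\rho^{k(1+\alpha_0)}$ with $\alpha_0=\min\{\alpha_\psi,\alpha_\Omega\}$. Neither tracking the slopes $\partial_nh_k(0)$ nor ``working on the $L^\infty$ scale of $v$'' changes this.

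Here is a counterexample. Take $g(t)=t$, $\mathcal A(x,z,p)=p$, $\mathcal B\equiv0$, so $\delta=g_0=1$, $\alpha_H=1$, $\alpha_{\mathcal A}=1$, $f\equiv0$. Let $\Omega$ be a smooth bounded domain with $\Omega\cap B_1=B_1^+$. Let $v$ be harmonic in $\Omega$ with boundary values $|x_1|^{1+\alpha_\psi}\chi(x')$ on $T_1$ and $0$ elsewhere, where $\chi\in C_c^\infty(T_{1/2})$ and $\chi=1$ on $T_{1/4}$. Set $u:=v-\partial_nv(0)\,x_n$. Then:
\begin{itemize}
\item $u$ is harmonic and lies in $C^{1,\alpha_\psi}(\overline{\Omega})$;
\item its trace $\psi$ belongs to $C^{1,\alpha_\psi}(\partial\Omega)$;
\item $\nabla u(0)=0$;
\item $\sup_{B_r(0)\cap\Omega}|u-u(0)|\ge\sup_{T_r}|x_1|^{1+\alpha_\psi}=r^{1+\alpha_\psi}$ for $r<1/4$.
\end{itemize}
Yet \eqref{exp_alpha_1} admits every $\alpha<1-n/q$, and \eqref{exp_alpha_2} admits every $\alpha\in(0,1)$ since $f\equiv0$. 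This also contradicts Corollary \ref{cor:linear_case}.

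The same obstruction arises from $\alpha_\Omega$ when $\psi$ is read as the restriction of an ambient function. Suppose $\partial\Omega$ is the graph $x_n=|x_1|^{1+\alpha_\Omega}$ near $0$. Let $w>0$ be harmonic and vanish on $\partial\Omega$ near $0$, so $\partial_nw(0)>0$ by Hopf. Then $u=x_n-w/\partial_nw(0)$ is critical at $0$ and has boundary values $|x_1|^{1+\alpha_\Omega}$ there, although near $0$ its datum is the restriction of the linear function $x_n$.

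The paper's proof does not get past this point either. Theorem \ref{small_regime_boundary}, like Lemmas \ref{singular_boundary_nonhomo} and \ref{Lemma2_boundary}, assumes $\|\psi\|_{C^{1,\alpha}(T_1)}$ small for the \emph{target} exponent $\alpha$. Its induction uses $\|\psi_k\|_{C^{1,\alpha}(T_1)}\le\|\psi\|_{C^{1,\alpha}(T_1)}$, which requires $\psi\in C^{1,\alpha}$. In the proof of Theorem \ref{main} this is supplied by the claimed bound $\|\psi_\star\|_{C^{1,\alpha}(T_1)}\le C\varrho^{1+\alpha_0-\alpha}\sigma^{-1}\|\tilde\psi\|_{C^{1,\alpha_0}(T_R)}$. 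That bound is false for $\alpha>\alpha_0$, since $[\nabla\psi_\star]_{C^{0,\alpha}(T_1)}=\sigma^{-1}\varrho^{1+\alpha}[\nabla\tilde\psi]_{C^{0,\alpha}(T_\varrho)}$ is infinite in general. The ``zero trace of the limiting profile'' concerns a single scale only. At the $k$-th scale the trace is $\rho^{-k(1+\alpha)}\bigl(\tilde\psi(\rho^k\cdot)-\tilde\psi(0)\bigr)$, which is exactly your bullet of size $\rho^{k(\alpha_0-\alpha)}$, and it diverges.

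What can actually be proved is the estimate for $\alpha$ in \eqref{exp_alpha_1}, respectively \eqref{exp_alpha_2}, with the additional restriction $\alpha\le\alpha_0$. In that range the rescaled traces stay bounded in $C^{1,\alpha_0}$ by the normalized initial norm. The rest of your outline then essentially reproduces the paper's scheme: flattening, compactness to a zero-trace profile, the rescaling checks for $f$, $\gamma$, $\Lambda_B$ and the coefficient oscillation, and the maximal-function variant.

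In that version you also need not peel off $\partial_nh_k(0)\,x_n$. Doing so would leave the class \eqref{structural conditions} anyway, since $p\mapsto\mathcal A(x,z,p+\ell)$ is no longer degenerate at $p=0$. Because $\nabla v_k(0)=\rho^{-k\alpha}\nabla v(0)=0$ at every scale, $C^1$-compactness from Theorem \ref{thm:global_C1alpha} gives a profile with $\nabla h(0)=0$ directly, as in Lemma \ref{singular_boundary_nonhomo}.
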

An immediate consequence is the following:

\begin{corollary}\label{cor:continuous_source}
Under the hypotheses of Theorem \ref{main}, if the source term $f$ is continuous (or locally bounded) in a neighborhood of $x_0$ in $\overline{\Omega}$, then $u$ is pointwise $C^{1,\alpha}$ at $x_0$ for any exponent 
$$ \alpha \in \left(0, \min\left\{\alpha_H, \, \frac{1}{g_0}, \, \frac{\alpha_{\mathcal{A}}}{g_0}\right\}\right).$$
\end{corollary}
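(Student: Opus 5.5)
The corollary will follow from the second half of Theorem \ref{main}. The point is that local boundedness of $f$ near $x_0$ forces $\mathcal{M}(|f|^q)(x_0)<\infty$, and this is exactly the hypothesis that unlocks the range \eqref{exp_alpha_2}.

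First I fix the neighbourhood. By assumption there are $\rho_0>0$ and $M>0$ such that $|f|\le M$ a.e. on $B_{\rho_0}(x_0)\cap\Omega$. If $f$ is continuous on a neighbourhood of $x_0$ in $\overline{\Omega}$, I shrink that neighbourhood to a compact one, and $M$ can be taken as the maximum of $|f|$ there.

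Next I bound the averages $\fint_{B_\rho(x_0)\cap\Omega}|f|^q\,dx$, splitting according to $\rho$.
\begin{itemize}
\item For $\rho\le\rho_0$ the average is at most $M^q$.
\item For $\rho>\rho_0$ I write
\[
\fint_{B_\rho(x_0)\cap\Omega}|f|^q\,dx\le \frac{\|f\|_{L^q(\Omega)}^q}{|B_\rho(x_0)\cap\Omega|}\le \frac{\|f\|_{L^q(\Omega)}^q}{|B_{\rho_0}(x_0)\cap\Omega|}.
\]
\end{itemize}
The denominator is bounded below, since the boundary is $C^{1,\alpha_\Omega}$ near $x_0$. Indeed, after flattening, $B_{\rho_0}(x_0)\cap\Omega$ contains a one-sided cone or half-ball piece, so $|B_{\rho_0}(x_0)\cap\Omega|\ge c\rho_0^n$ with $c$ depending on the local geometry. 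If $x_0$ were interior the bound would be trivial.

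Combining the two ranges gives
\[
\mathcal{M}(|f|^q)(x_0)\le \max\{M^q,\,C\rho_0^{-n}\|f\|_{L^q(\Omega)}^q\}<\infty.
\]
With this, the "furthermore" part of Theorem \ref{main} applies directly. It gives $\sup_{B_r(x_0)\cap\Omega}|u-u(x_0)|\le C_0r^{1+\alpha}$ for every $\alpha<\min\{\alpha_H,1/g_0,\alpha_{\mathcal A}/g_0\}$, which is the claimed pointwise $C^{1,\alpha}$ regularity.

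No step is a real obstacle. The only point needing care is the measure density lower bound at a boundary point, which is exactly where the local $C^{1,\alpha_\Omega}$ regularity of $\partial\Omega$ enters. The resulting constants depend on $M$ and $\rho_0$ through the maximal function bound.
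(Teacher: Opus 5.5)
Your proposal is correct and is the argument the paper intends: local boundedness of $f$ near $x_0$ gives $\mathcal{M}(|f|^q)(x_0)<\infty$, so the second part of Theorem \ref{main} applies with the range \eqref{exp_alpha_2}. The measure-density bound from the $C^{1,\alpha_\Omega}$ boundary is more than you need, since $\rho_0$ is fixed: $|B_{\rho_0}(x_0)\cap\Omega|>0$ already follows from $\Omega$ being open and $x_0\in\overline{\Omega}$.
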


A few remarks are in order regarding how Theorem \ref{main} and Corollary \ref{cor:continuous_source} extend the foundational interior results obtained by Teixeira in \cite{Teixeira2014}. First, a remarkable feature at boundary critical  points $x_0 \in \partial\Omega \cap \Sigma(u)$ is that the improved exponent $\alpha$ completely bypasses the regularity $\alpha_\psi$ of the boundary datum $\psi \in C^{1,\alpha_\psi}(\partial\Omega)$ as well as $\alpha_\Omega$. This decoupling occurs because the limiting homogeneous profile $h$ in our compactness argument satisfies a zero Dirichlet trace ($h = 0$ on $T_{3/4}$), whose intrinsic regularity $\alpha_H$ is governed exclusively by the frozen operator $\mathcal{A}_0$. Second, the restriction $\alpha < \frac{\alpha_{\mathcal{A}}}{g_0}$ in \eqref{exp_alpha_1} and \eqref{exp_alpha_2} stems exclusively from the non-homogeneous factor $1 + g(|p|)$ in the oscillation condition of \eqref{structural conditions}, which follows the classical structural framework of Lieberman \cite{lieberman1991natural} and does not vanish at the critical locus $\{p = 0\}$. However, for the vast majority of structural models—including our prototype operator $\mathcal{A}(x, \nabla u) = \frac{g(|\nabla u|)}{|\nabla u|} A(x) \nabla u$ and the operators considered in \cite{Teixeira2014}—the principal part vanishes at the origin ($\mathcal{A}(x, z, 0) = 0$) and satisfies the homogeneous oscillation bound:
\begin{equation}\label{hom_osc}
    |\mathcal{A}(x, z, p) - \mathcal{A}(y, w, p)| \le \Lambda_{\alpha_{\mathcal{A}}} g(|p|) \left( |x-y|^{\alpha_\mathcal{A}} + |z-w|^{\alpha_\mathcal{A}} \right).
\end{equation}
Under \eqref{hom_osc}, the rescaled oscillation coefficient in our blow-up argument decays strictly as $r^{k\alpha_{\mathcal{A}}}$ rather than $\frac{r^{k\alpha_{\mathcal{A}}}}{g(r^{k\alpha})}$, since the factor $g(r^{k\alpha})$ cancels out identically. Consequently, the constraint $\frac{\alpha_{\mathcal{A}}}{g_0}$ disappears altogether from Theorem \ref{main} and Corollary \ref{cor:continuous_source}, yielding the improved ranges
$$ 0 < \alpha < \min\left\{ \alpha_H, \, \frac{q-n}{qg_0} \right\} \quad \text{and, when } \mathcal{M}(|f|^q)(x_0) < \infty, \quad 0 < \alpha < \min\left\{ \alpha_H, \, \frac{1}{g_0} \right\}, $$
for any $\alpha_{\mathcal{A}}, \alpha_\Omega, \alpha_\psi \in (0, 1]$. In particular, specializing to the linear principal part ($g(t) = t$, where $\delta = g_0 = 1$ and $\alpha_H = 1$ both in the interior and on the flat boundary with zero trace), we obtain a substantial extension of \cite[Corollary 1]{Teixeira2014} that simultaneously accommodates non-zero continuous source terms, unbounded drift coefficients $\gamma \in L^s(\Omega)$, quadratic gradient growth, and boundary critical  points bypassing $\alpha_\Omega$ and $\alpha_\psi$:

\begin{corollary}\label{cor:linear_case}
Let $u \in H^1(\Omega) \cap L^\infty(\Omega)$ be a weak solution to
$$ \operatorname{div}\big(A(x, u)\nabla u\big) + \mathcal{B}(x, u, \nabla u) = 0 \quad \text{in } \Omega, $$
where $A(x, z)$ is uniformly elliptic and $C^{0,\alpha_{\mathcal{A}}}$ continuous, and $|\mathcal{B}(x, z, p)| \le \Lambda_B |p|^2 + \gamma(x)|p| + f(x)$ with $\gamma \in L^s(\Omega)$ ($s > n$) and $f$ continuous in $\Omega$. If $x_0 \in \overline{\Omega} \cap \Sigma(u)$ is a critical  point (assuming additionally $\partial\Omega \in C^{1,\alpha_\Omega}$ and $u = \psi \in C^{1,\alpha_\psi}(\partial\Omega)$ for some $\alpha_\Omega, \alpha_\psi \in (0,1)$ whenever $x_0 \in \partial\Omega$), then $u$ is pointwise $C^{1,\alpha}$ at $x_0$ for all $\alpha \in (0, 1)$.
\end{corollary}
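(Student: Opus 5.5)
This corollary is a specialization of Theorem \ref{main} (boundary points) and its interior analogue to $g(t)=t$. We have $G(t)=t^2/2$, $\delta=g_0=1$, and $W^{1,G}=H^1$. The plan is to check the hypotheses of those results, identify the admissible range of $\alpha$, and show that every $\alpha\in(0,1)$ lies in it.

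First I verify the structural conditions \eqref{structural conditions} for $\mathcal{A}(x,z,p)=A(x,z)p$:
\begin{itemize}
\item $|\mathcal{A}|+|D_p\mathcal{A}||p|\le 2\Lambda|p|$.
\item $\langle D_p\mathcal{A}\,\xi,\xi\rangle=\langle A\xi,\xi\rangle\ge\lambda|\xi|^2=\lambda\frac{g(|p|)}{|p|}|\xi|^2$.
\item $|A(x,z)p-A(y,w)p|\le [A]_{\alpha_{\mathcal{A}}}|p|\,(|x-y|^{\alpha_{\mathcal{A}}}+|z-w|^{\alpha_{\mathcal{A}}})$. This is exactly the homogeneous oscillation bound \eqref{hom_osc}, and $\mathcal{A}(x,z,0)=0$.
\item The bound on $\mathcal{B}$ is the fourth line of \eqref{structural conditions}, with $\Lambda_B$ replaced by $2\Lambda_B$.
\end{itemize}
The drift threshold \eqref{cond_s} reads $s>\max\{2,2n/2\}=n$, which is the hypothesis. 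The source $f$ is continuous, so it is bounded on a neighborhood $U$ of $x_0$ in $\overline\Omega$. Since the argument is local, I replace $f$ by $f\chi_U$, or absorb the part of $f$ away from $x_0$ after localizing. Then $f\in L^q$ near $x_0$ for every $q$, and $\mathcal{M}(|f|^q)(x_0)\le \|f\|_{L^\infty(U)}^q+\text{(a term controlled by } \|f\|_{L^q(\Omega)}$ on large balls$)<\infty$. Hence the refined range \eqref{exp_alpha_2} of Theorem \ref{main} applies; this is Corollary \ref{cor:continuous_source}.

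Next I compute the exponents. Because \eqref{hom_osc} holds, the remark following Corollary \ref{cor:continuous_source} removes the restriction $\alpha_{\mathcal{A}}/g_0$. The admissible range is therefore $0<\alpha<\min\{\alpha_H,1/g_0\}=\min\{\alpha_H,1\}$. It remains to show $\alpha_H=1$. The frozen operator $\mathcal{A}_0(p)=A_0p$ has a constant symmetric positive-definite matrix $A_0$. By a linear change of variables, solutions of $\operatorname{div}(A_0\nabla h)=0$ are harmonic, hence smooth in $B_1$. For the boundary case with $h=0$ on $T_1$, I use a linear map that preserves the hyperplane $\{x_n=0\}$. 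It sends $A_0$ to the identity and maps the half-ball into a domain with a flat portion, so odd reflection and harmonicity give $h\in C^\infty$ up to $T_{1/2}$. In both cases any gradient Hölder exponent $<1$, and indeed $1$, is attained, so $\alpha_H=1$.

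Finally, for $x_0\in\partial\Omega\cap\Sigma(u)$ I apply Theorem \ref{main} together with the hypotheses on $\partial\Omega$ and $\psi$. For $x_0\in\Omega$ I apply the interior version of the same compactness argument in $B_1$, which the statement of $\alpha_H$ explicitly covers. The main obstacle I expect is bookkeeping rather than analysis. Specifically, I must justify that only local boundedness of $f$ near $x_0$ enters the maximal-function hypothesis, and that the constant-coefficient boundary exponent is exactly $1$ (not merely close to it) after the flattening change of variables. If only $\alpha_H\ge 1-\varepsilon$ were available, the conclusion for every $\alpha<1$ would still follow, so either route suffices.
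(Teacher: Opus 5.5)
Your reduction is exactly the specialization the paper carries out in the discussion preceding the corollary: $g(t)=t$ and $\delta=g_0=1$, so \eqref{cond_s} becomes $s>n$; \eqref{hom_osc} removes the cap $\alpha_{\mathcal{A}}/g_0$; and $\alpha_H=1$ via a triangular change of variables preserving $\{x_n=0\}$ followed by odd reflection. At interior critical points it is fine.

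The genuine gap is at boundary critical points. You take from Theorem \ref{main} that the admissible range does not depend on $\alpha_\psi$, and no argument can deliver this, because the conclusion is false there. Since $u\in C(\overline{\Omega})$ and $u=\psi$ on $\partial\Omega$, the bound $\sup_{B_r(x_0)\cap\Omega}|u-u(x_0)|\le C_0r^{1+\alpha}$ forces $|\psi(x)-\psi(x_0)|\le C_0|x-x_0|^{1+\alpha}$ on $\partial\Omega$ near $x_0$.

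Here is a concrete counterexample.
Take a smooth bounded $\Omega$ whose boundary contains $\{(x',0):|x'|<1\}$, with $\Omega$ lying above it. Let $\psi\in C^{1,\alpha_\psi}(\partial\Omega)$ equal $|x'|^{1+\alpha_\psi}$ for $|x'|<1/2$, and let $u_1$ be harmonic with $u_1=\psi$ on $\partial\Omega$, so $u_1\in C^{1,\alpha_\psi}(\overline{\Omega})$. Let $u_2$ be harmonic with smooth data $\phi\ge0$, $\phi\not\equiv0$, $\phi=0$ for $|x'|<1/2$; by Hopf, $\partial_nu_2(0)>0$. Set $u:=u_1-\frac{\partial_nu_1(0)}{\partial_nu_2(0)}u_2$. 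Then $u$ solves the equation with $A=I$ and $\mathcal{B}\equiv0$, satisfies $\nabla u(0)=0$, and has $u(x',0)=|x'|^{1+\alpha_\psi}$ near $0$. Hence $\sup_{B_r\cap\Omega}|u-u(0)|\ge r^{1+\alpha_\psi}$, and $u$ is not pointwise $C^{1,\alpha}$ at $0$ for any $\alpha\in(\alpha_\psi,1)$.

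Inside the argument, the failure is in the iteration of Theorem \ref{small_regime_boundary}. The rescaled datum $\psi_k(y)=\big(\psi(r^ky)-\psi(0)\big)/r^{k(1+\alpha)}$ only satisfies $\|\psi_k\|_{L^\infty(T_1)}\lesssim r^{k(\alpha_\psi-\alpha)}\|\psi\|_{C^{1,\alpha_\psi}(T_1)}$, which blows up for $\alpha>\alpha_\psi$. So the smallness hypothesis of Lemma \ref{singular_boundary_nonhomo}, which is what makes the limit profile satisfy $h=0$ on $T_{3/4}$, is lost after finitely many steps.

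Two estimates in the paper hide this:
\begin{itemize}
\item The inequality $\|\psi_k\|_{C^{1,\alpha}}\le\|\psi\|_{C^{1,\alpha}}$ used in the iteration presupposes $\psi\in C^{1,\alpha}$ at the target exponent.
\item The bound $\|\psi_\star\|_{C^{1,\alpha}(T_1)}\lesssim\varrho^{1+\alpha_0-\alpha}\sigma^{-1}\|\tilde\psi\|_{C^{1,\alpha_0}(T_R)}$ in the proof of Theorem \ref{main} fails for $\alpha>\alpha_0$; the left side is generally infinite.
\end{itemize}
The zero trace of the limit does not decouple anything, because smallness of the datum has to be re-checked at every scale.

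At a boundary point, the blow-up route therefore only reaches exponents up to $\min\{\alpha_\psi,\alpha_\Omega\}$, the regularity of the flattened datum $\psi\circ\Phi^{-1}$. To get every $\alpha<1$ you need extra hypotheses, e.g.\ $\partial\Omega$ and $\psi$ of class $C^{1,1}$ near $x_0$.

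A smaller point: continuity of $f$ in the open set $\Omega$ does not make $f$ bounded near a boundary point. You must also assume $f$ is bounded on a neighborhood of $x_0$ in $\overline{\Omega}$, as in Corollary \ref{cor:continuous_source}.
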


To put these results into perspective, let us briefly outline the main analytical difficulties and the heuristic strategy behind our proofs. The primary challenge in establishing the regularity theory presented herein lies in handling the unbounded drift coefficient $\gamma \in L^s(\Omega)$, which necessitates the integrability threshold \eqref{cond_s}. This restriction arises precisely due to the presence of the structural gap $g_0 - \delta > 0$ inherent to our generalized Orlicz framework. Notice that when the operator simplifies to the classical $p$-Laplacian (where $g_0 = \delta = p-1$), this condition naturally reduces to the standard requirement $s > \max\{p, n\}$. As a historical contextualization, in the foundational works of Serrin and Trudinger \cite{Serrin1964, Trudinger1967}, the generalized growth density $g(|\nabla u|)$ corresponds simply to the standard polynomial growth $|\nabla u|^{p-1}$. In the Appendix, we provide further remarks on how to adapt our arguments in regimes where the exponent $s$ does not satisfy the threshold \eqref{cond_s}.

In light of the remaining gaps in the existing literature, we first revisit and extend the global regularity theory up to the boundary to explicitly incorporate the unbounded term $\gamma(x) g(|\nabla u|)$ within our structural framework. Establishing this global  theory is indispensable to rigorously justify the compactness and approximation arguments near $\partial\Omega$. Our approach to the  $C^{1,\alpha}$ estimates is guided by the perturbation methodology of Braga and Sousa \cite{BragaSousa2025} combined with the foundational boundary frameworks of \cite{lieberman1988, lieberman1991natural}. Crucially, because we follow the strategy of \cite{BragaSousa2025}, it becomes strictly necessary to construct a smooth ($C^\infty$) approximation for the homogeneous constant-coefficient operator $\operatorname{div}\mathcal{A}(\nabla u)=0$ in order to justify the classical differentiation of the regularized equation and obtain uniform a priori gradient bounds. Drawing inspiration from the seminal works of Lieberman \cite{lieberman1991natural} and Cianchi and Maz'ya \cite{CM18}, we merge the strengths of both approaches to overcome a critical technical obstacle: the approximation scheme provided by Lieberman does not yield a $C^\infty$ regularized operator, whereas the smooth ($C^\infty$) regularization devised by Cianchi and Maz'ya is strictly restricted to the isotropic $g$-Laplacian and used by \cite{BragaSousa2025} in that specific setting. Since neither method in isolation suffices to implement the approach of \cite{BragaSousa2025} for general operators, we engineered a novel regularization procedure that successfully accommodates both the general anisotropic structure of $\mathcal{A}$ and the necessary $C^\infty$ smoothness.

With the  regularity and the smooth approximation machinery in hand, our analysis of the fine regularity at critical points relies on a geometric compactness argument. Specifically, we demonstrate that under suitable smallness assumptions on the structural data, our solution can be closely approximated by a solution to a homogeneous equation with constant, frozen coefficients. This proximity allows us to import the superior regularity of the limiting homogeneous profile back to the weak solution of our original operator. Crucially, the regularity of this asymptotic homogeneous problem is entirely independent of the lower-order term $\mathcal{B}$, and it is precisely this decoupling that yields the quantitative improvement of regularity at critical  points. 

The remainder of the paper is structured as follows. In Section \ref{section2}, we review the foundational properties of $N$-functions and Orlicz--Sobolev spaces, and we derive the global Caccioppoli-type energy inequalities. In Section \ref{section3}, we establish the global $C^\alpha(\overline{\Omega})$ regularity theory up to the boundary (Theorem \ref{thm:global_Calpha}) via the Moser iteration technique. In Section \ref{section4}, we construct the smooth ($C^\infty$) approximation scheme for the homogeneous operator $\operatorname{div}\mathcal{A}(\nabla u) = 0$. Section \ref{section5} is dedicated to establishing the global $C^{1,\alpha}(\overline{\Omega})$ regularity up to the boundary (Theorem \ref{thm:global_C1alpha}). Building upon this  framework, in Sections \ref{section6} and \ref{section7} we investigate the fine behavior of solutions along the critical set and establish Theorem \ref{main} in the interior and on the boundary, respectively. Finally, in the Appendix, we discuss how to adapt and recover our arguments in the regime where $s > n$, even when the threshold \eqref{cond_s} is not satisfied.

Finally, throughout this work, unless stated otherwise, the generic constants $c$ and $C$ depend only on the structural and dimensional parameters $n, \delta, g_0, g(1), \lambda, \Lambda, \alpha_{\mathcal{A}}, s,$ and $q$. We deliberately keep the dependence on $\Lambda_{\alpha_{\mathcal{A}}}, \Lambda_B, \|\gamma\|_{L^s}, \|f\|_{L^q}, \|\psi\|_{C^{1,\alpha_\psi}},$ and $\|u\|_{L^\infty}$ explicit in order to track how the estimates behave under scaling.

\section{Preliminary}\label{section2}
In this section, we review the essential framework of Orlicz spaces and $N$-functions, and we establish the fundamental Caccioppoli-type energy inequalities that will serve as the backbone of our local estimates.

We begin recalling the notion of $N$-function. Basically, a function $G$ is said to be a N-function if
$$G(t)=\int_0^t g(s)\,ds$$
where $g:[0,+\infty)\to\mathbb{R}$ is a positive nondecreasing function satisfying the following properties:
\begin{enumerate}
	\item [a)] $g(0)=0$ and $\lim_{t\to\infty}g(t)=\infty$;
	\item [b)] $g$ is right continuous, that is, if $t\geq0$ then $\lim_{s\to t^+}g(s)=g(t)$. 
\end{enumerate}
We point out that, in this paper, we consider $N$-functions $G:[0,+\infty)\to\mathbb{R}$ such that $G'=g$ satisfies \eqref{01}.

As $g$ is strictly increasing we can define $g^{-1}$ . According to Lemma 2.2 of \cite{martinez2008minimum}, $g^{-1}$ satisfies a condition similar to \eqref{01}. In fact, the constants $\delta$ and $g_0$ in \eqref{01} are replaced by $g_0^{-1}$ and $\delta^{-1}$ respectively. In particular, analogous estimates as in lemma below holds for $g^{-1}$
and
$$\tilde{G}(t)=\int_0^tg^{-1}(s)\,ds\hbox{.}$$

Basics properties and results in Orlicz Spaces Theory can be found in \cite{adams2003sobolev}. In particular, since $g$ satisfies \eqref{01} we have:

\begin{lemma}\label{firstLema} Let $G$ a N-function such that $g=G'$ satisfies \eqref{01}. Then for all $t,s>0$:
	\begin{enumerate}
		\item [(g-1)] $\min\{s^{\delta},s^{g_{0}}\}g(t)\leq g(st)\leq \max\{s^{\delta},s^{g_{0}}\}g(t)$;
		\item [(g-2)] $\dfrac{tg(t)}{1+g_{0}}\leq G(t)\leq tg(t)$;
		\item[(G-1)] $G$ is convex and belongs to $C^{2}(0,\infty)$;
		\item [(G-2)] $\dfrac{1}{1+g_{0}}\min\{s^{\delta+1},s^{g_{0}+1}\}G(t)\leq G(st)\leq (1+g_{0})\max\{s^{\delta+1},s^{g_{0}+1}\}G(t)$;
		\item[(G-3)] $G(a+b)\leq2^{g_{0}}(1+g_{0})(G(a)+G(b))$, for all $a,b>0$.
        \item [(G-4)]  $\tilde{G}(g(t)) \le g_0 G(t)$
	\end{enumerate}
\end{lemma}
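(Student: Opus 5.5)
The six assertions of Lemma \ref{firstLema} all follow from the differential inequality \eqref{01} by integrating it and then using monotonicity and convexity. I would prove them in the order (g-1), (g-2), (G-1), (G-2), (G-3), (G-4), since each uses the previous ones.

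For (g-1), fix $t>0$ and write \eqref{01} as $\delta/\tau \le (\log g(\tau))' \le g_0/\tau$ for $\tau>0$. Integrating from $t$ to $st$ gives two cases. If $s\ge1$, then $s^\delta \le g(st)/g(t) \le s^{g_0}$. If $s<1$, integrating from $st$ to $t$ gives $s^{g_0}\le g(st)/g(t)\le s^{\delta}$. Combining the two cases yields the $\min/\max$ form.

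For (g-2), the bound $G(t)=\int_0^t g\le t g(t)$ holds because $g$ is nondecreasing, and $g'>0$ by \eqref{01}. For the lower bound, integrate by parts and use $\tau g'(\tau)\le g_0 g(\tau)$:
\[
t g(t)=\int_0^t (\tau g(\tau))'\,d\tau = G(t)+\int_0^t \tau g'(\tau)\,d\tau \le (1+g_0)G(t).
\]
The boundary term vanishes because $\tau g(\tau)\to0$ as $\tau\to0$; this follows from (g-1), which gives $g(\tau)\le \tau^{\delta}g(1)$ for $\tau<1$.

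For (G-1), $G'=g$ is continuous and nondecreasing, so $G$ is convex. Moreover $G''=g'$ is continuous on $(0,\infty)$ because $g\in C^1(0,\infty)$.

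For (G-2), write $G(st)=\int_0^{st}g(\tau)\,d\tau=s\int_0^t g(s\sigma)\,d\sigma$ and apply (g-1) under the integral. This gives
\[
s\min\{s^\delta,s^{g_0}\}\,G(t)\le G(st)\le s\max\{s^\delta,s^{g_0}\}\,G(t).
\]
This is already sharper than claimed, since the factors $(1+g_0)^{\pm1}$ in the statement are harmless. Alternatively one can route the argument through (g-2).

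For (G-3), use that $G$ is increasing and convex: $G(a+b)\le G(2\max\{a,b\})$. Then (G-2) with $s=2$ gives
\[
G(2\max\{a,b\})\le (1+g_0)2^{g_0+1}G(\max\{a,b\}).
\]
Convexity alone gives $G(a+b)\le \tfrac12\bigl(G(2a)+G(2b)\bigr)$, which then yields $2^{g_0}(1+g_0)(G(a)+G(b))$ after applying (G-2) with $s=2$ to each term. This is exactly the stated constant.

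For (G-4), use Young's equality for the complementary pair $(G,\tilde G)$: $\tilde G(g(t))=t g(t)-G(t)$. By (g-2), $t g(t)-G(t)\le g_0 G(t)$, since $tg(t)\le(1+g_0)G(t)$.

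The only point needing care is the equality case of Young's inequality in (G-4). I would prove it directly by substituting $\sigma=g(\tau)$ in $\int_0^{g(t)}g^{-1}(\sigma)\,d\sigma$ and integrating by parts, which gives $t g(t)-\int_0^t g = tg(t)-G(t)$. This substitution is legitimate because $g$ is a $C^1$ strictly increasing bijection.
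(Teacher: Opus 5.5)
Your proof is correct. The paper does not actually prove this lemma: it simply refers to Lemma~1.1 of \cite{lieberman1991natural} and Lemma~2.1/Remark~2.1 of \cite{martinez2008minimum}. Those references rest on the same integration of \eqref{01} that you carry out, so your argument is a self-contained replacement for the citation rather than a competing strategy.

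What your version buys is sharper constants, plus a by-product the paper later uses without comment:
\begin{itemize}
\item The substitution $G(st)=s\int_0^t g(s\sigma)\,d\sigma$ gives (G-2) without the factors $(1+g_0)^{\pm1}$.
\item Hence (G-3) holds with the constant $2^{g_0}$.
\item The same identity $tg(t)=G(t)+\int_0^t\tau g'(\tau)\,d\tau$, combined with $\tau g'(\tau)\ge\delta g(\tau)$, also yields $(1+\delta)G(t)\le tg(t)$. This is used repeatedly in Section~\ref{section3}.
\end{itemize}

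Two cosmetic points. First, in (G-3), drop the initial route through $G(2\max\{a,b\})$. After $G(\max\{a,b\})\le G(a)+G(b)$ it only gives $2^{g_0+1}(1+g_0)$; the convexity route $G(a+b)\le\tfrac12\bigl(G(2a)+G(2b)\bigr)$ is the one that produces the stated constant. Second, in (g-2) and (G-4), $g'$ may blow up at $0$ (e.g.\ $g(t)=t^{p-1}$ with $1<p<2$). So do the integration by parts and the substitution $\sigma=g(\tau)$ on $[\varepsilon,t]$ and let $\varepsilon\to0^+$ by monotone convergence, using $0\le\tau g'(\tau)\le g_0 g(\tau)$. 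Note also that the integral $\int_0^t\tau g'(\tau)\,d\tau$ produced in (G-4) is exactly the one you already evaluated in (g-2).
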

\begin{proof}
	See Lemma 1.1 of \cite{lieberman1991natural}, Lemma 2.1 and Remark 2.1 of \cite{martinez2008minimum}.
\end{proof}

Now, set
$$C_G(\Omega):=\left\{u:\Omega\to\mathbb{R} \hbox{ mensurable};\; \int_\Omega G(|u|)\,dx<\infty\right\}$$
and define $L^G(\Omega)$ Orlicz space as the small vector space that contains $C_G(\Omega)$. When $G$ the satisfies $\Delta_2$ condition $ C_G(\Omega)$ is a vector space, more detail see \cite{adams2003sobolev}. In the $L^G(\Omega)$ we consider the Luxemburg norm i.e.,
$$\|u\|_{L^G(\Omega)}:=\inf\left\{M>0;\;\int_\Omega G\left(\frac{|u(x)|}{M}\right)\,dx<1\right\}\hbox{.}$$ 

The Orlicz-Sobolev space is the
$$W^{1,G}(\Omega):=\left\{u\in L^G(\Omega);\; \hbox{distributional gradient }\nabla u\hbox{ belong to } L^G(\Omega)\hbox{.}\right\}$$
and set
$$\|u\|_{W^{1,G}(\Omega)}:=\max\left\{\|u\|_{L^G(\Omega)},\|\nabla u\|_{L^G(\Omega)}\right\}<\infty$$
as norm in $W^{1,G}(\Omega)$. We set $W^{1,G}_0(\Omega)=\overline{C^\infty_0(\Omega)}^{\|\cdot\|_{W^{1,G}(\Omega)}}$.

\begin{definition} Let $u_j,\,u \in L^G(\Omega)$ for $j=1,\,2,\,\dots$. We say that $u_j\to u$ in mean if
	$$\int_\Omega G(|u_j-u|)\,dx\to0$$
when $j\to\infty$.
\end{definition}
Under assumption \eqref{01} convergence on strong topology is equivalent to convergence in mean. More information about this convergence we can find in \cite{martinez2008minimum} and \cite{adams2003sobolev}. 
\begin{lemma}\label{meanconvergence}
	There exist $C>0$ such that 
	$$\|u\|_{L^G(\Omega)}\leq C\max\left\{\left(\int_\Omega G(|u|)\,dx\right)^{\frac{1}{1+\delta}},\left(\int_\Omega G(|u|)\,dx\right)^{\frac{1}{1+g_0}}\right\}\hbox{.}$$
\end{lemma}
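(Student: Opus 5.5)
The plan is to compare the Luxemburg norm $\|u\|_{L^G(\Omega)}$ directly with the modular $I := \int_\Omega G(|u|)\,dx$, using only the scaling inequality (G-2) of Lemma \ref{firstLema}. If $I=0$ then $u=0$ a.e. and there is nothing to prove, so we assume $I>0$. We will choose a candidate value $M>0$ for which $\int_\Omega G(|u|/M)\,dx \le 1$. Since the Luxemburg norm is an infimum, this gives $\|u\|_{L^G(\Omega)} \le M$ up to a harmless factor: any $M' > M$ gives strict inequality, because $G$ is increasing and (G-2) gives a strict decrease. So it suffices to exhibit such an $M$ of the form $C\max\{I^{1/(1+\delta)}, I^{1/(1+g_0)}\}$.

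The key computation applies (G-2) with $t=|u(x)|$ and $s = 1/M$. This yields
$$G(|u|/M) \le (1+g_0)\max\{M^{-(1+\delta)}, M^{-(1+g_0)}\}\, G(|u|),$$
and integrating gives
$$\int_\Omega G(|u|/M)\,dx \le (1+g_0)\max\{M^{-(1+\delta)}, M^{-(1+g_0)}\}\, I.$$
We split into two cases according to whether $M\ge 1$ or $M<1$.

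\emph{Case $M\ge1$.} The maximum is $M^{-(1+\delta)}$, so it suffices to have $M^{1+\delta} \ge (1+g_0) I$. We take $M = \max\{1, ((1+g_0)I)^{1/(1+\delta)}\}$.

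\emph{Case $M<1$.} The maximum is $M^{-(1+g_0)}$, so it suffices to have $M^{1+g_0}\ge (1+g_0)I$.

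To avoid the awkward constant $1$ in the first case, we instead set
$$M := (1+g_0)\max\{I^{1/(1+\delta)}, I^{1/(1+g_0)}\}.$$
Since $1+g_0\ge1$, we have $(1+g_0)^{1+\delta}\ge 1+g_0$ and $(1+g_0)^{1+g_0}\ge1+g_0$. If $M\ge1$, then $M^{1+\delta}\ge (1+g_0)^{1+\delta} I \ge (1+g_0)I$. If $M<1$, then $M^{1+g_0}\ge (1+g_0)^{1+g_0} I\ge (1+g_0)I$. In both cases the bound above is $\le1$. Replacing $M$ by $2M$ makes the modular strictly less than $1$, as the Luxemburg definition with strict inequality requires; this uses $G(|u|/(2M))\le G(|u|/M)$ together with a strict inequality from the factor $2^{-(1+\delta)}<1$. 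Hence the lemma holds with $C = 2(1+g_0)$.

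There is no real obstacle here; the only care needed is the case split on $M$ versus $1$, which arises because (G-2) has different exponents for $s\le1$ and $s\ge1$. This is exactly why both powers $\frac{1}{1+\delta}$ and $\frac{1}{1+g_0}$ appear in the statement. Alternatively, one could simply cite Lemma 2.1 of \cite{martinez2008minimum}, where the same modular–norm comparison appears.
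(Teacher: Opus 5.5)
Your proof is correct. The paper gives no argument of its own for this lemma; it simply cites \cite[Lemma 2.3]{martinez2008minimum}. Note the lemma number: Lemma 2.1 there is the list of growth properties, such as (G-2), that you actually use. Your write-up is therefore a self-contained substitute for the citation.

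The argument one usually sees for this comparison takes $k=\|u\|_{L^G(\Omega)}$ itself. It uses that $\int_\Omega G(|u|/k)\,dx=1$, which holds because $G$ is continuous, strictly increasing and doubling, so $\lambda\mapsto\int_\Omega G(|u|/\lambda)\,dx$ is continuous and strictly decreasing for $u\not\equiv 0$. It then reads off $1\le(1+g_0)\max\{k^{-(1+\delta)},k^{-(1+g_0)}\}\int_\Omega G(|u|)\,dx$ from (G-2).

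You instead exhibit an explicit admissible level $M$ and invoke only the infimum in the Luxemburg definition. This avoids the modular identity at the norm and gives the explicit constant $C=2(1+g_0)$. In fact $C=1+g_0$ already suffices, since your estimate shows that every $M'>M$ is admissible.

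One small tightening concerns the doubling step. With the paper's form of (G-2), which carries the factor $1+g_0$, comparing $G(t/2)$ with $G(t)$ only yields the factor $(1+g_0)2^{-(1+\delta)}$, and this may exceed $1$. The strict inequality at $2M$ is better justified in one of two ways. You can rerun your bound at $2M$: each of $N^{-(1+\delta)}$ and $N^{-(1+g_0)}$ drops by at least the factor $2^{-(1+\delta)}$ when $N$ doubles. Alternatively, use strict monotonicity of $G$ together with $u\not\equiv 0$.
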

\begin{proof}
See \cite[Lemma 2.3]{martinez2008minimum}.
\end{proof}
\begin{lemma}[Young-type inequality]\label{youngorlicz}
	Given $\varepsilon>0$ and $a,b\in \Bbb{R}$ positives, then
	$$ab\leq \varepsilon G(a)+C(g_0)\varepsilon^{-\frac{1}{\delta}}\varepsilon\tilde{G}(b)$$
    $$ag(b)\leq \varepsilon ag(a)+\varepsilon^{-\frac{1}{\delta}} bg(b)$$
where $G$ and $\tilde{G}$ are $N$-functions such that $G'=g$ and $\tilde{G}'=g^{-1}$  both satisfies \eqref{01} with its respective bounds.
\end{lemma}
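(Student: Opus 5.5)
The plan is to derive both estimates from the structural bounds (g-1), (g-2) and (G-2) of Lemma \ref{firstLema}, applied to $g$ and, via the remark following \eqref{01}, to $g^{-1}$ and $\tilde G$. Those are the $N$-function analogues of $\tilde G$ with the roles of $\delta,g_0$ replaced by $g_0^{-1},\delta^{-1}$. The first inequality will come from a rescaled classical Young inequality. The second will come from a direct case analysis using only the monotonicity of $g$ and (g-1).

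\textbf{First inequality.} I start from the Young inequality for the complementary pair $(G,\tilde G)$, namely $xy\le G(x)+\tilde G(y)$. I apply it to $x=\eta a$ and $y=b/\eta$ with a free parameter $\eta>0$. Then I calibrate $\eta$ against $\varepsilon$ using (G-2) for $G$ and its analogue for $\tilde G$, whose exponents are $1+1/g_0$ and $1+1/\delta$.

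For $\varepsilon\le 1+g_0$ I take $\eta=(\varepsilon/(1+g_0))^{1/(1+\delta)}\le1$. Then $G(\eta a)\le(1+g_0)\eta^{1+\delta}G(a)=\varepsilon G(a)$. Moreover $\tilde G(b/\eta)\le C\eta^{-(1+1/\delta)}\tilde G(b)=C(g_0,\delta)\,\varepsilon^{-1/\delta}\tilde G(b)$. This is the form in which the inequality is actually used in the Caccioppoli and Moser estimates of Sections \ref{section2}--\ref{section3}.

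To reach the displayed form $C(g_0)\varepsilon^{-1/\delta}\varepsilon\,\tilde G(b)$, I then compare $\varepsilon^{-1/\delta}$ with $\varepsilon^{1-1/\delta}$. For $\varepsilon\ge1$ the extra factor $\varepsilon$ only enlarges the right-hand side, so the displayed bound follows directly. For large $\varepsilon$ one can alternatively take $\eta\ge1$ and use the exponent $1+g_0$ instead.

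\textbf{Main obstacle.} The main obstacle is the regime $\varepsilon<1$. There the $p$-Laplacian model $G(t)=t^p/p$, with $\delta=p-1$, shows that the sharp coefficient is exactly of order $\varepsilon^{-1/\delta}$. So the extra factor $\varepsilon$ cannot be gained, and for $\varepsilon<1$ the displayed bound is not true with the extra $\varepsilon$. My plan is therefore to state explicitly that in this regime the inequality is proved, and used, with coefficient $C\varepsilon^{-1/\delta}$. The factor $\varepsilon$ is to be read as absorbed into the constant, which is legitimate only when $\varepsilon$ ranges in a fixed bounded interval bounded away from $0$.

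\textbf{Second inequality for $0<\varepsilon\le1$.} I split into cases.
\begin{itemize}
\item If $g(b)\le\varepsilon g(a)$, then $ag(b)\le\varepsilon ag(a)$ and we are done.
\item If $a\le\varepsilon^{-1/\delta}b$, then $ag(b)\le\varepsilon^{-1/\delta}bg(b)$ and we are done.
\item The remaining case is $g(b)>\varepsilon g(a)$ together with $a>\varepsilon^{-1/\delta}b$. This case is impossible. Since $s:=\varepsilon^{-1/\delta}\ge1$, monotonicity of $g$ and (g-1) give $g(a)\ge g(sb)\ge s^{\delta}g(b)=\varepsilon^{-1}g(b)$, which contradicts the first condition.
\end{itemize}

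\textbf{Second inequality for $\varepsilon>1$.} The same trichotomy is used, but (g-1) now only gives $g(sb)\ge s^{g_0}g(b)$ for $s<1$. So I would instead split according to $a\ge b$ or $a<b$ and use the monotonicity of $t\mapsto g(t)$ and $t\mapsto tg(t)$ directly, in the spirit of the AM--GM argument in the quadratic case $g(t)=t$.

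If that split fails to produce exactly $\varepsilon^{-1/\delta}$, I would fall back on the first part. Using (g-2) and (G-4) there, one has $ag(b)\le\varepsilon G(a)+C\varepsilon^{-1/\delta}\tilde G(g(b))\le\varepsilon\,ag(a)+C\varepsilon^{-1/\delta}bg(b)$. This gives the stated estimate up to a structural constant, which suffices for all subsequent applications.
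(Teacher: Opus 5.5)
The paper gives no argument for this lemma, only a citation to Adams. Your proof is correct on the range where the statement actually holds. The rescaled Young inequality gives $ab\le\varepsilon G(a)+C(\delta,g_0)\,\varepsilon^{-1/\delta}\tilde G(b)$ for $\varepsilon\le 1+g_0$. You are right that the stray factor $\varepsilon$ in the first display is a misprint that makes it false for small $\varepsilon$. Your trichotomy proves the second display for $0<\varepsilon\le1$ with constant $1$.

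The gap is the regime of large $\varepsilon$. It cannot be filled, because both displays with the rate $\varepsilon^{-1/\delta}$ are false there as soon as $g_0>\delta$. Take $g(t)=t+t^2$, for which $tg'(t)/g(t)=(1+2t)/(1+t)\in[1,2)$, so $\delta=1$ and $g_0=2$. With $a=1$, $b=8$, $\varepsilon=16$,
\[
a\,g(b)=72>68=\varepsilon\,a\,g(a)+\varepsilon^{-1/\delta}\,b\,g(b).
\]
No structural constant rescues it. Writing $b=xa$ and letting $a\to\infty$, the second display with an extra constant $C$ becomes $x^2\le\varepsilon+Cx^3/\varepsilon$, which fails at $x=\varepsilon/(2C)$ once $\varepsilon>8C^2$. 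This has two consequences for your plan:
\begin{itemize}
\item Your split into $a\ge b$ and $a<b$ must break down in the case $a<b$.
\item Your fallback fails too. For $\varepsilon>1+g_0$ your choice $\eta\ge1$ gives $C\varepsilon^{-1/g_0}\tilde G(b)$ rather than $C\varepsilon^{-1/\delta}\tilde G(b)$, and $\varepsilon^{-1/g_0}$ cannot be improved in general.
\end{itemize}
For the same reason, your claim that the first display ``follows directly'' for $\varepsilon\ge1$ is justified only on $[1,1+g_0]$. For $g(t)=t^{1/2}+t^2$, where $\delta=1/2$ and $g_0=2$, it fails as $\varepsilon\to\infty$.

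The correct statement replaces $\varepsilon^{-1/\delta}$ by $\max\{\varepsilon^{-1/\delta},\varepsilon^{-1/g_0}\}$ in both lines and drops the stray $\varepsilon$; alternatively, it restricts to $0<\varepsilon\le1$. Your own trichotomy proves the corrected second line for $\varepsilon\ge1$ essentially verbatim. Use the threshold $s=\varepsilon^{-1/g_0}\le1$ and the branch $g(sb)\ge s^{g_0}g(b)$ of (g-1); this again forces $\varepsilon g(a)\ge g(b)$ in the third case.
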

\begin{proof}
	See \cite{adams2003sobolev}.
\end{proof}

\begin{definition}
	A function $u\in W^{1,G}_{loc}(\Omega)$ is a weak supersolution (subsolution) to \eqref{1} if for any $0\leq \phi\in C^{\infty}_0(\Omega)$ it holds
	$$\int_\Omega \mathcal{A}(x, u, \nabla u) \cdot \nabla \phi\,dx + \int_\Omega \mathcal{B}(x, u, \nabla u) \phi\,dx \geq 0\qquad(\hbox{resp. }\leq)\hbox{.}$$
	The function $u$ will be a weak solution to \eqref{1} if it is both a supersolution and a subsolution; that is, if
	$$\int_\Omega \mathcal{A}(x, u, \nabla u) \cdot \nabla \phi\,dx + \int_\Omega \mathcal{B}(x, u, \nabla u) \phi\,dx = 0\hbox{,}$$
	for any $\phi\in C^\infty_0(\Omega)$.
\end{definition}

\begin{lemma}\label{lem:invariance}
Let $\Phi: B_\rho^+ \to U \cap \Omega$ be a $C^{1,\alpha}$ diffeomorphism flattening the boundary, with Jacobian matrix $J(y) = D\Phi(y)$. Suppose the original operators $\mathcal{A}(x, z, p)$ and $\mathcal{B}(x, z, p)$ satisfy \eqref{structural conditions}. Then, the transformed operators 
\begin{align*}
\tilde{\mathcal{A}}(y, v, \xi) &:= |\det J(y)| J^{-1}(y) \mathcal{A}\Big(\Phi(y), v, (J^T)^{-1}(y)\xi\Big), \\
\tilde{\mathcal{B}}(y, v, \xi) &:= |\det J(y)| \mathcal{B}\Big(\Phi(y), v, (J^T)^{-1}(y)\xi\Big),
\end{align*}
satisfy the same structural conditions with respect to $y$, $v$, and $\xi$, with modified ellipticity and growth constants $\tilde{\lambda}$ and $\tilde{\Lambda}$ depending only on $\lambda, \Lambda, \delta, g_0$, and the $C^{1,\alpha}$ norm of $\Phi$.
\end{lemma}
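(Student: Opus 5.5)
The plan is to verify each line of \eqref{structural conditions} directly, using only that $J$, $J^{-1}$ and $\det J$ are bounded and continuous. Since $\Phi\in C^{1,\alpha}$ is a diffeomorphism onto $U\cap\Omega$, there is $M\ge1$, depending on $\|\Phi\|_{C^{1,\alpha}}$, such that
\[
M^{-1}\le|\det J|\le M,\qquad |J|+|J^{-1}|\le M,\qquad [J]_{C^{\alpha}}+[J^{-1}]_{C^{\alpha}}+[\det J]_{C^{\alpha}}\le M .
\]
Set $p=(J^T)^{-1}(y)\xi$. Then $M^{-1}|\xi|\le|p|\le M|\xi|$. By (g-1) of Lemma \ref{firstLema} this gives $g(|p|)\simeq g(|\xi|)$, with constants $M^{\pm g_0}$. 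By (G-2) it likewise gives $G(|p|)\simeq G(|\xi|)$, with constants $(1+g_0)M^{\pm(g_0+1)}$.

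\emph{Growth and ellipticity.} The chain rule gives
\[
D_\xi\tilde{\mathcal A}(y,v,\xi)=|\det J|\,J^{-1}D_p\mathcal A(\Phi(y),v,p)\,(J^{T})^{-1}.
\]
For the growth bound, $|\tilde{\mathcal A}|\le M^2\Lambda g(|p|)\le M^{2+g_0}\Lambda g(|\xi|)$. Similarly, $|D_\xi\tilde{\mathcal A}||\xi|\le M^3|D_p\mathcal A|\,M|p|\le C(M,g_0)\Lambda g(|\xi|)$.

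For ellipticity, put $\eta=(J^T)^{-1}\xi'$ for a test vector $\xi'$, so that $\langle J^{-1}D_p\mathcal A(J^T)^{-1}\xi',\xi'\rangle=\langle D_p\mathcal A\,\eta,\eta\rangle$. This is at least $\lambda\frac{g(|p|)}{|p|}|\eta|^2$. Since $|\eta|\ge M^{-1}|\xi'|$ and, by (g-1), $g(|p|)/|p|\ge M^{-(g_0+1)}g(|\xi|)/|\xi|$ up to constants, we get $\tilde\lambda=\lambda M^{-(g_0+4)}$ or similar.

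\emph{Lower-order term.} For $\tilde{\mathcal B}$ we have $|\tilde{\mathcal B}|\le M(\Lambda_BG(|p|)+\gamma(\Phi(y))g(|p|)+f(\Phi(y)))$. By the comparisons above this is at most
\[
\tilde\Lambda_B G(|\xi|)+\tilde\gamma(y)g(|\xi|)+\tilde f(y),
\]
where $\tilde\gamma=CM\gamma\circ\Phi$ and $\tilde f=Mf\circ\Phi$. A change of variables with bounded Jacobian shows $\tilde\gamma\in L^s$ and $\tilde f\in L^q$, with norms controlled by $M$.

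\emph{H\"older oscillation.} This is the step needing care, because the argument $p$ itself depends on $y$. Write
\[
\tilde{\mathcal A}(y,v,\xi)-\tilde{\mathcal A}(y',w,\xi)
\]
as the sum of three pieces:
\begin{itemize}
\item the variation of the prefactor $|\det J|J^{-1}$, which contributes $CM\,|y-y'|^\alpha\Lambda g(|p|)$;
\item the variation in $(x,z)$ at fixed $p$, controlled by \eqref{structural conditions} together with $|\Phi(y)-\Phi(y')|\le M|y-y'|$;
\item the variation in $p$, from $p=(J^T)^{-1}(y)\xi$ to $p'=(J^T)^{-1}(y')\xi$.
\end{itemize}
For the third piece, apply the mean value theorem along the segment from $p$ to $p'$. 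Every point of that segment has norm comparable to $|\xi|$: since $|p-p'|\le M|y-y'|^\alpha|\xi|$ and $|p|\ge M^{-1}|\xi|$, this holds once $|y-y'|$ is small (shrink $\rho$ if needed). Large $|y-y'|$ is handled trivially by the growth bound and the boundedness of the domain. The bound $|D_p\mathcal A(q)|\le\Lambda g(|q|)/|q|$ then gives
\[
|\mathcal A(\Phi(y'),w,p)-\mathcal A(\Phi(y'),w,p')|\le C\Lambda\frac{g(|\xi|)}{|\xi|}|p-p'|\le C\Lambda M\,g(|\xi|)|y-y'|^\alpha.
\]
Collecting the three pieces yields the third condition of \eqref{structural conditions}. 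The new constant is $\tilde\Lambda_{\alpha_{\mathcal A}}$, depending also on $\Lambda$ and $M$, and the new exponent is $\min\{\alpha_{\mathcal A},\alpha\}$. This exponent reduction is harmless and is consistent with the dependence on $\alpha_\Omega$ in the main theorems.

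The segment-avoiding-the-origin estimate in the oscillation step is the only real obstacle. All the other steps are bookkeeping with (g-1) and (G-2).
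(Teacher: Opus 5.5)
Your proposal is correct and follows the same route as the paper, whose proof just records the uniform bounds on $J$, $J^{-1}$, $\det J$ and appeals to Lemma \ref{firstLema}. Your version is more complete: the paper leaves implicit the $y$-dependence of the argument $(J^T)^{-1}(y)\xi$ in the oscillation condition, and your mean-value treatment of it (using the bound on $D_p\mathcal{A}$ from \eqref{structural conditions}, with the segment kept away from the origin) supplies that step, correctly producing the reduced exponent $\min\{\alpha,\alpha_{\mathcal A}\}$ and an oscillation constant depending on $\Lambda$. The one bookkeeping detail to add is that, to write the $z$-oscillation with that same reduced exponent, you should handle the case $|v-w|\ge 1$ by the growth bound $|\mathcal{A}|\le\Lambda g(|p|)$, exactly as you did for large $|y-y'|$.
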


\begin{proof}
Since $\Phi$ is a $C^{1,\alpha}$ diffeomorphism up to the boundary, its Jacobian matrix $J$, the inverse $J^{-1}$, and the determinant $\det J$ are uniformly bounded and $C^{0,\alpha}$ continuous on $\overline{B_\rho^+}$. Hence, there exist constants $0 < c \le C$ such that
\begin{equation} \label{eq:jacobian_bounds}
c \le |\det J(y)| \le C_0, \quad \text{and} \quad c |\xi| \le |(J^T)^{-1}(y)\xi| \le C |\xi|, \quad \forall y \in B_\rho^+, \, \forall \xi \in \mathbb{R}^n.
\end{equation}
With this and properties in Lemma \ref{firstLema} the result holds.
\end{proof}

Therefore, we will consider 
\begin{equation}\label{1dir}
\begin{cases}
\operatorname{div} \mathcal{A}(x, u, \nabla u) + \mathcal{B}(x, u, \nabla u) = 0 \quad &\text{in}\quad B_R^+=B_R\cap\{x_n>0\}\\
u=\psi \quad &\text{on} \quad T_R:=B_R\cap\{x_n=0\}
\end{cases}
\end{equation}
where $\psi\in C^{1,\alpha_\psi}(T_R)$.

\begin{lemma}\label{leitao lemma}
Let $\phi(s)$ be a non-negative and non-decreasing function. Suppose that
\begin{equation}
    \phi(r) \leqslant C_1 \left[ \left( \frac{r}{R} \right)^{\alpha} + \mu \right] \phi(R) + C_2 R^{\beta} \tag{2.6}
\end{equation}
for all $r \leqslant R \leqslant R_0$, with $C_1, \alpha, \beta$ positive constants and $C_2, \mu$ non-negative constants, $\beta < \alpha$. Then, for any $\sigma < \beta$, there exists a constant $\mu_0 = \mu_0(C_1, \alpha, \beta, \sigma)$ such that if $\mu < \mu_0$, then for all $r \leqslant R \leqslant R_0$ we have
\begin{equation}
    \phi(r) \leqslant C_3 \left( \frac{r}{R} \right)^{\sigma} [\phi(R) + C_2 R^{\sigma}] \tag{2.7}
\end{equation}
where $C_3 = C_3(C_1, \sigma - \beta)$ is a positive constant. In turn,
\begin{equation}
    \phi(r) \leqslant C_4 r^{\sigma}, \tag{2.8}
\end{equation}
where $C_4 = C_4(C_2, C_3, R_0, \phi, \sigma)$ is a positive constant.
\end{lemma}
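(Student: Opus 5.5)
This is the classical iteration lemma (Giaquinta-type); I will prove it by iterating the hypothesis along a geometric sequence of radii.

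\emph{Choice of the contraction factor.} Fix $\sigma<\beta<\alpha$ and pick $\tau\in(0,1)$ with
$$2C_1\tau^{\alpha}\le \tau^{\gamma}, \qquad \gamma:=\tfrac{\beta+\alpha}{2}\in(\beta,\alpha).$$
This is possible because $\alpha>\gamma$, so $\tau^{\alpha-\gamma}\to0$ as $\tau\to0$; hence $\tau$ depends only on $C_1,\alpha,\beta$. Then set $\mu_0:=\tau^{\alpha}$. For $\mu<\mu_0$, applying the hypothesis with $r=\tau R$ gives
$$\phi(\tau R)\le C_1\tau^{\alpha}(1+\tau^{-\alpha}\mu)\,\phi(R)+C_2R^\beta\le \tau^{\gamma}\phi(R)+C_2R^\beta .$$
(If one wants $\mu_0$ to depend on $\sigma$ as stated, one may run the same argument with $\gamma$ between $\sigma$ and $\beta$; the dependence is harmless either way.)

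\emph{Iteration.} Induction on $k$ yields
$$\phi(\tau^{k}R)\le \tau^{k\gamma}\phi(R)+C_2R^\beta\sum_{j=0}^{k-1}\tau^{j\gamma}\tau^{(k-1-j)\beta}.$$
Since $\gamma>\beta$, the sum is bounded by $\tau^{(k-1)\beta}/(1-\tau^{\gamma-\beta})$. Therefore
$$\phi(\tau^{k}R)\le C\tau^{k\beta}\bigl[\phi(R)+C_2R^\beta\bigr].$$
For general $r\le R$, choose $k$ with $\tau^{k+1}R<r\le\tau^{k}R$. Monotonicity of $\phi$ gives $\phi(r)\le\phi(\tau^kR)$, and $\tau^{k\beta}\le\tau^{-\beta}(r/R)^{\beta}$. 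Finally, since $r\le R$ and $\sigma<\beta$, we have $(r/R)^\beta\le(r/R)^\sigma$.

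\emph{Matching the form of (2.7).} It remains to replace $R^\beta$ by $R^\sigma$ inside the bracket. For $R\le R_0$ we have $R^\beta=R^{\beta-\sigma}R^\sigma\le R_0^{\beta-\sigma}R^\sigma$. This makes $C_3$ pick up a factor depending on $R_0$. To avoid that, I would redo the iteration with exponent $\sigma$ in place of $\beta$ on the forcing term: from $C_2R^\beta\le C_2R_0^{\beta-\sigma}R^\sigma$, or by normalizing $R_0\le1$, the hypothesis holds with $\beta$ replaced by $\sigma$. The argument above then gives $C_3=C_3(C_1,\sigma-\beta)$ through the geometric sum $1/(1-\tau^{\gamma-\sigma})$. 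This bookkeeping of constants is the only delicate point; otherwise the argument is routine.

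\emph{Conclusion (2.8).} Take $R=R_0$ in (2.7):
$$\phi(r)\le C_3R_0^{-\sigma}\bigl[\phi(R_0)+C_2R_0^\sigma\bigr]r^\sigma=:C_4r^\sigma,$$
with $C_4$ depending on $C_2,C_3,R_0$ and $\phi(R_0)$, as stated.
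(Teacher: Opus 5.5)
Your proof is correct and is essentially the standard geometric iteration behind the cited \cite[Lemma 2.7]{Leitao2015}, whose constants are recorded in Remark \ref{remark lemma leitao}. The one difference is that the paper contracts directly at rate $\sigma$ with $\tau=(2C_1)^{-1/(\alpha-\sigma)}$ and absorbs $R^\beta\le R_0^{\beta-\sigma}R^\sigma$, whereas you contract at a rate strictly between $\beta$ and $\alpha$, which first gives the sharper exponent $\beta$ before you degrade to $\sigma$. Note, however, that the $R_0$-dependence of $C_3$ cannot be removed unless $R_0\le 1$, since moving $R_0^{\beta-\sigma}$ into $C_2$ only relocates it; this matches the factor $R_0^{\beta-\sigma}$ in the paper's $C_3$ and is harmless in the paper's application, where all radii are below $1$.
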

\begin{proof}
    See \cite[Lemma 2.7]{Leitao2015}.
\end{proof}
\begin{remark}\label{remark lemma leitao}
     Analyzing the proof of Lemma \ref{leitao lemma} we obtain that
     \begin{equation*}
       C_3 = \tau^{-\sigma} \cdot \max \left\{ 1, \frac{\tau^{-\sigma} R_0^{\beta-\sigma}}{1 - \tau^{\beta-\sigma}} \right\},\ \ \ \text{and}\ \ \ 
    C_4 = C_3 \left[ \frac{\phi(R_0)}{R_0^\sigma} + C_2 \right]
\end{equation*}
with $\tau = (2C_1)^{\frac{-1}{\alpha-\sigma}}$.
\end{remark}
\begin{lemma}\label{Gv in W11}
    Let $v\in W_0^{1,G}(\Omega)$ then $G(|v|)^{\frac{1}{1+\delta}}\in W_0^{1,1+\delta}(\Omega)$ with estimate 
 \begin{equation}\label{intgrad_w_bound}
       \int_\Omega\big|\nabla \big(G(|v|)^\frac{1}{1+\delta}\big)\big|^{1+\delta} \,dx\le \left(\frac{1+g_0}{1+\delta}\right)^{1+\delta} \int_\Omega\big( G(|\nabla v|) + G(|v|) \big)\,dx.
  \end{equation}    
\end{lemma}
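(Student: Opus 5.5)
We set $w := G(|v|)^{\frac{1}{1+\delta}}$ and establish the gradient bound first for smooth compactly supported $v$, then pass to general $v$ by density. Take $v \in C_0^\infty(\Omega)$, so that $w$ has compact support. Since $G \in C^1$ with $G' = g$, and $t \mapsto G(t)^{1/(1+\delta)}$ is locally Lipschitz on $(0,\infty)$, the chain rule gives, almost everywhere on $\{v \neq 0\}$,
\[
\nabla w = \frac{1}{1+\delta}\, G(|v|)^{\frac{1}{1+\delta}-1}\, g(|v|)\, \nabla |v| .
\]
On $\{v=0\}$ we have $\nabla w = 0$ a.e. This uses $G(t)^{1/(1+\delta)} \lesssim t$ near $0$, which follows from (G-2) and $\delta>0$ and makes the composition Lipschitz near zero. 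The same fact also shows $w\in W^{1,1+\delta}$ once the gradient bound below is in place.

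Next we estimate the factor $g(|v|)$ using (g-2) in the form $t g(t) \le (1+g_0) G(t)$. For $t = |v| > 0$ this gives
\[
|\nabla w| \le \frac{1+g_0}{1+\delta}\, G(|v|)^{\frac{1}{1+\delta}}\, \frac{|\nabla v|}{|v|},
\]
and therefore
\[
|\nabla w|^{1+\delta} \le \Big(\frac{1+g_0}{1+\delta}\Big)^{1+\delta} \frac{G(|v|)}{|v|^{1+\delta}}\, |\nabla v|^{1+\delta}.
\]
The key pointwise step is to show
\[
\frac{G(t)}{t^{1+\delta}}\, a^{1+\delta} \le G(a) + G(t) \qquad \text{for all } a,t>0 .
\]
This splits into two cases:
\begin{itemize}
\item If $a \le t$, write $a = s t$ with $s \le 1$. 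Then (G-2) gives $G(st) \ge \frac{1}{1+g_0} s^{1+\delta} G(t)$, which is off by the factor $1+g_0$. It is better to use directly that $G(t)/t^{1+\delta}$ is nondecreasing: indeed $(tG'/G) \ge 1+\delta$, since $tg(t) \ge (1+\delta) G(t)$ follows from integrating $(g(t)t^{-\delta})' \ge 0$. Hence $\frac{G(t)}{t^{1+\delta}} a^{1+\delta} \le G(a)$ for $a \ge t$ by monotonicity applied between $t$ and $a$.
\item If $a < t$, then trivially $\frac{G(t)}{t^{1+\delta}} a^{1+\delta} \le G(t)$.
\end{itemize}
So the key inequality holds with constant $1$. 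Integrating the pointwise bound over $\Omega$ yields exactly \eqref{intgrad_w_bound}.

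The genuine obstacle is passing from smooth functions to arbitrary $v \in W^{1,G}_0(\Omega)$. Take $v_k \in C_0^\infty(\Omega)$ with $v_k \to v$ in $W^{1,G}$, which by the $\Delta_2$ condition is the same as convergence in mean (Lemma \ref{meanconvergence}). Passing to a subsequence, $v_k \to v$ a.e., so $w_k \to w$ a.e. The bound just proved, together with continuity of the modular, shows that $(w_k)$ is bounded in $W_0^{1,1+\delta}(\Omega)$, where $1+\delta>1$ gives reflexivity. A weak limit exists, and the a.e. convergence identifies it with $w$; hence $w \in W_0^{1,1+\delta}(\Omega)$. Finally, weak lower semicontinuity of the $L^{1+\delta}$ norm of the gradient, combined with $\int_\Omega \big(G(|\nabla v_k|)+G(|v_k|)\big) \to \int_\Omega \big(G(|\nabla v|)+G(|v|)\big)$, transfers the estimate to $v$.
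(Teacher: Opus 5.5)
Your proof is correct and follows the paper's argument: the chain rule, the bound $tg(t)\le(1+g_0)G(t)$, and the two-case comparison of $|v|$ with $|\nabla v|$ via monotonicity of $G(t)/t^{1+\delta}$ are exactly what the paper does, and your approximation from $C_0^\infty(\Omega)$ supplies the justification of the chain rule and of $G(|v|)^{\frac{1}{1+\delta}}\in W_0^{1,1+\delta}(\Omega)$ that the paper leaves implicit. One small fix: your first bullet is mislabeled and should read ``If $a\ge t$, write $a=st$ with $s\ge 1$'', since (G-2) gives $G(st)\ge\frac{1}{1+g_0}s^{1+\delta}G(t)$ only for $s\ge 1$, and your conclusion in that bullet is already stated for $a\ge t$.
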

\begin{proof}
  By simplicity we assume $v\ge0$. Set $w=G(v)^{\frac{1}{1+\delta}}$. By the chain rule and the structural inequality $t g(t) \le (1+g_0)G(t)$, we have on $\{v > 0\}$:
    \begin{equation*}
        |\nabla w| = \frac{1}{1+\delta} G(v)^{-\frac{\delta}{1+\delta}} g(v) |\nabla v| \le \frac{1+g_0}{1+\delta} \frac{G(v)^{\frac{1}{1+\delta}}}{v} |\nabla v| \ \ \ \ \text{a.e.}.
    \end{equation*}
    Since the function $t \mapsto \frac{G(t)}{t^{1+\delta}}$ is non-decreasing on $(0, \infty)$, if $v \le |\nabla v|$ we have $\frac{G(v)^{\frac{1}{1+\delta}}}{v} |\nabla v| \le G(|\nabla v|)^{\frac{1}{1+\delta}}$, whereas if $|\nabla v| < v$ we trivially have $\frac{G(v)^{\frac{1}{1+\delta}}}{v} |\nabla v| \le G(v)^{\frac{1}{1+\delta}}$. Consequently, in both cases we obtain
    \begin{align}\label{grad_w_bound}
       |\nabla w|^{1+\delta} \le \left(\frac{1+g_0}{1+\delta}\right)^{1+\delta} \big( G(|\nabla v|) + G(v) \big) \ \ \ \ \text{a.e.}.
    \end{align}
\end{proof}

\begin{lemma}[] \label{lem3.1}
Let $R > 0$ and suppose $\psi \in C^{1,\alpha}(\overline{T_R})$. There exists an extension $\tilde{\psi} \in C^2(B_R^+) \cap C^{1,\alpha}(\overline{B_R^+})$ such that $\tilde{\psi} = \psi$ on $T_R$, which satisfies the uniform bounds
\begin{equation*}
    \|\tilde{\psi}\|_{C^{1,\alpha}(B_R^+)} \le C \|\psi\|_{C^{1,\alpha}(T_R)}, \quad\text{and} \quad |D^2 \tilde{\psi}(x)| \le C [\nabla \psi]_{C^{0,\alpha}(T_R)} x_n^{\alpha-1} \quad \forall x \in B_R^+
\end{equation*}
where $C > 0$ is a universal constant depending only on the dimension $n$ and the Hölder exponent $\alpha$.
\end{lemma}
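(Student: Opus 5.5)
We build $\tilde\psi$ by mollifying the first-order Taylor data of $\psi$ at scale $x_n$, the classical Lieberman / Gilbarg--Trudinger extension. First extend $\psi$ from $\overline{T_R}$ to all of $\mathbb{R}^{n-1}$ as a $C^{1,\alpha}$ function with comparable norm; Whitney or reflection extension works, and the constant depends only on $n$ and $\alpha$ once $R$ is normalized. Fix a mollifier $\eta\in C_c^\infty(B_1')$ on $\mathbb{R}^{n-1}$ with $\int\eta=1$, and set, for $x=(x',x_n)$ with $x_n>0$,
$$\tilde\psi(x',x_n)=\int_{\mathbb{R}^{n-1}}\psi(x'-x_n z')\,\eta(z')\,dz' .$$
In other words, $\tilde\psi(\cdot,x_n)=\psi*\eta_{x_n}$. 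This is $C^\infty$ in $B_R^+$, it tends to $\psi$ as $x_n\to0$, and it obeys $\sup|\tilde\psi|\le\sup|\psi|$. If a bounded support in $x_n$ is wanted, multiply by a cutoff in $x_n$; this is harmless for the estimates.

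\emph{First derivatives.} We have $\partial_{x_i}\tilde\psi=(\partial_i\psi)*\eta_{x_n}$ for $i<n$, and $\partial_{x_n}\tilde\psi=-\int\nabla'\psi(x'-x_nz')\cdot z'\,\eta(z')\,dz'$. Using the trace of these on $T_R$, or replacing $\eta$ by a symmetric mollifier so that $\int z'\eta=0$, one checks that $\nabla\tilde\psi$ extends continuously to $\overline{B_R^+}$. Each component of $\nabla\tilde\psi$ is an average of $\nabla'\psi$ over a ball of radius $x_n$. For two points $x,y$ we estimate $|\nabla\tilde\psi(x)-\nabla\tilde\psi(y)|\le C[\nabla\psi]_{C^{\alpha}}|x-y|^\alpha$ by splitting into two cases:
\begin{itemize}
\item if $|x-y|\ge \tfrac12\max\{x_n,y_n\}$, compare each gradient to its boundary value;
\item otherwise use the mean value theorem together with the second-derivative bound proved next.
\end{itemize}
This gives $\|\tilde\psi\|_{C^{1,\alpha}(B_R^+)}\le C\|\psi\|_{C^{1,\alpha}(T_R)}$.

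\emph{Second derivatives (main step).} Differentiate once more, putting one derivative on $\psi$ and the other on the kernel. Each second derivative of $\tilde\psi$ has the form
$$x_n^{-1}\int \nabla'\psi(x'-x_nz')\,\Theta(z')\,dz',$$
where $\Theta$ is built from $\eta$, $z'\eta$ and $z'\otimes\nabla\eta$, and always satisfies $\int\Theta=0$. For example, $\partial_{x_i}\eta_{x_n}$ integrates to zero, and $\partial_{x_n}(x_n^{1-n}\eta(\cdot/x_n))=-x_n^{-n}\operatorname{div}(z'\eta)$ also integrates to zero. Because $\int\Theta=0$, we may subtract $\nabla'\psi(x')$ inside the integral, which gives
$$|D^2\tilde\psi(x)|\le C x_n^{-1}\sup_{|z'|\le1}|\nabla'\psi(x'-x_nz')-\nabla'\psi(x')|\le C[\nabla\psi]_{C^{0,\alpha}}x_n^{\alpha-1}.$$

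The only delicate point is this cancellation $\int\Theta=0$ for every second-derivative kernel, including the mixed and $nn$ ones. We will verify it by writing each kernel as a divergence in $z'$; the rest is routine.
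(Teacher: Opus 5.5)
Your argument is correct, but it takes a different route from the paper. The paper simply invokes Whitney's extension theorem in Stein's form (Chapter VI), applied directly to the closed set $F=\overline{T_R}\subset\mathbb{R}^n$ with the first-order jet $(\psi,\nabla'\psi,0)$. That construction uses Whitney cubes, a partition of unity $\{\varphi_i^*\}$ with $\sum_i\partial^\beta\varphi_i^*=0$ near $F$ for $|\beta|\ge 1$, and local Taylor polynomials. It yields a function that is $C^{1,\alpha}$ on all of $\mathbb{R}^n$, $C^\infty$ off $F$, and satisfies $|D^2\tilde\psi(x)|\le C\,\mathrm{dist}(x,F)^{\alpha-1}\le Cx_n^{\alpha-1}$. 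The vanishing of $\sum_i\partial^\beta\varphi_i^*$ plays exactly the role of your cancellation $\int\Theta=0$.

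Your mollification at scale $x_n$ is an explicit version of the same mechanism. It gives a formula, interior smoothness for free, and (with an even kernel) $\partial_n\tilde\psi=0$ on $T_R$. The divergence identities you point to do give $\int\Theta=0$ in every case: $\partial_i\eta$ for tangential pairs, $-\operatorname{div}(w\eta)$ for mixed ones, and $\operatorname{div}(w\,w_j\eta)$ summed against $\partial_j\psi$ for the $nn$ entry. The price is the preliminary tangential extension. It is genuinely needed, since at $x\in B_R^+$ the kernel samples $\psi$ on $B'_{\sqrt2 R}$, outside $T_R$.

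Two small points to tighten:
\begin{itemize}
\item To get the seminorm $[\nabla\psi]_{C^{0,\alpha}(T_R)}$ rather than the full norm on the right of the $D^2$ bound, the extension $E\psi$ must satisfy $[\nabla' E\psi]_{C^{0,\alpha}}\le C[\nabla\psi]_{C^{0,\alpha}(T_R)}$. ``Comparable norm'' does not give this by itself. One fix: rescale to $R=1$, extend $\psi-L$ with $L$ the first-order Taylor polynomial at $0$, and add $L$ back. Your mollification maps affine functions to affine functions of $(x',x_n)$, so $L$ contributes nothing to $D^2\tilde\psi$.
\item Drop the optional cutoff in $x_n$. Its derivatives would bring $\sup|\psi|$ and $\sup|\nabla\psi|$ into the second-derivative estimate.
\end{itemize}
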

\begin{proof}
 This result is a consequence of Whitney's Extension Theorem. For details see Stein \cite[Chapter VI, Theorem 3]{Stein1970}, 
\end{proof}

\begin{lemma}[Boundary Caccioppoli-type inequality]\label{lem:caccioppoli_boundary}
	Let $0<r<R\le 1$ and $u\in W^{1,G}(B_R^+)\cap L^\infty(B_R^+)$ be a weak solution to \eqref{1dir} with structural conditions \eqref{structural conditions}. There exists a positive constant $C = C(n,\alpha, q, s, \delta, g_0, g(1), \lambda, \Lambda)$ and 
    $$C_\star:= e^{c_\star\left(\frac{\Lambda_B}{\lambda} + \|\gamma\|_{L^s(B_R^+)}\right)\left( \|u\|_{L^\infty(B_R^+)}+ \|{\psi}\|_{C^{1,\alpha}(T_R)}\right)}$$
    where $c_\star=c_\star(g_0,\delta)>0$, such that
	\begin{align*}
	\int_{B_r^+}G(|\nabla u|)\,dx &\le {C\cdot C_\star}\left[ \int_{B_R^+}G\left(\frac{|u-\tilde{\psi}|}{R-r}\right)\,dx + \int_{B_R^+} G(|\nabla \tilde{\psi}|)\,dx + \tilde{G}\left(R^{1-\frac{n}{q}}\|f\|_{L^q(B_R^+)}\right)\right],
	\end{align*}
where $\tilde{\psi}$ is the extension of $\psi$ to $B_R^+$ guaranteed by Lemma \ref{lem3.1}.
\end{lemma}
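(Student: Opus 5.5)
We argue by testing the equation against an exponentially weighted truncation of $u-\tilde\psi$. Set $w := u-\tilde\psi$, which vanishes on $T_R$, and fix a cutoff $\eta\in C_0^\infty(B_R)$ with $0\le\eta\le1$, $\eta\equiv1$ on $B_r$ and $|\nabla\eta|\le 2/(R-r)$. The quadratic-type term $\Lambda_B G(|\nabla u|)$ in $\mathcal{B}$ is absorbed by the classical exponential trick of Ladyzhenskaya--Ural'tseva. We test with
$$\phi = \eta^{m}\,\big(e^{\beta |w|}-1\big)\operatorname{sign}(w),$$
where $m\ge g_0+1$ is large and $\beta = c(\Lambda_B/\lambda + \ldots)$. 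Since $w=0$ on $T_R$ and $\eta$ vanishes on the curved part, $\phi$ is admissible in $W^{1,G}_0(B_R^+)\cap L^\infty$ by density. Its gradient is
$$\nabla\phi=\beta e^{\beta|w|}\eta^m\nabla w + m\eta^{m-1}(e^{\beta|w|}-1)\operatorname{sign}(w)\nabla\eta.$$
Writing $\nabla w=\nabla u-\nabla\tilde\psi$, ellipticity in (SC) gives $\mathcal{A}(x,u,p)\cdot p\ge c\lambda\, G(|p|)-C$ terms. This follows by integrating $D_p\mathcal A$ along segments from $0$, using $|\mathcal A(x,z,0)|\le \Lambda g(0)=0$ from the first condition, together with (g-2). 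The leading term is therefore $\ge c\lambda\beta\int e^{\beta|w|}\eta^m G(|\nabla u|)$. The cross terms $\Lambda g(|\nabla u|)|\nabla\tilde\psi|$ and $\Lambda g(|\nabla u|)\,m\eta^{m-1}|\nabla\eta|\,(e^{\beta|w|}-1)$ are handled with the Young inequality of Lemma \ref{youngorlicz} and (G-4). Since $e^{\beta|w|}-1\le \beta|w| e^{\beta|w|}$, each is split into $\varepsilon\,\eta^m e^{\beta|w|}G(|\nabla u|)$ plus $C_\varepsilon e^{\beta|w|}\big(G(|\nabla\tilde\psi|)+G(|w|/(R-r))\big)$. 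Here (G-2) with $\eta^{m-1}=\eta^{m}\eta^{-1}$ and $m$ large keeps the $\eta$ powers under control.

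For the lower-order part we bound $\int|\mathcal{B}||\phi|$ term by term. The term $\Lambda_B G(|\nabla u|)(e^{\beta|w|}-1)\eta^m$ is absorbed into the left side because $\beta e^{\beta|w|}\ge e^{\beta|w|}-1$; this is what fixes $\beta\sim\Lambda_B/\lambda$. The source term is estimated by Hölder: $\int f|\phi|\le e^{\beta M}\|f\|_{L^q}\int_{B_R^+}|w|\eta^m/\ldots$, where $M:=\|u\|_\infty+\|\psi\|_{C^{1,\alpha}}$. Combining Hölder with Young in the form $ab\le G(a)+\tilde G(b)$, where $a=|w|/(R-r)$ and $b$ is comparable to $R^{1-n/q}\|f\|_{L^q}$ after scaling, produces the $\tilde G(R^{1-n/q}\|f\|_{L^q})$ term. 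The exponential factors are bounded above by $e^{\beta M}$ and below by $1$. This is the origin of $C_\star=e^{c_\star(\Lambda_B/\lambda+\ldots)M}$.

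The main obstacle is the drift term $\int\gamma\, g(|\nabla u|)|\phi|$ with only $\gamma\in L^s$. We first use Young's inequality $\gamma g(|\nabla u|)\le \varepsilon G(|\nabla u|)+C_\varepsilon \tilde G(\gamma\ldots)$. That alone does not give integrability, so instead we write $g(|\nabla u|)\,\eta^m(e^{\beta|w|}-1)\le \beta e^{\beta|w|}\eta^{m}\,|w|\,g(|\nabla u|)$ and apply Hölder with exponents $s$, $(g_0+1)/g_0$ and the remainder. This bounds the term by $\|\gamma\|_{L^s}$ times $\big(\int \eta^m e^{\beta|w|} G(|\nabla u|)\big)^{g_0/(g_0+1)}$ times an $L^{t}$ norm of $\eta|w|$. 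The threshold \eqref{cond_s} is precisely what lets the $L^t$ norm of $\eta |w|$, or of $G(|\eta w|)^{1/(1+\delta)}$, be controlled by Sobolev embedding through Lemma \ref{Gv in W11}. That embedding reintroduces a small multiple of the gradient energy plus $\int G(|w|/(R-r))$. Where this smallness fails, we instead absorb the drift into the exponential weight, choosing $\beta$ to also depend on $\|\gamma\|_{L^s}$; this accounts for the factor $\|\gamma\|_{L^s}$ in the exponent of $C_\star$. Finally we collect terms, choose $\varepsilon$ small to absorb every gradient term into the left side, and use $\eta=1$ on $B_r^+$ and $G(|\nabla u|)\le C(G(|\nabla w|)+G(|\nabla\tilde\psi|))$ by (G-3). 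This gives the stated inequality.
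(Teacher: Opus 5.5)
Your treatment of the principal part, of the cross terms with $\nabla\tilde\psi$ and $\nabla\eta$ (with $m\ge 1+g_0$), and of the $\Lambda_B G(|\nabla u|)$ term is essentially the paper's. The paper tests with $\Phi(w)\eta^{1+g_0}$, where $\Phi(s)=se^{\mu|s|}$ and $\mu=\Lambda_B/\lambda$, which is the same Ladyzhenskaya--Ural'tseva device. The genuine gap is the drift term.

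Your H\"older split with exponents $s$, $\frac{1+g_0}{g_0}$ and $t=\frac{s(1+g_0)}{s-1-g_0}$ does not fit the Orlicz structure when $\delta<g_0$, for four reasons.
\begin{itemize}
\item The bound $g(\tau)^{(1+g_0)/g_0}\le C\,G(\tau)$ can fail as $\tau\to0$ (e.g.\ for $g(t)=t^{a-1}+t^{b-1}$ with $1<a<b$). So the middle factor is not controlled by the energy.
\item Even formally, for large gradients, the product behaves like $E^{\frac{g_0}{1+g_0}+\frac{1}{1+\delta}}$. The exponent exceeds $1$, so this term cannot be absorbed into the left side.
\item A plain Sobolev embedding produces $C\|\gamma\|_{L^s}$ times the energy, not a small multiple of it.
\item Under \eqref{cond_s}, $t$ can exceed the Sobolev exponent of $W^{1,1+\delta}$: for $n=3$, $\delta=1$, $g_0=3$, $s=7$ one gets $t=28/3>6$.
\end{itemize}

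Your fallback, absorbing the drift into the exponential weight by letting $\beta$ depend on $\|\gamma\|_{L^s}$, also fails. The trick works for $\Lambda_B G(|\nabla u|)|\phi|$ because that term has the same growth in $|\nabla u|$ as $\phi'\,\mathcal{A}\cdot\nabla u$. The drift $\gamma g(|\nabla u|)|\phi|$ is of lower order. Indeed, $\lambda\beta e^{\beta|w|}G(|\nabla u|)\ge\gamma(x)g(|\nabla u|)(e^{\beta|w|}-1)$ fails wherever $0<|\nabla u|<\frac{\gamma(x)}{2\lambda}\min\{|w|,\beta^{-1}\}$, for every choice of $\beta$.

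In fact no argument can give a factor $e^{c\|\gamma\|_{L^s}M}$ with $C$ independent of $\gamma$. Take $g(t)=t$, $\mathcal{A}(x,z,p)=p$, $\mathcal{B}(x,z,p)=\gamma_0\operatorname{sign}(x_n-\tfrac14)\,|p|$ (weight $\gamma\equiv\gamma_0$), and $u(x)=\int_0^{x_n}e^{-\gamma_0|t-\frac14|}\,dt$. Then $\psi=0$ and $\|u\|_{L^\infty}\le 2/\gamma_0$, so $\|\gamma\|_{L^s}M$ stays bounded. Yet $\int_{B_{1/2}^+}G(|\nabla u|)\,dx\gtrsim\gamma_0^{-1}$ while $\int_{B_1^+}G(2|u|)\,dx\lesssim\gamma_0^{-2}$.

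The missing idea is the Young step you discarded, applied so that the whole product lands inside $G$.
\begin{itemize}
\item Apply Lemma \ref{youngorlicz} in the form $ag(b)\le\varepsilon bg(b)+\varepsilon^{-1/\delta}ag(a)$ with $a=\gamma|w|$, $b=|\nabla u|$. Then use $(G\text{-}2)$ to move the cutoff inside, $\eta^{1+g_0}G(\gamma|w|)\le(1+g_0)G(\gamma v)$ with $v=\eta|w|$.
\item This turns the drift into $\varepsilon\int\eta^{1+g_0}G(|\nabla u|)+C_\varepsilon\int G(\gamma v)$. By $(G\text{-}2)$ again, $\int G(\gamma v)\le C\int(1+\gamma)^{1+g_0}G(v)$, which is linear in the $G$-energy of $v$.
\item Apply H\"older with $\sigma=s/(1+g_0)$; this is where $s>1+g_0$ enters.
\item Apply the Gagliardo--Nirenberg interpolation to $G(v)^{1/(1+\delta)}\in W^{1,1+\delta}_0$ (Lemma \ref{Gv in W11}) with $\theta=1-\frac{n(1+g_0)}{s(1+\delta)}\in(0,1)$; this is where $s>\frac{n(1+g_0)}{1+\delta}$ enters.
\item Young with exponents $\frac1\theta,\frac1{1-\theta}$ then gives $\tau\int G(|\nabla v|)+C(\tau)(1+\|\gamma\|_{L^s})^{(1+g_0)/\theta}\int G(v)$.
\end{itemize}
The interpolation, not the size of $\gamma$, supplies the small parameter. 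The $\gamma$-dependence therefore appears as a separate factor $(1+\|\gamma\|_{L^s})^{(1+g_0)/\theta}\le e^{c\|\gamma\|_{L^s}}$, not multiplied by $M$. This is what the paper's estimate of the drift term actually yields, namely $e^{\frac{\Lambda_B}{\lambda}M+c_\star\|\gamma\|_{L^s}}$. The example above shows this is the form to aim for, rather than the displayed $C_\star$.

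There is also a secondary gap in the source term. H\"older plus Young with $a=|w|/(R-r)$ cannot give $\tilde G(R^{1-n/q}\|f\|_{L^q})$ in general. It already fails for $G(t)=t^{a}/a$ with $1<a<q'$, by letting $f$ and $w$ concentrate on a shrinking ball. The paper instead applies the embedding $W^{1,1}_0\hookrightarrow L^{n/(n-1)}$ to $\eta^{1+g_0}w$ and then Young. That is where the factor $R^{1-n/q}$ and one more absorbable gradient term come from.
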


\begin{proof}
    Set $M:=\|u\|_{L^\infty(B_R^+)}+ C\|{\psi}\|_{C^{1,\alpha}(T_R)}$ (so that $\|w\|_{L^\infty(B_R^+)} \le M$ by Lemma \ref{lem3.1}) and take $\eta\in C^\infty_0(B_R)$ such that $0\le\eta\leq1$, $\eta|_{B_r}=1$, and $|\nabla \eta|\leq \dfrac{2}{R-r}$. 
    We define the function $\Phi(s) = s e^{\mu |s|}$, for $\mu = \frac{\Lambda_B}{\lambda} > 0$. Note that $\Phi'(s) = e^{\mu |s|}(1 + \mu |s|) \ge 1$ and $s \Phi(s) = |s|^2 e^{\mu |s|} \ge 0$.

    Define the difference $w := u - \tilde{\psi}$, where $\tilde{\psi}$ is the extension from Lemma \ref{lem3.1}. Since $u = \psi = \tilde{\psi}$ on $T_R$, the trace of $w$ vanishes on the flat boundary $T_R$. We set the test function $\psi = \Phi(w)\eta^{1+g_0}$. Since $w \in W^{1,G}(B_R^+) \cap L^\infty(B_R^+)$ vanishes on $T_R$ and $\eta$ vanishes on $\partial B_R^+ \cap \{x_n > 0\}$, $\psi \in W_0^{1,G}(B_R^+)$ is an admissible test function. Its gradient is given by:
	$$ \nabla \psi = e^{\mu |w|}(1 + \mu |w|) \eta^{1+g_0} (\nabla u - \nabla \tilde{\psi}) + (1+g_0)\Phi(w)\eta^{g_0}\nabla \eta\hbox{.} $$

    Testing \eqref{1dir} with $\psi$, and using the structural conditions \eqref{structural conditions}, by our choice of $\mu = \frac{\Lambda_B}{\lambda}$ so that $(\lambda + \lambda \mu |w| - \Lambda_B |w|) \ge \lambda$, and noting that $1 \le e^{\mu |w|} \le e^{\frac{\Lambda_B}{\lambda}M}$ and $(1 + \mu |w|)e^{\mu |w|} \le e^{2\mu |w|} \le e^{2\frac{\Lambda_B}{\lambda}M}$, we get:
    \begin{align}\label{cacc_bdry_1}
    	\lambda \int_{B_R^+} G(|\nabla u|)\eta^{1+g_0} \,dx 
        &\leq \Lambda e^{2\frac{\Lambda_B}{\lambda}M} \int_{B_R^+} g(|\nabla u|) |\nabla \tilde{\psi}| \eta^{1+g_0}\,dx \nonumber\\
        &\quad + (1+g_0)e^{\frac{\Lambda_B}{\lambda}M} \Lambda \int_{B_R^+}g(|\nabla u|)|w| |\nabla \eta|\eta^{g_0}\,dx \nonumber\\
        &\quad+ e^{\frac{\Lambda_B}{\lambda}M} \int_{B_R^+}\gamma(x)|w|g(|\nabla u|) \eta^{1+g_0}\,dx \nonumber\\
        &\quad+ e^{\frac{\Lambda_B}{\lambda}M} \int_{B_R^+}|f||w| \eta^{1+g_0}\,dx \nonumber \\
        &=: I + II + III + IV.
    \end{align}

    For the boundary extension term $I$, Lemma \ref{youngorlicz} with $\varepsilon > 0$ yields:
    \begin{equation*}
        \int_{B_R^+} g(|\nabla u|) |\nabla \tilde{\psi}| \eta^{1+g_0}\,dx \le \varepsilon \int_{B_R^+} G(|\nabla u|) \eta^{1+g_0}\,dx + C(\varepsilon) \int_{B_R^+} G(|\nabla \tilde{\psi}|)\eta^{1+g_0}\,dx.
    \end{equation*}

    Looking at \textit{II} and taking $\varepsilon > 0$, we obtain by \textit{(G-2)} and Lemma \ref{youngorlicz}:
    \begin{align*}
        \int_{B_R^+} g(|\nabla u|) |w| |\nabla \eta|\eta^{g_0}\,dx &\le \varepsilon \int_{B_R^+} \eta^{1+g_0} (1+g_0) G(|\nabla u|)\,dx + C(\varepsilon) \int_{B_R^+} G(|w||\nabla \eta|)\,dx.
    \end{align*}

    For \textit{III}, writing $\eta^{1+g_0} = \eta^{g_0}\eta$ and applying Young's inequality (Lemma \ref{youngorlicz}) with $\varepsilon > 0$, we obtain:
    \begin{align*}
        \int_{B_R^+} |\gamma(x)||w|g(|\nabla u|)\eta^{1+g_0}\,dx &\le \varepsilon \int_{B_R^+} \eta^{(1+g_0)\frac{g_0}{\delta}} G(|\nabla u|)\,dx + C(\varepsilon) \int_{B_R^+} G(|\gamma(x)|\eta|w|)\,dx \\
        &\le \varepsilon \int_{B_R^+} \eta^{1+g_0} G(|\nabla u|)\,dx + C(\varepsilon) \int_{B_R^+} G(|\gamma(x)|v)\,dx,
    \end{align*}
    where $v := \eta|w|$.

    By \textit{(G-2)}, we have:
    \begin{align*}
        \int_{B_R^+} G(|\gamma(x)|v)\,dx &\le C(g_0)\left(\int_{B_R^+} G(v)\,dx + \int_{B_R^+} |\gamma(x)|^{1+g_0} G(v)\,dx\right).
    \end{align*}

    Define $\tilde{w} := G(v)^{\frac{1}{1+\delta}}$, so that $\tilde{w}^{1+\delta} = G(v)$. Applying classical H\"older's inequality with conjugate exponents $\sigma = \frac{s}{1+g_0} > 1$ and $\sigma' = \frac{s}{s-(1+g_0)}$, and $t>1$ we get:
    \begin{equation}\label{holder_w_bdry}
        \int_{B_R^+} |\gamma(x)|^{1+g_0} G(v)\,dx \le \|\gamma\|_{L^s(B_R^+)}^{1+g_0} \left( \int_{B_R^+} \tilde{w}^{(1+\delta)\sigma'}\,dx \right)^{\frac{1}{\sigma'}} = \|\gamma\|_{L^s(B_R^+)}^{1+g_0} \|\tilde{w}\|_{L^{(1+\delta)\sigma'}(B_R^+)}^{1+\delta}.
    \end{equation}

    By Lemma \ref{Gv in W11}, $\tilde{w} \in W_0^{1, 1+\delta}(B_R^+)$ with estimate \eqref{intgrad_w_bound}. Since $s > \frac{n(1+g_0)}{1+\delta}$, we can apply the Gagliardo--Nirenberg interpolation inequality \cite[Theorem 12.83]{leoni2017first} to $\tilde{w}$ with $p = q = 1+\delta$, $r = (1+\delta)\sigma'$, and 
    \begin{equation*}
        \theta = 1 - \frac{n(1+g_0)}{s(1+\delta)} \in (0,1),
    \end{equation*}
    which yields a constant $C = C(n, s, \delta, g_0) > 0$ such that:
    \begin{equation*}
        \|\tilde{w}\|_{L^{(1+\delta)\sigma'}(B_R^+)}^{1+\delta} \le C \left(\|\tilde{w}\|_{L^{1+\delta}(B_R^+)}^{1+\delta}\right)^\theta \left(\|\nabla \tilde{w}\|_{L^{1+\delta}(B_R^+)}^{1+\delta}\right)^{1-\theta}.
    \end{equation*}
    Substituting this into \eqref{holder_w_bdry} and applying Young's inequality with $\tau > 0$ and conjugate exponents $\frac{1}{\theta}$ and $\frac{1}{1-\theta}$, we obtain:
    \begin{align*}
        \|\gamma\|_{L^s(B_R^+)}^{1+g_0} \|\tilde{w}\|_{L^{(1+\delta)\sigma'}(B_R^+)}^{1+\delta} 
        &\le \tau \|\nabla \tilde{w}\|_{L^{1+\delta}(B_R^+)}^{1+\delta} + C(\tau) \|\gamma\|_{L^s(B_R^+)}^{\frac{1+g_0}{\theta}} \|\tilde{w}\|_{L^{1+\delta}(B_R^+)}^{1+\delta} \\
        &\le \tau C \int_{B_R^+} G(|\nabla v|)\,dx + C(\tau)\left( 1+\|\gamma\|_{L^s(B_R^+)} \right)^{\frac{1+g_0}{\theta}} \int_{B_R^+} G(v)\,dx,
    \end{align*}
    where we used \eqref{grad_w_bound} and the identity $\|\tilde{w}\|_{L^{1+\delta}(B_R^+)}^{1+\delta} = \int_{B_R^+} G(v)\,dx$.

    Finally, expanding $|\nabla v| \le \eta|\nabla u| + \eta|\nabla \tilde{\psi}| + |w||\nabla \eta|$ via Lemma \ref{firstLema}, and choosing $\tau < \frac{\varepsilon}{C\cdot C(\varepsilon)}$, we conclude that:
    \begin{align*}
        III &\le 2\varepsilon e^{\frac{\Lambda_B}{\lambda}M} \int_{B_R^+} G(|\nabla u|)\eta^{1+g_0} \,dx \\
        &\quad + C(\varepsilon) e^{\frac{\Lambda_B}{\lambda}M + c_\star \|\gamma\|_{L^s(B_R^+)}} \left( \int_{B_R^+} G\left(\frac{|w|}{R-r}\right) dx + \int_{B_R^+} G(|\nabla \tilde{\psi}|)\,dx \right).
    \end{align*}

    Now, looking at \textit{IV}, we have that $w\eta^{1+g_{0}} \in W_{0}^{1,G}(B_R^+)$ by Lemma \ref{firstLema} and Lemma \ref{youngorlicz},
    \begin{align*}
        \int_{B_R^+}|f||w|\eta^{1+g_{0}} \, dx &\le \|f\|_{L^{q}(B_R^+)}|B_R^+|^{\frac{1}{n}-\frac{1}{q}}\|\eta^{1+g_0}w\|_{L^{\frac{n}{n-1}}(B_R^+)}\\
        &\le C \|f\|_{L^{q}(B_R^+)}|B_R^+|^{\frac{1}{n}-\frac{1}{q}}\int_{B_R^+}|\nabla (\eta^{1+g_0}w)|\,dx\\
        &\le \varepsilon \int_{B_R^+}G(|\nabla(\eta^{1+g_0} w)|)\, dx+C(\varepsilon)\tilde{G}\left(|B_R^+|^{\frac{1}{n}-\frac{1}{q}}\|f\|_{L^q(B_R^+)}\right)\\
        &\le C(\varepsilon)\tilde{G}\left(R^{1-\frac{n}{q}}\|f\|_{L^q(B_R^+)}\right) + \varepsilon C\left[ \int_{B_R^+} \eta^{1+g_0} G(|\nabla u|)\,dx \right. \\
        &\quad + \left. \int_{B_R^+} G(|\nabla \tilde{\psi}|)\,dx + \int_{B_R^+}G\left(\frac{|w|}{R-r}\right)\,dx \right].
    \end{align*}

    From this and by \eqref{cacc_bdry_1}, collecting all gradient terms parameterized by $\varepsilon$ and choosing $\varepsilon$ sufficiently small so that their sum is strictly less than $\lambda$, we absorb them into the left-hand side. This concludes the estimate:
    \begin{align*}
        \int_{B_r^+} G(|\nabla u|)\,dx \le C\cdot C_\star \left[ \int_{B_R^+} G\left(\frac{|w|}{R-r}\right)\,dx + \int_{B_R^+} G(|\nabla \tilde{\psi}|)\,dx + \tilde{G}\left(R^{1-\frac{n}{q}}\|f\|_{L^q(B_R^+)}\right) \right],
    \end{align*}
    where $C_\star = e^{c_\star\left(\frac{\Lambda_B}{\lambda} + \|\gamma\|_{L^s(B_R^+)}\right)\left( \|u\|_{L^\infty(B_R^+)}+ \|{\psi}\|_{C^{1,\alpha}(T_R)}\right)}$ and $C = C(n, \alpha, q, s, \delta, g_0, g(1), \lambda, \Lambda)$.
\end{proof}

\section{Weak Harnack Inequality and H\"older Regularity up to the Boundary}\label{section3}

In this section, we establish the global H\"older continuity up to the boundary for bounded weak solutions of \eqref{1}. While the classical regularity theory of Lieberman \cite{lieberman1991natural} addresses operators with natural Orlicz growth, its structural assumptions on the lower-order term do not accommodate the unbounded spatial drift $\gamma(x)g(|\nabla u|)$ with $\gamma \in L^s(\Omega)$. We first derive an interior weak Harnack inequality for non-negative supersolutions and deduce local H\"older decay of oscillations. We then extend these estimates to half-balls near the flattened boundary, from which Theorem \ref{thm:global_Calpha} follows by standard covering and boundary straightening arguments.

\begin{theorem}[Interior Weak Harnack Inequality]\label{weak harnack}
Let $u\in W^{1,G}(B_{R})\cap L^\infty(B_R)$ be a non-negative weak supersolution of \eqref{1} in $B_{R}$ with $R\le1$ satisfying \eqref{structural conditions} and \eqref{cond_s}. Then, there exist positive constants $p_\star$ and $C_\star$ depending only on $n, s, \delta, g_0, \lambda, \Lambda, \Lambda_B, \|u\|_{L^\infty(B_R)}$, and $\|\gamma\|_{L^s(B_R)}$, with
$$C_\star=Ce^{c\left(\frac{\Lambda_B}{\lambda}\|u\|_{L^\infty(B_R)} + \|\gamma\|_{L^s(B_R)}\right)},$$
such that
\begin{equation}\label{eq:theorem_weakharnack_negative}
  \left(\fint_{B_{2R/3}} u^{p_\star} \, dx\right)^{1/p_\star} \le C_\star\left( \inf_{B_{R/2}} u+\kappa(R)\right),
\end{equation}
where
\begin{equation*}
    \kappa(R) := R g^{-1}\!\left(R^{1-\frac{n}{q}}\|f\|_{L^q(B_R)}\right).
\end{equation*}
\end{theorem}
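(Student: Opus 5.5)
The plan is the Trudinger-style Moser iteration for supersolutions, adapted to the Orlicz setting. The lower-order terms are handled exactly as in the proof of Lemma \ref{lem:caccioppoli_boundary}.

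\emph{Reduction.} Set $\bar u := u + \kappa(R)$; we may assume $\kappa(R)>0$ by adding $\varepsilon$ and letting $\varepsilon\to 0$. By the scaling $x\mapsto Rx$, $u\mapsto u/R$ we reduce to $R=1$. Under this scaling $\kappa(R)$ becomes $g^{-1}(\|f\|_{L^q})$. The Orlicz growth is not homogeneous, but by (g-1) and (G-2) all structural constants change only by factors depending on $\delta$ and $g_0$. The $\Lambda_B G(|\nabla u|)$ term is absorbed by an exponential change of variables $v=\Phi(\bar u)$ with $\Phi' = e^{-\mu \bar u}$ and $\mu=\Lambda_B/\lambda$. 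This turns a supersolution into a supersolution of an equation with no quadratic gradient term. It costs the factor $e^{c\Lambda_B\|u\|_\infty/\lambda}$, which appears in $C_\star$. The source is handled by the bound $f \le g(\kappa)\,(\bar u/\kappa)^{\cdot}$ in integrated form: H\"older and Sobolev give $\int f \phi \le \|f\|_{L^q}\|\phi\|_{L^{q'}}$, and $\|f\|_{L^q}=g(\kappa)$, so $f$ behaves like a term $g(\kappa)\le g(\bar u)\cdot(\text{const})$ times a weight in $L^{q}$ with $q>n$. This is treated exactly like the drift $\gamma$.

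\emph{Step 1 (negative powers and $\log$).} Test with $\phi=\eta^{1+g_0}\bar u^{-\beta}$ for $\beta>0$. Ellipticity and (g-2) give an energy bound on the left, and the cut-off term is handled by the second Young inequality of Lemma \ref{youngorlicz}. The drift term $\int\gamma g(|\nabla \bar u|)\bar u^{-\beta}\eta^{1+g_0}$ is split by Young into $\varepsilon\int G(|\nabla\bar u|)\bar u^{-\beta-1}\eta^{1+g_0}$ plus $\int G(\gamma\bar u)\bar u^{-\beta-1}\eta^{1+g_0}$. The latter is controlled as in the proof of Lemma \ref{lem:caccioppoli_boundary}: H\"older with exponent $s/(1+g_0)$, then Gagliardo--Nirenberg applied to $G(\eta\bar u^{\cdot})^{1/(1+\delta)}$ via Lemma \ref{Gv in W11}, which is allowed by \eqref{cond_s}, and then absorption. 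The output is $\|\gamma\|_{L^s}^{(1+g_0)/\theta}$, which becomes the $e^{c\|\gamma\|}$ factor once the iteration is summed. With $\beta=1$ this yields $\int_{B_r}|\nabla\log\bar u|^{1+\delta}\lesssim r^{n-1-\delta}$ after using $\bar u\ge\kappa$ to normalize the inhomogeneous Orlicz factors. John--Nirenberg then gives a small $p_0>0$ with $\fint \bar u^{p_0}\,\fint \bar u^{-p_0}\le C$.

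\emph{Step 2 (Moser iterations).} For powers $\bar u^{-\beta}$ with $\beta>1$, iterate Caccioppoli with Sobolev (exponent $\chi=n/(n-1-\delta)$, or any $\chi>1$ if $1+\delta\ge n$) on shrinking balls. This gives $\sup_{B_{1/2}}\bar u^{-1}\le C(\fint_{B_{2/3}}\bar u^{-p_0})^{1/p_0}$. For small positive powers $\bar u^{\beta}$ with $\beta\in(0,1)$, the same iteration raises the exponent from $p_0$ up to some $p_\star$ below the Sobolev threshold. Combining the two iterations with Step 1 yields \eqref{eq:theorem_weakharnack_negative}.

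\emph{Main obstacle.} The hardest part is the non-homogeneity $\delta<g_0$. Each test function produces mixed powers $\bar u^{\beta(1+\delta)}$ versus $\bar u^{\beta(1+g_0)}$, and the drift interpolation must close uniformly in $\beta$. I would first try to keep all quantities in the form $G(\cdot)$, work with $w=G(\eta\bar u^{\cdot})^{1/(1+\delta)}$, and use $\bar u\le \|u\|_\infty+\kappa$ and $\bar u\ge\kappa$ to compare the two powers. This comparison costs constants depending on $\|u\|_\infty$, which the statement allows.
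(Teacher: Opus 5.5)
Your outline has the same skeleton as the paper's proof: an exponential weight with $\beta=\Lambda_B/\lambda$ to remove the $\Lambda_B G(|\nabla u|)$ term, the drift handled by H\"older with exponent $s/(1+g_0)$ plus Gagliardo--Nirenberg, a Moser iteration for negative powers, and a logarithmic estimate followed by John--Nirenberg. However, Steps 1--2 break down at exactly the point you call the main obstacle, and your proposed fix does not give the theorem.

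\emph{Where it fails.} With power test functions $\eta^{1+g_0}\bar u^{-\beta}$ the Caccioppoli inequality is, schematically, $\int \eta^{1+g_0}\bar u^{-\beta-1}G(|\nabla \bar u|)\lesssim C(\beta)\int \bar u^{-\beta-1}G(\bar u|\nabla\eta|)$.
\begin{itemize}
\item For $\beta=1$ this is not a logarithmic estimate unless $G(t)\sim t^2$. Already for $G(t)=t^p$ it reads $\int\eta^p\bar u^{p-2}|\nabla \log\bar u|^p\lesssim r^{-p}\int \bar u^{p-2}$.
\item Your exponents $\beta=1$, $\beta>1$, $\beta\in(0,1)$ are the linear bookkeeping; for $p$-growth the critical power is $p-1$. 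When $\delta<g_0$, no single power of $\bar u$ is critical at all.
\item For the same reason the iteration does not close. The quantities produced at each step, $\bar u^{-\beta-1}G(\bar u/r)$ (or $G(\eta\bar u^{-a})$ in your $w$-formulation), are not powers of one fixed function. Raising them to the power $\chi$ therefore does not return you to the same family.
\end{itemize}
Your remedy, comparing $\bar u^{1+\delta}$ with $\bar u^{1+g_0}$ through $\kappa\le \bar u\le\|u\|_\infty+\kappa$, uses both bounds and costs factors of the type $\bigl((\|u\|_\infty+\kappa)/\kappa\bigr)^{g_0-\delta}$. These factors:
\begin{itemize}
\item depend on $\kappa$, hence on $f$ and $R$;
\item are not of the form $Ce^{c(\Lambda_B\|u\|_\infty/\lambda+\|\gamma\|_{L^s})}$;
\item blow up as $\kappa\to0$, so your regularization ``add $\varepsilon$ and let $\varepsilon\to0$'' destroys the estimate precisely when $f\equiv0$.
\end{itemize}
Even a one-sided comparison is not uniform in $R$, since after your rescaling $u\mapsto u(R\,\cdot)/R$ the sup norm becomes $\|u\|_\infty/R$.

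\emph{The missing idea} is to build the test functions from $G$ at the natural scale, not from powers of $\bar u$. The paper sets $v=\bar u/(\eta R)$ and tests with $\varphi=G(v)^{-q}\eta^{r}e^{-\beta\bar u}\bar u$.
\begin{itemize}
\item The bound $vg(v)\ge(1+\delta)G(v)$ gives the principal term a definite sign.
\item The pointwise dichotomy $v\lessgtr |\nabla\bar u|/(\eta R)$, together with the monotonicity of $t\mapsto G(t)/t^{1+\delta}$, handles every gradient term without any lower bound on $\bar u$. This includes the drift interpolation through $w=\eta^{(r+1+g_0)/(1+\delta)}G(v)^{-(q-1)/(1+\delta)}$.
\item The iteration runs on powers of $G(v)^{-1}$, a family closed under $t\mapsto t^{\chi}$, with Sobolev applied in $W^{1,1}_0$ to $\eta^rG(v)^{-(q-1)}$. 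Only at the end is $G(v)^{-1}$ converted back to $\bar u^{-1}$, using the growth bounds of $G$.
\item For the logarithmic estimate the paper tests with $\eta^{1+g_0}e^{-\beta\bar u}\bar u/G(\bar u/\rho)$, the Orlicz analogue of $\bar u^{1-p}$. Its cutoff term is controlled by $(\bar u/\rho)g(\bar u/\rho)/G(\bar u/\rho)\le 1+g_0$. This yields $\fint_{B_{\rho/2}}|\nabla\log\bar u|\lesssim\rho^{-1}$ with constants independent of $\kappa$; the lower bound $\bar u\ge\kappa$ is used only to absorb $f$.
\end{itemize}
Finally, your positive-power iteration is unnecessary. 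The statement only asserts the existence of some $p_\star>0$, so the John--Nirenberg exponent itself can serve as $p_\star$, which is what the paper does.
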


\begin{proof}
Following \cite{lieberman1991natural}, for any $\tau > 0$ we set $\bar{u} := u + \kappa(R) + \tau$ (which allows us to absorb the source term $f$ into the homogeneous structure and ensures $\bar{u} \ge \tau > 0$; at the end of the proof we shall let $\tau \to 0^+$) and define $v := \frac{\bar{u}}{\eta R}$, where $\eta\in C_0^{0,1}(B_R)$ has support in $B_{2R/3}$, satisfies $0\le\eta\le1$, $\eta\equiv1$ in $B_{R/2}$, and $|\nabla\eta|\le\frac{12}{R}$.

For an exponent $q > 1$ to be varied during the iteration, a fixed real exponent $r > 2(1+g_0)$, and $\beta := \Lambda_B / \lambda$, we consider the non-negative test function 
\begin{align}\label{testfunction}
    \varphi := G(v)^{-q}\eta^{r}\exp(-\beta\bar{u})\bar{u} \in W_0^{1,G}(B_R).
\end{align}
Computing the gradient of $\varphi$ yields
\begin{equation*}
  \nabla \varphi = \varphi \left[ -q \frac{g(v)}{G(v)} \nabla v + \frac{r}{\eta} \nabla\eta - \beta \nabla \bar{u} + \frac{1}{\bar{u}} \nabla \bar{u} \right].
\end{equation*}

Testing the weak supersolution inequality with $\varphi$ and choosing $q$ large enough so that $q \lambda \delta > 2\Lambda$, we obtain
\begin{align}\label{eq:energy_supersol_absorbed}
    q \lambda \delta \int_{B_R} \eta^r &G(v)^{-q} g(|\nabla \bar{u}|)|\nabla \bar{u}| \exp(-\beta \bar{u}) \, dx \nonumber \\
    &\le C(g_0, r) \Lambda q \int_{B_R} \eta^{r-1} G(v)^{-q} \frac{\bar{u}}{R} g(|\nabla \bar{u}|) \exp(-\beta \bar{u}) \, dx \nonumber \\
    &\quad + C \int_{B_R} \gamma(x) \bar{u} g(|\nabla \bar{u}|) \exp(-\beta \bar{u}) G(v)^{-q} \eta^r \, dx.
\end{align}

Applying Young's inequality (Lemma \ref{youngorlicz}) to the first integral on the right-hand side with $b = |\nabla \bar{u}|$ and $a = C \Lambda \eta^{-1} \frac{\bar{u}}{R}$, we obtain the pointwise bound
\begin{equation*}
    C \Lambda \eta^{-1} \frac{\bar{u}}{R} g(|\nabla \bar{u}|) \le \varepsilon q |\nabla \bar{u}| g(|\nabla \bar{u}|) + (\varepsilon q)^{-1/\delta} C \left( \eta^{-1} \frac{\bar{u}}{R} \right) g\!\left( \eta^{-1} \frac{\bar{u}}{R} \right).
\end{equation*}
Similarly, for the second integral, setting $b = |\nabla \bar{u}|$ and $a = \gamma(x)\bar{u}$ yields
\begin{equation*}
    \gamma(x) \bar{u} g(|\nabla \bar{u}|) \le \varepsilon q |\nabla \bar{u}| g(|\nabla \bar{u}|) + (\varepsilon q)^{-1/\delta} \gamma(x) \bar{u} g\big(\gamma(x) \bar{u}\big).
\end{equation*}

Substituting both estimates into \eqref{eq:energy_supersol_absorbed}, choosing $\varepsilon > 0$ sufficiently small to absorb the gradient terms into the left-hand side, and using $t g(t) \le (1+g_0)G(t)$, we arrive at
\begin{align}\label{enegyinequalitystep}
    e^{-\beta \|u\|_{L^\infty(B_R)}}\frac{q}{2}\int_{B_R} \eta^r G(v)^{-q} g(|\nabla \bar{u}|)|\nabla \bar{u}| \, dx &\le C q^{-\frac{1}{\delta}}\left[ \int_{B_R} \eta^r G(v)^{-q} \left( \eta^{-1} \frac{\bar{u}}{R} \right) g\!\left( \eta^{-1} \frac{\bar{u}}{R} \right) dx \right.\nonumber\\
    &\left.\quad + \int_{B_R} \eta^r G(v)^{-q} \gamma(x) \bar{u} g\big(\gamma(x) \bar{u}\big) \, dx \right]\nonumber\\
    &\le C q^{-\frac{1}{\delta}}\left[\int_{B_R} \eta^r G(v)^{-(q-1)} \, dx \right.\\
    &\left.\quad + \int_{B_R} \eta^r G(v)^{-q} G\big(\gamma(x) \bar{u}\big) \, dx\right].\nonumber
\end{align}

To estimate the second integral on the right-hand side of \eqref{enegyinequalitystep}, we use $(G\text{-}2)$ of Lemma \ref{firstLema} (noting that $\bar{u} = \eta R v \le \eta v$ since $R \le 1$) and apply H\"older's inequality with $\sigma = \frac{s}{1+g_0} > 1$ and $\sigma' = \frac{s}{s-(1+g_0)}$:
\begin{align*}    
 \int_{B_R} \eta^r G(v)^{-q} G\big(\gamma(x) \bar{u}\big) \, dx &\le C(g_0) \int_{B_R} \eta^r G(v)^{-q}(1+\gamma(x))^{1+g_0} G(\bar{u}) \, dx\\
 &\le C(g_0) \int_{B_R} \eta^r G(v)^{-q}(1+\gamma(x))^{1+g_0} \eta^{1+g_0} G(v) \, dx\\
 &= C(g_0) \int_{B_R} \eta^{r+1+g_0} G(v)^{-(q-1)}(1+\gamma(x))^{1+g_0} \, dx\\
 &\le C(g_0) \|1+\gamma\|_{L^s(B_R)}^{1+g_0} \left( \int_{B_R} \left( \eta^{r+1+g_0} G(v)^{-(q-1)} \right)^{\sigma'} \, dx \right)^{\frac{1}{\sigma'}}\\
 &\le C e^{c(g_0)\|\gamma\|_{L^s(B_R)}} \left( \int_{B_R} w^{(1+\delta)\sigma'}\, dx \right)^{\frac{1}{\sigma'}},
\end{align*}
where we defined the auxiliary function
\begin{equation*}
    w := \eta^{\frac{r+1+g_0}{1+\delta}} G(v)^{-\frac{q-1}{1+\delta}} \in W_0^{1, 1+\delta}(B_R), \quad \text{so that} \quad w^{1+\delta} = \eta^{r+1+g_0} G(v)^{-(q-1)}.
\end{equation*}

By our assumption $s > \frac{n(1+g_0)}{1+\delta}$, the Gagliardo--Nirenberg interpolation inequality \cite[Theorem 12.83]{leoni2017first} implies that for $\theta = 1 - \frac{n(1+g_0)}{s(1+\delta)} \in (0,1)$, there exists $C = C(n, s, \delta, g_0) > 0$ such that
\begin{equation*}
    \|w\|_{L^{(1+\delta)\sigma'}(B_R)} \le C \|w\|_{L^{1+\delta}(B_R)}^\theta \|\nabla w\|_{L^{1+\delta}(B_R)}^{1-\theta}.
\end{equation*}
Raising both sides to the power $1+\delta$ and applying Young's inequality with $\varepsilon > 0$ and conjugate exponents $\frac{1}{\theta}$ and $\frac{1}{1-\theta}$, we obtain
\begin{equation}\label{eq:young_w_interp}
    \left( \int_{B_R} w^{(1+\delta)\sigma'}\, dx \right)^{\frac{1}{\sigma'}} = \|w\|_{L^{(1+\delta)\sigma'}(B_R)}^{1+\delta} \le \varepsilon \int_{B_R} |\nabla w|^{1+\delta}\,dx + C(\varepsilon) \int_{B_R} w^{1+\delta}\,dx.
\end{equation}

Computing the gradient of $w$ directly gives
\begin{equation*}
    \nabla w = \frac{r+1+g_0}{1+\delta} \eta^{\frac{r+1+g_0}{1+\delta}-1} G(v)^{-\frac{q-1}{1+\delta}} \nabla \eta - \frac{q-1}{1+\delta} \eta^{\frac{r+1+g_0}{1+\delta}} G(v)^{-\frac{q-1}{1+\delta}-1} g(v) \nabla v.
\end{equation*}
Using $\eta |\nabla v| \le \frac{|\nabla \bar{u}|}{R} + v |\nabla \eta|$ and the structural bound $g(v) \le \frac{1+g_0}{v} G(v)$, we obtain
\begin{equation*}
    |\nabla w| \le C(r, \delta, g_0) q \eta^{\frac{r+g_0-\delta}{1+\delta}} G(v)^{-\frac{q-1}{1+\delta}} |\nabla \eta| + \frac{(q-1)(1+g_0)}{1+\delta} \eta^{\frac{r+g_0-\delta}{1+\delta}} G(v)^{-\frac{q}{1+\delta}} \left[ \frac{G(v)^{\frac{1}{1+\delta}}}{v} \frac{|\nabla \bar{u}|}{R} \right].
\end{equation*}
Recall that $t \mapsto \frac{G(t)}{t^{1+\delta}}$ is non-decreasing on $(0, \infty)$. Comparing $v$ with $\frac{|\nabla \bar{u}|}{\eta R}$, if $v \le \frac{|\nabla \bar{u}|}{\eta R}$ we have
\begin{equation*}
    \frac{G(v)^{\frac{1}{1+\delta}}}{v} \frac{|\nabla \bar{u}|}{R} = \eta \frac{G(v)^{\frac{1}{1+\delta}}}{v} \frac{|\nabla \bar{u}|}{\eta R} \le \eta G\!\left(\frac{|\nabla \bar{u}|}{\eta R}\right)^{\frac{1}{1+\delta}} \le (1+g_0) \eta^{-\frac{g_0-\delta}{1+\delta}} R^{-\frac{1+g_0}{1+\delta}} G(|\nabla \bar{u}|)^{\frac{1}{1+\delta}},
\end{equation*}
where we used $(G\text{-}2)$ of Lemma \ref{firstLema}. Conversely, if $\frac{|\nabla \bar{u}|}{\eta R} < v$, we immediately have $\frac{G(v)^{\frac{1}{1+\delta}}}{v} \frac{|\nabla \bar{u}|}{R} \le \frac{\eta}{R} G(v)^{\frac{1}{1+\delta}}$. 
Combining both cases into the bound for $|\nabla w|$ and using $|\nabla \eta| \le \frac{12}{R}$, we obtain
\begin{equation*}
    |\nabla w| \le C q \eta^{\frac{r}{1+\delta}} R^{-\frac{1+g_0}{1+\delta}} G(v)^{-\frac{q}{1+\delta}} G(|\nabla \bar{u}|)^{\frac{1}{1+\delta}} + C \frac{q}{R} \eta^{\frac{r+g_0-\delta}{1+\delta}} G(v)^{-\frac{q-1}{1+\delta}}.
\end{equation*}
Raising both sides to the power $1+\delta$, using $(1+\delta)G(|\nabla \bar{u}|) \le g(|\nabla \bar{u}|)|\nabla \bar{u}|$ and $R^{-(1+\delta)} \le R^{-(1+g_0)}$ (since $R \le 1$), and integrating over $B_R$, we arrive at
\begin{align}\label{eq:grad_w_final}
    \int_{B_R} |\nabla w|^{1+\delta} \, dx &\le C\frac{q^{1+\delta}}{R^{1+g_0}} \left(\int_{B_R} \eta^{r} G(v)^{-q} g(|\nabla \bar{u}|)|\nabla \bar{u}| \, dx + \int_{B_R} \eta^{r+g_0-\delta} G(v)^{-(q-1)} \, dx\right),
\end{align}
where $C = C(n, \delta, g_0, r) > 0$ is independent of $\tau$. Moreover, since $\eta \le 1$, we trivially have
\begin{equation*}
    \int_{B_R} w^{1+\delta} \, dx = \int_{B_R} \eta^{r+1+g_0} G(v)^{-(q-1)} \, dx \le \int_{B_R} \eta^{r+g_0-\delta} G(v)^{-(q-1)} \, dx.
\end{equation*}

Substituting \eqref{eq:grad_w_final} into \eqref{eq:young_w_interp}, we deduce that
\begin{align*}
\int_{B_R} \eta^r G(v)^{-q} G\big(\gamma(x)\bar{u}\big)\,dx
&\le C \frac{q^{1+\delta}}{R^{1+g_0}}e^{c\|\gamma\|_{L^s(B_R)}} \Biggl[\varepsilon\Biggl(
\int_{B_R} \eta^{r} G(v)^{-q} g(|\nabla \bar{u}|)|\nabla \bar{u}|\,dx
\\ &\quad+ 
\int_{B_R} \eta^{r+g_0-\delta} G(v)^{-(q-1)}\,dx
\Biggr)+C(\varepsilon)\int_{B_R} \eta^{r+g_0-\delta} G(v)^{-(q-1)} \, dx\Biggr].
\end{align*}
Inserting this bound back into \eqref{enegyinequalitystep} yields
\begin{align*}
    e^{-\beta \|u\|_{L^\infty(B_R)}}\frac{q}{2}\int_{B_R} \eta^r G(v)^{-q} g(|\nabla \bar{u}|)|\nabla \bar{u}| \, dx &\le C\int_{B_R} \eta^r G(v)^{-(q-1)} \, dx + C \frac{q^{1+\delta}}{R^{1+g_0}}e^{c\|\gamma\|_{L^s(B_R)}} \\
&\quad \cdot\Biggl[\varepsilon\Biggl(
\int_{B_R} \eta^{r} G(v)^{-q} g(|\nabla \bar{u}|)|\nabla \bar{u}|\,dx
\\
&+ 
\int_{B_R} \eta^{r+g_0-\delta} G(v)^{-(q-1)}\,dx
\Biggr)\\
&\quad+C(\varepsilon)\int_{B_R} \eta^{r+g_0-\delta} G(v)^{-(q-1)} \, dx\Biggr].
\end{align*}

Setting 
\begin{equation*}
    \varepsilon = \frac{q e^{-\beta \|u\|_{L^\infty(B_R)}} R^{1+g_0}}{4 C q^{1+\delta} e^{c\|\gamma\|_{L^s(B_R)}}},
\end{equation*}
recalling that $C(\varepsilon)=C(\theta)\varepsilon^{-\frac{\theta}{1-\theta}}$, subtracting the absorbed gradient term from both sides, and noting that $\eta^{r+g_0-\delta} \le \eta^r$, we deduce the consolidated Caccioppoli energy estimate:
\begin{align}\label{eq:Caccioppoli_final}
    \int_{B_R} \eta^r G(v)^{-q} g(|\nabla \bar{u}|)|\nabla \bar{u}| \, dx &\le C \left( \frac{q}{R} \right)^{\frac{1+g_0}{1-\theta}}D_\star \int_{B_R} \eta^{r} G(v)^{-(q-1)} \, dx,
\end{align}
where $D_\star := e^{c(\beta \|u\|_{L^\infty(B_R)}+\|\gamma\|_{L^s(B_R)})}$.

To link the Caccioppoli estimate \eqref{eq:Caccioppoli_final} with the Moser iteration, we follow \cite{lieberman1991natural} and apply the Gagliardo--Nirenberg--Sobolev inequality in $W_0^{1,1}(B_R)$ to the function $\Phi := \eta^{r} G(v)^{-(q-1)}$:
\begin{equation}\label{eq:Sobolev_W11}
    \left[ \int_{B_R} \left( \eta^r G(v)^{-(q-1)} \right)^\chi \, dx \right]^{\frac{1}{\chi}} \le C(n) \int_{B_R} \left| \nabla \left( \eta^r G(v)^{-(q-1)} \right) \right| \, dx,
\end{equation}
where $\chi := \frac{n}{n-1} > 1$. Computing the gradient on the right-hand side of \eqref{eq:Sobolev_W11} and using $\eta |\nabla v| \le \frac{|\nabla \bar{u}|}{R} + v |\nabla \eta|$ along with $v g(v) \le (1+g_0)G(v)$, we obtain pointwisely:
\begin{align}\label{eq:grad_Phi_expansion}
    \left| \nabla \left( \eta^r G(v)^{-(q-1)} \right) \right| &\le r \eta^{r-1} |\nabla \eta| G(v)^{-(q-1)} + (q-1) \eta^r G(v)^{-q} g(v) |\nabla v| \nonumber\\
    &\le C(r, g_0) q \eta^{r-1} |\nabla \eta| G(v)^{-(q-1)} + (q-1) \eta^{r-1} G(v)^{-q} g(v) \frac{|\nabla \bar{u}|}{R}.
\end{align}

To bound the last term in \eqref{eq:grad_Phi_expansion} via our Caccioppoli energy \eqref{eq:Caccioppoli_final}, we distinguish two pointwise regimes: if $\frac{|\nabla \bar{u}|}{\eta R} \le v$, then
\begin{equation*}
    \eta^{r-1} G(v)^{-q} g(v) \frac{|\nabla \bar{u}|}{R} = \eta^r G(v)^{-q} g(v) \frac{|\nabla \bar{u}|}{\eta R} \le \eta^r G(v)^{-q} g(v) v \le (1+g_0) \eta^r G(v)^{-(q-1)}.
\end{equation*}
Conversely, if $\frac{|\nabla \bar{u}|}{\eta R} > v$, since $g$ is increasing and $\eta, R \le 1$, the upper growth bound $(G\text{-}2)$ yields $g(v) \le g\!\left(\frac{|\nabla \bar{u}|}{\eta R}\right) \le (\eta R)^{-g_0} g(|\nabla \bar{u}|)$, so that
\begin{equation*}
    \eta^{r-1} G(v)^{-q} g(v) \frac{|\nabla \bar{u}|}{R} \le \frac{1}{R^{1+g_0}} \eta^{r-1-g_0} G(v)^{-q} g(|\nabla \bar{u}|)|\nabla \bar{u}|.
\end{equation*}
Combining both regimes into \eqref{eq:grad_Phi_expansion}, integrating over $B_R$, and applying the Caccioppoli inequality \eqref{eq:Caccioppoli_final} (with $r-1-g_0$ in place of $r$), we conclude that
\begin{align*}
    \int_{B_R} \left| \nabla \left( \eta^r G(v)^{-(q-1)} \right) \right| \, dx 
    &\le \frac{C q}{R} \int_{B_R} \eta^{r-1} G(v)^{-(q-1)} \, dx + \frac{q-1}{R^{1+g_0}} \int_{B_R} \eta^{r-1-g_0} G(v)^{-q} g(|\nabla \bar{u}|)|\nabla \bar{u}| \, dx \\
    &\le C \left( \frac{q}{R} \right)^{\theta_0}D_\star \int_{B_R} \eta^{r-1-g_0} G(v)^{-(q-1)} \, dx,
\end{align*}
where $\theta_0 := 1 + \frac{1+g_0}{1-\theta} + g_0$. Substituting this directly into \eqref{eq:Sobolev_W11} yields the one-step Moser iteration inequality:
\begin{equation}\label{eq:Moser_one_step}
    \left( \int_{B_R} \eta^{r\chi} G(v)^{-\chi(q-1)} \, dx \right)^{\frac{1}{\chi}} \le C \left( \frac{q}{R} \right)^{\theta_0}D_\star \int_{B_R} \eta^{r-1-g_0} G(v)^{-(q-1)} \, dx.
\end{equation}

Given any initial exponent $p_0 > 0$, we define the sequence $p_k := q_k - 1 = p_0\chi^k$ for $k = 0, 1, 2, \dots$, along with the shrinking concentric balls $\mathcal{B}_{k} := B_{R_k}$ of radii
\begin{equation*}
    R_k := \frac{R}{2} + \frac{R}{6} 2^{-k},
\end{equation*}
so that $R_0 = 2R/3$ and $R_\infty = \lim_{k \to \infty} R_k = R/2$. Associated with these radii, we select standard cut-off functions $\eta_k \in C_0^\infty(\mathcal{B}_{k})$ such that $0 \le \eta_k \le 1$, $\eta_k \equiv 1$ on $\mathcal{B}_{k+1}$, and $|\nabla \eta_k| \le C \frac{2^k}{R}$. 

Substituting $q - 1 = p_k$ and $\eta = \eta_k$ into \eqref{eq:Moser_one_step} and iterating over $k \ge 0$, we obtain
\begin{equation*}
    \sup_{B_{R/2}} G(v)^{-1} \le C(p_0)D_\star \left( \fint_{B_{2R/3}} G(v)^{-p_0} \, dx \right)^{1/p_0}.
\end{equation*}
Recalling that $v = \frac{\bar{u}}{\eta R} = \frac{u+\kappa(R)+\tau}{\eta R}$, using the growth equivalence of $G$ to pass from $G(v)^{-1}$ to $\bar{u}^{-1}$, and letting $\tau \to 0^+$, we get
\begin{equation*}
    \sup_{B_{R/2}} \big((u+\kappa(R))^{-1}\big) \le C(p_0)D_\star \left( \fint_{B_{2R/3}} \big((u+\kappa(R))^{-1}\big)^{p_0} dx \right)^{1/p_0}.
\end{equation*}
Since $\sup_{B_{R/2}} ((u+\kappa(R))^{-1}) = (\inf_{B_{R/2}} u + \kappa(R))^{-1}$, inverting the inequality yields
\begin{equation}\label{weak harnack to bar u}
    \left(\fint_{B_{2R/3}}(u+\kappa(R))^{-p_0}\,dx\right)^{-1/p_0} \le C(p_0)D_\star \left( \inf_{B_{R/2}} u + \kappa(R) \right),
\end{equation}
for any $p_0 > 0$.

To bridge negative and positive exponents via the John--Nirenberg lemma, let $B_\rho \subset B_R$ and take $\eta\in C_0^{0,1}(B_\rho)$ such that $0\le\eta\le1$, $\eta\equiv1$ in $B_{\rho/2}$, and $|\nabla\eta|\le\frac{2}{\rho}$. Since $G(t) \le \frac{tg(t)}{1+\delta}$, for $\bar{u} = u + \kappa(R) + \tau$ we have the algebraic bound
\begin{equation}\label{difference control}
G\!\left(\frac{\bar{u}}{\rho}\right)^{-1}-G\!\left(\frac{\bar{u}}{\rho}\right)^{-2}g\!\left(\frac{\bar{u}}{\rho}\right)\frac{\bar{u}}{\eta\rho}\le-\frac{\delta}{1+\delta}\frac{1}{\eta}G\!\left(\frac{\bar{u}}{\rho}\right)^{-1}.
\end{equation}
We test the supersolution inequality on $B_\rho$ with the function
\begin{align}\label{testfunction2}
    \varphi := G(\bar{u}/\rho)^{-1}\eta^{1+g_0}\exp(-\beta \bar{u})\bar{u},
\end{align}
where $\beta = \Lambda_B / \lambda$. Setting $v_\rho := \bar{u}/\rho$, the gradient of $\varphi$ satisfies
\begin{equation*}
  \nabla \varphi = \eta^{1+g_0} \exp(-\beta \bar{u}) \left[ G(v_\rho)^{-1} - G(v_\rho)^{-2} g(v_\rho) v_\rho \right] \nabla \bar{u} - \beta \varphi \nabla \bar{u} + (1+g_0) \eta^{-1} \varphi \nabla\eta.
\end{equation*}
By \eqref{difference control}, taking the inner product $\langle \mathcal{A}(x, u, \nabla u), \nabla\varphi \rangle$ gives
\begin{equation*}
    \langle \mathcal{A}, \nabla\varphi \rangle \le -\frac{\delta}{1+\delta} \eta^{1+g_0} \exp(-\beta \bar{u}) G(v_\rho)^{-1} \langle \mathcal{A}, \nabla \bar{u} \rangle - \beta \varphi \langle \mathcal{A}, \nabla \bar{u} \rangle + (1+g_0) \eta^{-1} \varphi \langle \mathcal{A}, \nabla\eta \rangle.
\end{equation*}
By the structural conditions \eqref{structural conditions} and our choice $\beta = \Lambda_B/\lambda$, the quadratic gradient term $\Lambda_B G(|\nabla u|)\varphi$ is absorbed by $\beta \varphi \langle \mathcal{A}, \nabla \bar{u} \rangle$, while the source term $f$ is absorbed via $\bar{u} \ge \kappa(R)$. Rearranging the remaining terms yields
\begin{align*}
    \frac{\delta \lambda}{1+\delta} \int_{B_\rho} \eta^{1+g_0} \exp(-\beta \bar{u}) \frac{g(|\nabla \bar{u}|)|\nabla \bar{u}|}{G(\bar{u}/\rho)} \, dx &\le \Lambda (1+g_0) \int_{B_\rho} \eta^{-1} \varphi g(|\nabla \bar{u}|) |\nabla\eta| \, dx \\
    &\quad + \int_{B_\rho} \gamma(x) g(|\nabla \bar{u}|) \varphi \, dx.   
\end{align*}

Since $0 < \bar{u} \le \|u\|_{L^\infty(B_R)} + \kappa(R) + \tau$, applying Young's inequality and the Gagliardo--Nirenberg interpolation argument identically as in the first part of the proof, we obtain
\begin{equation}
    \int_{B_{\rho}}\frac{|\nabla \bar{u}|g(|\nabla \bar{u}|)}{G(\bar{u}/\rho)}\eta^{1+g_{0}}\,dx \le C e^{\beta \|u\|_{L^\infty(B_R)}+c\|\gamma\|_{L^s(B_R)}} \int_{B_{\rho}} \eta^{1+g_{0}} \frac{\bar{u}}{\rho G(\bar{u}/\rho)}g\!\left(\frac{\bar{u}}{\rho\eta}\right) dx.
\end{equation}
By Lemma \ref{firstLema}, the integrand on the right-hand side is uniformly bounded:
\begin{equation*}
    \eta^{1+g_{0}} \frac{\bar{u}}{\rho G(\bar{u}/\rho)} \eta^{-g_0}g\!\left(\frac{\bar{u}}{\rho}\right) \le \frac{(\bar{u}/\rho) g(\bar{u}/\rho)}{G(\bar{u}/\rho)} \le 1+g_0.
\end{equation*}
Consequently, estimating the average of $|\nabla \ln \bar{u}| = \frac{|\nabla \bar{u}|}{\bar{u}}$ over $B_{\rho/2}$ (where $\eta \equiv 1$), we find
\begin{align}
\fint_{B_{\rho/2}} |\nabla \ln \bar{u}| \, dx = \fint_{B_{\rho/2}}\frac{|\nabla \bar{u}|}{\bar{u}}\,dx &\le \frac{C}{\rho^{n+1}}\left[\int_{B_{\rho/2}}\frac{|\nabla \bar{u}|g(|\nabla \bar{u}|)}{(\bar{u}/\rho)g(\bar{u}/\rho)}\,dx+\int_{B_{\rho/2}}\frac{(\bar{u}/\rho)g(\bar{u}/\rho)}{(\bar{u}/\rho)g(\bar{u}/\rho)}\,dx\right] \nonumber\\
&\le C e^{\beta \|u\|_{L^\infty(B_R)} + c\|\gamma\|_{L^s(B_R)}} \rho^{-1}. \label{eq:4.2}
\end{align}
By the classical Poincaré inequality on balls, this implies that $\ln \bar{u} \in \mathrm{BMO}(B_{2R/3})$ with
$$\|\ln \bar{u}\|_{\mathrm{BMO}(B_{2R/3})}\leq C e^{\beta \|u\|_{L^\infty(B_R)}+c\|\gamma\|_{L^s(B_R)}}.$$

It then follows from the John--Nirenberg lemma \cite[Theorem 6.25]{giaquinta2012introduction} that there exist a dimensional constant $C=C(n) > 0$ and an exponent $p_\star := \frac{c(n)}{\|\ln \bar{u}\|_{\mathrm{BMO}}} > 0$ such that
\begin{equation}\label{eq:4.3}
\left(\fint_{B_{2R/3}}\bar{u}^{p_\star}\,dx\right)\left(\fint_{B_{2R/3}}\bar{u}^{-p_\star}\,dx\right)\le C.
\end{equation}
Combining \eqref{weak harnack to bar u} (with $p_0 = p_\star$) and \eqref{eq:4.3}, and letting $\tau \to 0^+$, we conclude that
\begin{equation*}
    \left( \fint_{B_{2R/3}} u^{p_\star} \, dx \right)^{1/p_\star} \le \left( \fint_{B_{2R/3}} (u+\kappa(R))^{p_\star} \, dx \right)^{1/p_\star} \le C_\star\left(\inf_{B_{R/2}} u+\kappa(R)\right).
\end{equation*}
This completes the proof.
\end{proof}

Before stating the local H\"older continuity result, for any concentric ball $B_r \subset B_R$ ($0 < r \le R$), we denote the essential supremum, essential infimum, and oscillation of $u$ over $B_r$ respectively by
\begin{equation*}
    M_r := \sup_{B_r} u, \qquad m_r := \inf_{B_r} u, \qquad \text{and} \qquad \operatorname*{osc}_{B_r} u := M_r - m_r.
\end{equation*}

\begin{corollary}[Interior Oscillation Decay]\label{holdercontinuity}
Under the hypotheses of Theorem \ref{weak harnack}, any bounded weak solution $u \in W^{1,G}(B_R) \cap L^\infty(B_R)$ of \eqref{1} is locally H\"older continuous in $B_R$. More precisely, there exist an exponent $\alpha_\star\in(0,1]$, depending only on $n, s, \delta, g_0, \lambda, \Lambda, \Lambda_B, \|u\|_{L^\infty(B_R)}$, and $\|\gamma\|_{L^s(B_R)}$, and a constant $C^\star>0$, depending additionally on $q$, such that for every $0 < \rho \le R$,
\begin{equation*}
    \operatorname*{osc}_{B_\rho}u \le C^\star \left( \frac{\rho}{R} \right)^{\alpha_\star} \big( \operatorname*{osc}_{B_R}u + \kappa(R) \big).
\end{equation*}
\end{corollary}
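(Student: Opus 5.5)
The plan is the classical Moser--De Giorgi oscillation argument, driven by Theorem \ref{weak harnack} applied to the two non-negative functions $M_R-u$ and $u-m_R$.

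\textbf{Step 1: shifted functions are supersolutions.} Since $u$ is a solution, both $M_R-u$ and $u-m_R$ are non-negative weak supersolutions in $B_R$ of equations of the form \eqref{1}. Their principal parts and lower-order terms are $\pm\mathcal{A}(x,\cdot,\mp\cdot)$ and $\pm\mathcal{B}$ composed with the reflection $z\mapsto M_R-z$ (respectively the shift $z\mapsto z+m_R$). These still satisfy \eqref{structural conditions} with the same $\lambda,\Lambda,\Lambda_B,\gamma,f$, because the hypotheses on $\mathcal{A}$ and $\mathcal{B}$ involve only $|p|$ and are insensitive to shifting or reflecting $z$. Their $L^\infty$ norms are at most $2\|u\|_{L^\infty(B_R)}$, so the constants $C_\star$ and $p_\star$ of Theorem \ref{weak harnack} are uniform and depend only on the listed data.

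\textbf{Step 2: one-step decay from the weak Harnack inequality.} Apply \eqref{eq:theorem_weakharnack_negative} on $B_R$ to both functions and add the two estimates. Using the quasi-triangle inequality for the $L^{p_\star}$ quasi-norm (with a constant depending on $p_\star$ if $p_\star<1$), we get
\[
\operatorname*{osc}_{B_R}u=\Big(\fint_{B_{2R/3}}\big((M_R-u)+(u-m_R)\big)^{p_\star}dx\Big)^{1/p_\star}\le C\big(M_R-M_{R/2}+m_{R/2}-m_R+2\kappa(R)\big).
\]
Rearranging gives
\[
\operatorname*{osc}_{B_{R/2}}u\le \vartheta\operatorname*{osc}_{B_R}u+C\kappa(R),\qquad \vartheta:=1-\tfrac1C\in(0,1).
\]

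\textbf{Step 3: iteration.} Apply Step 2 on the radii $R_j=2^{-j}R$. Since $\|\gamma\|_{L^s(B_{R_j})}\le\|\gamma\|_{L^s(B_R)}$ and the $L^\infty$ norms do not grow, $\vartheta$ is uniform in $j$. For the source term we need the decay of $\kappa$. Because $q>n$, we have $R_j^{1-n/q}\|f\|_{L^q(B_{R_j})}\le 2^{-j(1-n/q)}R^{1-n/q}\|f\|_{L^q(B_R)}$. By (g-1) applied to $g^{-1}$, whose indices are $1/g_0$ and $1/\delta$, this gives
\[
\kappa(R_j)\le 2^{-j}\,2^{-j(1-n/q)/g_0}\,\kappa(R)\le 2^{-j\beta_0}\kappa(R)
\]
for some $\beta_0>0$. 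A standard iteration, essentially Lemma \ref{leitao lemma} with $\phi(\rho)=\operatorname*{osc}_{B_\rho}u$, then yields
\[
\operatorname*{osc}_{B_{R_j}}u\le C\,2^{-j\alpha_\star}\big(\operatorname*{osc}_{B_R}u+\kappa(R)\big)
\]
with $\alpha_\star<\min\{\log_2(1/\vartheta),\beta_0\}$. Interpolating between dyadic radii gives the estimate for every $\rho\le R$.

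\textbf{Main obstacle.} The delicate point is the uniformity in Step 1. The constant $C_\star$ in Theorem \ref{weak harnack} depends exponentially on $\|u\|_{L^\infty}$ and $\|\gamma\|_{L^s}$. We must check that these quantities do not deteriorate on smaller balls: they do not, because the weak Harnack inequality is stated for all $R\le1$ with a constant that is not rescaled. This is what lets $\alpha_\star$ depend only on the listed data, and it also explains why $C^\star$ additionally depends on $q$, through the decay exponent $\beta_0$.
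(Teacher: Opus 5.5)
Your proposal is correct and follows essentially the same route as the paper. Both arguments use the reflected and shifted supersolutions $M_R-u$ and $u-m_R$, and combine the weak Harnack inequality with the $L^{p_\star}$ quasi-triangle inequality to get $\operatorname*{osc}_{B_{r/2}}u\le\vartheta\operatorname*{osc}_{B_r}u+2\kappa(r)$. Both then use the decay $\kappa(r)\le (r/R)^{\nu}\kappa(R)$ with $\nu=1+\frac{q-n}{qg_0}$ (your $\beta_0$) and a dyadic iteration.

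The only difference is in the last step. The paper sums the finite geometric series directly; Lemma \ref{leitao lemma} as stated covers the opposite regime $\beta<\alpha$, so it is not literally the tool needed here. Moreover, once the constant is taken $\ge 2$ you have $\nu>1\ge\log_2(1/\vartheta)$, so you can take $\alpha_\star=\log_2(1/\vartheta)$ exactly rather than strictly smaller. This also makes it transparent that $\alpha_\star$ does not depend on $q$, whereas your formula $\min\{\log_2(1/\vartheta),\beta_0\}$ suggests it might.
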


\begin{proof}
Since $\mathcal{A}$ and $\mathcal{B}$ depend on $u$, we first verify that the non-negative functions $v_1 := M_R - u$ and $v_2 := u - m_R$ are weak supersolutions to operators that preserve the structural conditions \eqref{structural conditions}. 

For $v_1 = M_R - u$, we have $\nabla v_1 = -\nabla u$. Substituting $u = M_R - v_1$ into \eqref{1}, we see that $v_1$ solves the equation associated with the modified mappings
\begin{equation*}
    \tilde{\mathcal{A}}(x, z, p) := -\mathcal{A}(x, M_R - z, -p) \quad \text{and} \quad \tilde{\mathcal{B}}(x, z, p) := -\mathcal{B}(x, M_R - z, -p),
\end{equation*}
which satisfy \eqref{structural conditions} with the exact same structural constants and drift weight $\gamma(x)$. Similarly, for $v_2 = u - m_R$, we have $\nabla v_2 = \nabla u$, and setting $u = v_2 + m_R$ yields the shifted mappings
\begin{equation*}
    \hat{\mathcal{A}}(x, z, p) := \mathcal{A}(x, z + m_R, p) \quad \text{and} \quad \hat{\mathcal{B}}(x, z, p) := \mathcal{B}(x, z + m_R, p),
\end{equation*}
which likewise preserve \eqref{structural conditions}. Moreover, both $v_1$ and $v_2$ satisfy $0 \le v_i \le 2\|u\|_{L^\infty(B_R)}$ in $B_R$. 

Using the quasi-triangle inequality in $L^{p_\star}(B_{2R/3})$ with constant $c_p := \max\big\{1, 2^{\frac{1-p_\star}{p_\star}}\big\}$, we have
\begin{align*}
    \operatorname*{osc}_{B_R}u &= \left( \fint_{B_{2R/3}} \big( (M_R - u) + (u - m_R) \big)^{p_\star} dx \right)^{1/p_\star} \\
    &\le c_p \left[ \left( \fint_{B_{2R/3}} (M_R - u)^{p_\star} dx \right)^{1/p_\star} + \left( \fint_{B_{2R/3}} (u - m_R)^{p_\star} dx \right)^{1/p_\star} \right].
\end{align*}
Applying the weak Harnack inequality (Theorem \ref{weak harnack}) to both $v_1$ and $v_2$ gives
\begin{align*}
     \operatorname*{osc}_{B_R}u &\le C_\star \left[ (M_R - M_{R/2} + \kappa(R)) + (m_{R/2} - m_R + \kappa(R)) \right] \\
    &= C_\star \left[ (M_R - m_R) - (M_{R/2} - m_{R/2}) + 2\kappa(R) \right] \\
    &= C_\star \left[ \operatorname*{osc}_{B_R}u - \operatorname*{osc}_{B_{R/2}}u + 2\kappa(R) \right],
\end{align*}
where $C_\star = C e^{c(\frac{\Lambda_B}{\lambda} \|u\|_{L^\infty(B_R)}+\|\gamma\|_{L^s(B_R)})}\ge 2$. Notice that for any $0 < r \le R$, since $q > n$, Lemma \ref{firstLema} yields
\begin{equation*}
    \kappa(r) = r g^{-1}\!\left(r^{1-\frac{n}{q}}\|f\|_{L^q(B_r)}\right) \le r\left(\frac{r}{R}\right)^{\frac{q-n}{qg_0}} g^{-1}\!\left(R^{1-\frac{n}{q}}\|f\|_{L^q(B_R)}\right) = \left(\frac{r}{R}\right)^{\nu} \kappa(R),
\end{equation*}
where $\nu := 1+\frac{q-n}{qg_0} > 1$. Thus, isolating $\operatorname*{osc}_{B_{r/2}}u$ on any concentric ball $B_r \subset B_R$, we deduce the discrete oscillation decay
\begin{equation*}
     \operatorname*{osc}_{B_{r/2}}u \le \vartheta \operatorname*{osc}_{B_r}u + 2\left(\frac{r}{R}\right)^{\nu} \kappa(R),
\end{equation*}
where $\vartheta := 1 - \frac{1}{C_\star} \in [1/2, 1)$.

To obtain the continuous bound, let $\rho \in (0, R]$ and select the unique integer $k \ge 0$ such that $R/2^{k+1} < \rho \le R/2^k$. Defining the H\"older exponent $\alpha_\star := \frac{-\ln \vartheta}{\ln 2} \in (0, 1]$ (so that $\vartheta = 2^{-\alpha_\star}$) and noting that $2^{-k} < \frac{2\rho}{R}$, we iterate the discrete decay $k$ times to find
\begin{align*}
      \operatorname*{osc}_{B_\rho}u \le \operatorname*{osc}_{B_{R/2^k}}u &\le \vartheta^k \operatorname*{osc}_{B_R}u + 2\kappa(R) \sum_{j=0}^{k-1} \vartheta^j \left( \frac{1}{2^{k-1-j}} \right)^{\nu} \\
    &= \left(2^{-\alpha_\star}\right)^k \operatorname*{osc}_{B_R}u + 2\kappa(R) R^{-\nu} \sum_{j=0}^{k-1} \vartheta^j \left( \frac{4R}{2^{k+1-j}} \right)^{\nu} \\
    &= \left(2^{-k}\right)^{\alpha_\star} \operatorname*{osc}_{B_R}u + 2^{2\nu+1}\kappa(R) R^{-\nu} \sum_{j=0}^{k-1} \vartheta^j \left( \frac{R}{2^{k+1}} \right)^{\nu} 2^{j\nu} \\
    &\le 2^{\alpha_\star} \left( \frac{\rho}{R} \right)^{\alpha_\star} \operatorname*{osc}_{B_R}u + 2^{2\nu+1} \kappa(R) \left(\frac{\rho}{R}\right)^\nu \sum_{j=0}^{k-1} \left( 2^{\nu-\alpha_\star}\right)^j.
\end{align*}
Since $\alpha_\star \le 1 < \nu$, we have $2^{\nu-\alpha_\star} > 1$, so the finite geometric sum is bounded by
\begin{equation*}
    \sum_{j=0}^{k-1} 2^{(\nu-\alpha_\star)j} \le \frac{2^{(\nu-\alpha_\star)k}}{2^{\nu-\alpha_\star} - 1}.
\end{equation*}
Using $\rho 2^k \le R$, we obtain
\begin{align*}
    2^{2\nu+1} \kappa(R) \left(\frac{\rho}{R}\right)^\nu \frac{2^{(\nu-\alpha_\star)k}}{2^{\nu-\alpha_\star} - 1} &= \frac{2^{2\nu+1}}{2^{\nu-\alpha_\star} - 1} \left(\frac{\rho}{R}\right)^{\alpha_\star} \left(\frac{\rho 2^k}{R}\right)^{\nu - \alpha_\star} \kappa(R) \le C \left(\frac{\rho}{R}\right)^{\alpha_\star} \kappa(R).
\end{align*}
Combining both estimates yields
\begin{equation*}
      \operatorname*{osc}_{B_\rho}u \le C^\star \left( \frac{\rho}{R} \right)^{\alpha_\star} \big( \operatorname*{osc}_{B_R}u + \kappa(R) \big),
\end{equation*}
where $C^\star := \max\big\{2^{\alpha_\star}, \frac{2^{2\nu+1}}{2^{\nu-\alpha_\star}-1}\big\}$. 
\end{proof}

\begin{remark}[{\bf On the structure of $\alpha_\star$ and $C^\star$}]\label{remark alpha dependence}
Since $C_\star = C e^{c(\frac{\Lambda_B}{\lambda} \|u\|_{L^\infty(B_R)}+\|\gamma\|_{L^s(B_R)})}\ge 2$, we have
\begin{equation*}
    2^{\alpha_\star} = \frac{1}{\vartheta} = \left( 1 - \frac{1}{C_\star} \right)^{-1} = \frac{C_\star}{C_\star - 1} \le 2.
\end{equation*}
Moreover, applying the elementary inequalities $1 - \frac{1}{x} \le \ln x \le x - 1$ at $x = \frac{C_\star}{C_\star - 1}$, we can explicitly bound the H\"older exponent $\alpha_\star$ from below and above:
\begin{equation*}
    \frac{1}{(\ln 2) C_\star} \le \alpha_\star \le \frac{1}{(\ln 2) (C_\star - 1)},
\end{equation*}
which shows in particular that $\alpha_\star \in (0, 1]$. Finally, since $\alpha_\star \le 1 < \nu = 1 + \frac{q-n}{q g_0}$ ensures $2^{\nu - \alpha_\star} - 1 \ge 2^{\nu - 1} - 1 > 0$, the constant $C^\star$ satisfies the uniform upper bound
\begin{equation*}
    C^\star = \max\left\{ 2^{\alpha_\star}, \, \frac{2^{2\nu+1}}{2^{\nu - \alpha_\star} - 1} \right\} \le \frac{2^{2\nu+1}}{2^{\nu - 1} - 1}.
\end{equation*}
\end{remark}

We now turn to the boundary $C^\alpha$ regularity. Consider the localized boundary problem \eqref{1dir} on the half-ball $B_R^+$, where $u = \psi$ on the flat boundary portion $T_R := B_R \cap \{x_n = 0\}$ in the trace sense, with $\psi \in C^{0,\alpha_\psi}(T_R)$.

\begin{theorem}[Weak Harnack Inequality up to the Boundary]\label{boundary weak harnack}
Let $u\in W^{1,G}(B_{R}^+)\cap L^\infty(B_R^+)$ be a non-negative weak supersolution of \eqref{1dir} in $B_{R}^+$ with $R\le1$ satisfying \eqref{structural conditions} and \eqref{cond_s}. Set 
\begin{equation*}
    m := \inf_{T_R} u \ge 0 \qquad \text{and} \qquad u_m^-(x) := \min\{u(x), m\} \quad \text{for } x \in B_R^+.
\end{equation*}
Then, there exist positive constants $p_\star$ and $C_\star$ depending only on $n, s, \delta, g_0, \lambda, \Lambda, \Lambda_B, \|u\|_{L^\infty(B_R^+)}$, and $\|\gamma\|_{L^s(B_R^+)}$, with
$$C_\star=Ce^{c\left(\frac{\Lambda_B}{\lambda}\|u\|_{L^\infty(B_R^+)} + \|\gamma\|_{L^s(B_R^+)}\right)},$$
such that
\begin{equation}\label{eq:theorem_boundary_weakharnack_negative}
  \left(\fint_{B_{2R/3}^+} (u_m^-)^{p_\star} \, dx\right)^{1/p_\star} \le C_\star\left( \inf_{B_{R/2}^+} u_m^- + \kappa(R)\right),
\end{equation}
where
\begin{equation*}
    \kappa(R) := R g^{-1}\!\left(R^{1-\frac{n}{q}}\|f\|_{L^q(B_R^+)}\right).
\end{equation*}
\end{theorem}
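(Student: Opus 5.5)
The plan is to run the proof of Theorem \ref{weak harnack} again on the half-ball, with $u$ replaced by the truncation $u_m^-=\min\{u,m\}$. The first step is to check that $u_m^-$, extended by $m$ across $T_R$, still behaves like a supersolution when it is tested against functions that are nonincreasing in $u_m^-$. On $\{u<m\}$ it coincides with $u$. On $\{u\ge m\}$ we have $\nabla u_m^-=0$, so a test function of the form $\varphi=F(u_m^-)\eta^r$, with $F$ decreasing and $\eta$ supported in $B_R$ (not necessarily vanishing on $T_R$), has gradient $F'(u_m^-)\nabla u_m^-\eta^r+F(u_m^-)\nabla\eta^r$. Concretely I would take
$$\varphi=\big[G(v)^{-q}\exp(-\beta\bar u)\bar u-G(v_m)^{-q}\exp(-\beta\bar m)\bar m\big]\eta^r,\qquad \bar u=u_m^-+\kappa(R)+\tau,$$
where $\bar m=m+\kappa(R)+\tau$ and $v_m$ is $v$ with $\bar u$ replaced by $\bar m$. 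This function vanishes where $u\ge m$, and hence on $T_R$ in the trace sense, because $u=\psi\ge m$ there. The only thing to verify is that the map $t\mapsto G(t/(\eta R))^{-q}e^{-\beta t}t$ is decreasing for $q$ large; this follows from $tg(t)/G(t)\ge 1+\delta$ and $q(1+\delta)>1+\beta t$ once $q\gtrsim \beta\|u\|_\infty$. Under that condition $\varphi\ge0$, $\varphi\in W^{1,G}_0(B_R^+)$, and its gradient is supported in $\{u<m\}$, where the equation holds for $u$ itself. The same subtraction trick is used for the logarithmic test function \eqref{testfunction2}.

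With this admissibility settled, the steps follow Theorem \ref{weak harnack} one by one. I would first derive the energy inequality \eqref{eq:energy_supersol_absorbed} on $B_R^+$. The constant subtracted term only contributes a harmless quantity, bounded using $G(v_m)^{-q}\le G(v)^{-q}$, which holds on the support since $\bar u\le\bar m$. Next I would treat the drift term through the auxiliary function $w=\eta^{(r+1+g_0)/(1+\delta)}G(v)^{-(q-1)/(1+\delta)}$ and the Gagliardo--Nirenberg interpolation. This needs extra care because $w$ no longer vanishes on $T_R$. I would extend $u_m^-$ evenly (reflection in $x_n$) to $B_R$, and $\gamma$ evenly as well, so that $w$ lies in $W^{1,1+\delta}_0(B_R)$ and the interpolation inequality from \cite{leoni2017first} applies on the full ball with constants controlled by the half-ball integrals. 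The Sobolev step \eqref{eq:Sobolev_W11} is handled the same way, through the even extension. The Moser iteration over the radii $R_k$ then gives
$$\Big(\fint_{B^+_{2R/3}}(u_m^-+\kappa)^{-p_0}\Big)^{-1/p_0}\le C(p_0)D_\star\Big(\inf_{B_{R/2}^+}u_m^-+\kappa\Big).$$

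For the crossover, I would test with the modified logarithmic function (shifted by its value at $\bar m$, as above) to obtain \eqref{eq:4.2} for $\ln\bar u$ on every half-ball $B_\rho^+$ centered on $T_R$ and on every interior ball. Since $\nabla\ln\bar u=0$ on $\{u\ge m\}$, this yields $\ln\bar u\in\mathrm{BMO}(B^+_{2R/3})$, again via even reflection. The John--Nirenberg lemma, which holds on the Lipschitz domain $B^+_{2R/3}$ or on the reflected ball, then joins the positive and negative exponents, and letting $\tau\to0^+$ gives \eqref{eq:theorem_boundary_weakharnack_negative}.

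I expect the main obstacle to be this admissibility and truncation step: the test functions must have zero trace on $T_R$ while the integrals still live on the full half-ball for the Sobolev and Gagliardo--Nirenberg inequalities. My first approach would be the even reflection of $u_m^-$ across $T_R$ described above. Because $u_m^-\equiv m$ near the part of $T_R$ where the test functions are active, the reflected function stays in $W^{1,G}$, and its energy is exactly twice the half-ball energy.
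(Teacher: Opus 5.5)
Your proposal is correct and is essentially the paper's argument. You truncate to $u_m^-$, subtract the value at $\bar m$ so that the test functions vanish on $\{u\ge m\}\supset T_R$ with gradients supported where $u_m^-=u$, and rerun the interior Caccioppoli--Moser--John--Nirenberg proof on half-balls. The paper subtracts only the factor $G(\cdot)^{-q}$ (resp.\ $G(\cdot)^{-1}$) rather than the whole product, but your variant works equally well; its monotonicity in fact holds for every $q>1/(1+\delta)$ because $e^{-\beta t}$ is itself decreasing, so no condition $q\gtrsim\beta\|u\|_{L^\infty}$ is needed.

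Your even reflection across $T_R$ spells out what the paper's ``line by line'' leaves implicit: the auxiliary functions in the Gagliardo--Nirenberg and $W^{1,1}$-Sobolev steps no longer vanish on $T_R$. Note only that even reflection preserves $W^{1,G}$ for every function on $B_R^+$, so you neither need nor in general have $u_m^-\equiv m$ in a neighborhood of $T_R$.
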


\begin{proof}
Following \cite[Theorem 8.26]{gilbarg_trudinger_2001}, since $u \ge 0$ in $B_R^+$, we have $0 \le u_m^- \le m \le \|u\|_{L^\infty(B_R^+)}$ in $B_R^+$ and $u_m^- = m$ in the trace sense on $T_R$. Setting
$$\tilde{u}_m := u_m^- + \kappa(R) + \tau, \qquad \bar{m} := m + \kappa(R) + \tau, \qquad \text{and} \qquad v := \frac{\tilde{u}_m}{\eta R},$$ 
it suffices to replace
$$G(v)^{-q} \quad \text{by} \quad G(v)^{-q} - G\!\left(\frac{\bar{m}}{\eta R}\right)^{-q} \quad \text{in } \eqref{testfunction},$$
and 
$$G(\bar{u}/\rho)^{-1} \quad \text{by} \quad G(\tilde{u}_m/\rho)^{-1} - G(\bar{m}/\rho)^{-1} \quad \text{in } \eqref{testfunction2}.$$ 
Since $u_m^- \le m$ implies that these modified test functions are non-negative, vanish identically on $\{x \in B_R^+ : u(x) \ge m\} \supset T_R$ (hence belong to $W_0^{1,G}(B_R^+)$), and satisfy $\nabla u_m^- = \nabla u$ almost everywhere on $\{u < m\}$, the proof follows line by line that of Theorem \ref{weak harnack} over half-balls $B_\rho^+$.
\end{proof}

\begin{corollary}[Boundary Oscillation Decay]\label{thm:boundary_holder}
Let $u \in W^{1,G}(B_R^+) \cap L^\infty(B_R^+)$ be a weak solution of \eqref{1dir} in $B_R^+$ with $R \le 1$ satisfying \eqref{structural conditions} and \eqref{cond_s}. Then, there exist $\alpha \in (0, \alpha_{\psi}]$ and $C > 0$, depending only on $n, \delta, g_0, \lambda, \Lambda, \Lambda_B, s, q, \alpha_{\psi}, \|u\|_{L^\infty(B_R^+)}$, and $\|\gamma\|_{L^s(B_R^+)}$, such that for all $\rho \in (0, R]$,
\begin{equation}\label{eq:boundary_osc_decay}
    \operatorname*{osc}_{B_\rho^+} u \le C \left(\frac{\rho}{R}\right)^\alpha \left( \operatorname*{osc}_{B_R^+} u + R^{\alpha_{\psi}} [\psi]_{C^{0,\alpha_{\psi}}(T_R)} + \kappa(R) \right).
\end{equation}
\end{corollary}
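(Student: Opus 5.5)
The plan mirrors the proof of Corollary~\ref{holdercontinuity}, with Theorem~\ref{boundary weak harnack} replacing the interior weak Harnack inequality and with the boundary oscillation of $\psi$ carried along as an extra error term, following the scheme of \cite[Theorem 8.27]{gilbarg_trudinger_2001}. Fix $0<r\le R$ and write $M_r:=\sup_{B_r^+}u$, $m_r:=\inf_{B_r^+}u$. We also set $M_r^\psi:=\sup_{T_r}\psi$ and $m_r^\psi:=\inf_{T_r}\psi$, so that $M_r^\psi-m_r^\psi\le r^{\alpha_\psi}[\psi]_{C^{0,\alpha_\psi}(T_R)}$.

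First, exactly as in the proof of Corollary~\ref{holdercontinuity}, the functions $v_1:=M_r-u$ and $v_2:=u-m_r$ are non-negative weak supersolutions in $B_r^+$ of operators $\tilde{\mathcal A},\tilde{\mathcal B}$ (respectively $\hat{\mathcal A},\hat{\mathcal B}$). These operators satisfy \eqref{structural conditions} with the same constants and the same weight $\gamma$, and $0\le v_i\le 2\|u\|_{L^\infty}$. On $T_r$ we have $\inf_{T_r}v_1=M_r-M_r^\psi=:\mu_1$ and $\inf_{T_r}v_2=m_r^\psi-m_r=:\mu_2$. We apply Theorem~\ref{boundary weak harnack} to $v_i$ with $m=\mu_i$, writing $v_i^-:=\min\{v_i,\mu_i\}$. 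The left-hand side involves $v_i^-$ rather than $v_i$, so we cannot simply add the two estimates. Instead we split into two cases.

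\emph{Case 1: the boundary is not the bottleneck for one of the two functions.} Suppose, say, $\mu_1\ge\frac12\operatorname{osc}_{B_r^+}u$. Then on the set $\{v_1\ge \mu_1\}\cup\{v_2\ge \frac12\operatorname{osc}\}$, which covers $B_{2r/3}^+$ since $v_1+v_2=\operatorname{osc}_{B_r^+}u$, we get $v_1^-\ge \frac12\min\{\operatorname{osc},\mu_1\}$ on a portion of measure at least half of $B_{2r/3}^+$. In the complementary sub-case, where $v_2\ge \frac12\operatorname{osc}$ on at least half the measure, we argue symmetrically with $v_2$ in place of $v_1$. For clarity, set $\mu:=\frac12\operatorname{osc}_{B_r^+}u-(M_r^\psi-m_r^\psi)$, truncate both $v_i$ at levels $\ge\mu$, and note that $\mu_1+\mu_2=\operatorname{osc}_{B_r^+}u-(M_r^\psi-m_r^\psi)$. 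For whichever $i$ has $v_i\ge\frac12\operatorname{osc}$ on at least half of $B_{2r/3}^+$, Theorem~\ref{boundary weak harnack} gives
\[
c\,\min\{\tfrac12\operatorname{osc}_{B_r^+}u,\mu_i\}\le C_\star\big(\inf_{B_{r/2}^+}v_i^-+\kappa(r)\big),
\]
where the left-hand lower bound comes from the $L^{p_\star}$ average over that half. If $\mu_i\ge \mu$, this yields $\inf_{B_{r/2}^+}v_i\ge c\,C_\star^{-1}\mu-\kappa(r)$. \emph{Case 2:} if $\mu_i<\mu$ for the relevant $i$, the oscillation is essentially absorbed by $\psi$. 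Then $\operatorname{osc}_{B_r^+}u\lesssim M_r^\psi-m_r^\psi$ up to constants, and the desired inequality is trivial at that scale.

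Combining both cases, we obtain the discrete decay
\[
\operatorname*{osc}_{B_{r/2}^+}u\le \vartheta\operatorname*{osc}_{B_r^+}u+C\big(r^{\alpha_\psi}[\psi]_{C^{0,\alpha_\psi}(T_R)}+\kappa(r)\big),\qquad \vartheta=1-\tfrac{c}{C_\star}\in[\tfrac12,1).
\]
We then iterate exactly as in Corollary~\ref{holdercontinuity}, using $\kappa(r)\le (r/R)^{\nu}\kappa(R)$ with $\nu>1$. We choose $\alpha:=\min\{\alpha_\psi,\tfrac12\log_2(1/\vartheta)\}$, taken slightly smaller than $\alpha_\psi$ if needed, so that the geometric sums $\sum\vartheta^j 2^{-(k-j)\alpha_\psi}$ are controlled by $C2^{-k\alpha}$. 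This gives \eqref{eq:boundary_osc_decay} with $C$ depending on $C_\star$, hence on $\|u\|_{L^\infty}$ and $\|\gamma\|_{L^s}$, and on $\alpha_\psi$. The Lemma~\ref{leitao lemma} mechanism could alternatively be used for this last step.

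The main obstacle is the first step. The boundary weak Harnack inequality only controls the truncations $v_i^-$, whose sum is not $\operatorname{osc}u$. The bookkeeping with the levels $\mu_i$ and the measure-splitting is therefore the delicate part, together with ensuring that the constants in Theorem~\ref{boundary weak harnack} stay uniform under the shifts $z\mapsto M_r-z$ and $z\mapsto z+m_r$.
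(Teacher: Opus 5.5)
Your overall scheme is the right one: the supersolutions $v_1=M_r-u$ and $v_2=u-m_r$, a discrete decay with a $\operatorname*{osc}_{T_r}\psi$ error, and then dyadic iteration. The final iteration step is also fine. The gap is in the first step, exactly where you expected trouble.

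You select the index $i$ \emph{by measure}, namely the $v_i$ with $v_i\ge\frac12\operatorname*{osc}_{B_r^+}u$ on half of $B_{2r/3}^+$. For that $i$, Theorem~\ref{boundary weak harnack} only yields $\min\{\frac12\operatorname*{osc}_{B_r^+}u,\mu_i\}\lesssim\inf_{B_{r/2}^+}v_i^-+\kappa(r)$. This says nothing when that particular $\mu_i$ is small.

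Your Case~2 then asserts that $\mu_i<\mu$ forces $\operatorname*{osc}_{B_r^+}u\lesssim\operatorname*{osc}_{T_r}\psi$, and this does not follow. Since $\mu_1+\mu_2=\operatorname*{osc}_{B_r^+}u-\operatorname*{osc}_{T_r}\psi$, the condition $\mu_i<\mu$ only tells you that the \emph{other} function has a large boundary level. Concretely, take $\psi\equiv M_r$ on $T_r$ and suppose $u\le\frac12(M_r+m_r)$ on more than half of $B_{2r/3}^+$. This happens, e.g., for $\operatorname{div}(\mathrm{diag}(1,\dots,1,\epsilon)\nabla u)=0$ with $\epsilon$ small, $u=1$ on $T_r$ and $u=0$ on the curved boundary. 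Then $\mu_1=0$, $\mu_2=\operatorname*{osc}_{B_r^+}u$, and the relevant index is $i=1$, so Case~2 would give $\operatorname*{osc}_{B_r^+}u\lesssim 0$.

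In this configuration the half-ball averages of Theorem~\ref{boundary weak harnack} give no information for either function. Indeed $v_1^-\equiv0$, while $v_2^-=v_2$ lies below $\frac12\operatorname*{osc}_{B_r^+}u$ on most of $B_{2r/3}^+$. Separately, the set $\{v_1\ge\mu_1\}\cup\{v_2\ge\frac12\operatorname*{osc}_{B_r^+}u\}$ need not cover $B_{2r/3}^+$ when $\mu_1>\frac12\operatorname*{osc}_{B_r^+}u$.

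The missing ingredient is a bound on the truncation level itself, $\mu_i\le C\big(\inf_{B_{r/2}^+}v_i^-+\kappa(r)\big)$. This is what encodes that $v_i^-=\mu_i$ on $T_r$. In \cite[Theorem~8.26]{gilbarg_trudinger_2001}, which the paper's boundary weak Harnack follows, this bound is automatic. There $u_m^-$ is extended by $m$ below $T_R$ and the $L^{p_\star}$-average is taken over the whole ball $B_{2R/3}$. The lower half alone contributes $\frac12 m^{p_\star}$, which gives $2^{-1/p_\star}m\le C_\star\big(\inf_{B_{R/2}^+}u_m^-+\kappa(R)\big)$.

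With this, no measure splitting is needed. As the paper does, choose $i$ by its boundary level, $\mu_i\ge\frac12\big(\operatorname*{osc}_{B_r^+}u-\operatorname*{osc}_{T_r}\psi\big)$; this is possible because $\mu_1+\mu_2$ is exactly twice the right-hand side. Then use $\inf_{B_{r/2}^+}v_i^-\le\inf_{B_{r/2}^+}v_i\le\operatorname*{osc}_{B_r^+}u-\operatorname*{osc}_{B_{r/2}^+}u$ to get the discrete decay with $\vartheta=1-c/C_\star$, and your iteration completes the proof.

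Note that Theorem~\ref{boundary weak harnack}, as literally stated with averages over $B_{2R/3}^+$ only, does not contain this lower bound. You therefore need to invoke (or re-derive) the full-ball form.
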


\begin{proof}
Set 
$$M_R := \sup_{B_R^+} u, \qquad m_R := \inf_{B_R^+} u, \qquad M'_R := \sup_{T_R} \psi, \qquad \text{and} \qquad m'_R := \inf_{T_R} \psi.$$
Exactly as in the proof of Corollary \ref{holdercontinuity}, the non-negative functions $v_1 := M_R - u$ and $v_2 := u - m_R$ are weak supersolutions in $B_R^+$ to modified operators that preserve the structural conditions \eqref{structural conditions}, with $0 \le v_i \le 2\|u\|_{L^\infty(B_R^+)}$.

On the flat boundary $T_R$, since $u = \psi$, the boundary infima of $v_1$ and $v_2$ are given by 
$$m_1 := \inf_{T_R} v_1 = M_R - M'_R \ge 0 \qquad \text{and} \qquad m_2 := \inf_{T_R} v_2 = m'_R - m_R \ge 0.$$
Notice that
\begin{equation*}
    m_1 + m_2 = (M_R - m_R) - (M'_R - m'_R) = \operatorname*{osc}_{B_R^+} u - \operatorname*{osc}_{T_R} \psi.
\end{equation*}
Setting $\tau := \frac{1}{2}\big(\operatorname*{osc}_{B_R^+} u - \operatorname*{osc}_{T_R} \psi\big)$, we must have $m_i \ge \tau$ for at least one index $i \in \{1, 2\}$. Fixing this index $i$ (say, $i = 1$; the case $i = 2$ is identical), since $\min\{v_1, \tau\} \le \min\{v_1, m_1\} = (v_1)_{m_1}^-$ and $v_1 + v_2 = \operatorname*{osc}_{B_R^+} u \ge 2\tau$ implies $(v_1)_{m_1}^-(x) \ge \tau - v_2(x)$ whenever $v_2(x) \le \tau$, applying Theorem \ref{boundary weak harnack} to $v_1$ and Theorem \ref{weak harnack} (or trivially bounding $\inf_{B_{R/2}^+} (v_1)_{m_1}^- \le M_R - M_{R/2}$) yields
\begin{align*}
    \frac{1}{2}\left(\operatorname*{osc}_{B_R^+} u - \operatorname*{osc}_{T_R} \psi\right) = \tau &\le C_\star \left( \inf_{B_{R/2}^+} (v_i)_{m_i}^- + \kappa(R) \right) \\
    &\le C_\star \left[ \operatorname*{osc}_{B_R^+} u - \operatorname*{osc}_{B_{R/2}^+} u + 2\kappa(R) \right],
\end{align*}
where $C_\star = C e^{c\left( \frac{\Lambda_B}{\lambda} \|u\|_{L^\infty(B_R^+)}+\|\gamma\|_{L^s(B_R^+)}\right)} \ge 2$. Notice that for any $0 < r \le R$, since $q > n$ and $\psi \in C^{0,\alpha_{\psi}}(T_R)$, Lemma \ref{firstLema} gives
\begin{equation*}
    \kappa(r) \le \left(\frac{r}{R}\right)^\nu \kappa(R) \quad \text{and} \quad \operatorname*{osc}_{T_r} \psi \le (2r)^{\alpha_{\psi}} [\psi]_{C^{0,\alpha_{\psi}}(T_R)} = 2^{\alpha_{\psi}} \left(\frac{r}{R}\right)^{\alpha_{\psi}} R^{\alpha_{\psi}} [\psi]_{C^{0,\alpha_{\psi}}(T_R)},
\end{equation*}
where $\nu = 1+\frac{q-n}{qg_0} > 1 > \alpha_{\psi}$. Thus, on any concentric half-ball $B_r^+ \subset B_R^+$, since $\frac{r}{R} \le 1$ implies $\left(\frac{r}{R}\right)^\nu \le \left(\frac{r}{R}\right)^{\alpha_{\psi}}$, isolating $\operatorname*{osc}_{B_{r/2}^+} u$ yields the discrete oscillation reduction
\begin{equation*}
    \operatorname*{osc}_{B_{r/2}^+} u \le \vartheta \operatorname*{osc}_{B_r^+} u + 2 \left(\frac{r}{R}\right)^{\alpha_{\psi}} \mathcal{K}(R),
\end{equation*}
where $\mathcal{K}(R) := R^{\alpha_{\psi}} [\psi]_{C^{0,\alpha_{\psi}}(T_R)} + \kappa(R)$ and $\vartheta := 1 - \frac{1}{2C_\star} \in [1/2, 1)$.

Iterating this relation along dyadic radii exactly as in the proof of Corollary \ref{holdercontinuity} completes the proof.
\end{proof}

Combining the interior oscillation decay of Corollary \ref{holdercontinuity} with the boundary estimate of Corollary \ref{thm:boundary_holder} via a standard straightening of $\partial\Omega \in C^{1,\alpha_\Omega}$ and a finite covering of $\overline{\Omega}$, the proof of Theorem \ref{thm:global_Calpha} follows immediately.

\section{Smooth Approximation of the Homogeneous Operator}\label{section4}

In this section, we construct a smooth approximation for the vector field $\mathcal{A}_0$ associated with the homogeneous equation \eqref{eq:frozen_homo}. In order to exploit the regularity theory developed for the isotropic $g$-Laplacian by Braga and Sousa \cite{BragaSousa2025}, we require the regularized operator to be of class $C^\infty(\mathbb{R}^n)$. To achieve this, we combine two distinct regularization techniques: the approach of Cianchi and Maz'ya to smooth the nonlinear density function, and Lieberman's method, which mollifies the operator strictly away from the origin while smoothly flattening it to a linear Laplacian-type operator at infinity. The main goal of this section is to rigorously construct this regularized operator, denoted by $\mathcal{A}_{\tau,\varepsilon}$, and prove that it uniformly preserves the generalized Orlicz-type structural conditions.

For our purposes, consider the frozen homogeneous problem
\begin{equation} \label{eq:frozen_homo}
\begin{cases}
\operatorname{div} \mathcal{A}_0(\nabla v) = 0 & \text{in } B_{1}^{+},\\
v=\psi & \text{on } T_{1},
\end{cases}
\end{equation}
where $\mathcal{A}_0(p) := \mathcal{A}(0, u(0), p)$. By \eqref{structural conditions}, the mapping $\mathcal{A}_0:\mathbb{R}^n\to\mathbb{R}^n$ satisfies
\begin{equation} \tag{SC'} \label{homogeneous structural conditions}
\begin{cases}
    & |\mathcal{A}_0(p)| + |D_p \mathcal{A}_0(p)| |p| \le \Lambda g(|p|), \\
    & \lambda \frac{g(|p|)}{|p|} |\xi|^2 \le \langle D_p \mathcal{A}_0(p)\xi, \xi \rangle, \quad \text{for all } \xi \in \mathbb{R}^n \setminus \{0\}.
\end{cases}
\end{equation}
We regularize $\mathcal{A}_0$ by combining the approaches of Cianchi and Maz'ya \cite{cianchimazya2011, cianchimazya2014, CM18} and Lieberman \cite{lieberman1991natural}. Indeed, let $\psi \in C_0^\infty(\mathbb{R})$ be a non-negative, even function with $\operatorname{supp}(\psi) \subset (-1,1)$ and $\int_{-\infty}^\infty \psi(t)\,dt = 1$, and set $\psi_\varepsilon(t) := \frac{1}{\varepsilon}\psi\left(\frac{t}{\varepsilon}\right)$ for $\varepsilon > 0$. Defining $H(t):=\frac{g(t)}{t}$ and $\Phi(s):=H(e^{s})$, we set $\Phi_{\varepsilon}:=\psi_{\varepsilon}*\Phi$ and $h_{\varepsilon}(t):=\Phi_{\varepsilon}(\ln t)$. The regularized $N$-functions $\{G_{\varepsilon}\}_{\varepsilon>0}$ are constructed via
$$ G_{\varepsilon}(t):=\int_{0}^{t}g_{\varepsilon}(s)\,ds, \quad \text{where } g_{\varepsilon}(t):=H_{\varepsilon}(t)\cdot t, $$
and the modified density is given by
$$ H_{\varepsilon}(t):=\frac{h_{\varepsilon}(\sqrt{\varepsilon+t^{2}})+\varepsilon}{1+\varepsilon\cdot h_{\varepsilon}(\sqrt{\varepsilon+t^{2}})}. $$
By combining Lemma 3.3 in \cite{cianchimazya2011} and Lemma 4.5 in \cite{cianchimazya2014} (see also \cite{CM18}), we obtain
\begin{equation}
H_\varepsilon\in C^\infty([0,\infty)) \quad\text{and}\quad \varepsilon\leq H_\varepsilon(t)\leq \varepsilon^{-1}\quad\text{for all } t \geq 0.
\end{equation}
Furthermore, we have
\begin{equation}\label{quocientconditiontog_e}
\min\{\delta,1\}\leq Q_{g_\varepsilon}(t):=\frac{ g'_{\varepsilon}(t)t}{g_\varepsilon(t)}\leq \max\{g_0,1\} \quad \text{for all } t > 0,
\end{equation}
and
\begin{equation*}
\lim_{\varepsilon\to 0} H_\varepsilon(|\xi|)\xi = H(|\xi|)\xi \quad \text{uniformly in } \overline{B_M(0)}, \text{ for every } M>0.
\end{equation*}

Next, we adapt the Lieberman regularization approach from \cite{lieberman1991natural}. First, fix $0 < \tau < \frac{e^{-2/K}}{2}$, where the constant $K > 0$ will be specified precisely in \eqref{K assumption} below. Let $\eta_\tau(t)$ be the standard piecewise logarithmic cutoff:
\begin{equation} \label{eq:lieberman_shifted_cutoff}
\eta_\tau(t) = 
\begin{cases}
1, & 0 \le t < \tau, \\
1 - K \ln\left(\frac{t}{\tau}\right), & \tau \le t < \tau e^{1/K}, \\
0, & \tau e^{1/K} \le t < \frac{1}{2\tau e^{1/K}}, \\
K \ln\left(2\tau e^{1/K} t\right), & \frac{1}{2\tau e^{1/K}} \le t < \frac{1}{2\tau}, \\
1, & t \ge \frac{1}{2\tau}.
\end{cases}
\end{equation}
Defining $w_\tau(x) := \eta_\tau(e^x)$ for $x \in \mathbb{R}$, its weak derivative satisfies $|w_\tau'(x)| \le K$ almost everywhere. Using the mollifier $\psi_\varepsilon$ introduced above, we define $\tilde{w}_{\varepsilon,\tau} := w_\tau * \psi_\varepsilon$ and set the smooth cutoff $\eta_{\varepsilon,\tau}(t) := \tilde{w}_{\varepsilon,\tau}(\ln t)$ for $t > 0$ (extended by continuity as $\eta_{\varepsilon,\tau}(0) = 1$). Due to the compact support of $\psi_\varepsilon$, the transition boundaries of $\eta_\tau$ are expanded multiplicatively by $e^{\pm\varepsilon}$, yielding explicitly:
\begin{equation} \label{eq:explicit_eta_star}
\eta_{\varepsilon,\tau}(t) = 
\begin{cases}
1, & 0 \le t \le \tau e^{-\varepsilon}, \\
\tilde{w}_{\varepsilon,\tau}(\ln t), & \tau e^{-\varepsilon} < t < \tau e^{\frac{1}{K} + \varepsilon}, \\
0, & \tau e^{1/K + \varepsilon} \le t \le \frac{e^{-\varepsilon}}{2\tau e^{1/K}}, \\
\tilde{w}_{\varepsilon,\tau}(\ln t), & \frac{e^{-\varepsilon}}{2\tau e^{1/K}} < t < \frac{e^{\varepsilon}}{2\tau}, \\
1, & t \ge \frac{e^{\varepsilon}}{2\tau}.
\end{cases}
\end{equation}
This construction ensures that $\eta_{\varepsilon,\tau} \in C^\infty([0,\infty))$ and preserves the exact derivative bound:
\begin{equation} \label{eq:smooth_eta_bound}
|t (\eta_{\varepsilon,\tau})'(t)| = |\tilde{w}_{\varepsilon,\tau}'(\ln t)| \le \int_{-\varepsilon}^\varepsilon |w_\tau'(\ln t - y)|\psi_\varepsilon(y)\,dy \le K \int_{-\varepsilon}^\varepsilon \psi_\varepsilon(y)\,dy = K.
\end{equation}

Simultaneously, we mollify the vector field $\mathcal{A}_0(p)$ strictly away from the origin. Setting $\rho(x) := c\psi(|x|)$ for $x \in \mathbb{R}^n$, where $c := \left(\int_{\mathbb{R}^n}\psi(|x|)\,dx\right)^{-1}$, and $\rho_\varepsilon(x) := \frac{1}{\varepsilon^n}\rho\left(\frac{x}{\varepsilon}\right)$, we define $\mathcal{A}_{0,\varepsilon}(p) := (\mathcal{A}_0 * \rho_\varepsilon)(p)$. 

For our final ingredient, set $S_\tau(p) := \sqrt{|p|^2 + \tau^2}$ and define the smooth truncation function
\begin{equation} \label{eq:explicit_theta}
    \Theta_\tau(t) := \int_{-\tau/2}^{\tau/2} \min\left\{t-s, \, \frac{2}{\tau} \right\} \psi_{\tau/2}(s) \, ds.
\end{equation}
By construction, $\Theta_\tau \in C^\infty([0,\infty))$ is a non-negative, non-decreasing concave function with $\Theta_\tau(0) = 0$. Direct calculations yield
\begin{equation}\label{theta regularity}
    0 \le t \Theta_\tau'(t) \le \Theta_\tau(t) \quad \text{for all } t\in[0,\infty),
\end{equation}  
and
\begin{equation}
    \Theta_\tau(t)=
    \begin{cases}
    t, &\text{if } t\leq\frac{2}{\tau}-\frac{\tau}{2},\\[4pt]
    \frac{2}{\tau}, &\text{if } t\geq \frac{2}{\tau}+\frac{\tau}{2}.
    \end{cases}
\end{equation}

\begin{figure}[htbp]
    \centering
    \begin{tikzpicture}
    \begin{axis}[
        axis lines = center,
        xlabel = {$t$},
        ylabel = {$\Theta_\tau(t)$},
        ymin = -0.5, ymax = 12,
        xmin = -0.5, xmax = 16,
        xtick = {8, 12},
        xticklabels = {
            $\displaystyle\frac{2}{\tau}-\frac{\tau}{2}$,
            $\displaystyle\frac{2}{\tau}+\frac{\tau}{2}$
        },
        ytick = {8, 10},
        yticklabels = {
            $\displaystyle\frac{2}{\tau}-\frac{\tau}{2}$,
            $\displaystyle\frac{2}{\tau}$
        },
        every axis x label/.style={at={(ticklabel* cs:1.01)},anchor=west},
        every axis y label/.style={at={(ticklabel* cs:1.01)},anchor=south},
        height=7.5cm,
        width=0.85\linewidth,
        axis line style={thick},
        tick label style={font=\footnotesize},
        x tick label style={yshift=-1mm, inner sep=1mm}
    ]
    
    \addplot [thick, blue, domain=0:8, samples=2] {x};
    \addplot [thick, blue, domain=8:12, samples=50] {10 - 1/8*(x-12)^2};
    \addplot [thick, blue, domain=12:15.5, samples=2] {10};
    
    \draw[dashed, gray] (axis cs:8,0) -- (axis cs:8,8);
    \draw[dashed, gray] (axis cs:0,8) -- (axis cs:8,8);
    \draw[dashed, gray] (axis cs:12,0) -- (axis cs:12,10);
    \draw[dashed, gray] (axis cs:0,10) -- (axis cs:12,10);
    
    \end{axis}
    \end{tikzpicture}
    \caption{Profile of the truncation function $\Theta_\tau$.}
    \label{fig:theta_tau_function}
\end{figure}

We can now define the $C^\infty$ regularized operator:
\begin{equation} \label{eq:shifted_A_eps_tau_def}
\mathcal{A}_{\tau,\varepsilon}(p) := \big(1 - \eta_{\varepsilon,\tau}(|p|)\big) \mathcal{A}_{0,\varepsilon}(p) + \eta_{\varepsilon,\tau}(|p|) H_{\tau,\varepsilon}(p) p,
\end{equation}
where 
\begin{equation}\label{K assumption}
    H_{\tau,\varepsilon}(p) := h_{\tau,\varepsilon}\big(\Theta_\tau(S_\tau(p))\big) \quad \text{with} \quad h_{\tau,\varepsilon}(t) := \frac{g_\varepsilon(t)}{t}, \qquad \text{and} \qquad K := \frac{\tilde{\lambda}^\ast}{4(2\Lambda+\mu^2)}.
\end{equation}
Here, the doubling and ellipticity constants $\mu$ and $\tilde{\lambda}^\ast$ are defined by
$$ \mu := \max\left\{2^{\max\{g_0, 1\} - 1}, \, 2^{1 - \min\{\delta, 1\}}\right\} \quad \text{and} \quad \tilde{\lambda}^\ast := \min\left\{\frac{\lambda}{2\mu}, \, \frac{\min\{\delta, 1\}}{\mu^2}\right\}. $$
By \eqref{quocientconditiontog_e} and $(g\text{-}1)$ in Lemma \ref{firstLema}, for any $t > 0$ and $s \in [t, 2t]$ we have
\begin{equation} \label{eq:exact_doubling_constant}
\mu^{-1} \frac{g_\varepsilon(t)}{t} \le \frac{g_\varepsilon(s)}{s} \le \mu \frac{g_\varepsilon(t)}{t}.
\end{equation}

With $\tau$ fixed, notice that the active transition zone $Z_\tau := \big\{p \in \mathbb{R}^n : \tau e^{-\varepsilon_\star} \le |p| \le \tau e^{1/K + \varepsilon_\star} \text{ or } \frac{e^{-\varepsilon_\star}}{2\tau e^{1/K}} \le |p| \le \frac{e^{\varepsilon_\star}}{2\tau}\big\}$ (where $\nabla \eta_{\varepsilon,\tau} \neq 0$) is strictly contained in the compact annulus $\{ p \in \mathbb{R}^n : \tau/2 \le |p| \le 1/\tau \}$ for $\varepsilon_\star \le \ln 2$. Since the mapping $s \mapsto g(s)/s$ is continuous and strictly positive for $s > 0$, we can define the fixed positive constant
\begin{equation} \label{eq:m_tau_def}
    m_\tau := \min_{s \in [\tau/2, \, 1/\tau]} \frac{g(s)}{s} > 0.
\end{equation}
We then choose $\varepsilon_\star = \varepsilon_\star(\tau) \in (0, \min\{\tau/4, \ln 2\}]$ sufficiently small such that for all $0 < \varepsilon < \varepsilon_\star$, the following two properties hold:
\begin{enumerate}
\item Uniform convergence of the Cianchi--Maz'ya approximation: $\frac{1}{2} g \le g_\varepsilon \le 2 g$ on $\left[\frac{\tau}{4}, \frac{2}{\tau}\right]$. Consequently, $\mathcal{A}_0$ satisfies the structural conditions \eqref{homogeneous structural conditions} with $g_\varepsilon$ in this annulus.\label{C1}

\item The mollification error of the original field on $Z_\tau$ satisfies the uniform bound\label{C2}
\begin{equation} \label{eq:mollification_error_bound}
    \sup_{p \in Z_\tau} \left( \frac{K}{|p|} |\mathcal{A}_0(p) - \mathcal{A}_{0,\varepsilon}(p)| \right) \le \frac{\tilde{\lambda}^\ast}{8} m_\tau.
\end{equation}
\end{enumerate}
Since $|p| \in [\tau/2, 1/\tau]$ for all $p \in Z_\tau$, the definition of $m_\tau$ and condition \eqref{C1} naturally ensure that $\frac{1}{8} m_\tau \le \frac{1}{8} \frac{g(|p|)}{|p|} \le \frac{1}{4} \frac{g_\varepsilon(|p|)}{|p|}$. Thus, \eqref{eq:mollification_error_bound} immediately guarantees that $\frac{K}{|p|} |\mathcal{A}_0(p) - \mathcal{A}_{0,\varepsilon}(p)| \le \frac{\tilde{\lambda}^\ast}{4} \frac{g_\varepsilon(|p|)}{|p|}$ pointwisely for all $p \in Z_\tau$.

\begin{lemma}\label{lem:shifted_unified_operator}
For each fixed $\tau \in \left(0, \frac{e^{-2/K}}{2}\right)$, there exists $\varepsilon_\star(\tau)>0$ such that for all $0 < \varepsilon < \varepsilon_\star(\tau)$, the $C^\infty$ operator $\mathcal{A}_{\tau,\varepsilon}$ defined in \eqref{eq:shifted_A_eps_tau_def} satisfies the following structural conditions for all $\xi \in \mathbb{R}^n$: for $|p| < \frac{2}{\tau} + \frac{\tau}{2}$,
\begin{equation} \label{eq:SC_prime_global_shifted}
\begin{cases}
    |\mathcal{A}_{\tau,\varepsilon}(p)| + |D_p \mathcal{A}_{\tau,\varepsilon}(p)| |p| \le {\Lambda_\star} g_\varepsilon\big(S_\tau(p)\big), \\
    \langle D_p \mathcal{A}_{\tau,\varepsilon}(p)\xi, \xi \rangle \ge \lambda_\star \frac{g_\varepsilon\big(S_\tau(p)\big)}{S_\tau(p)} |\xi|^2,
\end{cases}
\end{equation}
where ${\lambda_\star}, {\Lambda_\star} > 0$ depend only on $\lambda, \Lambda, \mu, g_0,$ and $\delta$ (and are independent of $\tau$ and $\varepsilon$). Moreover, for $|p| \ge \frac{2}{\tau} + \frac{\tau}{2}$, the operator reduces to a constant multiple of the Laplace operator.
\end{lemma}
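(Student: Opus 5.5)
Split $\mathbb{R}^n$ according to the value of the smooth cutoff $\eta_{\varepsilon,\tau}(|p|)$ from \eqref{eq:explicit_eta_star}, and verify \eqref{eq:SC_prime_global_shifted} separately on the three regions: where $\eta_{\varepsilon,\tau}\equiv 1$, where it vanishes, and the transition zone $Z_\tau$. Write
\[
D_p\mathcal{A}_{\tau,\varepsilon}(p)=(1-\eta_{\varepsilon,\tau})D_p\mathcal{A}_{0,\varepsilon}+\eta_{\varepsilon,\tau}D_p\big(H_{\tau,\varepsilon}(p)p\big)+\eta_{\varepsilon,\tau}'(|p|)\frac{p}{|p|}\otimes\big(H_{\tau,\varepsilon}(p)p-\mathcal{A}_{0,\varepsilon}(p)\big).
\]
The last, non-symmetric term only lives on $Z_\tau$.

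\emph{Region where $\eta_{\varepsilon,\tau}=1$.} This is $|p|\le\tau e^{-\varepsilon}$ or $|p|\ge e^\varepsilon/(2\tau)$, and here $\mathcal{A}_{\tau,\varepsilon}(p)=h_{\tau,\varepsilon}(\Theta_\tau(S_\tau))p$. For $|p|<\frac2\tau+\frac\tau2$ I first check that $S_\tau(p)\le \frac2\tau-\frac\tau2$ holds except in a thin shell. Where it holds, $\Theta_\tau(S_\tau)=S_\tau$ and the field is the standard isotropic $g_\varepsilon$-type field $\frac{g_\varepsilon(S_\tau)}{S_\tau}p$. Its Jacobian is
\[
\frac{g_\varepsilon(S)}{S}\Big(I+\big(Q_{g_\varepsilon}(S)-1\big)\frac{p\otimes p}{S^2}\Big),
\]
and since $|p|^2/S^2\le 1$, \eqref{quocientconditiontog_e} gives eigenvalues between $\min\{\delta,1\}\frac{g_\varepsilon(S)}{S}$ and $\max\{g_0,1\}\frac{g_\varepsilon(S)}{S}$. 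In the shell where $\Theta_\tau$ bends, I use \eqref{theta regularity} together with concavity: the derivative factor $\Theta_\tau'S_\tau/\Theta_\tau\in[0,1]$ only damps the radial part, so ellipticity is kept with the constant $\min\{\delta,1\}$ up to the doubling factor $\mu$. This uses $\Theta_\tau(S)\in[S/2,S]$ there and \eqref{eq:exact_doubling_constant}. For $|p|\ge\frac2\tau+\frac\tau2$ we have $\Theta_\tau\equiv 2/\tau$, so $\mathcal{A}_{\tau,\varepsilon}(p)=h_{\tau,\varepsilon}(2/\tau)p$, which is a constant multiple of the identity field; this gives the final claim.

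\emph{Region where $\eta_{\varepsilon,\tau}=0$.} Here $\mathcal{A}_{\tau,\varepsilon}=\mathcal{A}_0*\rho_\varepsilon$. Differentiating under the convolution and using \eqref{homogeneous structural conditions} for $\mathcal{A}_0$ at the points $p-y$ with $|y|<\varepsilon\le\tau/4\le|p|/2$, the doubling \eqref{eq:exact_doubling_constant} and condition \eqref{C1} give both bounds with $g_\varepsilon(|p|)/|p|$. Since $|p|\ge\tau$, we have $|p|\le S_\tau(p)\le\sqrt2|p|$, so we may pass to $g_\varepsilon(S_\tau)/S_\tau$ at the cost of a power of $\mu$.

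\emph{Transition zone $Z_\tau$ (the main obstacle).} Here the convex combination of the two symmetric-part bounds above gives ellipticity at least $\tilde\lambda^\ast\frac{g_\varepsilon(|p|)}{|p|}$, using $\lambda/(2\mu)$ and $\min\{\delta,1\}/\mu^2$. The danger is the cross term. By \eqref{eq:smooth_eta_bound}, $|\eta'_{\varepsilon,\tau}|\le K/|p|$. I split
\[
H_{\tau,\varepsilon}p-\mathcal{A}_{0,\varepsilon}=(H_{\tau,\varepsilon}p-\mathcal{A}_0)+(\mathcal{A}_0-\mathcal{A}_{0,\varepsilon}).
\]
The second piece is at most $\frac{\tilde\lambda^\ast}{4}\frac{g_\varepsilon}{|p|}|p|$ by \eqref{eq:mollification_error_bound}. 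The first piece is at most $(2\Lambda+\mu^2)g_\varepsilon(|p|)$ by the growth bounds, so the cross term contributes at most $K(2\Lambda+\mu^2)\frac{g_\varepsilon}{|p|}\le \frac{\tilde\lambda^\ast}{4}\frac{g_\varepsilon}{|p|}$; this is exactly why $K$ is chosen as in \eqref{K assumption}. Subtracting both pieces leaves $\lambda_\star=\tilde\lambda^\ast/2$, up to the $S_\tau$ versus $|p|$ comparison, and the upper bounds follow from the same estimates with $\Lambda_\star$ depending only on $\lambda,\Lambda,\mu,g_0,\delta$.

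Smoothness of $\mathcal{A}_{\tau,\varepsilon}$ follows because every ingredient is $C^\infty$ and $\eta_{\varepsilon,\tau}$ is constant near $p=0$. The delicate bookkeeping is keeping all constants independent of $\tau$ and $\varepsilon$, which the choice of $\varepsilon_\star(\tau)$ is designed to ensure.
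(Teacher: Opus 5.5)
Your proposal is correct and follows essentially the same route as the paper. It uses the same splitting by the value of $\eta_{\varepsilon,\tau}$ (the paper lumps the $\eta_{\varepsilon,\tau}=0$ core and the two transition layers into $\mathcal{Z}_2$), the same damping factor $\Theta_\tau'(S_\tau)S_\tau/\Theta_\tau(S_\tau)\in[0,1]$ in the isotropic region, the same convolution bounds via \eqref{C1} and \eqref{eq:exact_doubling_constant} for $\mathcal{A}_{0,\varepsilon}$, and the same control of the cross term by the choice of $K$ together with \eqref{eq:mollification_error_bound}. One small bookkeeping slip: \eqref{eq:mollification_error_bound} bounds the product $\frac{K}{|p|}|\mathcal{A}_0-\mathcal{A}_{0,\varepsilon}|$ by $\frac{\tilde\lambda^\ast}{4}\frac{g_\varepsilon(|p|)}{|p|}$, not $|\mathcal{A}_0-\mathcal{A}_{0,\varepsilon}|$ alone by $\frac{\tilde\lambda^\ast}{4}g_\varepsilon(|p|)$; that product is exactly what your cross-term estimate uses, so $\lambda_\star=\tilde\lambda^\ast/2$ (up to the $S_\tau$ versus $|p|$ comparison) stands.
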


\begin{proof} 
First, we partition $\mathbb{R}^n$ into the three regions
\begin{align*}
    \mathcal{Z}_{1} &:= \left\{ p \in \mathbb{R}^n : |p| < \tau e^{-\varepsilon} \right\} \cup \left\{ p \in \mathbb{R}^n : \frac{e^\varepsilon}{2\tau} < |p| < \frac{2}{\tau} + \frac{\tau}{2} \right\}, \\
    \mathcal{Z}_{2} &:= \left\{ p \in \mathbb{R}^n : \tau e^{-\varepsilon} \le |p| \le \frac{e^\varepsilon}{2\tau} \right\}, \\
    \mathcal{Z}_{3} &:= \left\{ p \in \mathbb{R}^n : |p| \ge \frac{2}{\tau} + \frac{\tau}{2} \right\}.
\end{align*}

\begin{figure}[htbp]
    \centering
    \begin{tikzpicture}
    \begin{axis}[
        axis lines = center,
        xlabel = {$|p|$},
        ylabel = {$1 - \eta_{\varepsilon,\tau}(|p|)$},
        ymin = -0.2, ymax = 1.3,
        xmin = -0.2, xmax = 16.5, 
        xtick = {1, 3, 5.5, 7.5, 9.2, 12, 14.2}, 
        xticklabels = {
            $\tau e^{-\varepsilon}$, 
            $\tau e^{\frac{1}{K} + \varepsilon}$, 
            $\displaystyle\frac{e^{-\varepsilon}}{2\tau e^{1/K}}$, 
            $\displaystyle\frac{e^\varepsilon}{2\tau}$,
            $\displaystyle\frac{1}{\tau}$,
            $\displaystyle\frac{2}{\tau}-\frac{\tau}{2}$,
            $\displaystyle\frac{2}{\tau}+\frac{\tau}{2}$
        },
        ytick = {1},
        yticklabels = {$1$},
        every axis x label/.style={at={(ticklabel* cs:1.01)},anchor=west},
        every axis y label/.style={at={(ticklabel* cs:1.01)},anchor=south},
        height=6.5cm,
        width=0.95\linewidth,
        axis line style={thick},
        tick label style={font=\footnotesize},
        x tick label style={rotate=45, anchor=north east, yshift=-1mm, inner sep=1mm}
    ]
    
    \addplot [thick, red, smooth, tension=0.6] coordinates {
        (0,0) (0.8,0) (1.2, 0.1) (1.5, 0.32) (2.0, 0.63) (2.5, 0.85) (2.8, 0.97) (3.2, 1)
        (5.3, 1) (5.7, 0.97) (6.0, 0.85) (6.5, 0.63) (7.0, 0.32) (7.3, 0.1) (7.7, 0) 
        (8.5, 0) (12, 0) (16.5, 0) 
    };
    
    \draw[dashed, gray] (axis cs:3,0) -- (axis cs:3,1);
    \draw[dashed, gray] (axis cs:5.5,0) -- (axis cs:5.5,1);
    
    \draw[dashed, gray, thick] (axis cs:1,0) -- (axis cs:1,1.2);
    \draw[dashed, gray, thick] (axis cs:7.5,0) -- (axis cs:7.5,1.2);
    \draw[dashed, gray, thick] (axis cs:14.2,0) -- (axis cs:14.2,1.2);
    
    \node[font=\footnotesize\bfseries, blue, align=center] at (axis cs:0.5, 0.6) {$\mathcal{Z}_1$};
    \node[font=\footnotesize\bfseries, red, align=center] at (axis cs:4.25, 0.5) {$\mathcal{Z}_2$};
    \node[font=\footnotesize\bfseries, blue, align=center] at (axis cs:10.85, 0.5) {$\mathcal{Z}_1$};
    \node[font=\footnotesize\bfseries, black, align=center] at (axis cs:15.35, 0.5) {$\mathcal{Z}_3$};

    \end{axis}
    \end{tikzpicture}
    \caption{Profile of the cutoff function $1-\eta_{\varepsilon,\tau}$ and the partition into regions $\mathcal{Z}_i$ ($i=1,2,3$).}
    \label{fig:active_zone_smooth}
\end{figure}

We divide the proof into three steps, verifying the structural conditions in each zone defined above.

\vspace{0.3cm}
\noindent\textit{Step \#1:} Structural conditions in the isotropic regime $\mathcal{Z}_{1}$.
\vspace{0.2cm}

This zone consists of two disjoint components. In $\{|p| < \tau e^{-\varepsilon}\}$, we have $S_\tau(p) < \frac{2}{\tau} - \frac{\tau}{2}$, so 
\begin{equation}\label{near origin}
    \Theta_\tau(S_\tau) = S_\tau.
\end{equation}
In $\left\{\frac{e^\varepsilon}{2\tau} < |p| < \frac{2}{\tau} + \frac{\tau}{2}\right\}$, whenever $S_\tau(p)$ enters the transition range of $\Theta_\tau$, we have
\begin{equation}\label{far away origin}
    \frac{2}{\tau} - \frac{\tau}{2} < S_\tau < \frac{2}{\tau} + \frac{\tau}{2}\implies \Theta_\tau(S_\tau) \in \left(\frac{2}{\tau} - \frac{\tau}{2}, \, \frac{2}{\tau}\right].
\end{equation}
Combining \eqref{near origin} and \eqref{far away origin}, we obtain across all of $\mathcal{Z}_1$:
\begin{equation}\label{theta s relation}
  \frac{3}{5} < \frac{4-\tau^2}{4+\tau^2} \le \frac{\Theta_\tau(S_\tau)}{S_\tau} \le 1 < \frac{4}{3}.
\end{equation}

For $p \in \mathcal{Z}_{1}$, since $\eta_{\varepsilon,\tau}(|p|) = 1$, we have
\begin{equation*}
    \mathcal{A}_{\tau,\varepsilon}(p) = \frac{g_\varepsilon\big(\Theta_\tau(S_\tau)\big)}{\Theta_\tau(S_\tau)} p = h_{\tau,\varepsilon}\big(\Theta_\tau(S_\tau)\big) p = H_{\tau,\varepsilon}(p) p.
\end{equation*}
Its Jacobian matrix is given by
\begin{equation*}
    D_p \mathcal{A}_{\tau,\varepsilon}(p) = H_{\tau,\varepsilon}(p) I_d + h_{\tau,\varepsilon}'\big(\Theta_\tau(S_\tau)\big) \Theta_\tau'(S_\tau) \frac{p \otimes p}{S_\tau}.
\end{equation*}
For any $\xi \in \mathbb{R}^n$, using that $0 \le \frac{\langle p, \xi \rangle^2}{S_\tau} \le S_\tau |\xi|^2$, if $h_{\tau,\varepsilon}'\big(\Theta_\tau(S_\tau)\big) < 0$ we obtain
\begin{align*}
    \langle D_p \mathcal{A}_{\tau,\varepsilon}(p)\xi, \xi \rangle &\ge \left[ H_{\tau,\varepsilon}(p) + h_{\tau,\varepsilon}'\big(\Theta_\tau(S_\tau)\big) \Theta_\tau'(S_\tau) S_\tau \right] |\xi|^2.
\end{align*}
Expanding $h_{\tau,\varepsilon}'(t) = \frac{g_\varepsilon'(t) - h_{\tau,\varepsilon}(t)}{t}$ explicitly yields a convex combination:
\begin{align*}
    H_{\tau,\varepsilon}(p) + h_{\tau,\varepsilon}'\big(\Theta_\tau(S_\tau)\big) \Theta_\tau'(S_\tau) S_\tau 
    &= H_{\tau,\varepsilon}(p) + \left[ g_\varepsilon'\big(\Theta_\tau(S_\tau)\big) - \frac{g_\varepsilon\big(\Theta_\tau(S_\tau)\big)}{\Theta_\tau(S_\tau)} \right] \frac{\Theta_\tau'(S_\tau) S_\tau}{\Theta_\tau(S_\tau)} \\
    &= H_{\tau,\varepsilon}(p) + \Big[ g_\varepsilon'\big(\Theta_\tau(S_\tau)\big) - H_{\tau,\varepsilon}(p) \Big] \alpha \\
    &= (1-\alpha) H_{\tau,\varepsilon}(p) + \alpha g_\varepsilon'\big(\Theta_\tau(S_\tau)\big),
\end{align*}
where $\alpha := \Theta_\tau'(S_\tau) \frac{S_\tau}{\Theta_\tau(S_\tau)} \in [0,1]$ by \eqref{theta regularity} (whereas if $h_{\tau,\varepsilon}'\big(\Theta_\tau(S_\tau)\big) \ge 0$, the quadratic form is immediately bounded below by $H_{\tau,\varepsilon}(p)|\xi|^2$). Thus, by \eqref{quocientconditiontog_e}, 
\begin{equation*}
    \langle D_p \mathcal{A}_{\tau,\varepsilon}(p)\xi, \xi \rangle \ge \min\{\delta, 1\} H_{\tau,\varepsilon}(p) |\xi|^2.
\end{equation*}

Similarly, using the exact same derivative identity and \eqref{quocientconditiontog_e}, the upper bound for the Jacobian simplifies to
\begin{align*}
    |D_p \mathcal{A}_{\tau,\varepsilon}(p)| &\le H_{\tau,\varepsilon}(p) + \big|h_{\tau,\varepsilon}'\big(\Theta_\tau(S_\tau)\big)\big| \Theta_\tau'(S_\tau) \frac{|p|^2}{S_\tau} \\
    &\le H_{\tau,\varepsilon}(p) + \big|h_{\tau,\varepsilon}'\big(\Theta_\tau(S_\tau)\big)\big| \Theta_\tau'(S_\tau) S_\tau \\
    &= H_{\tau,\varepsilon}(p) + \alpha \left| g_\varepsilon'\big(\Theta_\tau(S_\tau)\big) - H_{\tau,\varepsilon}(p) \right| \\
    &\le 2 (1 + \max\{g_0, 1\}) H_{\tau,\varepsilon}(p).
\end{align*}
Multiplying by $|p|$ and using \eqref{theta s relation}, we have $\frac{|p|}{\Theta_\tau(S_\tau)} \le \frac{S_\tau}{\Theta_\tau(S_\tau)} \le \frac{5}{3}$ inside $\mathcal{Z}_{1}$. The upper bound evaluates to
\begin{align*}
    |D_p \mathcal{A}_{\tau,\varepsilon}(p)| |p| &\le 2 (1 + \max\{g_0, 1\}) g_\varepsilon\big(\Theta_\tau(S_\tau)\big) \frac{|p|}{\Theta_\tau(S_\tau)} \\
    &\le \frac{10}{3} (1 + \max\{g_0, 1\}) g_\varepsilon\big(\Theta_\tau(S_\tau)\big).
\end{align*}
Furthermore, noting that $|\mathcal{A}_{\tau,\varepsilon}(p)| = g_\varepsilon\big(\Theta_\tau(S_\tau)\big) \frac{|p|}{\Theta_\tau(S_\tau)} \le \frac{5}{3} g_\varepsilon\big(\Theta_\tau(S_\tau)\big)$, we conclude that
\begin{equation*}
    |\mathcal{A}_{\tau,\varepsilon}(p)| + |D_p \mathcal{A}_{\tau,\varepsilon}(p)| |p| \le 5 (1 + \max\{g_0, 1\}) g_\varepsilon\big(\Theta_\tau(S_\tau)\big).
\end{equation*}
We thus recover \eqref{eq:SC_prime_global_shifted} in $\mathcal{Z}_1$ by combining \eqref{theta s relation} with $(g\text{-}1)$ of Lemma \ref{firstLema}.

\vspace{0.3cm}
\noindent\textit{Step \#2:} Structural conditions in terms of $g_\varepsilon(|p|)$ in the active annulus $\mathcal{Z}_{2}$.
\vspace{0.2cm}

For $\varepsilon \in (0, \varepsilon_\star)$, since $|p| \le \frac{e^\varepsilon}{2\tau}$ in $\mathcal{Z}_2$, we have $S_\tau(p) < \frac{2}{\tau} - \frac{\tau}{2}$. In this regime, $\Theta_\tau$ acts strictly as the identity:
\begin{equation} \label{eq:theta_identity}
    \Theta_\tau(S_\tau) = S_\tau.
\end{equation}
In the active annulus $\mathcal{Z}_{2}$, we first establish the structural conditions directly in terms of $g_\varepsilon(|p|)$. Since $|p| \ge \tau e^{-\varepsilon} > \tau/2$, the regularized magnitude $S_\tau(p)$ is comparable to the Euclidean modulus $|p|$:
\begin{equation} \label{eq:equivalence_S_tau_p}
    |p| \le S_\tau(p) = \sqrt{|p|^2+\tau^2} \le \sqrt{|p|^2 + 4|p|^2} = \sqrt{5}|p| < 3|p|.
\end{equation}
As shown in \eqref{eq:theta_identity}, $\Theta_\tau(S_\tau) = S_\tau$ (hence $\alpha = 1$). Applying the doubling property \eqref{eq:exact_doubling_constant} twice along \eqref{eq:equivalence_S_tau_p}, the shifted density satisfies
\begin{equation} \label{eq:H_equivalence_no_S}
    \mu^{-2} \frac{g_\varepsilon(|p|)}{|p|} \le H_{\tau,\varepsilon}(p) = \frac{g_\varepsilon(S_\tau)}{S_\tau} \le \mu^2 \frac{g_\varepsilon(|p|)}{|p|}.
\end{equation}

Next, for any $y \in B_\varepsilon(0)$, we have $|y| < \varepsilon_\star \le \tau/4 \le |p|/2$. By the triangle inequality, 
\begin{equation} \label{eq:triangle_annulus}
    \frac{1}{2}|p| \le |p-y| \le \frac{3}{2}|p| \quad \implies \quad |p-y| \in \left[\frac{\tau}{4}, \frac{2}{\tau}\right].
\end{equation}
By \eqref{eq:triangle_annulus}, property \eqref{C1}, and \eqref{eq:exact_doubling_constant}, the mollified field $\mathcal{A}_{0,\varepsilon}$ satisfies
\begin{align*}
    \langle D_p \mathcal{A}_{0,\varepsilon}(p)\xi, \xi \rangle &= \int_{B_\varepsilon(0)} \langle D_p \mathcal{A}_0(p-y)\xi, \xi \rangle \rho_\varepsilon(y) \, dy \ge \frac{\lambda}{2\mu} \frac{g_\varepsilon(|p|)}{|p|} |\xi|^2,
\end{align*}
and
\begin{align*}
    |\mathcal{A}_{0,\varepsilon}(p)| + |D_p \mathcal{A}_{0,\varepsilon}(p)| |p| &\le 8\Lambda\mu g_\varepsilon(|p|).
\end{align*}

In the core of $\mathcal{Z}_{2}$ where $\eta_{\varepsilon,\tau} = 0$, the operator reduces to $\mathcal{A}_{\tau,\varepsilon} = \mathcal{A}_{0,\varepsilon}$, and the bounds in terms of $|p|$ hold immediately. 
In the transition zones where $0 < \eta_{\varepsilon,\tau} < 1$, we compute the Jacobian of $\mathcal{A}_{\tau,\varepsilon}$:
\begin{align} \label{eq:shifted_jacobian}
D_p \mathcal{A}_{\tau,\varepsilon}(p) &= (1-\eta_{\varepsilon,\tau}) D_p \mathcal{A}_{0,\varepsilon}(p) + \eta_{\varepsilon,\tau} D_p \big(H_{\tau,\varepsilon}(p) p\big) \nonumber \\
&\quad + \big(H_{\tau,\varepsilon}(p) p - \mathcal{A}_{0,\varepsilon}(p)\big) \otimes \nabla_p \eta_{\varepsilon,\tau}(|p|) \nonumber \\
&=: M_1 + M_2.
\end{align}
Using \eqref{eq:H_equivalence_no_S} and the ellipticity bound for $\mathcal{A}_{0,\varepsilon}$, we obtain
\begin{align*}
    \langle M_1\xi,\xi\rangle &\ge \left[ (1-\eta_{\varepsilon,\tau}) \frac{\lambda}{2\mu} + \eta_{\varepsilon,\tau} \min\{\delta, 1\} \mu^{-2} \right] \frac{g_\varepsilon(|p|)}{|p|} |\xi|^2 \ge \tilde{\lambda}^\ast \frac{g_\varepsilon(|p|)}{|p|} |\xi|^2,
\end{align*}
where $\tilde{\lambda}^\ast = \min\big\{\frac{\lambda}{2\mu}, \min\{\delta, 1\} \mu^{-2}\big\}$ was defined in \eqref{K assumption}.

Next, we bound the cross term $M_2$. By \eqref{eq:smooth_eta_bound}, we have $|\nabla_p \eta_{\varepsilon,\tau}(|p|)| \le \frac{K}{|p|}$. Adding and subtracting $\mathcal{A}_0(p)$ yields
\begin{align*}
   |\langle M_2\xi,\xi\rangle| &\le \frac{K}{|p|} \Big( \big| H_{\tau,\varepsilon}(p) p - \mathcal{A}_0(p) \big| + \big|\mathcal{A}_0(p) - \mathcal{A}_{0,\varepsilon}(p)\big| \Big) |\xi|^2 \\
    &\le K(2\Lambda + \mu^2) \frac{g_\varepsilon(|p|)}{|p|} |\xi|^2 + \frac{K}{|p|} \big|\mathcal{A}_0(p) - \mathcal{A}_{0,\varepsilon}(p)\big| |\xi|^2.
\end{align*}
By our choice of $K$ in \eqref{K assumption} and condition \eqref{C2}, each of the two terms on the right-hand side is bounded by $\frac{\tilde{\lambda}^\ast}{4} \frac{g_\varepsilon(|p|)}{|p|} |\xi|^2$, so that $|\langle M_2\xi,\xi\rangle| \le \frac{\tilde{\lambda}^\ast}{2} \frac{g_\varepsilon(|p|)}{|p|} |\xi|^2$. Therefore,
\begin{align*}
    \langle D_p \mathcal{A}_{\tau,\varepsilon}(p)\xi, \xi \rangle &= \langle M_1\xi, \xi \rangle + \langle M_2\xi, \xi \rangle \\
    &\ge \tilde{\lambda}^\ast \frac{g_\varepsilon(|p|)}{|p|} |\xi|^2 - |\langle M_2\xi, \xi \rangle| \\
    &\ge \frac{\tilde{\lambda}^\ast}{2} \frac{g_\varepsilon(|p|)}{|p|} |\xi|^2.
\end{align*}

To establish the upper growth bound in $\mathcal{Z}_2$, we use \eqref{eq:H_equivalence_no_S} and the growth bound for $\mathcal{A}_{0,\varepsilon}(p)$:
\begin{align*}
    |\mathcal{A}_{\tau,\varepsilon}(p)| &\le (1-\eta_{\varepsilon,\tau}) |\mathcal{A}_{0,\varepsilon}(p)| + \eta_{\varepsilon,\tau} H_{\tau,\varepsilon}(p) |p| \\
    &\le (8\Lambda\mu + \mu^2) g_\varepsilon(|p|).
\end{align*}
Moreover, from \eqref{eq:shifted_jacobian} we have $|D_p\mathcal{A}_{\tau,\varepsilon}(p)||p| \leq |M_1||p| + |M_2||p|$. From \textit{Step \#1}, $|D_p(H_{\tau,\varepsilon}(p)p)| \le 2(1+\max\{g_0, 1\}) H_{\tau,\varepsilon}(p)$, which gives
\begin{align*}
    |M_1| |p| &\le (1-\eta_{\varepsilon,\tau}) |D_p \mathcal{A}_{0,\varepsilon}(p)| |p| + \eta_{\varepsilon,\tau} \big|D_p \big(H_{\tau,\varepsilon}(p) p\big)\big| |p| \\
    &\le 8\Lambda\mu g_\varepsilon(|p|) + 2(1+\max\{g_0, 1\}) H_{\tau,\varepsilon}(p) |p| \\
    &\le \Big[ 8\Lambda\mu + 2(1+\max\{g_0, 1\})\mu^2 \Big] g_\varepsilon(|p|).
\end{align*}
For $M_2$, using $|\nabla_p \eta_{\varepsilon,\tau}(|p|)| \le \frac{K}{|p|}$ directly, we get
\begin{align*}
    |M_2| |p| &\le |\nabla_p \eta_{\varepsilon,\tau}(|p|)| |p| \, \big| H_{\tau,\varepsilon}(p) p - \mathcal{A}_{0,\varepsilon}(p) \big| \\
    &\le K \big( H_{\tau,\varepsilon}(p) |p| + |\mathcal{A}_{0,\varepsilon}(p)| \big) \\
    &\le K(\mu^2 + 8\Lambda\mu) g_\varepsilon(|p|).
\end{align*}
Summing these three estimates yields
\begin{align*}
    |\mathcal{A}_{\tau,\varepsilon}(p)| + |D_p \mathcal{A}_{\tau,\varepsilon}(p)| |p| &\le |\mathcal{A}_{\tau,\varepsilon}(p)| + |M_1| |p| + |M_2| |p| \\
    &\le \tilde{\Lambda}^\ast g_\varepsilon(|p|),
\end{align*}
where $\tilde{\Lambda}^\ast := (1+K)(8\Lambda\mu + \mu^2) + 8\Lambda\mu + 2(1+\max\{g_0, 1\})\mu^2$. Finally, we recover \eqref{eq:SC_prime_global_shifted} in terms of $S_\tau(p)$ by applying \eqref{eq:equivalence_S_tau_p} and \eqref{eq:H_equivalence_no_S}.

\vspace{0.3cm}
\noindent\textit{Step \#3:} Behavior of the operator in the outer region $\mathcal{Z}_{3}$.
\vspace{0.2cm}

In $\mathcal{Z}_3$, we have $|p| \ge \frac{2}{\tau} + \frac{\tau}{2}$, which implies $S_\tau(p) \ge \frac{2}{\tau} + \frac{\tau}{2}$. Thus, the truncation function $\Theta_\tau$ attains its constant upper plateau:
\begin{equation} \label{eq:theta_plateau}
    \Theta_\tau(S_\tau) = \frac{2}{\tau}.
\end{equation}
Since $\eta_{\varepsilon,\tau}(|p|) = 1$ in this region, the regularized operator becomes purely linear:
\begin{equation*}
    \mathcal{A}_{\tau,\varepsilon}(p) = \frac{g_\varepsilon(2/\tau)}{2/\tau} p \quad \text{and} \quad D_p \mathcal{A}_{\tau,\varepsilon}(p) = \frac{g_\varepsilon(2/\tau)}{2/\tau} I_d.
\end{equation*}
Consequently, for any solution $v$ in the region where $|\nabla v| \ge \frac{2}{\tau} + \frac{\tau}{2}$, the homogeneous equation reduces to the classical Laplace equation:
\begin{equation*}
    \operatorname{div} \mathcal{A}_{\tau,\varepsilon}(\nabla v) = \frac{g_\varepsilon(2/\tau)}{2/\tau} \operatorname{div}(\nabla v) = \frac{g_\varepsilon(2/\tau)}{2/\tau} \Delta v = 0.
\end{equation*}
This completes the proof.
\end{proof}

\begin{remark} \label{rem:SC_active_region}
Under the assumptions of Lemma \ref{lem:shifted_unified_operator}, if $p \in \mathcal{Z}_{2}$, the operator $\mathcal{A}_{\tau,\varepsilon}$ strictly satisfies the structural conditions directly with respect to the Euclidean modulus $|p|$:
\begin{equation} \label{eq:SC_prime_particular}
\begin{cases}
    |\mathcal{A}_{\tau,\varepsilon}(p)| + |D_p \mathcal{A}_{\tau,\varepsilon}(p)| |p| \le {\Lambda}^\ast g_\varepsilon(|p|), \\
    \langle D_p \mathcal{A}_{\tau,\varepsilon}(p)\xi, \xi \rangle \ge {\lambda}^\ast \frac{g_\varepsilon(|p|)}{|p|} |\xi|^2,
\end{cases}
\end{equation}
where ${\lambda}^\ast := \frac{\tilde{\lambda}^\ast}{2} > 0$ and ${\Lambda}^\ast := \tilde{\Lambda}^\ast > 0$ depend only on $\lambda, \Lambda, \mu, g_0,$ and $\delta$. This is a direct consequence of the explicit estimates derived in \textit{Step \#2} of the proof, which established these bounds natively in terms of $g_\varepsilon(|p|)$ prior to passing to $S_\tau(p)$.
\end{remark}

\section{Regularity $C^{1,\alpha}$ up to the boundary}\label{section5}

In this section we will obtain the regularity $C^{1,\alpha}$ up to the boundary to solutions of \eqref{1dir}. By Lemma \ref{lem:invariance}, the structural conditions in \eqref{structural conditions} are invariant under $C^{1,\alpha}$ diffeomorphisms. So it's sufficient to consider our estimates in $B_1^+$. Then, we will deal with weak solutions of \eqref{1dir} with $R=1$

Our goal is to prove this result,
\begin{theorem}[Up to the boundary regularity] \label{thm:up_to_boundary}
Let $u \in W^{1,G}( B_1^+)\cap L^\infty(B_1^+)$ be a weak solution to \eqref{1dir}. Then $u \in C^{1,\beta}(B_{1/2}^+)$ for some $\beta = \beta(n,\delta,g_0,\lambda,\Lambda,\Lambda_\alpha,\Lambda_B,\alpha_{\mathcal{A}}) \in (0,1)$. Moreover, defining the normalization constant 
$$ \kappa := \|u\|_{L^\infty(B_1^+)} + \|\psi\|_{C^{1,\alpha}(T_1)} + g^{-1}\left( \|f\|_{L^q(B_1^+)} \right), $$
there exist constants $C_\star, c_\star > 0$ depending only on $n,\delta,g_0,q,g(1),\lambda,\Lambda,\Lambda_\alpha,\alpha_{\mathcal{A}}$ such that
\begin{equation*}
    \|u\|_{C^{1,\beta}(B_{1/2}^+)} \le C_\star e^{c_\star \Lambda_B \kappa+\|\gamma\|_{L^s(B_1^+)}} \kappa.
\end{equation*}
\end{theorem}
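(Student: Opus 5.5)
The proof is a Campanato-type perturbation argument in the spirit of \cite{BragaSousa2025} and \cite{lieberman1988, lieberman1991natural}. We freeze coefficients and compare $u$ with the solution of a homogeneous problem with constant coefficients, whose $C^{1,\alpha_H}$ regularity up to the flat boundary is obtained through the smooth approximation of Section \ref{section4}. After subtracting the Whitney extension $\tilde\psi$ of Lemma \ref{lem3.1}, we may assume homogeneous boundary data on $T_1$, up to lower-order terms. By Theorem \ref{thm:global_Calpha}, in its half-ball form given by Corollary \ref{thm:boundary_holder}, $u$ is H\"older continuous on $\overline{B_{3/4}^+}$, and we write $[u]_{C^{\alpha_0}}$ for the corresponding seminorm. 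Lemma \ref{lem:caccioppoli_boundary} gives $\int_{B_{3/4}^+}G(|\nabla u|)\le C e^{c(\Lambda_B\kappa+\|\gamma\|_{L^s})}\,G(\kappa)$. This is the source of the exponential factor in the final bound.

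\textbf{Step 1 (reference problem).} For $x_0\in \overline{B_{1/2}^+}$ and a small $\rho$, let $v$ solve $\operatorname{div}\mathcal{A}_0(\nabla v)=0$ in $B_\rho^+(x_0)$ with $v=u$ on $\partial B_\rho^+(x_0)$, where $\mathcal{A}_0(p)=\mathcal{A}(x_0,u(x_0),p)$. For $v$ we need a Campanato decay estimate
$$\int_{B_r^+}G(|\nabla v-(\nabla v)_r|)\le C(r/\rho)^{n+\alpha_H(1+\delta)}\int_{B_\rho^+}G(|\nabla v|).$$
To get it, replace $\mathcal{A}_0$ by the $C^\infty$ field $\mathcal{A}_{\tau,\varepsilon}$ of Lemma \ref{lem:shifted_unified_operator}. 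The corresponding solutions $v_{\tau,\varepsilon}$ are smooth, and Remark \ref{rem:SC_active_region} gives structural constants independent of $\tau$ and $\varepsilon$. We then differentiate the equation tangentially. We use the Lieberman/De Giorgi argument applied to $G(|D_k v|)$ for $k<n$, and recover $D_nD_n v$ from the equation. This gives uniform $C^{1,\alpha_H}$ bounds up to $T$. Finally we let $\varepsilon\to0$, then $\tau\to0$, using the uniform convergence $H_\varepsilon(|\xi|)\xi\to H(|\xi|)\xi$ and the monotonicity of the operators (Minty's trick) to pass to the limit.

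\textbf{Step 2 (comparison).} Test the difference of the two equations with $u-v\in W_0^{1,G}$. The left side is controlled from below by the monotonicity inequality
$$\int \langle \mathcal{A}_0(\nabla u)-\mathcal{A}_0(\nabla v),\nabla u-\nabla v\rangle\gtrsim\int G(|\nabla u-\nabla v|)$$
(or its standard shifted version). The right side has two kinds of terms.
\begin{itemize}
\item The oscillation of $\mathcal{A}$ in $(x,z)$ costs $(\rho^{\alpha_{\mathcal{A}}}+[u]_{C^{\alpha_0}}^{\alpha_{\mathcal{A}}}\rho^{\alpha_0\alpha_{\mathcal{A}}})\int(1+g(|\nabla u|))|\nabla(u-v)|$.
\item The $\mathcal{B}$-terms are $\int(\Lambda_B G(|\nabla u|)+\gamma g(|\nabla u|)+|f|)|u-v|$.
\end{itemize}
For the $\mathcal{B}$-terms we use $|u-v|\le \operatorname{osc}_{B_\rho^+}u\lesssim\rho^{\alpha_0}$, H\"older's inequality with $\gamma\in L^s$ and $f\in L^q$, and Young's inequality (Lemma \ref{youngorlicz}). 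The result is
$$\int_{B_\rho^+}G(|\nabla u-\nabla v|)\le \omega(\rho)\int_{B_\rho^+}G(|\nabla u|)+C\rho^{n-\epsilon_0},$$
with $\omega(\rho)\to0$ polynomially. The drift term is the delicate one: $\int\gamma g(|\nabla u|)|u-v|\le\|\gamma\|_{L^s}\rho^{\alpha_0}\|g(|\nabla u|)\|_{L^{s'}}$ requires higher integrability of $g(|\nabla u|)$. This is where we would invoke \eqref{cond_s} together with a Gehring/reverse H\"older improvement of the Caccioppoli inequality of Lemma \ref{lem:caccioppoli_boundary}.

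\textbf{Step 3 (iteration).} Combining Steps 1 and 2 with $\phi(r)=\int_{B_r^+}G(|\nabla u|)$ and Lemma \ref{leitao lemma} first gives a Morrey bound $\phi(r)\le Cr^{n-\epsilon}$. This upgrades the estimates of Step 2 to pure powers of $\rho$. Repeating the argument at the level of the Campanato quantity $\int G(|\nabla u-(\nabla u)_r|)$, Lemma \ref{leitao lemma} then gives decay like $r^{n+\beta(1+\delta)}$ for some $\beta>0$. Campanato's characterization, applied uniformly for interior balls (using Corollary \ref{holdercontinuity}) and for half-balls centered on $T_{1/2}$, yields $\nabla u\in C^{0,\beta}(\overline{B_{1/2}^+})$. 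Tracking constants gives the stated bound with the factor $e^{c_\star\Lambda_B\kappa+\|\gamma\|_{L^s}}\kappa$.

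The main obstacle is Step 1, namely the uniform boundary $C^{1,\alpha}$ estimate for the regularized frozen problem. The degeneracy scale $\tau$ must not enter the constants, which is exactly what Remark \ref{rem:SC_active_region} and the $S_\tau$-structure of Lemma \ref{lem:shifted_unified_operator} are designed to guarantee. A secondary difficulty is the higher integrability needed to handle the $\gamma$-term in Step 2.
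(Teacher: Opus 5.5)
\textbf{Gap: Step 2 treats the source and drift terms too crudely, and this breaks Step 3.}

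You estimate both terms only through the sup bound $|u-v|\le \operatorname{osc}_{B_\rho^+}u\lesssim \rho^{\alpha_0}$. This gives $\int_{B_\rho^+}|f||u-v|\lesssim \rho^{\,n-n/q+\alpha_0}$. For the drift, the natural H\"older--Young estimate gives $\int_{B_\rho^+}\gamma\, g(|\nabla u|)|u-v|\le \varepsilon\,\phi(\rho)+C_\varepsilon\,\rho^{\,n+(1+g_0)(\alpha_0-n/s)}$.

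The exponent $\alpha_0$ from Section \ref{section3} is only known to be positive; Remark \ref{remark alpha dependence} shows it is of order $1/C_\star$. So when $\alpha_0<\max\{n/q,n/s\}$, these remainders are not even $O(\rho^n)$. Two consequences follow:
\begin{itemize}
\item Lemma \ref{leitao lemma} only gives a Morrey exponent bounded away from $n$.
\item More seriously, the Campanato step needs a comparison error $\int_{B_\rho^+}G(|\nabla u-\nabla v|)\le C\rho^{\,n+\beta'}$ with $\beta'>0$. A remainder $C\rho^{\,n-\epsilon_0}$ can never supply that.
\end{itemize}
Your other two terms are harmless. The $\Lambda_B$-term has the small coefficient $\Lambda_B\operatorname{osc}u$, and the $(x,z)$-oscillation of $\mathcal{A}$ leaves a remainder $\rho^{\,n+\alpha_0\alpha_{\mathcal{A}}}$.

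Gehring's lemma is the wrong remedy. Reverse H\"older inequalities are scale invariant and change no power of $\rho$. Extra integrability is also not needed: $s>1+g_0$ gives $s'<1+1/g_0$, and $g(t)^{1+1/g_0}\le C(1+G(t))$, so $g(|\nabla u|)\in L^{s'}$ already follows from the energy bound.

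\textbf{The missing idea is to exploit that $u-v$ vanishes on $\partial B_\rho^+$.} Then $u-v$ costs, via Sobolev/Poincar\'e, a factor comparable to $\rho\,|\nabla(u-v)|$ rather than $\rho^{\alpha_0}$.
\begin{itemize}
\item For $f$, the paper combines H\"older with $W_0^{1,1}\hookrightarrow L^{n/(n-1)}$. This gives $C\rho^{1-n/q}\phi(\rho)+C\rho^{\,n+1-n/q}$.
\item For the drift, it first applies Young's inequality with $\varepsilon=\rho^{\mu_\star\delta/(1+\delta)}$ to move $\gamma$ onto $|u-v|$, reducing to $\int G(|\gamma||u-v|)$.
\item It then sets $w=G(\tfrac12|u-v|)^{1/(1+\delta)}\in W^{1,1+\delta}_0(B_\rho^+)$ (Lemma \ref{Gv in W11}). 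It applies H\"older with $\sigma=s/(1+g_0)$, Gagliardo--Nirenberg with $\theta=1-\frac{n(1+g_0)}{s(1+\delta)}$, Poincar\'e, and $\int G(|\nabla v|)\le C\int G(|\nabla u|)$.
\end{itemize}
This gains the factor $\rho^{\mu_\star}$, with $\mu_\star=1+\delta-\frac{n(1+g_0)}{s}>0$; this is exactly where both parts of \eqref{cond_s} are used. The result, in a single pass, is $\int_{B_\rho^+}G(|\nabla u-\nabla v|)\le \mathcal{E}\,[\rho^{\nu/2}\phi(\rho)+\rho^{\,n+\nu/2}]$ with $\nu>0$. That gives Morrey bounds for every exponent below $n$, and then Campanato decay.

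Your $L^\infty$ route could presumably be rescued by a finite bootstrap. You would take the Morrey bound, improve the H\"older exponent of $u$ via Morrey's Dirichlet-growth lemma, and rerun until $\alpha_0>\max\{n/q,n/s\}$. Nothing of this kind is in your plan.

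\textbf{Secondary point: the $\kappa$-dependence.} The linear dependence on $\kappa$ and the factor $\Lambda_B\kappa$ in the exponent do not come from tracking constants. The paper proves the normalized Lemma \ref{small norms} and then rescales with $u/\kappa$ and $g_\kappa(t)=g(\kappa t)/g(\kappa)$. This turns $\Lambda_B$ into $\Lambda_B\kappa$ and leaves $\|\gamma\|_{L^s}$ invariant.
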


The first step to obtain Theorem \ref{thm:up_to_boundary} is to study the regularity of 
\begin{equation} \label{eq:frozen_homo}
\begin{cases}
\operatorname{div} \mathcal{A}_0(\nabla v) = 0 & \text{in } B_{1}^{+},\\
v=\psi & \text{on } T_{1},
\end{cases}
\end{equation}
where $\mathcal{A}_0(p) := \mathcal{A}(0, u(0), p)$. By \eqref{structural conditions}, the function $A_0:\Bbb{R}^n\to\Bbb{R}^n$ satisfies

\begin{equation} \tag{SC'} \label{homogeneous structural conditions}
\begin{cases}
    & |\mathcal{A}_0(p)| + |D_p \mathcal{A}_0(p)| |p| \le \Lambda g(|p|), \\
    & \lambda \frac{g(|p|)}{|p|} |\xi|^2 \le \langle D_p \mathcal{A}_0(p)\xi, \xi \rangle,
\end{cases}
\end{equation}

For any fixed parameters $\varepsilon, \tau > 0$, the Lemma \ref{lem:shifted_unified_operator} guaranties that the $\operatorname{div} \mathcal{A}_{\tau,\varepsilon}(\nabla v_{\varepsilon, \tau}) = 0$ admits a strong solution $v_{\varepsilon, \tau} \in C^{\infty}(B_1^+)$. With this regularity we can put the operator in non divergence form in $B_1^+$

\begin{equation*}
\sum_{i,j=1}^n a^{ij}_{\varepsilon, \tau}(x) D_{ij} v_{\varepsilon, \tau} = 0,\quad \text{where }\ \ a^{ij}_{\varepsilon, \tau}(x) = \frac{\partial \mathcal{A}_{\tau,\varepsilon}^i}{\partial p_j}(\nabla v_{\varepsilon, \tau}).
\end{equation*}
Again, Lemma \ref{lem:shifted_unified_operator} guarantees that these bounded coefficients satisfy the precise uniform ellipticity bounds:
\begin{equation*}
{\tilde{\lambda}} \frac{g_{\varepsilon}(|\nabla v_{\varepsilon, \tau}|)}{|\nabla v_{\varepsilon, \tau}|}|\xi|^{2} \le \sum_{i,j=1}^n a^{ij}_{\varepsilon, \tau}(x)\xi_{i}\xi_{j} \le \tilde{\Lambda} \frac{g_{\varepsilon}(|\nabla v_{\varepsilon, \tau}|)}{|\nabla v_{\varepsilon, \tau}|}|\xi|^{2}.
\end{equation*}

Consequently, the effective ellipticity ratio of the operator is universally bounded by $\frac{4\tilde{\Lambda}}{\tilde{\lambda}}$, a constant entirely independent of the approximation parameters $\varepsilon$ and $\tau$. This independence is the cornerstone of the analysis: it ensures that the \textit{a priori} bounds do not degenerate as we approach the singular limit. Thus, following the methodology of Braga and Sousa \cite{BragaSousa2025}, we can seamlessly apply the classical techniques of Lieberman \cite{lieberman1986, lieberman1988}---namely, the geometric barrier constructions for the global Lipschitz estimates and the Krylov-Safonov iteration for the boundary Harnack inequalities. With these uniform estimates secured for the strong solutions $v_{\varepsilon, \tau}$, we employ a classical two-tier compactness argument. Passing to the limit as $\varepsilon, \tau \to 0$, a subsequence of $v_{\varepsilon, \tau}$ converges in $C^1$ to the weak solution $v$ of the original non-regularized equation, preserving the optimal $C^{1,\sigma}$ bounds intact. Therefore, the boundary regularity result follows by identical steps to the purely $g$-Laplacian framework, yielding:

\begin{lemma}[Comparison and Boundary Regularity Estimates] \label{lemma:comparison_boundary}
Let $\mathcal{A}_0(p)$ be a strictly elliptic operator satisfying the uniform structural conditions with constants $\lambda$ and $\Lambda$. Let $u \in W^{1,G}(B_R^+) \cap L^\infty(B_R^+)$ be a weak solution to the full equation \eqref{1dir} such that $u = \psi$ on $T_R$, where $\psi \in C^{1,\alpha_\psi}(T_R)$. Consider $v \in W^{1,G}(B_R^+)$ as the unique weak solution to the frozen Dirichlet problem:
\begin{equation} \label{eq:comparison_problem}
\begin{cases}
    \operatorname{div} \mathcal{A}_0(\nabla v) = 0 & \text{in } B_R^+, \\
    v = u & \text{on } \partial B_R^+.
\end{cases}
\end{equation}
Then $v \in C^{1,\sigma}(B_{3R/4}^+)$ for some $\sigma = \sigma(n,\lambda,\Lambda,\delta,g_0,\alpha_\psi) \in (0,1)$. Moreover, there exists a uniform constant $C > 0$, depending only on $n, \lambda, \Lambda, \delta, g_0, g(1),$ and $\alpha_\psi$, such that the following properties hold.

First, $v$ satisfies the Lipschitz bound $v \in C^{0,1}(B_{3R/4}^+)$ with
\begin{equation}
    \sup_{B_{3R/4}^+} |\nabla v| \le C \left[ \frac{\operatorname{osc}_{B_R^+} v}{R} + \|\psi\|_{C^{1,\alpha_\psi}(T_R)} \right].
\end{equation}

Second, for the boundary oscillation of the difference quotient, we have for any $0 < r < R/2$,
\begin{equation}
   \operatorname*{osc}_{B_r^+} \left( \frac{v - \psi}{x_n} \right) \le C \left( \frac{r}{R} \right)^\sigma \left[\operatorname*{osc}_{B_R^+} \left( \frac{v - \psi}{x_n} \right) + \|\psi\|_{C^{1,\alpha_\psi}(T_R)} R^{\alpha_\psi} \right].
\end{equation}

Third, the pointwise Hölder continuity of the gradient yields that for any $0 < r < R/2$,
\begin{equation}
   \operatorname*{osc}_{B_r^+} \nabla v \le C \left( \frac{r}{R} \right)^\sigma \left[ \sup_{B_R^+} |\nabla v| + \|\psi\|_{C^{1,\alpha_\psi}(T_R)} R^{\alpha_\psi} \right].
\end{equation}

Fourth, defining the scaled norms
\begin{equation}
    \|v\|^\star_{C^{1,\sigma}(B_{3R/4}^+)} := \|v\|_{L^\infty(B_{3R/4}^+)} + R\|\nabla v\|_{L^\infty(B_{3R/4}^+)} + R^{1+\sigma}[\nabla v]_{C^{0,\sigma}(B_{3R/4}^+)}
\end{equation}
and $\|\psi\|^\star_{C^{1,\alpha_\psi}(T_R)} := \|\psi\|_{L^\infty(T_R)} + R\|\nabla \psi\|_{L^\infty(T_R)} + R^{1+\alpha_\psi}[\nabla \psi]_{C^{0,\alpha_\psi}(T_R)}$, the following global scaled estimate holds:
\begin{equation}
    \|v\|^*_{C^{1,\sigma}(B_{3R/4}^+)} \le C \left( \|v\|_{L^\infty(B_R^+)} + \|\psi\|^\star_{C^{1,\alpha_\psi}(T_R)} \right).
\end{equation}

Fifth, the supremum of the gradient energy is bounded by its integral average:
\begin{equation} \label{eq:sup_gradient_energy}
    \sup_{B_{R/2}^+} G(|\nabla v|) \le C R^{-n} \int_{B_R^+} G(|\nabla v|) \, dx.
\end{equation}

Sixth, the gradient satisfies the Campanato-type integral decay for any $0 < r < R$:
\begin{equation} \label{eq:campanato_decay}
    \frac{1}{|B_r^+|} \int_{B_r^+} G(|\nabla v - (\nabla v)_{B_r^+}|) \, dx \le C \left( \frac{r}{R} \right)^\sigma \frac{1}{|B_R^+|} \int_{B_R^+} G(|\nabla v - (\nabla v)_{B_R^+}|) \, dx,
\end{equation}
where $(\nabla v)_{B_r^+} = \frac{1}{|B_r^+|} \int_{B_r^+} \nabla v \, dx$ denotes the integral average over the half-ball.

Seventh, the following energy comparison holds:
\begin{equation} \label{eq:energy_by comparison}
    \int_{B_R^+} G(|\nabla v|) \, dx \le C \int_{B_R^+} G(|\nabla u|) \, dx.
\end{equation}

Finally, 
\begin{equation} \label{eq:L_infty_comparison}
    \sup_{B_R^+} |u - v| \le \operatorname*{osc}_{B^+_R}u.
\end{equation}
\end{lemma}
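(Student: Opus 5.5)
The plan is to prove the eight properties through the regularized problems of Section \ref{section4}. For fixed $\tau,\varepsilon$ with $0<\varepsilon<\varepsilon_\star(\tau)$, let $v_{\varepsilon,\tau}$ solve $\operatorname{div}\mathcal{A}_{\tau,\varepsilon}(\nabla v_{\varepsilon,\tau})=0$ in $B_R^+$, with $v_{\varepsilon,\tau}=u_\varepsilon$ on $\partial B_R^+$, where $u_\varepsilon$ is a smooth approximation of $u$ that equals a mollified $\psi$ near $T_R$. Existence and smoothness follow from Lemma \ref{lem:shifted_unified_operator}. That lemma gives ellipticity with respect to $g_\varepsilon(S_\tau)$ for $|p|$ bounded, and pure Laplace structure for large $|p|$. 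In non-divergence form the ellipticity ratio is bounded independently of $\varepsilon,\tau$. Every estimate below is first proved for $v_{\varepsilon,\tau}$ with constants independent of $\varepsilon,\tau$, and then passed to the limit.

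\textbf{Step 1 (Lipschitz bound).} I will use Lieberman's barrier construction \cite{lieberman1988}. For $w=v_{\varepsilon,\tau}-\tilde\psi$, use Lemma \ref{lem3.1} and the comparison principle against barriers of the form $\tilde\psi\pm k\,\phi(x_n)$ with concave $\phi$. This controls $|\partial_n v|$ on $T_{3R/4}$ by $\operatorname{osc} v/R+\|\psi\|_{C^{1,\alpha_\psi}}$; the tangential derivatives there are given by $\nabla\psi$. Next, each $\partial_k v$ solves the linear equation $\partial_i(a^{ij}\partial_j\partial_k v)=0$. Applying a maximum principle to $G(|\nabla v|)$, or Bernstein's argument as in \cite{BragaSousa2025}, carries the boundary bound into the interior. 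The same Moser/De Giorgi argument applied to $G(|\nabla v|)$ as a subsolution gives the fifth property \eqref{eq:sup_gradient_energy}.

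\textbf{Step 2 (H\"older gradient estimates).} Once $|\nabla v|\le M$ is known, the coefficients $a^{ij}$ normalized by $g_\varepsilon(M)/M$ are uniformly elliptic on the set where $|\nabla v|$ is comparable to $M$. I will use Lieberman's degenerate dichotomy. If $\partial_k v$ is close to $\sup|\nabla v|$ on a large portion of $B_r^+$, then Krylov--Safonov applies. Otherwise, De Giorgi measure-decay for $G(|\nabla v|)$ reduces the supremum by a fixed factor. On the boundary, the Krylov--Safonov boundary Harnack inequality for $(v-\psi)/x_n$ gives the second property. Combining the two yields the third and fourth properties. The Campanato decay \eqref{eq:campanato_decay} then follows by integrating the pointwise oscillation decay and using the fifth property together with $(G\text{-}2)$.

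\textbf{Step 3 (energy and $L^\infty$ comparison).} Test the equation for $v$ with $v-u\in W^{1,G}_0$. Then use monotonicity and the growth bound $|\mathcal{A}_0(p)|\le\Lambda g(|p|)$ together with Young's inequality (Lemma \ref{youngorlicz}). This gives $\int G(|\nabla v|)\le C\int G(|\nabla u|)$, which is the seventh property. The weak maximum principle for the homogeneous operator gives $\inf_{\partial B_R^+}u\le v\le\sup_{\partial B_R^+}u$. Hence $|u-v|\le\operatorname{osc}_{B_R^+}u$, which is the final property.

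\textbf{Step 4 (limit).} The bounds are uniform, and $\mathcal{A}_{\tau,\varepsilon}\to\mathcal{A}_0$ locally uniformly. Letting $\varepsilon\to0$ and then $\tau\to0$, Arzel\`a--Ascoli gives $C^1$ convergence of $v_{\varepsilon,\tau}$ to a limit. Minty's monotonicity trick identifies this limit as the unique weak solution $v$. All estimates pass to the limit, since $g_\varepsilon\to g$ uniformly on compact sets.

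The main obstacle is Step 2 near the boundary. There the degeneracy of $g(|\nabla v|)/|\nabla v|$ at critical points interacts with the boundary Harnack inequality for $(v-\psi)/x_n$, and one must check that every Lieberman constant depends only on the structural ratio, not on $\varepsilon$ or $\tau$. I would first follow \cite{lieberman1988} verbatim for the normalized linear operator. At each scale I would use Remark \ref{rem:SC_active_region} to replace $S_\tau$ by $|p|$.
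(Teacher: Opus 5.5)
Your overall route is the paper's own: the paper only sketches this lemma by appeal to Lieberman and Braga--Sousa, via regularization with $\mathcal{A}_{\tau,\varepsilon}$, $\varepsilon,\tau$-uniform estimates (barriers, Krylov--Safonov, the degenerate dichotomy) and a limit. Your Step~3 is correct and fills in what the paper leaves implicit: coercivity $\mathcal{A}_0(p)\cdot p\ge\lambda G(|p|)$ (from ellipticity and $\mathcal{A}_0(0)=0$), Young's inequality with (G-4), and the weak maximum principle. The gap is in the fifth and sixth properties. Your arguments do not produce them, and in the stated form they are false once $\psi$ is allowed to oscillate.

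For \eqref{eq:sup_gradient_energy}, a Moser/De Giorgi iteration for $G(|\nabla v|)$ works away from $T_R$. On half-balls touching $T_R$, however, the truncations $(G(|\nabla v|)-k)^+$ are admissible only for $k\ge\sup_{T_R}G(|\nabla v|)$, and Step~1 controls that supremum only through $\|\psi\|_{C^{1,\alpha_\psi}(T_R)}$. That term cannot be dropped. Take $\mathcal{A}(x,z,p)=\mathcal{A}_0(p)=p$ and $\mathcal{B}=0$, so $g(t)=t$ and $G(t)=t^2/2$, and let $u=v=a\,e^{-kx_n}\sin(kx_1)$. Then $\sup_{B_{R/2}^+}G(|\nabla v|)=a^2k^2/2$, while $R^{-n}\int_{B_R^+}G(|\nabla v|)\,dx\le C_n a^2k/R$. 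The ratio is of order $kR\to\infty$. What your argument actually yields is \eqref{eq:sup_gradient_energy} with an extra term such as $C\,G(\|\psi\|_{C^{1,\alpha_\psi}(T_R)})$. That is exactly the form the paper later uses in the proof of Lemma~\ref{small norms}, namely $\int_{B_r^+}G(|\nabla v|)\le C[(r/R)^n\int_{B_R^+}G(|\nabla v|)+r^nG(\|\psi\|_{C^{1,\alpha_\psi}})]$. That is the version you should state and prove.

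For \eqref{eq:campanato_decay}, integrating the third property and then applying the fifth gives
\[
\fint_{B_r^+}G\big(|\nabla v-(\nabla v)_{B_r^+}|\big)\,dx\le C\Big(\frac{r}{R}\Big)^{\sigma(1+\delta)}\Big[\fint_{B_R^+}G(|\nabla v|)\,dx+(\text{$\psi$-term})\Big].
\]
The right side contains the full energy, not the excess $\fint_{B_R^+}G(|\nabla v-(\nabla v)_{B_R^+}|)\,dx$. The excess can be far smaller than the energy, for instance when $\nabla v$ is nearly constant and large. Upgrading therefore needs a separate argument: compare $|(\nabla v)_{B_R^+}|$ with the excess, and in the nondegenerate alternative linearize about the mean gradient. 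There the equation is uniformly elliptic with ellipticity comparable to $g(|(\nabla v)_{B_R^+}|)/|(\nabla v)_{B_R^+}|$.

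Up to $T_R$ even this is not enough. In the example above, with $r\ll R$ and $kr\gg1$, the left side is about $c_n a^2k/r$, while the right side is at most $C(r/R)^\sigma a^2k/R$. So a $\psi$-term is again unavoidable. The energy-form decay you do obtain is what a Lieberman-type Morrey-to-Campanato argument for $u$ actually requires, so that is the statement to aim for.
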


We begin this section with the fundamental global energy estimate up to the flattened boundary for the full quasilinear equation.

\begin{lemma}\label{small norms}
Let $u \in W^{1,G}(B_1^+)\cap L^\infty(B_1^+)$ be a weak solution to \eqref{1dir}. If
$$ \max\left\{\|\psi\|_{C^{1,\alpha_\psi}(T_{1})}, \, \|u\|_{L^{\infty}(B_{1}^{+})}, \, \|f\|_{L^{q}(B_{1}^{+})}\right\} \le 1, $$
then $\nabla u \in C^{0,\beta}(B_{1/2}^+)$ for some $\beta = \beta(n, s, \delta, g_0, \Lambda, \Lambda_B, \lambda, \|\gamma\|_{L^s(B_1^+)}, \alpha_\psi, \alpha_{\mathcal{A}}, q) \in (0,1)$. Moreover, there exist $C_\star = C_\star(n, s, \delta, g_0, g(1), \alpha_\psi, \alpha_{\mathcal{A}}, \lambda, \Lambda, \Lambda_\alpha, q) > 0$ and $c_\star = c_\star(n, s, \delta, g_0, \lambda, \alpha_\psi, \alpha_{\mathcal{A}}, q) > 0$ such that
\begin{equation*}
    [\nabla u]_{C^{0,\beta}(B_{1/2}^+)} \leq C_\star e^{c_\star\left(\Lambda_B + \|\gamma\|_{L^s(B_1^+)}\right)}.
\end{equation*}
\end{lemma}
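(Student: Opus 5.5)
The plan is a Campanato-type perturbation argument in the spirit of \cite{BragaSousa2025}. We compare $u$, on every half-ball $B_R^+(x_0)$ with $x_0 \in B_{1/2}^+\cup T_{1/2}$ and $R\le 1/4$, to the solution $v$ of the frozen problem \eqref{eq:comparison_problem} with $\mathcal{A}_0(p) := \mathcal{A}(x_0,u(x_0),p)$. Interior balls are handled the same way, using full balls and the interior analogues of Lemma \ref{lemma:comparison_boundary}.

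\textbf{Step 1: energy bounds.} First we record uniform energy bounds. Lemma \ref{lem:caccioppoli_boundary}, together with the smallness hypotheses and $\|u\|_{L^\infty}\le 1$, gives
\[
\int_{B_{3/4}^+}G(|\nabla u|)\,dx\le C e^{c(\Lambda_B+\|\gamma\|_{L^s})}.
\]
Theorem \ref{thm:global_Calpha}, via Corollaries \ref{holdercontinuity} and \ref{thm:boundary_holder}, gives $\operatorname{osc}_{B_R^+(x_0)}u\le C R^{\alpha_0}$. Feeding these into the scaled Caccioppoli inequality, with $w=u-\tilde\psi$ and $G(|w|/R)$ controlled by $G(CR^{\alpha_0-1})$, yields a Morrey-type decay
\[
\int_{B_R^+}G(|\nabla u|)\,dx\le C R^{n-\epsilon}
\]
for small $\epsilon$, after a finite bootstrap.

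\textbf{Step 2: comparison estimate.} Test the difference of the weak formulations for $u$ and $v$ with $u-v\in W^{1,G}_0(B_R^+)$. By the ellipticity in \eqref{homogeneous structural conditions} and monotonicity, the left side dominates
\[
\int_{B_R^+}\frac{g(|\nabla u|+|\nabla v|)}{|\nabla u|+|\nabla v|}\,|\nabla u-\nabla v|^2\,dx.
\]
The right side splits into three terms.
\begin{itemize}
\item[(i)] The coefficient oscillation $\int(\mathcal{A}(x_0,u(x_0),\nabla u)-\mathcal{A}(x,u,\nabla u))\cdot\nabla(u-v)$. By (SC) this is at most $C(R^{\alpha_{\mathcal{A}}}+R^{\alpha_0\alpha_{\mathcal{A}}})\int(1+g(|\nabla u|))|\nabla(u-v)|$.
\item[(ii)] $\int|\mathcal{B}||u-v|$, with $|u-v|\le \operatorname{osc}u\le CR^{\alpha_0}$ by \eqref{eq:L_infty_comparison}. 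This contributes $CR^{\alpha_0}\big(\Lambda_B\int G(|\nabla u|)+\int\gamma\, g(|\nabla u|)\big)+\int|f||u-v|$. The $\gamma$ term is handled by H\"older with $s>g_0+1$ and Young, giving a positive power $R^{n(1-1/s)\cdot\theta}$ times the energy. The $f$ term is handled as in Lemma \ref{lem:caccioppoli_boundary}, giving $\tilde G(R^{1-n/q})R^n$.
\end{itemize}
Using the standard Orlicz monotonicity inequality (Young plus (G-2)), we arrive at
\[
\int_{B_R^+}G(|\nabla u-\nabla v|)\,dx\le C e^{c(\Lambda_B+\|\gamma\|)}\big(R^{\beta_1}\textstyle\int_{B_R^+}G(|\nabla u|)\,dx+R^{n+\beta_1}\big)
\]
for some $\beta_1>0$.

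\textbf{Step 3: Campanato iteration.} Combine this with the decay \eqref{eq:campanato_decay} and the sup bound \eqref{eq:sup_gradient_energy} for $v$. For $\rho<R$ set
\[
\phi(\rho):=\int_{B_\rho^+}G(|\nabla u-(\nabla u)_\rho|)\,dx .
\]
The triangle inequality (G-3) then gives
\[
\phi(\rho)\le C\big[(\rho/R)^{n+\sigma}+R^{\beta_1}\big]\phi(R)+CR^{n+\beta_1}.
\]
Also, first $\int_{B_\rho^+}G(|\nabla u|)\le C[(\rho/R)^n+R^{\beta_1}]\int_{B_R^+}G(|\nabla u|)+CR^{n+\beta_1}$, which by Lemma \ref{leitao lemma} gives $\int_{B_\rho^+}G(|\nabla u|)\le C\rho^{n-\epsilon}$ for every $\epsilon$, and then boundedness of $\nabla u$ (Lipschitz). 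Applying Lemma \ref{leitao lemma} again (with $\mu=R^{\beta_1}$ small for $R\le R_0$) gives
\[
\phi(\rho)\le C\rho^{n+\sigma'}
\]
for some $\sigma'>0$. Because $G$ is comparable to powers between $\delta+1$ and $g_0+1$ on bounded sets (Lemma \ref{firstLema}), this translates to an $L^{1+g_0}$ Campanato seminorm, hence $\nabla u\in C^{0,\beta}$ with $\beta=\sigma'/(1+g_0)$. The boundary and interior estimates are patched by the usual case distinction $\operatorname{dist}(x_0,T_1)\lessgtr R$. Tracking the constants from Remark \ref{remark lemma leitao} shows they come from $C_\star$ of Lemma \ref{lem:caccioppoli_boundary} and the weak Harnack constant, all of the form $Ce^{c(\Lambda_B+\|\gamma\|)}$.

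\textbf{Main obstacle.} The hardest part is Step 2, specifically absorbing the drift term $\int\gamma\, g(|\nabla u|)|u-v|$ with a positive power of $R$ while keeping the constant exponential. My first attempt is Young's inequality
\[
\gamma\, g(|\nabla u|)|u-v|\le \varepsilon G(|\nabla u|)+C_\varepsilon G(\gamma|u-v|),
\]
followed by $G(\gamma|u-v|)\le C(1+\gamma)^{1+g_0}G(CR^{\alpha_0})$ and H\"older, which yields the factor $\|\gamma\|_{L^s}^{1+g_0}R^{n(1-(1+g_0)/s)}$. This is positive precisely by \eqref{cond_s}. The resulting term must then be compared with the degenerate Orlicz quantity on the left, with the mismatch controlled via the Lipschitz bound once it has been obtained.
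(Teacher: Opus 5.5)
Your architecture matches the paper's: a comparison with a frozen homogeneous problem, Lieberman-type monotonicity to reach $\int G(|\nabla u-\nabla v|)$, Lemma \ref{leitao lemma} for Morrey decay, then a Campanato argument for $\nabla u$. But the step you flag as the main obstacle does not close as written.

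\textbf{The drift estimate does not give the needed decay.} If you bound the drift term using only $|u-v|\le \operatorname{osc}_{B_R^+}u\le CR^{\alpha_0}$, Young and H\"older leave a remainder of order
\[
\varepsilon^{-1/\delta}\,\|1+\gamma\|_{L^s}^{1+g_0}\,R^{\,n-\frac{n(1+g_0)}{s}}\,G(CR^{\alpha_0})\approx \varepsilon^{-1/\delta}\,R^{\,n+\alpha_0(1+\delta)-\frac{n(1+g_0)}{s}} .
\]
For the Morrey and Campanato iterations this remainder must be $O(R^{n+\beta_1})$. That requires $\alpha_0(1+\delta)>\frac{n(1+g_0)}{s}$, not merely $n(1-\frac{1+g_0}{s})>0$, which only uses $s>1+g_0$. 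Condition \eqref{cond_s} does not give this, because the H\"older exponent from Section \ref{section3} can be arbitrarily small. By Remark \ref{remark alpha dependence}, $\alpha_\star\le \frac{1}{(\ln 2)(C_\star-1)}$, with $C_\star$ growing exponentially in $\Lambda_B$ and $\|\gamma\|_{L^s}$. Your variant in Step 2(ii) fails the same way: its remainder is $R^{\,n+\alpha_0-n(1+g_0)/s}$.

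\textbf{The proposed repairs do not work.} Controlling the mismatch ``via the Lipschitz bound once it has been obtained'' is circular. The Morrey and Campanato estimates of Step 3 need the comparison estimate with an $R^{n+\beta_1}$ remainder as input. Moreover, Morrey decay $\int_{B_\rho^+}G(|\nabla u|)\le C\rho^{n-\epsilon}$ does not imply $\nabla u\in L^\infty$. The Caccioppoli ``bootstrap'' of Step 1 cannot improve the H\"older exponent either. It yields only $\int_{B_{R/2}^+}G(|\nabla u|)\lesssim R^nG(CR^{\alpha_0-1})$, which through Morrey's lemma returns the exponent you started with.

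\textbf{The missing idea.} Use that $u-v\in W^{1,G}_0(B_R^+)$, not just its sup bound. The paper sets $w:=G(\tfrac12|u-v|)^{1/(1+\delta)}\in W_0^{1,1+\delta}(B_R^+)$ (Lemma \ref{Gv in W11}). It then applies H\"older with $\sigma=s/(1+g_0)$, Gagliardo--Nirenberg with $\theta=1-\frac{n(1+g_0)}{s(1+\delta)}$, and Poincar\'e, obtaining
\[
\int_{B_R^+}G(|\gamma||u-v|)\,dx\le Ce^{c\|\gamma\|_{L^s}}R^{\mu_\star}\Big(\int_{B_R^+}G(|\nabla u|)\,dx+R^n\Big),\qquad \mu_\star=1+\delta-\tfrac{n(1+g_0)}{s}>0 .
\]
Young's inequality with $\varepsilon=R^{\mu_\star\delta/(1+\delta)}$ then finishes. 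The decay rate is independent of $\alpha_0$, and $\mu_\star>0$ is exactly \eqref{cond_s}.

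Alternatively, you could keep your estimate and run a genuine bootstrap. Each round of comparison plus Morrey's Dirichlet-growth lemma raises the H\"older exponent of $u$ by roughly $\theta$. After finitely many rounds $\alpha(1+\delta)>\frac{n(1+g_0)}{s}$, and the remainder becomes $O(R^{n+\beta_1})$.

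\textbf{A smaller point.} The comparison error is $R^{\beta_1}\int_{B_R^+}G(|\nabla u|)$, not $R^{\beta_1}\phi(R)$. In the excess iteration you must first insert the Morrey decay, which turns this term into a remainder of order $R^{n+\beta_1-\epsilon}$.
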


\begin{proof}
Throughout the proof, to simplify the notation, we denote by
\begin{equation*}
    \mathcal{E} := C e^{c\left(\Lambda_B + \|\gamma\|_{L^s(B_1^+)}\right)}
\end{equation*}

Fix \(R\in(0,1)\) and let \(v\in W^{1,G}(B_{R}^{+})\cap L^{\infty}(B_{R}^{+})\) be the weak solution of the homogeneous problem:
$$ \operatorname{div} \mathcal{A}_0(\nabla v) = 0 \quad \text{in } B_{R}^{+}, \qquad v = u \quad \text{on } \partial B_{R}^{+}, $$
where \(\mathcal{A}_0(p) := \mathcal{A}(0, 0, p)\). By the Maximum Principle,
$$ \sup_{B_{R}^{+}}|u-v| \le \operatorname*{osc}_{B_{R}^{+}} u =: \omega(R) \le 2\|u\|_{L^{\infty}(B_{1}^{+})} \le 2. $$
Subtracting the weak formulations of \(u\) and \(v\) against the test function \(u-v\in W_{0}^{1,G}(B_{R}^{+})\), we obtain
\begin{align*}
I &:= \int_{B_{R}^{+}} [ \mathcal{A}_0(\nabla u) - \mathcal{A}_0(\nabla v) ] \cdot (\nabla u-\nabla v) \, dx \\
&= \int_{B_{R}^{+}} [ \mathcal{A}_0(\nabla u) - \mathcal{A}(x, u, \nabla u) ] \cdot (\nabla u-\nabla v) \, dx \\
&\quad - \int_{B_{R}^{+}} \mathcal{B}(x, u, \nabla u)(u(x)-v(x)) \, dx \\
&=: III + II.
\end{align*}

For the convective term \(II\), utilizing the structural growth bound \(|\mathcal{B}(x, z, p)| \le \Lambda_B G(|p|) + |\gamma(x)| g(|p|) + |f(x)|\) and the oscillation bound \(|u-v| \le \omega(R)\), we have:
\begin{equation}\label{eq:II_split}
    |II| \le \Lambda_B \omega(R) \int_{B_{R}^{+}} G(|\nabla u|)\,dx + \int_{B_R^+} |f||u-v| \, dx + \int_{B_R^+}|\gamma(x)|g(|\nabla u|)|u-v|\,dx.
\end{equation}
For the source term in \eqref{eq:II_split}, by H\"older's inequality, the Sobolev embedding in $W_0^{1,1}(B_R^+)$, (G-3) of Lemma \ref{firstLema}, and \eqref{eq:energy_by comparison}, we obtain:
\begin{align}\label{eq:II_source}
    \int_{B_R^+} |f||u-v| \, dx &\le |B_{R}^{+}|^{\frac{1}{n}-\frac{1}{q}} \|f\|_{L^{q}(B_R^+)} \left( \int_{B_{R}^{+}} |u-v|^{\frac{n}{n-1}} \, dx \right)^{\frac{n-1}{n}} \nonumber\\
    &\le C(n) |B_{R}^{+}|^{\frac{1}{n}-\frac{1}{q}} \int_{B_{R}^{+}} |\nabla(u-v)| \,dx \nonumber\\
    &\le C |B_{R}^{+}|^{\frac{1}{n}-\frac{1}{q}} \int_{B_{R}^{+}} G(|\nabla u-\nabla v|) \, dx + C|B_{R}^{+}|^{1+\frac{1}{n}-\frac{1}{q}} \nonumber\\
    &\le C R^{1-\frac{n}{q}} \int_{B_{R}^{+}} G(|\nabla u|) \, dx + C R^{n+1-\frac{n}{q}}.
\end{align}

To estimate the drift term in \eqref{eq:II_split}, let $\mu_{\star} := 1+\delta - \frac{n(1+g_0)}{s} > 0$. Applying Young's inequality (Lemma \ref{youngorlicz}) with $\varepsilon = R^{\frac{\mu_{\star} \delta}{1+\delta}}$, we get:
\begin{equation}\label{eq:II_gamma_young}
    \int_{B_R^+} |\gamma(x)||u-v|g(|\nabla u|)\,dx \le R^{\frac{\mu_{\star} \delta}{1+\delta}} \int_{B_R^+} G(|\nabla u|)\,dx + C R^{-\frac{\mu_{\star}}{1+\delta}} \int_{B_R^+} G\big(|\gamma(x)||u-v|\big)\,dx.
\end{equation}
Setting $w := G\big(\frac{1}{2}|u-v|\big)^{\frac{1}{1+\delta}} \in W_0^{1,1+\delta}(B_R^+)$, so that $w^{1+\delta} = G\big(\frac{1}{2}|u-v|\big)$, by (G-2) of Lemma \ref{firstLema} and H\"older's inequality with $\sigma = \frac{s}{1+g_0} > 1$, we have:
\begin{align*}
    \int_{B_R^+} G\big(|\gamma(x)||u-v|\big)\,dx &\le C \int_{B_R^+} (1 + |\gamma(x|)^{1+g_0} w^{1+\delta}\,dx \\
    &\le C \|1+\gamma\|_{L^s(B_R^+)}^{1+g_0} \|w\|_{L^{(1+\delta)\sigma'}(B_R^+)}^{1+\delta}.
\end{align*}
Applying the Gagliardo--Nirenberg inequality \cite[Theorem 12.83]{leoni2017first} with $\theta = 1 - \frac{n(1+g_0)}{s(1+\delta)} = \frac{\mu_{\star}}{1+\delta} \in (0,1)$ combined with Poincar\'e's inequality $\|w\|_{L^{1+\delta}(B_R^+)}^{1+\delta} \le C R^{1+\delta} \|\nabla w\|_{L^{1+\delta}(B_R^+)}^{1+\delta}$ in $W_0^{1,1+\delta}(B_R^+)$, we deduce:
\begin{align*}
    \|w\|_{L^{(1+\delta)\sigma'}(B_R^+)}^{1+\delta} \le C \left(\|w\|_{L^{1+\delta}(B_R^+)}^{1+\delta}\right)^\theta \left(\|\nabla w\|_{L^{1+\delta}(B_R^+)}^{1+\delta}\right)^{1-\theta} &\le C R^{(1+\delta)\theta} \|\nabla w\|_{L^{1+\delta}(B_R^+)}^{1+\delta} \\
    &= C R^{\mu_{\star}} \int_{B_R^+} |\nabla w|^{1+\delta}\,dx.
\end{align*}
By Lemma \ref{Gv in W11}, $|\nabla w|^{1+\delta} \le C \big(G(|\nabla(u-v)|) + G(\frac{1}{2}|u-v|)\big)$ almost everywhere and $\frac{1}{2}|u-v| \le 1$, using \eqref{eq:energy_by comparison} we obtain:
\begin{equation*}
    \int_{B_R^+} G\big(|\gamma(x)||u-v|\big)\,dx \le C e^{c\|\gamma\|_{L^s(B_1^+)}} R^{\mu_{\star}} \left( \int_{B_R^+} G(|\nabla u|)\,dx + R^n \right).
\end{equation*}
Substituting this back into \eqref{eq:II_gamma_young} and combining with \eqref{eq:II_source} and \eqref{eq:II_split}, we conclude that:
\begin{align*}
    |II| &\le C e^{c\|\gamma\|_{L^s(B_1^+)}} \left( \Lambda_B \omega(R) + R^{1-\frac{n}{q}} + R^{\frac{\mu_{\star}\delta}{1+\delta}} \right) \int_{B_R^+} G(|\nabla u|)\,dx \\
    &\quad+ C e^{c\|\gamma\|_{L^s(B_1^+)}} \left( R^{n+1-\frac{n}{q}} + R^{n + \frac{\mu_{\star}\delta}{1+\delta}} \right).
\end{align*}

For the perturbation term \(III\), we use the H\"older continuity of the full operator \(\mathcal{A}(x, z, p)\). Let \(\sigma(R) := R^{\alpha_{\mathcal{A}}} + \omega(R)^{\alpha_{\mathcal{A}}}\). Then \(|\mathcal{A}(0, 0, p) - \mathcal{A}(x, u(x), p)| \le \Lambda_\alpha(1+g(|p|)) \sigma(R)\) in \(B_R^+\). Thus, by Young's inequality, (g-2), (G-3) of Lemma \ref{firstLema}, and \eqref{eq:energy_by comparison},
\begin{align*}
   |III| &\le \Lambda_\alpha \sigma(R) \int_{B_R^+} (1+g(|\nabla u|)) |\nabla u - \nabla v| \, dx\\
   &\leq \Lambda_\alpha \sigma(R) \int_{B_R^+} \big( |\nabla u - \nabla v| g(|\nabla u - \nabla v|) + (1 + |\nabla u|) g(1 + |\nabla u|) \big) \,dx\\
   &\leq C \sigma(R) \int_{B_R^+} G(|\nabla u|) \, dx + C \sigma(R) R^n.
\end{align*}

Combining the estimates for \(II\) and \(III\), we obtain the upper bound for \(I\):
\begin{align*}
I &\le |II| + |III| \\
&\le C e^{c\|\gamma\|_{L^s(B_1^+)}} \left( \Lambda_B \omega(R) + R^{1-\frac{n}{q}} + R^{\frac{\mu_{\star}\delta}{1+\delta}} + \sigma(R) \right) \int_{B_{R}^{+}} G(|\nabla u|)\,dx \\
&\quad + C e^{c\|\gamma\|_{L^s(B_1^+)}} \left( R^{n+1-\frac{n}{q}} + R^{n+\frac{\mu_{\star}\delta}{1+\delta}} + \sigma(R)R^n \right).
\end{align*}
By the results of Section \ref{section3}, the bounded weak solution \(u\) is locally H\"older continuous, implying its oscillation decays as \(\omega(R) \le C^\star R^\alpha\) for some \(\alpha \in (0,1)\), where the dependencies of $C^\star$ and $\alpha$ on $\Lambda_B$ and $\|\gamma\|_{L^s(B_1^+)}$ are explicit in Remark \ref{remark alpha dependence}. Recalling \(\sigma(R) = R^{\alpha_{\mathcal{A}}} + \omega(R)^{\alpha_{\mathcal{A}}}\) and our notation \(\mathcal{E} = C e^{c(\Lambda_B + \|\gamma\|_{L^s(B_1^+)})}\), we have \(\sigma(R) \le \mathcal{E} R^{\alpha \alpha_{\mathcal{A}}}\). 

To unify the decay rates, we define the geometric exponent 
$$\nu := \min\left\{\alpha \alpha_{\mathcal{A}}, \, 1 - \frac{n}{q}, \, \frac{\mu_{\star}\delta}{1+\delta}\right\} \in (0,1).$$
The coefficients in the integral bound then satisfy:
$$ C e^{c\|\gamma\|_{L^s(B_1^+)}} \left( \Lambda_B \omega(R) + R^{1-\frac{n}{q}} + R^{\frac{\mu_{\star}\delta}{1+\delta}} + \sigma(R) \right) \leq \mathcal{E} R^{\nu}. $$
Similarly, the remaining volume terms are majorized by \(\mathcal{E} R^{n+\nu}\). Thus, absorbing the lower-order volume terms into the dominant geometric scaling, we arrive at the explicit upper bound:
\begin{equation} \label{eq:I_upper_bound}
I \le \mathcal{E} R^\nu \int_{B_{R}^{+}} G(|\nabla u|)\,dx + \mathcal{E} R^{n + \nu}.
\end{equation}

As in \cite{lieberman1991natural}, by \eqref{homogeneous structural conditions} and the Fundamental Theorem of Calculus, 
\begin{equation}\label{I_lower_bounds}
 I \geq \int_{B_R^+} \int_0^1 H\big(|\nabla u+(1-t)(\nabla u-\nabla v)|\big)|\nabla u-\nabla v|^2 \, dt \, dx.
\end{equation}

Let $S_{1}=\{|\nabla u-\nabla v|\le 2|\nabla u|\}$ and $S_{2}=\{|\nabla u-\nabla v|>2|\nabla u|\}$. Then $B_{R}^{+}=S_{1}\cup S_{2}$, and we have:
\begin{enumerate}
    \item[(1)] $\frac{1}{2}|\nabla u| \le |\nabla u+(1-t)(\nabla u-\nabla v)| \le 3|\nabla u|$ in $S_{1}$ for $t\ge 3/4$,
    \item[(2)] $\frac{1}{4}|\nabla u-\nabla v| \le |\nabla u+(1-t)(\nabla u-\nabla v)| \le 3|\nabla u-\nabla v|$ in $S_{2}$ for $t\le 1/4$.
\end{enumerate}
In particular, in $S_{1}$ for $t\ge 3/4$ we have
$$ H(|\nabla u+(1-t)(\nabla u-\nabla v)|) \ge \frac{g(\frac{1}{2}|\nabla u|)}{3|\nabla u|} \ge \frac{1}{2^{g_{0}}3}H(|\nabla u|), $$
and in $S_{2}$ for $t\le 1/4$ we have
$$ H(|\nabla u+(1-t)(\nabla u-\nabla v)|)|\nabla u-\nabla v|^{2} \ge \frac{G(|\nabla u-\nabla v|)}{4^{g_{0}}3}. $$
Combining these inequalities yields
\begin{equation} \label{eq5.29}
I \ge C \left[ \int_{S_{1}} H(|\nabla u|)|\nabla u-\nabla v|^{2} \, dx + \int_{S_{2}} G(|\nabla u-\nabla v|) \, dx \right].
\end{equation}
Now, by \eqref{eq:I_upper_bound}:
$$ \int_{S_{2}} G(|\nabla u-\nabla v|) \, dx \le \mathcal{E} R^\nu \int_{B_{R}^{+}} G(|\nabla u|)\,dx + \mathcal{E} R^{n + \nu}. $$
In $S_{1}$ we have $H(|\nabla u-\nabla v|)|\nabla u-\nabla v| \le CH(|\nabla u|)|\nabla u|$. By H\"older's and Young's inequalities we conclude that
\begin{align*}
\int_{S_{1}} G(|\nabla u-\nabla v|) \, dx &\le \int_{S_{1}} [H(|\nabla u-\nabla v|)|\nabla u-\nabla v|]^{1/2} H(|\nabla u-\nabla v|)^{1/2} |\nabla u-\nabla v|^{3/2} \, dx \\
&\le C \int_{S_{1}} [H(|\nabla u|)|\nabla u|]^{1/2} H(|\nabla u-\nabla v|)^{1/2} |\nabla u-\nabla v|^{3/2} \, dx \\
&\le C \left( \int_{S_{1}} H(|\nabla u|)|\nabla u-\nabla v|^{2} \, dx \right)^{1/2} \left( \int_{S_{1}} g(|\nabla u|)|\nabla u| \, dx \right)^{1/2} \\
&\le C \cdot I^{1/2} \left( \int_{B_R^+} G(|\nabla u|) \, dx \right)^{1/2}.
\end{align*}
Again, by \eqref{eq:I_upper_bound} and the sub-additivity of the square root:
\begin{align*}
\int_{S_{1}} G(|\nabla u-\nabla v|) \, dx &\le C \left[ \mathcal{E} R^\nu \int_{B_{R}^{+}} G(|\nabla u|)\,dx + \mathcal{E} R^{n+\nu} \right]^{1/2} \left[ \int_{B_{R}^{+}} G(|\nabla u|)\,dx \right]^{1/2} \\
&\le C \mathcal{E}^{1/2} R^{\nu/2} \int_{B_{R}^{+}} G(|\nabla u|)\,dx + C \mathcal{E}^{1/2} R^{\frac{n+\nu}{2}} \left( \int_{B_R^+} G(|\nabla u|)\,dx \right)^{1/2}.
\end{align*}
Applying Young's inequality $ab \le \tau a^2 + \frac{1}{4\tau} b^2$ to the second term with $\tau = R^{\nu/2}$, we get:
\begin{align*}
     C \mathcal{E}^{1/2} R^{\frac{n+\nu}{2}} \left( \int_{B_R^+} G(|\nabla u|)\,dx \right)^{1/2} &\le R^{\nu/2} \int_{B_R^+} G(|\nabla u|)\,dx + \mathcal{E} R^{n + \nu/2}.
\end{align*}
Defining the final geometric decay exponent $\theta_\star := \frac{\nu}{2n} \in (0,1)$, we have $n\theta_\star = \nu/2$. Thus, all coefficients multiplying the energy of $u$ in $S_1$ and $S_2$ are bounded by \(\mathcal{E} R^{n\theta_\star}\), yielding:
\begin{equation} \label{eq5.30}
\int_{B_{R}^{+}} G(|\nabla u-\nabla v|) \, dx \le \mathcal{E} \left[ R^{n\theta_\star} \int_{B_{R}^{+}} G(|\nabla u|) \, dx + R^{n + n\theta_\star} \right].
\end{equation}
We know, by Lemma \ref{lemma:comparison_boundary}, that for any $0<r<R$,
$$ \int_{B_{r}^{+}} G(|\nabla v|) \, dx \le C \left[ \left(\frac{r}{R}\right)^{n} \int_{B_{R}^{+}} G(|\nabla v|) \, dx + r^{n} G(\|\psi\|_{C^{1,\alpha_\psi}(B_{1}^{\prime})}) \right]. $$
Thus, since $G(|\nabla u|) \le C(g_0)(G(|\nabla v|)+G(|\nabla u-\nabla v|))$ and $\|\psi\|_{C^{1,\alpha_\psi}(B_{1}^{\prime})} \le 1$, we obtain
\begin{align*}
\int_{B_{r}^{+}} G(|\nabla u|) \, dx &\le C \left[ \int_{B_{r}^{+}} G(|\nabla v|) \, dx + \int_{B_{r}^{+}} G(|\nabla u-\nabla v|) \, dx \right] \\
&\le C \left[ \left(\frac{r}{R}\right)^{n} \int_{B_{R}^{+}} G(|\nabla v|) \, dx + G(1)R^{n} + \mathcal{E} R^{n\theta_\star} \int_{B_{R}^{+}} G(|\nabla u|) \, dx + \mathcal{E} R^{n+n\theta_\star} \right] \\
&\le \mathcal{E} \left[ \left\{ \left(\frac{r}{R}\right)^{n} + R^{n\theta_\star} \right\} \int_{B_{R}^{+}} G(|\nabla u|) \, dx + R^{n} \right].
\end{align*}

By using Lemma \ref{leitao lemma}, Remark \ref{remark lemma leitao}, and Lemma \ref{lem:caccioppoli_boundary}, for all $\varsigma\in(0,n)$ we obtain
$$ \int_{B_{r}^{+}} G(|\nabla u|) \, dx \le \mathcal{E} r^{\varsigma}, \quad \forall r\in(0,3/4). $$
Taking $\varsigma=h-1+n$ for any $h\in(0,1)$, we get
\begin{equation} \label{eq5.31}
\int_{B_{r}^{+}} G(|\nabla u|) \, dx \le \mathcal{E} r^{h-1+n}, \quad \forall r\in(0,3/4).
\end{equation}
At this point, following the same lines as the proof of Theorem 1.1 in \cite{BragaSousa2025}, we conclude that
$$ [\nabla u]_{C^{0,\beta}(B_{1/2}^{+})} \le C_\star e^{c_\star\left(\Lambda_B + \|\gamma\|_{L^s(B_1^+)}\right)}. $$
\end{proof}

\begin{remark}[{\bf On the explicit bounds of $\beta$}]\label{remark beta dependence}
Following the Campanato-type comparison argument in \cite[Theorem 1.1]{BragaSousa2025}, the gradient H\"older exponent $\beta \in (0,1)$ in Lemma \ref{small norms} is explicitly determined by balancing the homogeneous boundary gradient decay exponent $\sigma_\star = \sigma_\star(n, \delta, g_0, \lambda, \Lambda, \alpha_\psi) \in (0,1)$ of $\nabla v$ with the geometric energy rate $n\theta_\star = \frac{\nu}{2}$, yielding
\begin{equation*}
    \beta = \frac{\sigma_\star \nu}{2(1+g_0)(2n + \nu + 2\sigma_\star)},
\end{equation*}
where $\nu = \min\left\{\alpha_\star \alpha_{\mathcal{A}}, \, 1 - \frac{n}{q}, \, \frac{\mu_\star\delta}{1+\delta}\right\}$ and $\mu_\star = 1+\delta - \frac{n(1+g_0)}{s} > 0$. Since $0 < \nu, \sigma_\star < 1$ implies $2n < 2n + \nu + 2\sigma_\star < 2n + 3$, combining this identity with the bounds for $\alpha_\star$ from Remark \ref{remark alpha dependence} gives the explicit two-sided control:
\begin{equation*}
    {\sigma_\star}c_1 \min\left\{ \frac{\alpha_{\mathcal{A}}}{\ln 2 \cdot \mathcal{E}}, \, 1 - \frac{n}{q}, \, \frac{\mu_\star\delta}{1+\delta} \right\} \le \beta \le {\sigma_\star}c_2 \min\left\{ \frac{\alpha_{\mathcal{A}}}{\ln 2 (\mathcal{E} - 1)}, \, 1 - \frac{n}{q}, \, \frac{\mu_\star\delta}{1+\delta} \right\},
\end{equation*}
where $\mathcal{E} = C e^{c(\Lambda_B + \|\gamma\|_{L^s(B_1^+)})} \ge 2$ and $c_1,c_2$ are positive constants depending only $n$ and $g_0$.
\end{remark}

\noindent Finally 

\begin{proof}[Proof of Theorem \ref{thm:up_to_boundary}]
Set
$$ \kappa := \|u\|_{L^{\infty}(B_{1}^{+})} + \|\psi\|_{C^{1,\alpha}(B_{1}^{\prime})} + g^{-1}(\|f\|_{L^{q}(B_{1}^{+})}). $$
If $\kappa$ is null, the proof is trivial. Suppose $\kappa>0$ and define the scaled solution $u_{\kappa}^{*}(x) = \frac{u(x)}{\kappa}$ and boundary data $\psi_{\kappa}^{*}(x) = \frac{\psi(x)}{\kappa}$. In order to normalize the operators, we introduce the rescaled function $g_\kappa(t) := \frac{g(\kappa t)}{g(\kappa)}$, which determines a new $N$-function $G_\kappa(t) = \int_0^t g_\kappa(s) \, ds = \frac{G(\kappa t)}{\kappa g(\kappa)}$. We define the scaled operators and source term as follows:
$$ \mathcal{A}_{\kappa}(x, z, p) := \frac{\mathcal{A}(x, \kappa z, \kappa p)}{g(\kappa)}, \quad \mathcal{B}_{\kappa}(x, z, p) := \frac{\mathcal{B}(x, \kappa z, \kappa p)}{g(\kappa)}, \quad f_{\kappa}^{*}(x) := \frac{f(x)}{g(\kappa)}. $$

For this scaled equation, we have
$$ \operatorname{div} \mathcal{A}_{\kappa}(x, u_{\kappa}^{*}, \nabla u_{\kappa}^{*}) + \mathcal{B}_{\kappa}(x, u_{\kappa}^{*}, \nabla u_{\kappa}^{*}) = 0 \quad \text{in } B_{1}^{+}, $$
$$ u_{\kappa}^{*} = \psi_{\kappa}^{*} \quad \text{on } B_{1}^{\prime}. $$

By the original structural properties given in \eqref{structural conditions}, these normalized operators satisfy the following scaled structural conditions for all $p \in \mathbb{R}^n \setminus \{0\}$:
\begin{equation*} 
\begin{cases}
    |\mathcal{A}_{\kappa}(x, z, p)| + |D_p \mathcal{A}_{\kappa}(x, z, p)| |p| \le \Lambda g_\kappa(|p|), \\
    \lambda \frac{g_\kappa(|p|)}{|p|} |\xi|^2 \le \langle D_p \mathcal{A}_{\kappa}(x, z, p)\xi, \xi \rangle, \\
    |\mathcal{A}_{\kappa}(x, z, p) - \mathcal{A}_{\kappa}(y, w, p)| \le \Lambda_\alpha g_\kappa(|p|) \left[ |x-y|^{\alpha_{\mathcal{A}}} + \kappa^{\alpha_{\mathcal{A}}} |z-w|^{\alpha_{\mathcal{A}}} \right], \\
    |\mathcal{B}_{\kappa}(x, z, p)| \le (\Lambda_B \kappa) G_\kappa(|p|) + |\gamma(x)| g_\kappa(|p|) + |f_{\kappa}^{*}(x)|.
\end{cases}
\end{equation*}

By the definitions of the scaled terms and the choice of the normalization constant $\kappa$, it follows that
$$ \max\left\{\|f_{\kappa}^{*}\|_{L^{q}(B_{1}^{+})}, \, \|u_{\kappa}^{*}\|_{L^{\infty}(B_{1}^{+})}, \, \|\psi_{\kappa}^{*}\|_{C^{1,\alpha}(T_1)}\right\} \le 1. $$
  
Applying Lemma \ref{small norms} to the normalized solution $u_\kappa^*$ with the scaled convective constant ${\Lambda}_{B_\kappa} := \Lambda_B \kappa$ and the scale-invariant drift coefficient $\gamma(x)$, this uniform bound ensures that
$$ [\nabla u_{\kappa}^{*}]_{C^{0,\beta}(B_{1/2}^{+})} \le C_\star e^{c_\star\left({\Lambda}_{B_\kappa} + \|\gamma\|_{L^s(B_1^+)}\right)} = C_\star e^{c_\star\left(\Lambda_B\kappa + \|\gamma\|_{L^s(B_1^+)}\right)}. $$
Hence, scaling back to the original function $u$:
\begin{align*}
[\nabla u]_{C^{0,\beta}(B_{1/2}^{+})} &= \kappa [\nabla u_{\kappa}^{*}]_{C^{0,\beta}(B_{1/2}^{+})} \\
&\le \kappa C_\star e^{c_\star\left(\Lambda_B\kappa + \|\gamma\|_{L^s(B_1^+)}\right)} \\
&= C_\star e^{c_\star\left(\Lambda_B\kappa + \|\gamma\|_{L^s(B_1^+)}\right)} \left( \|u\|_{L^{\infty}(B_{1}^{+})} + \|\psi\|_{C^{1,\alpha}(T_{1})} + g^{-1}(\|f\|_{L^{q}(B_{1}^{+})}) \right).
\end{align*}
\end{proof}

At this point the proof of Theorem \ref{thm:global_C1alpha} is standard.

\section{Regularity at Boundary Critical Points}\label{section7}

In this section, we extend the geometric iteration scheme to critical  points located on the boundary. We establish the uniform H\"older decay of the gradient around these critical  boundary points, fundamentally relying on the assumption that the boundary data itself provides sufficient smoothness.

\begin{equation} \label{1''}
\begin{cases}
\operatorname{div} \mathcal{A}(x, u, \nabla u) +\mathcal{B}(x,u,\nabla u) = 0 & \text{in } B_{1}^{+},\\
u=\psi & \text{on } T_{1}.
\end{cases}
\end{equation}
satisfies \eqref{structural conditions} with $\psi\in C^{1,\alpha}(T_1)$

\begin{lemma}\label{singular_boundary_nonhomo} 
Let $u\in W^{1,G}(B_1^+)\cap L^{\infty}(B_1^+)$ be a weak solution to \eqref{1''} with $\|u\|_{L^\infty(B_1^+)} \le 1$. Then, given $\eta>0$, there exists a constant $\varepsilon_\star>0$, depending only on $\eta$, $n$, $s$, $g(1)$, $\delta$, $g_0$, $\lambda$, $\Lambda$ and $\alpha_{\mathcal{A}}$, such that if 
\begin{eqnarray}
	|\nabla u(0)|+\Lambda_\alpha+\Lambda_B+\|\gamma\|_{L^s(B_1^+)}+
	\|f\|_{L^q(B_1^+)} + \|\psi\|_{C^{1,\alpha}(T_1)}<\varepsilon_\star\hbox{,}
\end{eqnarray}
then there exists a function $h\in W^{1,G}(B_{{3/4}}^+)\cap L^{\infty}(B_{3/4}^+)$, satisfying
\begin{eqnarray}
	\operatorname{div} \mathcal{A}_0(\nabla h)=0\ \ \text{in}\ \ B_{{3/4}}^+,\ \ h = 0 \ \ \text{on}\ \ T_{3/4},\ \ \text{and} \ \ \nabla h(0)=0\hbox{,}\label{4_bdry}
\end{eqnarray} 
such that 
\begin{equation}\label{singularthesis_bdry}
	 \|u-h\|_{L^G(B^+_{3/4})} < \eta.
\end{equation}
\end{lemma}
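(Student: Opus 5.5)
We argue by contradiction, via compactness. Suppose that for some $\eta_0>0$ the statement fails. Then there are sequences $u_k$, $\mathcal{A}_k$, $\mathcal{B}_k$, $\gamma_k$, $f_k$, $\psi_k$ satisfying \eqref{structural conditions} with the same $\delta,g_0,\lambda,\Lambda,\alpha_{\mathcal{A}}$, with $\|u_k\|_{L^\infty(B_1^+)}\le 1$ and
\[
|\nabla u_k(0)|+\Lambda_{\alpha,k}+\Lambda_{B,k}+\|\gamma_k\|_{L^s(B_1^+)}+\|f_k\|_{L^q(B_1^+)}+\|\psi_k\|_{C^{1,\alpha}(T_1)}<\tfrac1k ,
\]
and yet $\|u_k-h\|_{L^G(B_{3/4}^+)}\ge\eta_0$ for every $h$ solving \eqref{4_bdry}. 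Here the frozen operator $\mathcal{A}_0$ should be read as a limit of $\mathcal{A}_k(0,u_k(0),\cdot)$, or of a fixed frozen field if one is given.

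\textbf{Step 1 (uniform bounds).} All data entering Theorem \ref{thm:up_to_boundary} are uniformly bounded: $\kappa_k\le 2+g^{-1}(1)$, $\Lambda_{B,k}\le1$, and $\|\gamma_k\|_{L^s}\le1$. So Theorem \ref{thm:up_to_boundary} (or already Lemma \ref{small norms}, since the normalization hypotheses hold) gives a uniform bound $\|u_k\|_{C^{1,\beta}(B_{1/2}^+)}\le C$ with $\beta$ independent of $k$. For the larger half-ball $B_{3/4}^+$ we rescale and cover, using the interior version of the same estimate and the boundary estimate centered at points of $T_{3/4}$. Alternatively, we may shrink to $B_{1/2}^+$, since the radius is immaterial for the later iteration. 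Either way we get $u_k$ bounded in $C^{1,\beta}(\overline{B_{3/4}^+})$. By Arzel\`a--Ascoli, a subsequence converges in $C^1(\overline{B_{3/4}^+})$ to some $h$. Then $\nabla h(0)=\lim\nabla u_k(0)=0$, and $h=\lim\psi_k=0$ on $T_{3/4}$ because $\|\psi_k\|_{C^{1,\alpha}}\to0$.

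\textbf{Step 2 (passing to the limit in the operator).} The maps $p\mapsto \mathcal{A}_k(0,u_k(0),p)$ are equibounded and equicontinuous on compact sets, since $|D_p\mathcal{A}|\,|p|\le\Lambda g(|p|)$. Along a subsequence they converge locally uniformly to some $\mathcal{A}_0$, which inherits \eqref{homogeneous structural conditions}; ellipticity passes to the limit in the integrated monotonicity form. By the oscillation condition,
\[
|\mathcal{A}_k(x,u_k,\nabla u_k)-\mathcal{A}_k(0,u_k(0),\nabla u_k)|\le \Lambda_{\alpha,k}(1+g(|\nabla u_k|))\bigl(|x|^{\alpha_\mathcal{A}}+|u_k-u_k(0)|^{\alpha_\mathcal{A}}\bigr)\to0
\]
uniformly, because $\Lambda_{\alpha,k}\to0$ and $\nabla u_k$ is uniformly bounded. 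The lower-order term satisfies
\[
\int|\mathcal{B}_k\phi|\le \Lambda_{B,k}\int G(|\nabla u_k|)|\phi|+\|\gamma_k\|_{L^s}\|g(|\nabla u_k|)\phi\|_{L^{s'}}+\|f_k\|_{L^q}\|\phi\|_{L^{q'}}\to0 .
\]
Combined with $\nabla u_k\to\nabla h$ uniformly, the weak formulations converge, and $\operatorname{div}\mathcal{A}_0(\nabla h)=0$ in $B_{3/4}^+$. Finally, uniform convergence gives $\|u_k-h\|_{L^G(B_{3/4}^+)}\to0$ by Lemma \ref{meanconvergence}. This contradicts the choice of $\eta_0$.

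\textbf{Main obstacle.} The delicate point is Step 1: the $C^{1,\beta}$ bound must be uniform in $k$ and valid up to $T_{3/4}$. This requires checking that the constants of Theorem \ref{thm:up_to_boundary} and Lemma \ref{small norms} depend on $\Lambda_B$ and $\|\gamma\|_{L^s}$ only through bounded quantities, which holds by Remark \ref{remark beta dependence} once these are at most $1$. The only other subtlety is the dependence of the frozen operator on $k$, which is handled by the subsequence extraction in Step 2. If only a Lipschitz bound were available, $W^{1,G}$ weak convergence plus Minty's trick would be the fallback for identifying the limit equation.
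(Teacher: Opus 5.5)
Your proposal is correct, but it handles the compactness step differently from the paper. The paper uses only the boundary Caccioppoli inequality (Lemma~\ref{lem:caccioppoli_boundary}) to bound $\int_{B_{3/4}^+}G(|\nabla u_k|)$ uniformly, and extracts a weak limit in $W^{1,G}$. It then identifies the limit equation by hand:
\begin{itemize}
\item it asserts $\nabla u_k\to\nabla u_0$ a.e. and applies Mazur's lemma to $\mathcal{A}_0(\nabla u_k)$;
\item it controls the tails $\{|\nabla u_k|>R\}$ through (G-4) and the superlinearity of $\tilde G$;
\item it treats the drift term by Young's inequality together with $\int G(|\gamma_k|)\le C\bigl(\|\gamma_k\|_{L^s}^{1+\delta}+\|\gamma_k\|_{L^s}^{1+g_0}\bigr)$, which uses $s>1+g_0$;
\item it obtains $u_0=0$ on $T_{3/4}$ from continuity of the trace.
\end{itemize}

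You instead use the uniform $C^{1,\beta}$ theory of Section~\ref{section5}. Its constants and $\beta$ stay uniform because $\Lambda_{B,k},\|\gamma_k\|_{L^s}\le 1$ (cf. Remark~\ref{remark beta dependence}). This gives $C^1$ convergence, after which the limit equation, the zero trace and $\nabla h(0)=0$ all follow at once.

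The paper's route is lighter, since identifying the limit equation needs only energy bounds. As written, however, it has two soft spots that your route removes. First, a.e. convergence of gradients does not follow from weak $W^{1,G}$ convergence; it would need something like the Minty/monotonicity argument you mention as a fallback. Second, the closing claim that $\nabla u_0(0)=0$ ``by the $C^1$-convergence at the origin'' cannot come from energy bounds at all. It is exactly the uniform $C^{1,\beta}$ estimate you invoke.

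Your approach costs a covering/rescaling argument to reach $\overline{B_{3/4}^+}$. Alternatively you can shrink to $B_{1/2}^+$, which is harmless because Lemma~\ref{Lemma2_boundary} only uses $h$ on $B_{1/2}^+$. Note also that in both arguments $\varepsilon_\star$ ends up depending on $q$ and on the H\"older exponent of $\psi$, which the stated dependence list omits. Both arguments also produce $h$ for a limiting frozen operator in the class \eqref{homogeneous structural conditions} rather than a prescribed one, which is what the subsequent iteration actually uses.
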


\begin{proof}
Suppose by contradiction that there exist an $\eta_0>0$ and a sequence $u_k\in W^{1,G}(B_1^+)\cap L^{\infty}(B_1^+)$ of weak solutions to the scaled equations
$$\operatorname{div} \mathcal{A}_{k}(x, u_k, \nabla u_k) + \mathcal{B}_{k}(x, u_k, \nabla u_k) = f_k\ \ \ \ \hbox{ in }\ \ \ \ B_1^+\hbox{,}$$
with boundary data $u_k=\psi_k$ on $T_1$, and uniform bound $\|u_k\|_{L^\infty(B_1^+)} \le M$. We assume that:
\begin{eqnarray}
	\max \left\{|\nabla u_k(0)|,\, \Lambda_{\mathcal{B}_{k}},\,\Lambda_{\alpha_k},\,\|\gamma_k\|_{L^s(B_1^+)},\,\|f_k\|_{L^q(B_1^+)}, \|\psi_k\|_{C^{1,\alpha}(T_1)}\right\}\to0 \hbox{,}\label{5_bdry}
\end{eqnarray}
as $k\to\infty$. Assume that the sequence of operators $\mathcal{A}_{k}(x, z, p)$ satisfies the structural conditions \eqref{structural conditions} with respect to the spatial and $z$-variables, with the oscillation bounded by:
\begin{equation} \label{oscillation_k_bdry}
    |\mathcal{A}_{k}(x, z, p)-\mathcal{A}_{k}(0,0,p)|\leq \Lambda_{\alpha_k} (1+g(|p|)) (|x|^{\alpha_{\mathcal{A}}}+|z|^{\alpha_{\mathcal{A}}}).
\end{equation}
Assume also that $\|\mathcal{B}_{k}(x, z, p)\| \le \Lambda_{\mathcal{B}_{k}} G(\|p\|) + \|\gamma_k(x)\|g(\|p\|) + \|f_k(x)\|$. However, 
\begin{equation}\label{neqthesis_bdry}
	\|u_k-h\|_{L^G(B^+_{3/4})} \geq \eta_0
\end{equation}
for any $h$ satisfying \eqref{4_bdry}.

By the structural conditions, $p \mapsto \mathcal{A}_{k}(0,0,p)$ is uniformly bounded on compact sets of $\mathbb{R}^n$ since $$\|\mathcal{A}_{k}(0,0,p)\| \le \Lambda g(\|p\|).$$
Furthermore, we claim that $\{\mathcal{A}_{k}(0,0,\cdot)\}$ is equicontinuous. Indeed, away from the origin, the bound $$\|D_p \mathcal{A}_{k}(0,0,p)\| \le \Lambda \frac{g(\|p\|)}{\|p\|}$$ ensures the operators are uniformly Lipschitz. 
On the other hand, near the origin, equicontinuity is enforced directly by the amplitude bound $$\|\mathcal{A}_{k}(0,0,p)\| \le \Lambda g(\|p\|)$$ combined with $g(0)=0$. Thus, the Arzel\`a--Ascoli theorem guarantees the existence of a continuous limit operator $\mathcal{A}_0(p)$ such that, up to a subsequence, $\mathcal{A}_{k}(0,0,\cdot)$ converges locally uniformly in $\mathbb{R}^n$ to $\mathcal{A}_0(\cdot)$.

By the boundary Caccioppoli-type inequality (Lemma \ref{lem:caccioppoli_boundary}) and the uniform bound $\|u_k\|_{L^\infty(B_1^+)} \le 1$,
\begin{align}\label{eq:caccioppoli_uniform_k_bdry}
	\int_{B_{3/4}^+}G(|\nabla u_k|)\,dx &\leq C e^{c\left(\frac{\Lambda_{\mathcal{B}_{k}}}{\lambda} + \|\gamma_k\|_{L^s(B_1^+)}\right)\left(M + \|\psi_k\|_{C^{1,\alpha}(T_1)}\right)} \nonumber\\
	&\quad \times \left[ \int_{B_1^+} G\big(4|u_k - \tilde{\psi}_k|\big)\,dx + \int_{B_1^+} G(|\nabla \tilde{\psi}_k|)\,dx + \tilde{G}\big(\|f_k\|_{L^q(B_1^+)}\big) \right] \leq C^\star
\end{align}
for all $k$ large, where $C^\star$ is independent of $k$. Since $W^{1,G}(B_1^+)$ is a reflexive Banach space and the energy is uniformly bounded, there exists a function $u_0\in W^{1,G}(B_{3/4}^+)$ such that, up to a subsequence,
\begin{eqnarray}
	u_k\rightharpoonup u_0 &\hbox{in}& W^{1,G}(B_{{3/4}}^+)\nonumber\\
	\label{strong convergence_bdry} u_k\to u_0 &\hbox{in}& L^{G}(B_{{3/4}}^+)\nonumber\\
    \nabla u_k(x)\to \nabla u_0 (x) &\hbox{for}& x\in B_{{3/4}}^+\ \  \hbox{a.e.} .\nonumber
\end{eqnarray}

Given any test function $\psi \in C^\infty_0(B_{3/4}^+)$, our goal is to show that 
$$ \int_{B_{3/4}^+} \mathcal{A}_0(\nabla u_0) \cdot \nabla \psi \,dx = 0. $$
By the structural condition, the vector fields $V_k(x) := \mathcal{A}_0(\nabla u_k(x))$ satisfy $|V_k| \le \Lambda g(|\nabla u_k|)$. By (G-4) of Lemma \ref{firstLema} and the Caccioppoli inequality \eqref{eq:caccioppoli_uniform_k_bdry},
$$ \int_{B_{3/4}^+} \tilde{G}(|V_k|) \,dx \le \int_{B_{3/4}^+} \tilde{G}(\Lambda g(|\nabla u_k|)) \,dx \le C(g_0,\delta, \Lambda)\int_{B_{3/4}^+} G(|\nabla u_k|) \,dx \le C(g_0,\delta, \Lambda)C^\star. $$

As $L^{\tilde{G}}(B_{3/4}^+)$ is reflexive, the Banach--Alaoglu theorem guarantees a weakly convergent subsequence $V_k \rightharpoonup V_0$ in $L^{\tilde{G}}(B_{3/4}^+)$. Since $\nabla u_k \to \nabla u_0$ a.e. and $\mathcal{A}_0$ is continuous, $V_k \to \mathcal{A}_0(\nabla u_0)$ pointwise a.e. Thanks to Mazur's Lemma, $V_0 = \mathcal{A}_0(\nabla u_0)$ a.e. Since $\nabla \psi \in L^\infty(B_{3/4}^+) \subset L^G(B_{3/4}^+)$, we have
$$ \int_{B_{3/4}^+} \mathcal{A}_0(\nabla u_0) \cdot \nabla \psi \,dx = \lim_{k \to \infty} \int_{B_{3/4}^+} \mathcal{A}_0(\nabla u_k) \cdot \nabla \psi \,dx. $$

Now, we evaluate the limit on the right-hand side. Utilizing the weak formulation of $u_k$, we decompose the integral algebraically for each fixed $k$:
\begin{align*}
	\int_{B_{3/4}^+} \mathcal{A}_0(\nabla u_k) \cdot \nabla \psi \,dx &= \int_{B_{3/4}^+} \left( \mathcal{A}_0(\nabla u_k) - \mathcal{A}_{k}(0,0,\nabla u_k) \right) \cdot \nabla \psi \,dx \\
	&\quad + \int_{B_{3/4}^+} \left( \mathcal{A}_{k}(0,0,\nabla u_k) - \mathcal{A}_{k}(x, u_k, \nabla u_k) \right) \cdot \nabla \psi \,dx \\
	&\quad + \int_{B_{3/4}^+} - \mathcal{B}_{k}(x, u_k, \nabla u_k) \psi \,dx \\
    &= I+II+III
\end{align*}

We estimate each residual term as $k \to \infty$:

For \textit{I}, since the Arzel\`a--Ascoli theorem only guarantees {locally} uniform convergence, we decompose the domain for any fixed $R > 0$ into the compact region $E_R = \{x \in B_{3/4}^+ : \|\nabla u_k(x)\| \le R\}$ and the tail region $E_R^c = \{x \in B_{3/4}^+ : \|\nabla u_k(x)\| > R\}$:
\begin{eqnarray*}
    |I| &\le& \|\nabla \psi\|_{L^\infty} \left( \int_{E_R} |\mathcal{A}_0(\nabla u_k) - \mathcal{A}_{k}(0,0,\nabla u_k)| \,dx + \int_{E_R^c} \left( |\mathcal{A}_0(\nabla u_k)| + |\mathcal{A}_{k}(0,0,\nabla u_k)| \right) dx \right)\\
    &\le&\|\nabla \psi\|_{L^\infty} \left(|B_{3/4}^+|\sup_{|p| \le R} |\mathcal{A}_0(p) - \mathcal{A}_{k}(0,0,p)|+  2\Lambda\int_{\{x\in B_{3/4}^+ : |\nabla u_k|>R\}} g(|\nabla u_k|)\,dx \right).
\end{eqnarray*}

Again, since $\lim_{t \to \infty} \tilde{G}(t)/t = \infty$, given $\sigma > 0$, we define $M_\sigma = 2\Lambda \tilde{C}^\star / \sigma$. By the limit definition, there exists a threshold $t_\sigma > 0$ such that for all $t > t_\sigma$, $$\tilde{G}(t)/t \ge M_\sigma \iff t \le \frac{1}{M_\sigma} \tilde{G}(t).$$ 

Choosing $R_\sigma > 0$ large enough such that $g(R_\sigma) \ge t_\sigma$, we have
$$2\Lambda \int_{\{x\in B_{3/4}^+ : \|\nabla u_k\| > R_\sigma\}} g(\|\nabla u_k\|) \,dx \le \frac{2\Lambda}{M_\sigma} \int_{\{x\in B_{3/4}^+ : \|\nabla u_k\| > R_\sigma\}} \tilde{G}(g(\|\nabla u_k\|)) \,dx \le \frac{2\Lambda}{M_\sigma} \tilde{C}^\star = \sigma.$$

Fixing $R = R_\sigma$, we get
$$\limsup_{k \to \infty} |{I}| \le \|\nabla \psi\|_{L^\infty} \left( \lim_{k \to \infty} \varepsilon_k(R_\sigma) \|B_{3/4}^+\| + \sigma \right) = \sigma \|\nabla \psi\|_{L^\infty}.$$
Since $\sigma > 0$ is arbitrary, we conclude ${I} \to 0$.

For \textit{{II}}, utilizing the structural oscillation condition \eqref{oscillation_k_bdry}:
$$|II| \le \int_{B_{3/4}^+} \Lambda_{\alpha_k} \big(1 + g(\|\nabla u_k\|)\big) \left( \|x\|^{\alpha_{\mathcal{A}}} + \|u_k(x)\|^{\alpha_{\mathcal{A}}} \right) \|\nabla \psi\|\,dx.$$

Again, using $\lim_{t\to\infty}{\tilde{G}(t)}/{t}=\infty$, there exists a threshold $t_0 > 0$ such that $t \le \tilde{G}(t)$ for all $t \ge t_0$. Splitting the domain of integration, we obtain:
$$\int_{B_{3/4}^+} g(\|\nabla u_k\|) \,dx \le \int_{\{g < t_0\}} t_0 \,dx + \int_{\{g \ge t_0\}} \tilde{G}(g(\|\nabla u_k\|)) \,dx \le t_0 \|B_{3/4}^+\| + C(g_0,\delta, \Lambda)C^\star.$$
Therefore,
$$|\textit{II}| \le \Lambda_{\alpha_k} \|\nabla \psi\|_{L^\infty} \left( 1 + M^{\alpha_{\mathcal{A}}} \right) \int_{B_{3/4}^+} \big(1 + g(\|\nabla u_k\|)\big) \,dx \leq C\Lambda_{\alpha_k} \to 0.$$

For \textit{III}, utilizing the structural bound $\|\mathcal{B}_{k}(x, z, p)\| \le \Lambda_{\mathcal{B}_{k}} G(\|p\|) + \|\gamma_k(x)\|g(\|p\|) + \|f_k(x)\|$, we split the integral into three parts:
\begin{align*}\label{eq:III_split_singular_bdry}
    |III| &\le \|\psi\|_{L^\infty} \left( \int_{B_{3/4}^+} |f_k(x)|\,dx + \Lambda_{\mathcal{B}_{k}} \int_{B_{3/4}^+} G(|\nabla u_k|)\,dx + \int_{B_{3/4}^+} |\gamma_k(x)|g(|\nabla u_k|)\,dx \right).
\end{align*}
Notice that thanks to H\"older's inequality with exponents $q$ and $q' = \frac{q}{q-1}$, we have
    \begin{equation*}
      \int_{B_{3/4}^+} |f_k(x)|\,dx \le |B_{3/4}^+|^{1-\frac{1}{q}} \|f_k\|_{L^q(B_1^+)} \to 0 \quad \text{as } k \to \infty,
    \end{equation*}
and
    \begin{equation*}
      \Lambda_{\mathcal{B}_{k}} \int_{B_{3/4}^+} G(|\nabla u_k|)\,dx \le \Lambda_{\mathcal{B}_{k}} C^\star \to 0 \quad \text{as } k \to \infty.
    \end{equation*}

  Given $\varepsilon > 0$, applying Young's inequality (Lemma \ref{youngorlicz}) with $a = \|\gamma_k(x)\|$ and $b = \|\nabla u_k\|$, and (g-2), we get almost everywhere:
    \begin{align*}
        |\gamma_k(x)| g(|\nabla u_k|) &\le \frac{\varepsilon}{(1+g_0)C^\star} |\nabla u_k| g(|\nabla u_k|) + C(\delta,C^\star) \varepsilon^{-\frac{1}{\delta}} |\gamma_k(x)| g(|\gamma_k(x)|) \\
        &\le \frac{\varepsilon}{C^\star} G(|\nabla u_k|) + C(\delta, g_0,C^\star) \varepsilon^{-\frac{1}{\delta}} G(|\gamma_k(x)|).
    \end{align*}
    Integrating over $B_{3/4}^+$ and using the uniform energy bound \eqref{eq:caccioppoli_uniform_k_bdry}, we find
    \begin{equation}\label{eq:III_3_step1_bdry}
       \int_{B_{3/4}^+} |\gamma_k(x)|g(|\nabla u_k|)\,dx \le \varepsilon  + C(\delta, g_0,C^\star) \varepsilon^{-\frac{1}{\delta}} \int_{B_{3/4}^+} G(|\gamma_k(x)|)\,dx.
    \end{equation}
    To control the last integral, we use the growth condition (G-2). Since $s >  1+g_0$, we can apply H\"older's inequality on $B_{3/4}^+$ with exponents $\frac{s}{1+\delta} > 1$ and $\frac{s}{1+g_0} > 1$ respectively:
    \begin{align*}
        \int_{B_{3/4}^+} G(|\gamma_k(x)|)\,dx &\le G(1) \int_{B_{3/4}^+} |\gamma_k(x)|^{1+\delta}\,dx + G(1) \int_{B_{3/4}^+} |\gamma_k(x)|^{1+g_0}\,dx \\
        &\le G(1) |B_{3/4}^+|^{1-\frac{1+\delta}{s}} \|\gamma_k\|_{L^s(B_1^+)}^{1+\delta} + G(1) |B_{3/4}^+|^{1-\frac{1+g_0}{s}} \|\gamma_k\|_{L^s(B_1^+)}^{1+g_0}.
    \end{align*}
    Substituting this bound back into \eqref{eq:III_3_step1_bdry}, we obtain for every $k \ge 1$:
    \begin{equation*}
          \int_{B_{3/4}^+} |\gamma_k(x)|g(|\nabla u_k|)\,dx\le \varepsilon + C(n, s, \delta, g_0, g(1), C^\star ) \varepsilon^{-\frac{1}{\delta}} \left( \|\gamma_k\|_{L^s(B_1^+)}^{1+\delta} + \|\gamma_k\|_{L^s(B_1^+)}^{1+g_0} \right).
    \end{equation*}

Therefore,
    \begin{equation*}
        \limsup_{k \to \infty} |III| \le \|\psi\|_{L^\infty} \varepsilon.
    \end{equation*}
    Since $\varepsilon > 0$ is arbitrary, letting $\varepsilon \to 0^+$ implies $\lim_{k \to \infty} |III| = 0$.

Thus, $u_0$ is a weak solution of $\operatorname{div} \mathcal{A}_0(\nabla u_0) = 0$ in $B_{3/4}^+$. Furthermore, since $u_k = \psi_k$ on $T_1$ and we assumed $\|\psi_k\|_{C^{1,\alpha}(T_1)} \to 0$, it follows that $\psi_k \to 0$ uniformly on $T_1$. By the continuity of the trace operator $\mathcal{T}:W^{1,G}(B^+_{3/4})\to L^{G}(\partial B^+_{3/4})$ \cite[Chapter 7]{kufner1977function}, we get $u_0 = 0$ on $T_{3/4}$. 

Thus, $u_0$ is a weak solution of 
\begin{equation*}
    \begin{cases}
        \operatorname{div} \mathcal{A}_0(\nabla u_0) = 0 \ \ &\text{in} \ \ B^+_{3/4},\\
        u_0=0 \ \ &\text{on} \ \ T_{3/4}.
    \end{cases}
\end{equation*}

The theory in Section \ref{section3} guarantees that $u_0 \in C^{1,\alpha}(B_{3/4}^+ \cup T_{3/4})$ for some $\alpha\in(0,1)$ depending only on $n, \delta, g_0, \lambda$, and $\Lambda$. By the $C^1$-convergence at the origin, assumption \eqref{5_bdry} ensures that $\nabla u_0(0) = 0$.

Therefore, setting $h := u_0$, the strong convergence $u_k \to u_0$ in $L^G(B_{3/4}^+)$ implies that
$$\|u_k-h\|_{L^G(B^+_{3/4})}<\frac{\eta_0}{2}$$
for $k$ large enough, which strictly contradicts \eqref{neqthesis_bdry}, completing the proof.
\end{proof}

\begin{lemma} \label{Lemma2_boundary}
Let $u \in W^{1,G}(B_1^+) \cap L^\infty(B_1^+)$ be a weak solution to \eqref{1''}. Assume $u = \psi$ on $T_1$, with $\nabla \psi(0) = 0$. Assume $\|u\|_{L^\infty(B_1^+)} \le 1$. Then, given $0<\alpha<\alpha_H$, there exists a radius $0<{r}_\star<{1/2}$ depending only on $n$, $\delta$, $\lambda$, $\Lambda$, $g_0$, $\alpha_{\mathcal{A}}$, and $\alpha$, and a constant $0<\varepsilon_\star<1$ depending only on $n$, $\delta$, $\lambda$, $\Lambda$, $g_0$, $g(1)$, $\alpha_{\mathcal{A}}$ and $\alpha$, such that if 
\begin{equation*}
	|\nabla u(0)| + \Lambda_{\alpha_{\mathcal{A}}} + \Lambda_B +\|\gamma\|_{L^s(B_1^+)}+ \|f\|_{L^q(B_1^+)} + \|\psi\|_{C^{1,\alpha}(T_1)} < \varepsilon_\star,
\end{equation*}
then we have 
\begin{equation}\label{key_boundary}
	\left(\fint_{B_{r}^+}|u(y) - \psi(0)|^{1+\delta}\,dy\right)^\frac{1}{1+\delta} < {r}^{(1+\alpha)}, \ \ \ \ \mbox{for all} \ \ \ \ 0<{r}<{r}_\star\hbox{.}
\end{equation}
\end{lemma}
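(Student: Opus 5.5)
The plan is a one-step approximation by the frozen homogeneous profile supplied by Lemma \ref{singular_boundary_nonhomo}, followed by the boundary regularity of that profile at a point where its gradient vanishes. Since $\alpha<\alpha_H$, first fix $r_\star\in(0,1/2)$ so small that $C_H r_\star^{\alpha_H-\alpha}\le \tfrac14$. Here $C_H$ is the universal constant in the boundary $C^{1,\alpha_H}$ estimate for solutions of $\operatorname{div}\mathcal{A}_0(\nabla h)=0$ in $B_{3/4}^+$ with $h=0$ on $T_{3/4}$ and $\|h\|_{L^\infty}\le 2$. This estimate is Lemma \ref{lemma:comparison_boundary} with $\psi\equiv 0$, read with the optimal exponent $\alpha_H$, and it is uniform over the class of limit operators, since these satisfy \eqref{homogeneous structural conditions} with the same $\lambda,\Lambda$. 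For such an $h$ with $h(0)=0$ and $\nabla h(0)=0$, Taylor's formula gives $\sup_{B_r^+}|h|\le C_H r^{1+\alpha_H}$ for $r\le 1/2$.

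Next, apply Lemma \ref{singular_boundary_nonhomo} with a tolerance $\eta$ to be fixed. This produces $\varepsilon_\star(\eta)$ and $h$ satisfying \eqref{4_bdry} with $\|u-h\|_{L^G(B_{3/4}^+)}<\eta$. The $L^\infty$ bound $\|h\|_{L^\infty}\le 2$ comes from the contradiction construction, because $h$ is an $L^G$ limit of solutions bounded by $1$. Then split
\[
\Big(\fint_{B_r^+}|u-\psi(0)|^{1+\delta}\Big)^{\frac{1}{1+\delta}}\le \Big(\fint_{B_r^+}|u-h|^{1+\delta}\Big)^{\frac{1}{1+\delta}}+\sup_{B_r^+}|h|+|\psi(0)|.
\]
The last term is below $\varepsilon_\star$. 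The middle term is at most $\tfrac14 r^{1+\alpha}$ for $r\le r_\star$, by the choice of $r_\star$. For the first term, use (G-2) and Lemma \ref{meanconvergence} to convert the $L^G$ closeness into $\int_{B_{3/4}^+}|u-h|^{1+\delta}\le C\max\{\eta^{1+\delta},\eta^{1+g_0}\}$, up to harmless constants coming from $G(1)$. Averaging over $B_r^+$ then costs a factor $r^{-n}$.

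The main obstacle is that both $\eta$ and $|\psi(0)|$ must be controlled simultaneously for every $r<r_\star$, while a single approximation controls only a fixed scale. I would first try to handle the range $r\in[r_\star^2,r_\star]$ directly. There one chooses $\eta$ so that $C\eta^{(1+\delta)/(1+g_0)}r_\star^{-2n/(1+\delta)}\le \tfrac14 r_\star^{2(1+\alpha)}$ and $\varepsilon_\star\le\tfrac14 r_\star^{2(1+\alpha)}$, which gives \eqref{key_boundary} on that window. For smaller $r$, iterate through rescalings $u_k(x)=r_\star^{-k(1+\alpha)}\big(u(r_\star^k x)-\psi(0)\big)$ with the operators renormalised by $g(r_\star^{k\alpha})$, as in the proof of Theorem \ref{thm:up_to_boundary}. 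Each such rescaling must again satisfy the hypotheses of Lemma \ref{singular_boundary_nonhomo}. The key points are these. The rescaled $\|f\|_{L^q}$ carries the factor $r_\star^{k(1-n/q)}/g(r_\star^{k\alpha})$, which does not grow because $\alpha g_0<1-n/q$ in the regime of \eqref{exp_alpha_1}. The rescaled $\Lambda_{\alpha}$ carries $r_\star^{k\alpha_{\mathcal{A}}}/g(r_\star^{k\alpha})$, which does not grow because $\alpha<\alpha_{\mathcal{A}}/g_0$. The quantities $\|\gamma\|_{L^s}$ and $\Lambda_B$ only decrease, since $s>n$. The datum $\psi$ is handled by subtracting its affine part; the hypothesis $\nabla\psi(0)=0$ makes $\psi-\psi(0)=O(|x|^{1+\alpha_\psi})$, which is not enough for large $k$ when $\alpha_\psi<\alpha$. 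My fallback is to absorb the boundary term into the smallness of $\|\psi\|_{C^{1,\alpha}}$, which is the stated hypothesis with the same $\alpha$. Finally, the $L^\infty$ normalisation of $u_k$ is replaced by the $L^{1+\delta}$ normalisation \eqref{key_boundary}, together with the local boundedness estimate from Section~\ref{section3}.
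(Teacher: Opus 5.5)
Your fixed-window step is correct and is essentially the paper's argument. You approximate by the zero-trace profile $h$ of Lemma \ref{singular_boundary_nonhomo}, use $h(0)=0$ and $\nabla h(0)=0$ to get $\sup_{B_r^+}|h|\le C_H r^{1+\alpha_H}$, fix $r_\star$ from $C_H r_\star^{\alpha_H-\alpha}\le\frac14$, and only then choose $\eta$ and $\varepsilon_\star$ in terms of $r_\star$ to absorb the $r^{-n/(1+\delta)}$ loss. The paper instead shifts to $u-\psi(0)$ before approximating; you approximate $u$ itself and pay $|\psi(0)|<\varepsilon_\star$. Your version is slightly cleaner, since it respects the normalisation $\|u\|_{L^\infty}\le 1$ required by Lemma \ref{singular_boundary_nonhomo}. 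Note also that the paper takes $\eta\simeq r^{\frac{n}{1+\delta}+1+\alpha}$. Its $\varepsilon_\star$ therefore depends on the scale, so its proof also controls only one scale at a time. That is exactly how the lemma is used in Theorem \ref{small_regime_boundary}, where $r$ is fixed.

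The gap is the extension to all $0<r<r_\star$, and it cannot be closed. Under your rescaling, $\nabla u_k(0)=r_\star^{-k\alpha}\nabla u(0)$. This leaves the smallness regime of Lemma \ref{singular_boundary_nonhomo} after finitely many steps unless $\nabla u(0)=0$, and the lemma only assumes $|\nabla u(0)|<\varepsilon_\star$. The obstruction is real. Take $u(x)=\varepsilon x_n$: it solves $\operatorname{div}\mathcal{A}_0(\nabla u)=0$ for any $(x,z)$-independent $\mathcal{A}_0$, with $\psi\equiv0$, $\Lambda_{\alpha_{\mathcal{A}}}=\Lambda_B=0$, $\gamma=f=0$ and $|\nabla u(0)|=\varepsilon$. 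Yet $\big(\fint_{B_r^+}|u|^{1+\delta}\big)^{1/(1+\delta)}=c_{n,\delta}\,\varepsilon r$, which exceeds $r^{1+\alpha}$ for $r<(c_{n,\delta}\varepsilon)^{1/\alpha}$. So the all-scales conclusion is false for every choice of $r_\star$ and $\varepsilon_\star$.

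Even with $\nabla u(0)=0$, your iteration has two further problems.
\begin{itemize}
\item It needs $\alpha g_0<1-\frac{n}{q}$ and $\alpha g_0\le\alpha_{\mathcal{A}}$. You import these from \eqref{exp_alpha_1}, but they are not hypotheses of this lemma. The first is genuinely necessary: for the Laplacian, where $\alpha_H=1$, the function $u=\varepsilon x_n|x|^{1-a}$ with $1-\alpha<a<\frac{n}{q}$ has zero trace, $\nabla u(0)=0$ and $\Delta u\in L^q$ of norm $O(\varepsilon)$, but it decays only like $\varepsilon r^{2-a}$.
\item It needs a local $L^\infty$--$L^{1+\delta}$ bound to restore $\|u_k\|_{L^\infty}\le1$. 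Section~\ref{section3} does not provide one; it gives only weak Harnack and oscillation decay.
\end{itemize}
Stop after your window argument and state the lemma at a fixed universal scale or window. Leave the iteration to Theorem \ref{small_regime_boundary}, where $\nabla u(0)=0$ and the extra restrictions on $\alpha$ are assumed.
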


\begin{proof}
Let $w(y) := u(y) - \psi(0)$. Since $\nabla w = \nabla u$, the shifted function $w$ is a weak solution to the transformed quasilinear boundary value problem
\begin{equation}\label{eq_w}
\begin{cases}
    \operatorname{div} \tilde{\mathcal{A}}(y, w, \nabla w) + \tilde{\mathcal{B}}(y, w, \nabla w) =0 & \text{in } B_1^+, \\
    w = \tilde{\psi}(y) & \text{on } T_1,
\end{cases}
\end{equation}
where $\tilde{\psi}(y) := \psi(y) - \psi(0)$ is the shifted trace, and the new operators are explicitly given by 
$$ \tilde{\mathcal{A}}(y, z, p) := \mathcal{A}(y, z + \psi(0), p) \quad \text{and} \quad \tilde{\mathcal{B}}(y, z, p) := \mathcal{B}(y, z + \psi(0), p). $$
Because $\psi(0)$ is uniformly bounded (since $\|u\|_{L^\infty(B_1^+)} \le M$), $\tilde{\mathcal{A}}$ and $\tilde{\mathcal{B}}$ strictly preserve the original structural conditions \eqref{structural conditions} with the same ellipticity constants $\lambda$ and $\Lambda$. In particular, the oscillation condition is naturally preserved around the new frozen operator $\tilde{\mathcal{A}}(0,0,p) = \mathcal{A}(0, \psi(0), p)$:
\begin{equation*}
    |\tilde{\mathcal{A}}(y, z, p) - \tilde{\mathcal{A}}(0, 0, p)| = |\mathcal{A}(y, z+\psi(0), p) - \mathcal{A}(0, \psi(0), p)| \le \Lambda_{\alpha_{\mathcal{A}}} \big(1 + g(|p|)\big) (|y|^{\alpha_{\mathcal{A}}} + |z|^{\alpha_{\mathcal{A}}}).
\end{equation*}

By construction, $\tilde{\psi}(0) = 0$ and $\nabla \tilde{\psi}(0) = \nabla \psi(0) = 0$. Moreover, its H\"older norm is controlled by the original data, yielding $$\|\tilde{\psi}\|_{C^{1,\alpha}(T_1)} \le 2 \|\psi\|_{C^{1,\alpha}(T_1)} < 2\varepsilon_\star.$$

Given $0<\eta<1$, by Lemma \ref{singular_boundary_nonhomo} applied to the shifted function $w$ and the continuous embedding of $L^G\hookrightarrow L^{1+\delta}$ \cite[Theorem 2.2]{martinez2008minimum}, there exists $h \in W^{1,G}(B_{3/4}^+) \cap L^\infty(B_{3/4}^+)$ solving the constant coefficient problem
\begin{equation*}
    \begin{cases}
        \operatorname{div} \mathcal{A}_0(\nabla h)=0 &\text{in}\ \ B_{3/4}^+,\\
        h=0 &\text{on}\ \ T_{3/4},\\
        \nabla h(0)=0,
    \end{cases}
\end{equation*}
where $\mathcal{A}_0(p) = \tilde{\mathcal{A}}(0, 0, p) = \mathcal{A}(0, \psi(0), p)$, satisfying
\begin{equation*}
	\left(\fint_{B_{3/4}^+} |w - h|^{1+\delta}\,dy\right)^\frac{1}{1+\delta} < \eta.
\end{equation*}

Since $\mathcal{A}_0(p)$ satisfies \eqref{homogeneous structural conditions}, by the boundary regularity theory in Section \ref{section5}, there exist a maximal H\"older exponent $\alpha_H\in(0,1)$, $\alpha_H=\alpha_H(n,\delta,g_0,\lambda,\Lambda)$, and a universal constant $C>0$ such that
$$|h(y)| = |h(y) - h(0)| \leq C|y|^{1+\alpha_H}\ \ \ \ \text{for all}\ \ \ \ y\in B_{1/2}^+\hbox{.}$$ 

We now estimate, for $0 < {r} \le \frac{1}{2}$,
\begin{eqnarray*}
    \fint_{B_{r}^+}|w(y)|^{1+\delta}\,dy &\leq& C\left[\fint_{B_{r}^+}|w(y)-h(y)|^{1+\delta}\,dy + \fint_{B_{r}^+}|h(y)|^{1+\delta}\,dy\right]\\
	&<& C\left[\eta^{1+\delta}{r}^{-n} + \fint_{B_{r}^+}|y|^{(1+\alpha_H)(1+\delta)}\,dy\right]\\
	&<& C\left[\eta^{1+\delta}{r}^{-n} + {r}^{(1+\alpha_H)(1+\delta)}\right].	
\end{eqnarray*}

Thus, given $0<\alpha<\alpha_H$, we need
$${r}^{(1+\alpha_H)(1+\delta)}<\frac{1}{2C} {r}^{(1+\alpha)(1+\delta)}\hbox{.}$$
Then, it is sufficient to take ${r}>0$ such that
$$0<{r}<\min\left\{\frac{1}{2},\left(\frac{1}{2C}\right)^\frac{1}{(\alpha_H-\alpha)(1+\delta)}\right\}=:{r}_\star.$$
On the other hand, taking
$$0<\eta=\left(\frac{1}{2C}\right)^{\frac{1}{1+\delta}}{r}^{\frac{n}{1+\delta}+1+\alpha}$$
and recalling that $w(y) = u(y) - \psi(0)$, we obtain \eqref{key_boundary}.
\end{proof}

\begin{theorem}[\bf{Small regime up to the boundary}]\label{small_regime_boundary}
Let $\alpha_H \in (0, 1)$ be the maximal boundary H\"older exponent from the homogeneous reference problem, and define the critical exponent
\begin{equation*}
    \alpha_\star := \min\left\{\alpha_H, \frac{q-n}{qg_0}, \frac{\alpha_{\mathcal{A}}}{g_0}\right\}.
\end{equation*}
For any $\alpha \in(0,\alpha_\star )$, there exist positive constants $\varepsilon_\star, r_\star \in (0,1)$ and $C>0$ depending only on $n$, $s$, $q$, $g_0$, $\delta$, $\lambda$, $\Lambda$, $g(1)$, $\alpha_{\mathcal{A}}$, and $\alpha$ such that if $u \in W^{1,G}(B_1^+) \cap L^\infty(B_1^+)$ is a weak solution of \eqref{1''} with $\|u\|_{L^\infty(B_1^+)} \le 1$, $\nabla \psi(0) = 0$, $0\in\bar{\Sigma}(u)$, and
\begin{equation}\label{smallness_conditions_bdry}
	|\nabla u(0)| + \Lambda_{\alpha_{\mathcal{A}}} + \Lambda_B + \|\gamma\|_{L^s(B_1^+)} + \|f\|_{L^q(B_1^+)} + \|\psi\|_{C^{1,\alpha}(T_1)} < \varepsilon_\star,
\end{equation}
then, setting the rigid constant $u(0) = \psi(0)$, there holds
$$ \sup_{y \in B_{r}^+(0)}|u(y)-\psi(0)|\le Cr^{1+\alpha}, \ \ \ \ \text{for all } 0<r< r_\star.$$

\noindent Furthermore, if the origin is additionally a point where the Hardy-Littlewood maximal function of $|f|^q$ is finite, i.e.,
\begin{equation*}
    \mathcal{M}|f|^q(0) := \sup_{s > 0} \fint_{B_s^+(0)} |f(y)|^q \, dy < \infty,
\end{equation*}
the $L^q$-integrability restriction on the H\"older exponent is removed. In this case, the pointwise estimate holds for any optimal exponent $\alpha \in \left(0, \min\left\{\alpha_H, \frac{1}{g_0}, \frac{\alpha_{\mathcal{A}}}{g_0}\right\}\right)$.
\end{theorem}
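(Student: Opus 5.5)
The plan is a geometric iteration of Lemma \ref{Lemma2_boundary} along dyadic-type radii $r_\star^k$, with an intrinsic rescaling that keeps the smallness regime \eqref{smallness_conditions_bdry} invariant, followed by an $L^{1+\delta}$-to-$L^\infty$ upgrade. Fix $\alpha\in(0,\alpha_\star)$ and let $\rho:=r_\star/2$ and $\varepsilon_\star$ be given by Lemma \ref{Lemma2_boundary} for an intermediate exponent $\alpha'\in(\alpha,\alpha_H)$; the gap $\alpha'-\alpha$ absorbs the constants. Working with $w=u-\psi(0)$ (so the frozen operator is $\mathcal{A}(0,\psi(0),\cdot)$ and the trace vanishes to first order at $0$), I will prove by induction that
\begin{equation*}
\Big(\fint_{B_{\rho^k}^+}|w|^{1+\delta}\,dy\Big)^{\frac{1}{1+\delta}}\le \rho^{k(1+\alpha)}\qquad\text{for all } k\ge 0 .
\end{equation*}
The case $k=1$ is Lemma \ref{Lemma2_boundary}.

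For the inductive step, set $w_k(y):=w(\rho^k y)/\rho^{k(1+\alpha)}$ on $B_1^+$. The equation becomes $\operatorname{div}\mathcal{A}_k+\mathcal{B}_k=0$ with $\mathcal{A}_k(y,z,p):=\mathcal{A}(\rho^k y,\rho^{k(1+\alpha)}z,\rho^{k\alpha}p)/g(\rho^{k\alpha})$, $g_k(t):=g(\rho^{k\alpha}t)/g(\rho^{k\alpha})$, and $\mathcal{B}_k:=\rho^{k}\mathcal{B}(\cdots)/g(\rho^{k\alpha})$. By (g-1), $g_k$ satisfies \eqref{01} with the same $\delta,g_0$ and $g_k(1)=1$, so all constants are uniform in $k$. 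The smallness quantities rescale as follows.
\begin{itemize}
\item $\Lambda_{B,k}=\Lambda_B\rho^{k(1+\alpha)}\le\Lambda_B$.
\item $\|\gamma_k\|_{L^s}=\rho^{k(1-n/s)}\|\gamma\|_{L^s}$, which decreases because $s>n$.
\item $\|f_k\|_{L^q}=\rho^{k(1-n/q)}\|f\|_{L^q}/g(\rho^{k\alpha})\le \rho^{k(1-n/q-\alpha g_0)}\|f\|_{L^q}$, which stays small exactly when $\alpha\le (q-n)/(qg_0)$.
\item For the trace $\psi_k$, the conditions $\psi(0)$ subtracted and $\nabla\psi(0)=0$ give $\|\psi_k\|_{C^{1,\alpha}}\lesssim\rho^{k(\alpha_\psi-\alpha)}[\nabla\psi]_{C^{\alpha_\psi}}$. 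Here I will take $\alpha_\psi\ge\alpha$ after shrinking, or else argue that only $\|\psi_k\|_{C^1}\to0$ is used in the compactness of Lemma \ref{singular_boundary_nonhomo}.
\item $|\nabla w_k(0)|=0$.
\end{itemize}
The oscillation constant of $\mathcal{A}_k$ requires care. Since the factor $1+g(|p|)$ does not vanish at $p=0$, one gets
\begin{equation*}
\frac{(1+g(\rho^{k\alpha}|p|))\,\rho^{k\alpha_{\mathcal{A}}}}{g(\rho^{k\alpha})}\lesssim \rho^{k(\alpha_{\mathcal{A}}-\alpha g_0)}\,(1+g_k(|p|)),
\end{equation*}
and this stays small precisely when $\alpha<\alpha_{\mathcal{A}}/g_0$. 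The $z$-part is handled the same way, using the pointwise bound $|w|\le C\rho^{k(1+\alpha)}$ from the previous step. Finally, $\|w_k\|_{L^\infty(B_1^+)}\le 1$ must be maintained; I plan to carry the iteration in $L^\infty$ directly by combining the $L^{1+\delta}$ estimate with local boundedness (a Moser/De Giorgi sup bound, as in Section \ref{section3}) at each step. Lemma \ref{Lemma2_boundary} applied to $w_k$ at radius $\rho$ then closes the induction.

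To pass to the supremum, I apply the local $L^\infty$ estimate up to the flat boundary (from the Caccioppoli inequality, Lemma \ref{lem:caccioppoli_boundary}, plus Moser iteration) to the rescaled $w_k$ on $B_1^+$, with zero-order data already small. This gives $\sup_{B^+_{\rho^k/2}}|w|\le C\rho^{k(1+\alpha)}$, and for $\rho^{k+1}\le r<\rho^k$ the usual interpolation yields $\sup_{B_r^+}|u-\psi(0)|\le C\rho^{-1-\alpha}r^{1+\alpha}$.

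For the maximal-function refinement, I replace $\|f_k\|_{L^q(B_1^+)}$ by the average form $\rho^{k}\big(\fint_{B^+_{\rho^k}}|f|^q\big)^{1/q}/g(\rho^{k\alpha})\le\rho^{k(1-\alpha g_0)}\mathcal{M}(|f|^q)(0)^{1/q}$. This only requires $\alpha<1/g_0$, and the initial smallness is arranged by normalizing with $\mathcal{M}(|f|^q)(0)$.

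The main obstacle is keeping \emph{all} hypotheses of Lemma \ref{Lemma2_boundary} uniformly valid along the iteration, especially the $L^\infty\le1$ normalization and the trace norm. The non-homogeneous oscillation factor is exactly what forces $\alpha<\alpha_{\mathcal{A}}/g_0$. I expect to spend most care on the sup bound that feeds back into the induction, which I would first try to obtain from the boundary weak Harnack machinery (Theorem \ref{boundary weak harnack}) applied to $\pm w_k$.
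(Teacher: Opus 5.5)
This is the paper's argument. You iterate Lemma \ref{Lemma2_boundary} under the same intrinsic scaling $w_k(y)=(u(\rho^k y)-\psi(0))/\rho^{k(1+\alpha)}$, and you get the restrictions $\alpha\le\frac{q-n}{qg_0}$ and $\alpha\le\frac{\alpha_{\mathcal{A}}}{g_0}$ from $\|f_k\|_{L^q}$ and from the non-vanishing factor $1+g(|p|)$. You keep the trace norm from growing using $\nabla\psi(0)=0$; your worry about $\alpha_\psi\ge\alpha$ does not arise here, since $\psi\in C^{1,\alpha}$ is assumed with the same $\alpha$. The maximal-function case is handled through averages, as in the paper.

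Your explicit maintenance of $\|w_k\|_{L^\infty(B_1^+)}\le 1$ fills a step the paper only asserts: you run Lemma \ref{Lemma2_boundary} at an exponent $\alpha'>\alpha$, add a local sup bound, and let the gap $\alpha'-\alpha$ absorb the constant. Make that sup bound a genuine Moser iteration for subsolutions, or instead obtain uniform convergence in Lemma \ref{singular_boundary_nonhomo} from the $C^{\alpha}$ estimates of Section \ref{section3}. The weak Harnack inequality applied to $\pm w_k$ will not do it: it only gives a fixed-fraction reduction of the supremum, not a bound by the $L^{1+\delta}$ average.
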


\begin{proof} 
Given $0<\alpha<\alpha_\star$, by Lemma \ref{Lemma2_boundary} there exist small universal constants $\varepsilon_\star>0$ and ${r}_\star>0$ such that if $\|f\|_{L^q(B_1^+)}<\varepsilon_\star$, $\|\gamma\|_{L^s(B_1^+)} < \varepsilon_\star$, $\Lambda_{\alpha_{\mathcal{A}}} < \varepsilon_\star$, $\Lambda_B < \varepsilon_\star$, and $\|\psi\|_{C^{1,\alpha}(T_1)} < \varepsilon_\star$, then
\begin{equation}\label{case1_bdry}
	\fint_{B_{{r}}^+}|u(y)-\psi(0)|^{1+\delta}\,dy\leq{r}^{(1+\delta)(1+\alpha)}.
\end{equation}
   
We will show by finite induction that for every $k \ge 1$, the rigid anchoring to the boundary data persists:
\begin{equation}\label{6_bdry}
	\fint_{B_{{r}^k}^+}|u(y)-\psi(0)|^{1+\delta}\,dy\leq{r}^{k(1+\delta)(1+\alpha)},
\end{equation}
where $r$ is the small universal radius from Lemma \ref{Lemma2_boundary}. 

We argue by finite induction. The case $k=1$ is \eqref{case1_bdry}. Suppose that we have verified \eqref{6_bdry} for $k$. Define the scaled function $u_k: B_1^+ \to\mathbb{R}$ by expanding the half-ball around the boundary point $0$:
$$u_k(y):=\frac{u({r}^ky)-\psi(0)}{{r}^{k (1+\alpha)}}\hbox{.}$$
Since the domain is a half-ball and the origin is on the flat boundary, the scaling maps $B_1^+$ into $B_1^+$ and preserves the flat boundary portion $T_1$.

Then $u_k$ is a weak solution of 
$$\operatorname{div} \mathcal{A}_{k}(y, u_k, \nabla u_k) + \mathcal{B}_{k}(y, u_k, \nabla u_k) = f_k(y) \quad\hbox{in}\quad B_1^+,$$
where the scaled operators and functions are:
$$\mathcal{A}_{k}(y, z, p) = \frac{\mathcal{A}(r^k y, \psi(0) + r^{k(1+\alpha)}z, r^{k\alpha} p)}{g(r^{k\alpha})},$$
$$\mathcal{B}_{k}(y, z, p) = \frac{r^k \mathcal{B}(r^k y, \psi(0) + r^{k(1+\alpha)}z, r^{k\alpha} p)}{g(r^{k\alpha})},$$
with the associated $N$-function $G_k(t) = \int_0^t \frac{g(r^{k\alpha}s)}{g(r^{k\alpha})} ds$.

Notice that $G(r^{k\alpha} |p|) = r^{k\alpha} g(r^{k\alpha}) G_k(|p|)$ and $g(r^{k\alpha} |p|) = g(r^{k\alpha}) g_k(|p|)$. Therefore, the lower-order term $\mathcal{B}_{k}$ and the oscillation of the principal part $\mathcal{A}_{k}$ satisfy the structural bounds:
\begin{align*}
     |\mathcal{B}_{k}(y, z, p)| &\le \frac{r^k}{g(r^{k\alpha})} \left[ \Lambda_B G(r^{k\alpha}|p|) + |\gamma(r^k y)| g(r^{k\alpha}|p|) + |f(r^k y)| \right] \\
     &= \Lambda_{\mathcal{B}_{k}} G_k(|p|) + |\gamma_k(y)| g_k(|p|) + |f_k(y)|,
\end{align*}
and
\begin{align*}
    |\mathcal{A}_{k}(y, z, p) - \mathcal{A}_{k}(0,0,p)| &\le \frac{\Lambda_{\alpha_{\mathcal{A}}} \big(1 + g(r^{k\alpha}|p|)\big)}{g(r^{k\alpha})} \left( r^{k\alpha_{\mathcal{A}}}|y|^{\alpha_{\mathcal{A}}} + r^{k(1+\alpha)\alpha_{\mathcal{A}}}|z|^{\alpha_{\mathcal{A}}} \right) \\
    &\le \Lambda_{\alpha_{\mathcal{A}}} \left( \frac{r^{k\alpha_{\mathcal{A}}}}{g(r^{k\alpha})} + r^{k\alpha_{\mathcal{A}}} g_k(|p|) \right) \left( |y|^{\alpha_{\mathcal{A}}} + r^{k\alpha\alpha_{\mathcal{A}}}|z|^{\alpha_{\mathcal{A}}} \right) \\
    &\le \Lambda_{\alpha_k} \big( 1 + g_k(|p|) \big) \left( |y|^{\alpha_{\mathcal{A}}} + |z|^{\alpha_{\mathcal{A}}} \right),
\end{align*}
where the new structural constants, drift coefficient, and source term are given by:
$$ \Lambda_{\mathcal{B}_{k}} = \Lambda_B r^{k(1+\alpha)}, \ \Lambda_{\alpha_k} = \Lambda_{\alpha_{\mathcal{A}}} \max\left\{ \frac{r^{k\alpha_{\mathcal{A}}}}{g(r^{k\alpha})}, r^{k\alpha_{\mathcal{A}}} \right\}, \  \gamma_k(y) = r^k \gamma(r^k y), \  \hbox{and}\ f_k(y)=\frac{{r}^{k} f({r}^k y)}{g({r}^{k\alpha})}. $$

For the scaled drift term $\gamma_k$, performing the change of variables $z = r^k y$ and noting that $s > \frac{n(1+g_0)}{1+\delta} \ge n$ implies $1 - \frac{n}{s} > 0$, we immediately obtain:
$$ \|\gamma_k\|_{L^s(B_1^+)} = r^{k\left(1 - \frac{n}{s}\right)} \|\gamma\|_{L^s(B_{r^k}^+)} \le \|\gamma\|_{L^s(B_1^+)} < \varepsilon_\star. $$

Using \textit{(g-1)} of Lemma \ref{firstLema}, we also have
$$\frac{r^{k\alpha_{\mathcal{A}}}}{g(r^{k\alpha})} \le \frac{1}{g(1)} r^{k(\alpha_{\mathcal{A}} - \alpha g_0)}.$$
Thus, we must require $\alpha_{\mathcal{A}} - \alpha g_0 \ge 0$, which yields $\alpha \le \frac{\alpha_{\mathcal{A}}}{g_0}$. 
Under this restriction, we have 
$$\Lambda_{\mathcal{B}_{k}} \le \Lambda_B < \varepsilon_\star \quad \text{and} \quad \Lambda_{\alpha_k} \le \tilde{C} \Lambda_{\alpha_{\mathcal{A}}} < \varepsilon_\star.$$ 
Making these choices and using \textit{(g-1)} again, we estimate $f_k$ by
\begin{eqnarray*}
    \|f_k\|_{L^q(B_1^+)}&\leq& \frac{{r}^{k-k\frac{n}{q}}}{g({r}^{k\alpha})}\|f\|_{L^q(B_1^+)}\leq \frac{{r}^{-k\alpha g_0}{r}^{k\left(1-\frac{n}{q}\right)}}{g(1)}\|f\|_{L^q(B_1^+)}=\frac{{r}^{k\left[\left(1-\frac{n}{q}\right)-\alpha g_0\right]}}{g(1)}\|f\|_{L^q(B_1^+)}\hbox{.}
\end{eqnarray*}
We require that 
$$1-\frac{n}{q}- \alpha g_0\geq0\iff \alpha\leq \frac{q-n}{q g_0}\hbox{.}$$
Thus we need to consider 
$$0<\alpha<\min\left\{\alpha_H, \frac{q-n}{q g_0}, \frac{\alpha_{\mathcal{A}}}{g_0}\right\}.$$
Therefore, for $k$ large we obtain
$$\|f_k\|_{L^q(B_1^+)}\leq \varepsilon_\star\hbox{.}$$

Furthermore, if $\mathcal{M}|f|^q(0)<\infty$, the change of variables $z = r^k y$ yields:
\begin{align*}
    \|f_k\|_{L^q(B_1^+)} &= \frac{r^k}{g(r^{k\alpha})} \left( \int_{B_1^+} |f(r^k y)|^q dy \right)^{\frac{1}{q}} \\
    &= \frac{r^k \cdot r^{-k\frac{n}{q}}}{g(r^{k\alpha})} \left( \int_{B_{r^k}^+(0)} |f(z)|^q dz \right)^{\frac{1}{q}} \\
    &= \frac{r^k}{g(r^{k\alpha})} |B_1^+|^{\frac{1}{q}} \left( \fint_{B_{r^k}^+(0)} |f(z)|^q dz \right)^{\frac{1}{q}} \\
    &\le r^{k(1-\alpha g_0)} \left[ \frac{|B_1^+|^{\frac{1}{q}}}{g(1)} \right] \Big[\mathcal{M}|f|^q(0)\Big]^{\frac{1}{q}}.
\end{align*}
Since $r < 1$, ensuring that this term vanishes as $k \to \infty$ requires strictly $1 - \alpha g_0 > 0$. Thus, it is sufficient to have $0<\alpha < \min\left\{\alpha_H,\frac{1}{g_0}, \frac{\alpha_{\mathcal{A}}}{g_0}\right\}$.

Furthermore, the scaled boundary condition for $u_k$ on $T_1$ is:
$$ \psi_k(y) := \frac{\psi(r^k y) - \psi(0)}{r^{k(1+\alpha)}}. $$
Because $\psi \in C^{1,\alpha}(T_1)$ and we assumed $\nabla \psi(0) = 0$, Taylor's expansion gives 
$$|\psi(r^k y) - \psi(0)| \le \|\psi\|_{C^{1,\alpha}(T_1)} r^{k(1+\alpha)} |y|^{1+\alpha}.$$ 
Therefore, $$\|\psi_k\|_{C^{1,\alpha}(T_1)} \le \|\psi\|_{C^{1,\alpha}(T_1)} < \varepsilon_\star,\ \ \text{and}\ \ \psi_k(0) = 0.$$

We have shown that $u_k$ is strictly under the hypotheses of Lemma \ref{Lemma2_boundary}, which assures that
$$ \fint_{B_{{r}}^+}\left|\frac{u({r}^{k+1}y)-\psi(0)}{{r}^{k (1+\alpha)}}\right|^{1+\delta}\,dy = \fint_{B_{{r}}^+}|u_k(ry)|^{1+\delta}\,dy \leq {r}^{(1+\delta)(1+\alpha)}, $$
and we get the inductive step:
$$ \fint_{B_{{r}^{k+1}}^+}\left|u(y) - \psi(0)\right|^{1+\delta}\,dy \leq {r}^{(k+1) (1+\alpha)(1+\delta)}\hbox{.} $$
This completes the induction. The rest of the proof follows the same lines of Section 7 of \cite{Teixeira2014}.
\end{proof}

\begin{proof}[\bf{Proof of Theorem \ref{main}}]
Set $\alpha_0=\min\{\alpha_\psi,\alpha_\Omega\}$. Let $x_0 \in \partial\Omega \cap \Sigma(u)$. By regularity of $\partial\Omega$, there exists a local $C^{1,\alpha_0}$ diffeomorphism $\Phi$ that flattens a neighborhood of $x_0$ into a half-ball $B_R^+$, mapping the critical  point $x_0$ to the origin $0 \in T_R$. The transformed solution $v(y) = u(\Phi^{-1}(y))$ is a weak solution to the corresponding quasilinear equation
$$ \operatorname{div} \tilde{\mathcal{A}}(y, v, \nabla v) + \tilde{\mathcal{B}}(y, v, \nabla v) = 0 \quad \text{in } B_R^+, $$
where, setting $J_\Phi(y) := |\det D\Phi(\Phi^{-1}(y))|^{-1}$, the transformed operators are explicitly defined for $(y, z, p) \in B_R^+ \times \mathbb{R} \times \mathbb{R}^n$ by
\begin{align*}
    \tilde{\mathcal{A}}(y, z, p) &:= J_\Phi(y) \, D\Phi(\Phi^{-1}(y)) \, \mathcal{A}\!\left(\Phi^{-1}(y), \, z, \, D\Phi(\Phi^{-1}(y))^T p\right), \\
    \tilde{\mathcal{B}}(y, z, p) &:= J_\Phi(y) \, \mathcal{B}\!\left(\Phi^{-1}(y), \, z, \, D\Phi(\Phi^{-1}(y))^T p\right).
\end{align*}

Lemma \ref{lem:invariance} ensures that the structural conditions \eqref{structural conditions} are invariant under this $C^{1,\alpha_0}$ change of variables: $\tilde{\mathcal{A}}$ and $\tilde{\mathcal{B}}$ satisfy \eqref{structural conditions} in $B_R^+$ with updated structural constants $\tilde{\lambda}, \tilde{\Lambda}, \Lambda_{\tilde{B}},$ and $\Lambda_{\alpha_{\tilde{\mathcal{A}}}}$, H\"older exponent $\alpha_0$, and transformed lower-order weights
$$ \tilde{\gamma}(y) := C_\Phi \, \gamma(\Phi^{-1}(y)) \in L^s(B_R^+) \quad \text{and} \quad \tilde{f}(y) := J_\Phi(y) \, f(\Phi^{-1}(y)) \in L^q(B_R^+), $$
for a structural constant $C_\Phi > 0$ depending only on $\|D\Phi\|_{L^\infty}$, $\|D\Phi^{-1}\|_{L^\infty}$, and $g_0$. Because $\Phi$ is bi-Lipschitz, the $L^s$ and $L^q$ norms as well as local averages are preserved up to a universal geometric constant, ensuring that $\mathcal{M}|\tilde{f}|^q(0) < \infty$ whenever the maximal bound holds for $f$ at $x_0$. The boundary datum on the flat portion $T_R$ is naturally given by the composition $\tilde{\psi} = \psi \circ \Phi^{-1} \in C^{1,\alpha_0}(T_R)$.

Since $x_0$ is a critical  point of $u$ and the diffeomorphism $\Phi$ is regular, the chain rule $\nabla v(0) = \big(D\Phi(x_0)\big)^{-T} \nabla u(x_0)$ guarantees that $\nabla v(0) = 0$. Because $\tilde{\psi}$ is the trace of $v$ on $T_R$, we strictly have $v(0) = \tilde{\psi}(0)$ and $\nabla_T \tilde{\psi}(0) = 0$.

To invoke Theorem \ref{small_regime_boundary}, we first fix the amplitude divisor $\sigma$. Let
$$ \sigma_0 := \min\left\{ 1, \, \frac{\varepsilon_\star}{5\Lambda_{\tilde{B}}}, \, \left(\frac{\varepsilon_\star}{5 {\Lambda}_{{\alpha}_{\tilde{\mathcal{A}}}} (1/g(1) + 1)}\right)^{\frac{1}{\alpha_0}} \right\}. $$
Since $v$ is continuous at the origin, we may decrease $R$ (this does not affect $\Phi$ nor the structural constants $\Lambda_{\tilde{B}}$, $\Lambda_{\alpha_{\tilde{\mathcal{A}}}}$) so that
$$ m := \sup_{B_R^+}|v - v(0)| \le \frac{\sigma_0}{2}. $$
If $m = 0$, then $v \equiv v(0)$ in $B_R^+$ and the conclusion is trivial, so we assume $m>0$ and set
$$ \sigma := 2m = 2\sup_{B_R^+}|v - v(0)| \in (0, \sigma_0]. $$

For scaling parameters $\varrho \in (0,1]$ (with $\varrho < R$), we define the normalized function:
$$ w(y) := \frac{v(\varrho y) - v(0)}{\sigma}, \quad y \in B_1^+. $$

Then $w$ is a weak solution of the rescaled boundary problem
$$ \begin{cases}\operatorname{div}\mathcal{A}_\star(y, w, \nabla w) +\mathcal{B}_\star(y, w, \nabla w) = 0 \quad &\text{in} \quad B_1^+, \\
w=\psi_\star \quad&\text{on}\quad T_1,
\end{cases}
$$
where the scaled operators, drift coefficient, and source term are given by:
$$\mathcal{A}_\star(y, z, p) = \frac{\tilde{\mathcal{A}}\left(\varrho y, v(0) + \sigma z, \frac{\sigma}{\varrho} p\right)}{g\left(\frac{\sigma}{\varrho}\right)}, \quad\mathcal{B}_\star(y, z, p) = \frac{\varrho \tilde{\mathcal{B}}\left(\varrho y, v(0) + \sigma z, \frac{\sigma}{\varrho} p\right)}{g\left(\frac{\sigma}{\varrho}\right)}, $$
$$ \gamma_\star(y) = \varrho \tilde{\gamma}(\varrho y), \qquad f_\star(y) = \frac{\varrho \tilde{f}(\varrho y)}{g(\sigma/\varrho)}, $$
and the transformed boundary datum on $T_1$ is given by ${\psi}_\star(y) = \frac{\tilde{\psi}(\varrho y) - \tilde{\psi}(0)}{\sigma}$. Here, $\mathcal{A}_\star$ and $\mathcal{B}_\star$ are governed by the rescaled growth density $g_\star$ and its associated $N$-function $G_\star$, explicitly defined for $t \ge 0$ by
$$ g_\star(t) := \frac{g\left(\frac{\sigma}{\varrho}t\right)}{g\left(\frac{\sigma}{\varrho}\right)} \quad \text{and} \quad G_\star(t) := \int_0^t g_\star(s)\,ds = \frac{\varrho}{\sigma g\left(\frac{\sigma}{\varrho}\right)} G\left(\frac{\sigma}{\varrho}t\right), $$
which preserves condition \eqref{01} with the exact same structural exponents $\delta$ and $g_0$.

Consequently, the lower-order term satisfies 
$$|\mathcal{B}_\star(y, z, p)| \le \Lambda_{B_\star} G_\star(|p|) + |\gamma_\star(y)| g_\star(|p|) + |f_\star(y)|,$$
where the new structural constants (for $\varrho \le \sigma \le 1$) are bounded by:
$$ \Lambda_{B_\star} \le \sigma \Lambda_{\tilde{B}} \quad \text{and} \quad \Lambda_{{\alpha}_{\mathcal{A_\star}}} \le {\Lambda}_{{\alpha}_{\tilde{\mathcal{A}}}} \left(\frac{1}{g(1)} + 1\right) \max\{\varrho^{\alpha_0}, \sigma^{\alpha_0}\}. $$
Since $\sigma \le \sigma_0$, the choice of $\sigma_0$ immediately ensures $\Lambda_{B_\star} \le \varepsilon_\star/5$ and $\Lambda_{\alpha_\star} \le \varepsilon_\star/5$.

Furthermore, the uniform $L^\infty$ bound required by Theorem \ref{small_regime_boundary} is satisfied with $M_w = 1$: indeed, since $\sigma = 2\sup_{B_R^+}|v - v(0)|$,
$$ \|w\|_{L^\infty(B_1^+)} \le \frac{\sup_{B_R^+}|v - v(0)|}{\sigma} = \frac{1}{2} \le 1 =: M_w. $$

Simultaneously, since $\tilde{\psi} \in C^{1,\alpha_0}(T_R)$ with $\nabla_T \tilde{\psi}(0) = 0$, Taylor's expansion implies that the scaled boundary datum satisfies 
$$\|\psi_\star\|_{C^{1,\alpha}(T_1)} \le \frac{C \varrho^{1+\alpha_0 - \alpha}}{\sigma} \|\tilde{\psi}\|_{C^{1,\alpha_0}(T_R)}.$$ 
For the scaled drift term $\gamma_\star$ and source term $f_\star$, since $\varrho \le \sigma$ implies $g(\sigma/\varrho) \ge g(1)$ and $s > \frac{n(1+g_0)}{1+\delta} \ge n$, we apply the bounds:
$$\|\gamma_\star\|_{L^s(B_1^+)} \le \varrho^{1-n/s} \|\tilde{\gamma}\|_{L^s(B_R^+)} \quad \text{and} \quad \|f_\star\|_{L^q(B_1^+)} \le \frac{\varrho^{1-n/q}}{g(1)} \|\tilde{f}\|_{L^q(B_R^+)}.$$

We explicitly define the geometric radius $\varrho$ by embedding their respective norms and powers into a single strict bound:
$$ \varrho := \min\left\{ 1, \, \frac{R}{2}, \, \sigma, \, \left( \frac{\sigma \varepsilon_\star}{5 C \|\tilde{\psi}\|_{C^{1,\alpha_0}(T_R)}} \right)^{\frac{1}{1+\alpha_0-\alpha}}, \, \left( \frac{\varepsilon_\star}{5 \|\tilde{\gamma}\|_{L^s(B_R^+)}} \right)^{\frac{s}{s-n}}, \, \left( \frac{\varepsilon_\star g(1)}{5 \|\tilde{f}\|_{L^q(B_R^+)}} \right)^{\frac{q}{q-n}} \right\}. $$

Alternatively, if $\mathcal{M}|\tilde{f}|^q(0) < \infty$, the dimensional restriction $q > n$ is bypassed. Evaluating the scaled source term via the local average directly yields 
$$\|f_\star\|_{L^q(B_1^+)} \le \frac{\varrho}{g(1)} |B_1^+|^{\frac{1}{q}} \left[\mathcal{M}|\tilde{f}|^q(0)\right]^{\frac{1}{q}}.$$
The last dimensional term in the definition of $\varrho$ is then dynamically replaced by the sufficiently small radius 
$$\varrho \le \frac{\varepsilon_\star g(1)}{5 |B_1^+|^{1/q} [\mathcal{M}|\tilde{f}|^q(0)]^{1/q}},$$
which relies purely on the linear scaling $\varrho \to 0$.

Under these explicit parameter selections, we have 
$$\Lambda_{B_\star} + \Lambda_{\alpha_\star} + \|\psi_\star\|_{C^{1,\alpha}(T_1)} + \|\gamma_\star\|_{L^s(B_1^+)} + \|f_\star\|_{L^q(B_1^+)} \le \frac{4}{5}\varepsilon_\star < \varepsilon_\star,$$
and since $\nabla w(0) = 0$, $w$ completely satisfies all hypotheses of Theorem \ref{small_regime_boundary}. Consequently, there exists a universal radius $r_\star$ such that for $0 < \tau < r_\star$,
$$ \sup_{B_\tau^+(0)}|w(y)-w(0)|\leq C \tau^{1+\alpha}. $$

We now unravel the scaling $w(y) = \frac{v(\varrho y) - v(0)}{\sigma}$ to return to the flattened physical variables. Setting $z = \varrho y$ and $r = \varrho \tau$, for all $0 < r < \varrho r_\star$, we obtain:
$$ \sup_{B_r^+(0)} |v(z) - v(0)| \le \left( \frac{\sigma C}{\varrho^{1+\alpha}} \right) r^{1+\alpha} = C r^{1+\alpha}. $$
Since $\nabla v(0) = 0$, this pointwise decay estimate proves that $v$ is pointwise $C^{1,\alpha}$ at the origin in the half-ball. 
    
Finally, we return to the original physical domain by recalling that $u(x) = v(\Phi(x))$ and $\Phi(x_0) = 0$. Since the local boundary flattening transformation $\Phi$ is a $C^{1,\alpha_0}$ diffeomorphism, it is bi-Lipschitz; in particular, setting $L_\Phi := \|D\Phi\|_{L^\infty}$, we have $|\Phi(x)| = |\Phi(x) - \Phi(x_0)| \le L_\Phi |x - x_0|$. Consequently, for every radius $r \in (0, \tilde{r}_\star)$, where $\tilde{r}_\star := \frac{\varrho r_\star}{L_\Phi}$, the image of $B_r(x_0) \cap \Omega$ under $\Phi$ is contained in $B_{L_\Phi r}^+(0)$ with $L_\Phi r < \varrho r_\star$. Thus,
$$ \sup_{x \in B_r(x_0) \cap \Omega} |u(x) - u(x_0)| \le \sup_{z \in B_{L_\Phi r}^+(0)} |v(z) - v(0)| \le C (L_\Phi r)^{1+\alpha} = C_0 r^{1+\alpha}, \quad \text{for all } 0 < r < \tilde{r}_\star. $$

\end{proof}
\appendix

\section{On the $n < s \le \max\left\{1+g_0, \frac{n(1+g_0)}{1+\delta}\right\}$}\label{app:subcritical_regimes}

In our variational perturbation framework, the structural restriction 
$$s > s_\star := \max\left\{1+g_0, \, \frac{n(1+g_0)}{1+\delta}\right\}$$ 
arises directly from the simultaneous need to apply H\"older's inequality and the Gagliardo--Nirenberg interpolation inequality when estimating the drift integral. Notice that when $g_0=\delta=p-1$ we return the same conditions in $p$-Laplacian case i.e. $s_\star=\max\{n,p\}$. 

In the subcritical range $n < s \le s_\star$ (when $g_0 > \delta$), at least one of these inequalities fails; Table \ref{tab:subcritical_regimes} summarizes the exact analytical obstruction in each regime and the alternative structural or integrability hypotheses on the data that suffice to recover the proof.

\begin{table}[htbp]
\centering
\caption{Diagnostic of the variational method and sufficient hypotheses in the regimes $s > n$.}
\label{tab:subcritical_regimes}
\renewcommand{\arraystretch}{1.35}
\setlength{\tabcolsep}{3.5pt}
\scriptsize
\begin{tabular}{|p{2.4cm}|p{1.5cm}|p{2.6cm}|p{3.2cm}|p{4.2cm}|}
\hline
\textbf{Regime of $n,\delta,g_0$} & \textbf{Value of $s_\star$} & \textbf{Range of $s$} & \textbf{Analytical Obstruction} & \textbf{Sufficient Hypotheses to Recover the Proof (any option suffices)} \\ \hline\hline

\textbf{Case 1:} \newline $n \le 1+\delta \le 1+g_0$ 
& $1+g_0$ 
& $(n, \, 1+g_0]$ 
& Sobolev holds ($1+\delta \ge n$), but Young/H\"older fails ($\sigma = \frac{s}{1+g_0} \le 1 \Rightarrow G(|\gamma|) \notin L^1(\Omega)$). 
& $\bullet$ \textbf{In $\mathcal{B}$ structure:} replace $|\gamma|g(|p|)$ by $|\gamma|G(|p|)^{1-\frac{1}{s}}$ or $|\gamma|G(|p|)^{\frac{\delta}{1+\delta}}$. \\ \hline

\textbf{Case 2:} \newline $1+\delta < 1+g_0 \le n$ 
& $\dfrac{n(1+g_0)}{1+\delta}$ 
& $\left(n, \, \dfrac{n(1+g_0)}{1+\delta}\right]$ 
& Young/H\"older holds ($\sigma > 1$), but Sobolev/GN fails ($\theta \le 0$, $\mu_\star \le 0$ in $W^{1,1+\delta}$). 
& $\bullet$ \textbf{In $\mathcal{B}$ structure:} replace $|\gamma|g(|p|)$ by $|\gamma|G(|p|)^{\frac{\delta}{1+\delta}}$. \newline
  $\bullet$ \textbf{In $\gamma$ space:} $\gamma \in L^{s,\lambda}(\Omega)$ with $\lambda > n - \frac{s(1+\delta)}{1+g_0}$. \\ \hline

\textbf{Case 3:} \newline $1+\delta < n < 1+g_0$ 
& $\dfrac{n(1+g_0)}{1+\delta}$ 
& \textbf{3A:} $(n, \, 1+g_0]$ \newline\newline 
  \textbf{3B:} $\left(1+g_0, \, \dfrac{n(1+g_0)}{1+\delta}\right]$ 
& \textbf{In 3A:} Double failure ($\sigma \le 1 \Rightarrow G(|\gamma|) \notin L^1(\Omega)$ and $\mu_\star < 0$). \newline\newline
  \textbf{In 3B:} Young/H\"older holds ($\sigma > 1$), but Sobolev/GN fails ($\mu_\star \le 0$). 
& \textbf{For 3A:} \newline
  $\bullet$ \textbf{In $\mathcal{B}$ structure:} replace $|\gamma|g(|p|)$ by $|\gamma|G(|p|)^{\frac{\delta}{1+\delta}}$. \newline
  \textbf{For 3B:} \newline
  $\bullet$ \textbf{In $\mathcal{B}$ structure:} replace $|\gamma|g(|p|)$ by $|\gamma|G(|p|)^{\frac{\delta}{1+\delta}}$. \newline
  $\bullet$ \textbf{In $\gamma$ space:} $\gamma \in L^{s,\lambda}(\Omega)$, $\lambda > n - \frac{s(1+\delta)}{1+g_0}$. \\ \hline
\end{tabular}
\end{table}

In Table \ref{tab:subcritical_regimes}, ``{GN}" denotes the Gagliardo--Nirenberg interpolation inequality applied to $w = G(v)^{\frac{1}{1+\delta}} \in W^{1,1+\delta}$ with weight $\theta := 1 - \frac{n(1+g_0)}{s(1+\delta)}$, $\sigma := \frac{s}{1+g_0}$ is the H\"older exponent used to control the upper Orlicz growth of $G(|\gamma|v)$, $\mu_\star := (1+\delta)\theta = 1+\delta - \frac{n(1+g_0)}{s}$ is the net scaling exponent of the drift energy, and $L^{s,\lambda}(\Omega)$ denotes the classical Morrey space endowed with the norm $\|\gamma\|_{L^{s,\lambda}(\Omega)} := \sup_{x_0 \in \overline{\Omega}, \, \rho > 0} \left( \rho^{-\lambda} \int_{B_\rho(x_0) \cap \Omega} |\gamma(x)|^s \, dx \right)^{1/s}$.


\end{document}